\newtheorem{thm}{Theorem}[section]
\newtheorem*{thm*}{Theorem}
\newtheorem{lem}[thm]{Lemma}
\newtheorem{prop}[thm]{Proposition}
\newtheorem{cor}[thm]{Corollary}
\newtheorem{conj}[thm]{Conjecture}
\newtheorem{dfn}[thm]{Definition}
\newtheorem{ques}[thm]{Question}
\theoremstyle{remark}
\newtheorem{ex}[thm]{Example}
\newtheorem*{rmk}{Remark}
\renewcommand{\bf}[1]{\mathbf{#1}}
\renewcommand{\rm}[1]{\mathrm{#1}}
\newcommand{\bbN}{\mathbb{N}}
\newcommand{\bbP}{\mathbb{P}}
\newcommand{\bbR}{\mathbb{R}}
\newcommand{\rmH}{\mathrm{H}}
\newcommand{\rmP}{\mathrm{P}}
\renewcommand{\d}{\mathrm{d}}
\newcommand{\rme}{\mathrm{e}}
\newcommand{\rmh}{\mathrm{h}}
\newcommand{\calN}{\mathcal{N}}
\newcommand{\calO}{\mathcal{O}}
\newcommand{\X}{\mathcal{X}}
\newcommand{\Y}{\mathcal{Y}}
\newcommand{\G}{\Gamma}
\renewcommand{\O}{\Omega}
\renewcommand{\S}{\Sigma}
\renewcommand{\a}{\alpha}
\newcommand{\eps}{\varepsilon}
\newcommand{\g}{\gamma}
\renewcommand{\l}{\lambda}
\newcommand{\s}{\sigma}
\renewcommand{\phi}{\varphi}
\renewcommand{\k}{\kappa}
\newcommand{\Sym}{\mathrm{Sym}}
\renewcommand{\hat}[1]{\widehat{#1}}
\newcommand{\ul}[1]{\underline{#1}}
\newcommand{\fin}{\nolinebreak\hspace{\stretch{1}}$\lhd$}
\newcommand{\actson}{\curvearrowright}
\renewcommand{\to}{\longrightarrow}
\newcommand{\onto}{\twoheadrightarrow}
\renewcommand{\t}{\widetilde}
\newcommand{\lws}{\stackrel{\rm{lw}\ast}{\to}}
\newcommand{\q}{\stackrel{\rm{q}}{\to}}
\newcommand{\dq}{\stackrel{\rm{dq}}{\to}}
\newcommand{\aL}{\stackrel{\rm{aL}}{\to}}
\newcommand{\cov}{\rm{cov}}
\newcommand{\pack}{\rm{pack}}
\newcommand{\ps}{\rm{ps}}
\renewcommand{\Pr}{\mathrm{Prob}}
\begin{document}


\title{\textbf{\Large{Additivity properties of sofic entropy and measures on model spaces}}}
\author{Tim Austin\thanks{Research supported partly by a fellowship from the Clay Mathematics Institute and partly by the Simons Collaboration on Algorithms and Geometry}}

\date{}

\maketitle

\begin{abstract}
Sofic entropy is an invariant for probability-preserving actions of sofic groups.  It was introduced a few years ago by Lewis Bowen, and shown to extend the classical Kolmogorov-Sinai entropy from the setting of amenable groups.  Some parts of Kolmogorov-Sinai entropy theory generalize to sofic entropy, but in other respects this new invariant behaves less regularly.

This paper explores conditions under which sofic entropy is additive for Cartesian products of systems.  It is always subadditive, but the reverse inequality can fail.  We define a new entropy-notion in terms of probability distributions on the spaces of good models of an action.  Using this, we prove a general lower bound for the sofic entropy of a Cartesian product in terms of separate quantities for the two factor systems involved.  We also prove that this lower bound is optimal in a certain sense, and use it to derive some sufficient conditions for the strict additivity of sofic entropy itself.  Various other properties of this new entropy notion are also developed.

\vspace{7pt}

\noindent MSC(2010): 37A35 (primary); 37A50, 60K35, 82B20 (secondary)
\end{abstract}



%


%
%
\setcounter{tocdepth}{1}
\tableofcontents

\section{Introduction}

Let $G$ be a discrete sofic group, $(X,\mu)$ a standard probability space and ${T:G\actson X}$ a measurable action which preserves $\mu$.  The triple $(X,\mu,T)$ is called a \textbf{$G$-system} or just a \textbf{system}.

Fix a sofic approximation $\S = (\s_n)_{n\geq 1}$ to the group $G$. For a system $(X,\mu,T)$ which has a finite generating partition, Lewis Bowen defined the `sofic entropy relative to $\S$', denoted by $\rmh_\S(\mu,T)$~(\cite{Bowen10}).  An alternative definition which does not require a finite generating partition, and so generalizes Bowen's, was given by Kerr and Li in~\cite{KerLi11b}.  That definition was in terms of operator algebras, but they later gave a more elementary construction of the same invariant:~\cite[Section 3]{KerLi13}.

If $G$ is amenable, then sofic entropy agrees with the classical Kolmogorov-Sinai entropy $\rmh_{\rm{KS}}$ for any choice of sofic approximation (see~\cite{Bowen12,KerLi13}).  If $G$ is not amenable, then sofic entropy can serve as a substitute for Kolmogorov-Sinai entropy for some purposes.  Bowen's original motivation was to show that isomorphic Bernoulli shifts over a sofic group $G$ must have one-dimensional distributions of equal Shannon entropy: this was accomplished for shifts with finite alphabet in~\cite{Bowen10} and then completed in~\cite{KerLi11a}.  (The sufficiency of that condition is also known in many cases, including for any $G$ that contains an infinite amenable subgroup.)  However, it is still uncertain how much of classical entropy theory generalizes to sofic groups, or what modifications are necessary.

This paper considers how $\rmh_\S$ behaves under forming Cartesian products of systems. If $G$ is amenable, then Kolmogorov-Sinai entropy is additive under Cartesian products:
\[\rmh_{\rm{KS}}(\mu\times \nu,T\times S) = \rmh_{\rm{KS}}(\mu,T) + \rmh_{\rm{KS}}(\nu,S).\]
Sofic entropy is subadditive for Cartesian products, and indeed for arbitrary joinings; the easy proof of this is given in Subsection~\ref{subs:subadd}.  However, strict additivity can fail: examples showing this will also be given in that subsection.

Suppose that $(X,\mu,T)$ is a $G$-system. The main innovation of this paper is a new quantity, denoted by $\rmh^\rm{dq}_\S(\mu,T)$, with the property that
\[\rmh_\S(\mu\times\nu,T\times S) \geq \rmh^\rm{dq}_\S(\mu,T) + \rmh_\S(\nu,S)\]
whenever $(Y,\nu,S)$ is another $G$-system for which $\rmh_\S(\nu,S)$ has a certain regularity (it is equal to the `lower' sofic entropy $\ul{\rmh}_\S(\nu,S)$, which is recalled below).

The sofic entropy of $(X,\mu,T)$ is obtained from a certain family of metric spaces consisting of `finitary models' for $(X,\mu,T)$.  The new quantity $\rmh^\rm{dq}_\S(\mu,T)$ is also defined in terms of these spaces, but together with another kind of structure: sequences of probability measures $\mu_n$ on those model spaces.

\subsection*{Model spaces and sofic entropy}

In order to formulate our main results precisely, we first recall the construction of model spaces and the definition of sofic entropy.

Our definition is very close to that in~\cite[Section 3]{KerLi13} but it is adapted slightly better to the purposes of this paper.  The following is only a sketch; full details are given in Section~\ref{sec:model-spaces}.  The equivalence with the Kerr-Li definition is shown in Subsection~\ref{subs:KerrLi}.

First, a \textbf{$G$-process} is a $G$-system in which $X = \X^G$ for some other standard measurable space $\X$ and $S$ is the right-shift action of $G$ on $X$:
\[S^g((x_h)_{h \in G}) := (x_{hg})_{h \in G}.\]
This is close to the probabilistic notion of a `$G$-stationary process': formally, that would be the collection of coordinate projections $\X^G\to \X$, regarded as a $G$-indexed family of $\X$-valued random variables on the probability space $(\X^G,\mu)$.

When we deal with isomorphism-invariant properties of systems, no generality is lost by confining our attention to $G$-processes.  Indeed, for any system $(X,\mu,T)$, the map
\begin{equation}\label{eq:system-process}
\Phi:X\to X^G:x \mapsto (T^hx)_{h \in G}
\end{equation}
intertwines $T$ with the right-shift action $S$ on $X^G$, and converts $\mu$ into the shift-invariant measure $\Phi_\ast\mu$. This map is clearly injective, so it defines a measure-theoretic isomorphism from $(X,\mu,T)$ to the $G$-process $(X^G,\Phi_\ast\mu,S)$.

If $(\X^G,\mu,S)$ is a $G$-process and $F \subseteq G$, then $\mu_F$ denotes the marginal of $\mu$ on $\X^F$.  Also, whenever $F\subseteq F' \subseteq G$, we let $\pi^{F'}_F$ denote the coordinate projection $\X^{F'} \to \X^F$. Thus,
\[\mu_F = (\pi^{F'}_F)_\ast\mu_{F'} = (\pi^G_F)_\ast\mu.\]
If $x \in \X^{F'}$ then we often write $x|_F$ as a shorthand for $\pi^{F'}_F(x)$.

Next, since $\X$ is standard, its $\s$-algebra may be generated as the Borel sets for some compact metric $d$.  We refer to such a $d$ as a \textbf{compact generating metric} for $\X$. Although this metric is far from unique, it is a key auxiliary object in the constructions that follow.  A \textbf{metric $G$-process} is a quadruple $(\X^G,\mu,S,d)$ in which $(\X^G,\mu,S)$ is a $G$-process and $d$ is such a metric on $\X$.  Sofic entropy is initially defined for metric $G$-processes, and then one shows that it does not depend on the choice of metric.  One can extend this fact to allow more general Polish generating metrics on $\X$~\cite{Hay14}, but we do not do so here.  Once a metric $d$ has been chosen, it is always implicit that $\X^G$ has the resulting compact metrizable product topology, and similarly for other products of standard spaces for which we have chosen compact generating metrics.

Now let $V$ be a nonempty finite set.  The space $\X^V$ carries an associated metric defined by
\[d^{(V)}(\bf{x},\bf{x}') = \frac{1}{|V|}\sum_{v \in V}d(x_v,x'_v) \quad \hbox{for}\ \bf{x} = (x_v)_{v \in V},\ \bf{x}' = (x'_v)_{v \in V}.\]
We call this the \textbf{Hamming average} of $d$ over $V$.  It generalizes the classical normalized Hamming metrics, which arise in this way when $(\X,d)$ is a finite set with the discrete metric.

Now consider again a sofic approximation $\S = (\s_n)_{n\geq 1}$ to $G$, where each $\s_n$ is a map from $G$ to $\rm{Sym}(V_n)$ for some finite set $V_n$ (see Section~\ref{sec:model-spaces} for a complete definition). Given $(\X^G,\mu,S,d)$ and $\S$, we define a sequence of subsets
\[\O(\calO,\s_n) \subseteq \X^{V_n}, \quad n \in \bbN,\]
for each choice of a weak$^\ast$-neighbourhood $\calO$ of $\mu$ in $\Pr(\X^G)$.  Note that the weak$^\ast$ topology invoked here depends on the topology of $\X^G$, which in turns depends on the choice of $d$.  The elements of $\O(\calO,\s_n)$ are the `$\calO$-good models of $\mu$ over $\s_n$': the elements of $\X^{V_n}$ whose empirical distributions lie in $\calO$, hence `close' to $\mu$.  Empirical distributions are defined in Section~\ref{sec:model-spaces}.  The `quality' required of these models improves as $\calO$ is reduced, and it is clear that
\[\calO' \subseteq \calO \quad \Longrightarrow \quad \O(\calO',\s) \subseteq \O(\calO,\s)\]
for any finite $V$ and map $\s:G\to \Sym(V)$.

For any compact metric space $(Y,d_Y)$, subset $Z \subseteq Y$, and $\delta > 0$, we let $\rm{cov}_\delta(Z,d_Y)$ be the minimum cardinality among $\delta$-dense subsets of $Z$.  The sofic entropy $\rmh_\S(\mu)$ is defined to be
\[\sup_{\delta > 0}\ \inf_{\calO}\ \limsup_{n\to\infty}\ \frac{1}{|V_n|}\log \rm{cov}_\delta\big(\O(\calO,\s_n),d^{(V_n)}\big),\]
where $\calO$ ranges over all weak$^\ast$-neighbourhoods of $\mu$. Heuristically, this is approximately the exponential growth rate of the covering numbers
\[\rm{cov}_\delta\big(\O(\calO,\s_n),d^{(V_n)}\big)\]
as $n\to\infty$, for sufficiently small $\delta$ and then for sufficiently small $\calO$ depending on $\delta$.

In general, this sequence of covering numbers need not grow at a well-defined exponential rate, so one takes the supremum of those rates over subsequences.  It can be important to know when the covering numbers have different asymptotics along other subsequences.  To capture this possibility, one also defines the \textbf{lower sofic entropy}
\[\ul{\rmh}_\S(\mu) := \sup_{\delta > 0}\ \inf_{\calO}\ \liminf_{n\to\infty}\ \frac{1}{|V_n|}\log \rm{cov}_\delta\big(\O(\calO,\s_n),d^{(V_n)}\big).\]

It can happen that $\ul{\rmh}_\S(\mu) < \rmh_\S(\mu)$: see the end of Subsection~\ref{subs:defs}.  If $\ul{\rmh}_\S(\mu) = \rmh_\S(\mu)$, then this asserts the following: for every $\eps > 0$ there is a $\delta_0 > 0$ such that for every $\delta \in (0,\delta_0)$ there is a weak$^\ast$-neighbourhood $\calO_\delta$ such that for every weak$^\ast$-neighbourhood $\calO \subseteq \calO_\delta$ we have
\[\Big|\frac{1}{|V_n|}\log \rm{cov}_\delta\big(\O(\calO,\s_n),d^{(V_n)}\big) - \rmh_\S(\mu)\Big| < \eps\]
for all sufficiently large $n$.  More heuristically: if $\delta$ is sufficiently small and then $\calO$ is sufficiently small depending on $\delta$, the covering numbers do grow at an approximately well-defined exponential rate.  Since it suffices to check this for rational $\delta$ and for a countable basis of  neighbourhoods $\calO$, a simple diagonal argument can always provide a subsequence of $(\s_n)_{n\geq 1}$ for which this is the case.

The quantity $\rmh_\S(\mu)$ generally depends on the choice of sofic approximation $\S$.  But, crucially, it does not depend on the compact metric $d$ that one uses to generate the $\s$-algebra of $\X$.  In fact, $\rmh_\S(\mu)$ and likewise $\ul{\rmh}_\S(\mu)$ are invariants of the system $(\X^G,\mu,S)$ up to measure-theoretic isomorphism, for a fixed choice of the sofic approximation $\S$.  One may therefore define $\rmh_\S(\mu,T)$ and similarly $\ul{\rmh}_\S(\mu,T)$ for an arbitrary $G$-system $(X,\mu,T)$, for instance by using the isomorphism~(\ref{eq:system-process}).

\subsection*{Measures on model spaces}

Our new invariant is defined in terms of sequences of measures $\mu_n$ on $\X^{V_n}$ which are asymptotically supported on these model spaces and which locally resemble $\mu$ at most points of $V_n$, which we refer to as `vertices'.

Section~\ref{sec:loc-and-quench} will consider three senses in which a sequence of probability measures $\mu_n$ on $\X^{V_n}$ can converge to a measure $\mu$ on $\X^G$: local weak$^\ast$ convergence, quenched convergence, and doubly-quenched convergence.  All three senses are relative to a particular choice of sofic approximation $\S$; we refer to convergence `over $\S$' if we need to make that choice explicit.  They are also relative to a particular choice of compact generating metric $d$ for $\X$.

Local weak$^\ast$ convergence asserts that, once $n$ is large, the marginals of $\mu_n$ around most points of $V_n$ resemble the corresponding marginal of $\mu$ in the weak$^\ast$ topology.  Here we use that the sofic approximation $\s_n$ gives a way to copy a fixed finite subset of $G$ to a corresponding `patch' around any vertex of $V_n$, perhaps with errors for a few vertices.  This convergence is denoted by $\mu_n \lws \mu$. This notion already has an important role in the study of various statistical physics models on random graphs.

Quenched convergence strengthens local weak$^\ast$ convergence by imposing a second condition: that $\mu_n$ be mostly supported on individual good models of $\mu$. The term `quenched' is also taken from statistical physics, where it indicates a property that holds among most instances in an ensemble, not just on average.  This convergence is denoted by $\mu_n \q \mu$.  Quenched convergence is strictly stronger than local weak$^\ast$ convergence in general, and the difference between them has a simple characterization in terms of certain random measures constructed from the $\mu_n$ (Lemma~\ref{lem:lwa}).  However, using this characterization, it follows that the two notions are equivalent if $\mu$ is ergodic (Corollary~\ref{cor:lws-lwa}).

A closely related notion of convergence for a sequence of measures on model spaces already appears in~\cite{Bowen11}.  Using this notion, that paper gives a new formula for sofic entropy for certain special examples of probability-preserving systems and sofic approximations.

For any compact metric space $(Y,d_Y)$, Borel probability measure $\nu$ on $Y$, and $\eps,\delta > 0$, we write
\[\cov_{\eps,\delta}(\nu,d_Y):= \min\big\{|F|:\ F\subseteq Y,\ \nu(B_\delta(F)) > 1 - \eps\big\},\]
where $B_\delta(F)$ is the $\delta$-neighbourhood of $F$ according to the metric $d_Y$.  Using this quantity in place of the covering numbers of spaces themselves, we define the following analog of $\rmh_\S$:
\begin{multline*}
\rmh^{\rm{q}}_\S(\mu) := \sup\Big\{\sup_{\delta,\eps > 0}\limsup_{i\to\infty}\frac{1}{|V_{n_i}|}\log\rm{cov}_{\eps,\delta}\big(\mu_i,d^{(V_{n_i})}\big):\\ n_i \uparrow \infty\ \hbox{and}\ \mu_i \q \mu\ \hbox{over}\ (\s_{n_i})_{i\geq 1}\Big\}.
\end{multline*}
The outer supremum here is over all subsequences $(\s_{n_i})_{i\geq 1}$ of the sofic approximation $\S$, and over all sequences of measures $\mu_i$ on $\X^{V_{n_i}}$ that quenched-converge to $\mu$ over that subsequence.  We must allow this supremum over subsequences, because it may be that there is no sequence of measures $\mu_n$ such that $\mu_n \q \mu$ over the original sofic approximation at all.  This will be explained more carefully at the beginning of Section~\ref{sec:mod-meas-sof-ent}.

We call $\rmh^\rm{q}_\S(\mu)$ the \textbf{model-measure sofic entropy of $\mu$ rel $\S$}.   Like sofic entropy, it is an isomorphism-invariant of the $G$-process (Theorem~\ref{thm:iso-invar}), and so in fact it does not depend on the choice of the generating metric $d$. As a result, its definition can be extended unambiguously to arbitrary $G$-systems.  Since quenched convergence $\mu_n \q \mu$ requires that $\mu_n$ be mostly supported on good models for $\mu$ once $n$ is large, it follows easily that $\rmh_\S \geq \rmh^\rm{q}_\S$ (see Lemma~\ref{lem:simple-ineq}). This inequality can be strict.

Like sofic entropy, $\rmh^\rm{q}_\S$ is always subadditive under Cartesian products, but may not be strictly additive.  However, this defect can be repaired by further restricting the sequences of measures on model spaces that we allow.

If $\mu_n \lws \mu$, then it follows easily that $\mu_n \times \mu_n \lws \mu\times \mu$.  However, the same implication may fail for quenched convergence: even if $\mu_n$ is asymptotically mostly supported on good models for $\mu$, the product $\mu_n \times \mu_n$ may not be mostly supported on good models for $\mu\times \mu$.  This phenomenon is responsible for cases in which $\rmh^\rm{q}_\S(\mu^{\times 2}) < 2\rmh_\S^\rm{q}(\mu)$.  However, if we simply \emph{require} the convergence of Cartesian squares
\[\mu_n \times \mu_n \q \mu\times \mu,\]
then it turns out that this implies good behaviour for all other Cartesian products with the measures $\mu_n$.

\vspace{7pt}

\noindent\textbf{Theorem A.}\quad \emph{Suppose that $\mu_n \q \mu$ over $\S$.  The following are equivalent:
\begin{itemize}
\item[i)] $\mu_n\times \mu_n \q \mu\times \mu$;
\item[ii)] if $(\Y^G,\nu,S,d_\Y)$ is another metric $G$-process and $\calN$ is a weak$^\ast$ neighbourhood of $\mu\times \nu$, then there is a weak$^\ast$ neighbourhood $\calO$ of $\nu$ such that
\[\inf_{\bf{y} \in \O(\calO,\s_{n_i})}\mu_{n_i}\big\{\bf{x} \in \X^{V_{n_i}}:\ (\bf{x},\bf{y}) \in \O(\calN,\s_{n_i})\big\} \to 1 \quad \hbox{as}\ i \to\infty\]
for any subsequence $n_i \uparrow \infty$ such that $\O(\calO,\s_{n_i}) \neq \emptyset$ for all $i$ (we regard this as vacuously true if there is no such subsequence $n_i$);
\item[iii)] if $(\Y^G,\nu,S,d_\Y)$ is another metric $G$-process, $n_i \uparrow \infty$, and $\nu_i \in \Pr(\Y^{V_{n_i}})$ is a sequence such that $\nu_i \q \nu$ over $(\s_{n_i})_{i\geq 1}$, then $\mu_{n_i}\times \nu_i \q \mu\times \nu$ over $(\s_{n_i})_{i\geq 1}$.
\end{itemize}}

\vspace{7pt}

Under any of the above equivalent conditions, we say that $( \mu_n)_{n\geq 1}$ \textbf{doubly-quenched converges} to $\mu$, and denote this by $\mu_n \dq \mu$.

Theorem A is analogous to the equivalence among various standard characterizations of weak mixing.  Furthermore, since local weak$^\ast$ convergence implies quenched convergence when $\mu$ is ergodic, one can deduce that quenched convergence implies doubly-quenched convergence when $\mu$ is weakly mixing (Lemma~\ref{lem:q-dq-and-wm}).

Doubly-quenched convergence finally leads to the new invariant we need:
\begin{multline*}
\rmh^{\rm{dq}}_\S(\mu) := \sup\Big\{\sup_{\delta,\eps > 0}\limsup_{i \to\infty}\frac{1}{|V_{n_i}|}\log\rm{cov}_{\eps,\delta}\big(\mu_i,d^{(V_{n_i})}\big):\\ n_i \uparrow \infty\ \hbox{and}\ \mu_i \dq \mu\ \hbox{over}\ (\s_{n_i})_{i\geq 1}\Big\}.
\end{multline*}
This is called the \textbf{doubly-quenched model-measure sofic entropy of $\mu$ rel $\S$}.  One sees easily that $\rmh^\rm{dq}_\S \leq \rmh^\rm{q}_\S$ (Lemma~\ref{lem:simple-ineq}). The proof that $\rmh^\rm{q}_\S$ is isomorphism-invariant (and hence independent of the choice of $d$) gives the same result for $\rmh^\rm{dq}_\S$, and so the definition of $\rmh_\S^{\rm{dq}}$ can be extended unambiguously to any $G$-system.

We can now state our main result for Cartesian products.

\vspace{7pt}

\noindent\textbf{Theorem B.}\quad \emph{Suppose that $(X,\mu,T)$ and $(Y,\nu,S)$ are $G$-systems such that
\begin{equation}\label{eq:upper=lower}
\rmh_\S(\nu,S) = \ul{\rmh}_\S(\nu,S).
\end{equation}
Then
\begin{equation}\label{eq:add1}
\rmh_\S(\mu\times\nu,T\times S) \geq \rmh^\rm{dq}_\S(\mu,T) + \rmh_\S(\nu,S).
\end{equation} }

\vspace{7pt}

This follows fairly easily from conclusion (ii) of Theorem A.

Of course, by symmetry, one also has the analogous conclusion with the roles of $(X,\mu,T)$ and $(Y,\nu,S)$ reversed.

One cannot hope for anything like~(\ref{eq:add1}) without some assumption such as~(\ref{eq:upper=lower}), since in general the relevant entropies $\rmh_\S^\rm{dq}(\mu,T)$ and $\rmh_\S(\nu,S)$ could be obtained as limit suprema along disjoint subsequences.

Since $\rmh_\S^\rm{dq} \leq \rmh_\S$ and $\rmh_\S$ is always subadditive, the following is an immediate corollary.

\vspace{7pt}

\noindent\textbf{Corollary B$'$.}\quad \emph{If $(X,\mu,T)$ and $(Y,\nu,S)$ satisfy
\[\rmh_\S^\rm{dq}(\mu,T) = \rmh_\S(\mu,T) \quad \hbox{and} \quad \ul{\rmh}_\S(\nu,S) = \rmh_\S(\nu,S),\]
or vice-versa, then
\[\rmh_\S(\mu\times\nu,T\times S) = \rmh_\S(\mu,T) + \rmh_\S(\nu,S).\]}

\vspace{7pt}

For instance, this condition on $(X,\mu,T)$ is satisfied by Bernoulli systems, so we recover the known result~\cite[Section 8]{Bowen10} that forming products with Bernoulli systems always makes the obvious additive contribution to sofic entropy.

It can happen that $\rmh^\rm{q}_\S > \rmh^\rm{dq}_\S$, and there are cases in which one cannot replace $\rmh^\rm{dq}_\S(\mu,T)$ with $\rmh^\rm{q}_\S(\mu,T)$ in Theorem B: see Example~\ref{ex:hq-neq-hdq}.  However, this cannot occur if $(X,\mu,T)$ is weakly mixing, simply because quenched convergence itself implies doubly-quenched convergence for weakly mixing systems.

Unlike the other notions, doubly-quenched model-measure sofic entropy does enjoy a general additivity result for Cartesian products.  The only caveat is that we must still assume some analog of condition~(\ref{eq:upper=lower}) in Theorem B.  This is conveniently expressed in terms of a `lower' version of doubly-quenched model-measure sofic entropy, denoted $\ul{\rmh}_\S^\rm{dq}$.

The additivity result for $\rmh^\rm{dq}_\S$ has an analog for $\rmh^\rm{q}_\S$ under an additional ergodicity assumption.

\vspace{7pt}

\noindent\textbf{Theorem C.}\quad \emph{For any $G$-systems $(X,\mu,T)$ and $(Y,\nu,S)$, it holds that
\[\rmh^{\rm{dq}}_\S(\mu\times \nu,T\times S) \leq \rmh^{\rm{dq}}_\S(\mu,T) + \rmh^{\rm{dq}}_\S(\nu,S),\]
and similarly with $\rmh^\rm{dq}_\S$ replaced by $\rmh_\S^\rm{q}$.}

\emph{If $\rmh^{\rm{dq}}_\S(\nu,S) = \ul{\rmh}^{\rm{dq}}_\S(\nu,S)$, then in fact
\begin{equation}\label{eq:add2}
\rmh^{\rm{dq}}_\S(\mu\times \nu,T\times S) = \rmh^{\rm{dq}}_\S(\mu,T) + \rmh^{\rm{dq}}_\S(\nu,S),
\end{equation}
where we interpret the right-hand side as $-\infty$ if either of its terms is $-\infty$. }

\emph{If $\mu\times \nu$ is ergodic, then the analogous result holds with $\rmh^\rm{dq}_\S$ and $\ul{\rmh}_\S^\rm{dq}$ replaced by $\rmh_\S^\rm{q}$ and $\ul{\rmh}_\S^\rm{q}$ throughout.  In particular, this is the case if both $\mu$ and $\nu$ are ergodic and one of them is weakly mixing.}

\vspace{7pt}

Theorems B and C will be proved in Section~\ref{sec:prod}.  The first conclusion of Theorem C (subadditivity) actually holds for arbitrary joinings: see Proposition~\ref{prop:h-q-subadd}.

By applying Theorem C to copies of a single system $(X,\mu,T)$, we can show that $\rmh^\rm{dq}_\S$ is stable under Cartesian powers:
\[\rmh^{\rm{dq}}_\S(\mu^{\times k}) = k\cdot \rmh^{\rm{dq}}_\S(\mu) \quad \forall k \geq 1\]
(see Corollary~\ref{cor:dq-is-stable}).  In this case we can do without the assumption that $\rmh^{\rm{dq}}_\S(\mu,T) = \ul{\rmh}^{\rm{dq}}_\S(\mu,T)$.

\subsection*{Processes with finite state spaces}

In case $\X$ is a finite set and $(\X^G,\mu,S)$ is a $G$-process, we are able to prove another relation between the new invariant $\rmh_\S^{\rm{dq}}$ and sofic entropies.  In view of isomorphism-invariance, this will actually apply to any system $(X,\mu,T)$ which has a finite generating partition. For an ergodic system, this, in turn, is equivalent to finiteness of the Rokhlin entropy $\rmh^\rm{Rok}(\mu,T)$, by the results of~\cite{Seward--KriI}.

For a general system $(X,\mu,T)$, consider the sequence of values
\[\frac{1}{k}\rmh_\S(\mu^{\times k},T^{\times k}), \quad k\geq 1.\]
Since $\rmh_\S$ is subadditive, the sequence $(\rmh_\S(\mu^{\times k},T^{\times k}))_{k\geq 1}$ is subadditive.  Therefore the limit
\[\rmh_\S^\ps(\mu,T) := \lim_{k\to\infty}\frac{1}{k}\rmh_\S(\mu^{\times k},T^{\times k})\]
exists and satisfies $\rmh_\S^\ps(\mu,T) \leq \rmh_\S(\mu,T)$, by Fekete's Lemma.  We call it the \textbf{power-stabilized sofic entropy rel $\S$}.  It is clearly an isomorphism-invariant.  If $\rmh_\S^\ps(\mu,T) < \rmh_\S(\mu,T)$, then this gap quantifies the failure of additivity of sofic entropy among the Cartesian powers of $(X,\mu,T)$ itself.

\vspace{7pt}

\noindent\textbf{Theorem D.}\quad \emph{It is always the case that
\[\rmh_\S^\ps(\mu,T) \geq \rmh^{\rm{dq}}_\S(\mu,T),\]
and this is an equality if $(X,\mu,T)$ has a finite generating partition.}

\vspace{7pt}

It is fairly easy to prove that $\rmh^\ps_\S \geq \rmh^\rm{dq}_\S$, so most of the work goes into proving the reverse inequality. This will be done by showing that individual good models for large Cartesian powers $(\mu^{\times k},T^{\times k})$ can be converted into measures that doubly-quenched converge to $\mu$.

Theorem D leads to the following sense in which Theorem B is optimal for systems that have finite generating partitions.

\vspace{7pt}

\noindent\textbf{Corollary D$'$.}\quad \emph{If $(X,\mu,T)$ has a finite generating partition and $\rmh_\S(\mu^{\times k},T^{\times k}) = \ul{\rmh}_\S(\mu^{\times k},T^{\times k})$ for all $k$, then
\begin{multline*}
\rmh^\rm{dq}_\S(\mu,T) = \inf\big\{\rmh_\S(\mu\times \nu,T\times S) - \rmh_\S(\nu,S):\\ (Y,\nu,S)\ \hbox{another $G$-system with}\ \rmh_\S(\nu,S) = \ul{\rmh}_\S(\nu,S)\big\}.
\end{multline*} }

\vspace{7pt}

Thus, if $(X,\mu,T)$ has a finite generating partition and $\rmh_\S(\mu^{\times k},T^{\times k}) = \ul{\rmh}_\S(\mu^{\times k},T^{\times k})$ for all $k$, then no other quantity which depends only on $(X,\mu,T)$ can improve on $\rmh^\rm{dq}_\S(\mu,T)$ in Theorem B.

\subsection*{Equality in the case of co-induced systems}

For an infinite sofic group $G$ and a Bernoulli process $(\X^G,\nu^{\times G},S)$, it is fairly easy to show that $\rmh_\S(\nu^{\times G})$, $\rmh^\rm{q}_\S(\nu^{\times G})$ and $\rmh^\rm{dq}_\S(\nu^{\times G})$ are all just equal to the Shannon entropy of $\nu$.  (In the case of $\rmh_\S$, this is the calculation that gives the classification of Bernoulli systems in~\cite{Bowen10,KerLi11a}.)  This supplies some examples for which the three entropy-notions coincide.

The final topic of this paper is a generalization of this result to a class of co-inductions.  Suppose now that $G$ and $H$ are two sofic groups, and let
\[\S = (\s_n:G\to \rm{Sym}(V_n))_{n\geq 1} \quad \hbox{and} \quad \rm{T} = (\tau_n:H\to \rm{Sym}(W_n))_{n\geq 1}\]
be respective sofic approximations for them. Then $G\times H$ has a \textbf{product} sofic approximation $\S\times \rm{T} = (\s_n\times \tau_n)_{n\geq 1}$, where
\[(\s_n\times \tau_n)^{(g,h)} := \s_n^g\times \tau_n^h \in \rm{Sym}(V_n\times W_n).\]

Let $(X,\mu,T)$ be a $G$-system.  Then \textbf{co-induction} gives a functorial way to construct from it a $(G\times H)$-system: the new probability space is $(X^H,\mu^{\times H})$, and the action $\rm{CInd}_G^{G\times H}T$ is defined by setting
\[(\rm{CInd}_G^{G\times H}T)^g := (T^g)^{\times H} \quad \hbox{for}\ g \in G\]
and
\[(\rm{CInd}_G^{G\times H}T)^h((x_k)_{k \in H}) := (x_{kh})_{k \in H} \quad \hbox{for}\ h \in H\]
(so $H$ acts simply by the right-shift on $X^H$). Since these commute, they define an action of $G\times H$.  See~\cite[Subsection II.10.(G)]{Kec10} or~\cite{DooZha12} for a more general discussion and some previous uses of co-induction in ergodic theory.

If $G$ is the trivial group, then this just produces an $H$-Bernoulli shift.  The following result therefore generalizes our observation about Bernoulli shifts.

\vspace{7pt}

\noindent\textbf{Theorem E.}\quad \emph{Let $G$, $H$, $\S$, $\rm{T}$, and $(X,\mu,T)$ be as above.  Assume that $H$ is infinite. Then
\[\rmh_{\S\times \rm{T}}\big(\mu^{\times H},\rm{CInd}_G^{G\times H}T\big) = \rmh^\rm{q}_{\S\times \rm{T}}\big(\mu^{\times H},\rm{CInd}_G^{G\times H}T\big) = \rmh^\rm{dq}_{\S\times \rm{T}}\big(\mu^{\times H},\rm{CInd}_G^{G\times H}T\big).\]}

\vspace{7pt}

For a $G$-system $(X,\mu,T)$, a different condition which is sufficient for $\rmh_\S(\mu,T) = \rmh^{\rm{q}}_\S(\mu,T)$ has appeared previously as~\cite[Theorem 4.1]{Bowen11}.

Combining Theorem E with Theorems B and D immediately yields the following.

\vspace{7pt}

\noindent\textbf{Corollary E$'$.}\quad \emph{Consider again the setting of Theorem E.
\begin{enumerate}
\item If $(Y,\nu,S)$ is another $(G\times H)$-system such that
\[\rmh_{\S\times \rm{T}}(\nu,S) = \ul{\rmh}_{\S\times \rm{T}}(\nu,S),\]
then
\[\rmh_{\S\times \rm{T}}\big(\mu^{\times H}\times \nu,\rm{CInd}_G^{G\times H}T \times S\big)\\ = \rmh_{\S\times \rm{T}}\big(\mu^{\times H},\rm{CInd}_G^{G\times H}T\big) + \rmh_{\S\times \rm{T}}(\nu,S).\]
\item If $(X,\mu,T)$ has a finite generating partition, then
\[\rmh_{\S\times \rm{T}}\big((\mu^{\times H})^{\times k},(\rm{CInd}_G^{G\times H}T)^{\times k}\big) = k\cdot \rmh_{\S\times \rm{T}}(\mu^{\times H},\rm{CInd}_G^{G\times H}T)\]
for all $k \geq 1$.
\qed
\end{enumerate} }

\vspace{7pt}

\subsection*{Overview of the paper}

The rest of this paper is divided into two parts.

Part I concerns the spaces of good models for a $G$-process.  After collecting some background material in Section~\ref{sec:background}, model spaces and sofic entropy are defined carefully and studied in Section~\ref{sec:model-spaces}.

The most substantial results of this part are in Section~\ref{sec:factors}, which describes how a factor map between $G$-processes can be converted into `approximately Lipschitz' maps between their model spaces, endowed with suitable Hamming-like metrics.  This is delicate, because an arbitrary measurable factor map must first be approximated by `almost continuous' maps, and then these can be modified to act on the model-spaces.

Part II introduces measures on the spaces of models of a $G$-process.  Section~\ref{sec:loc-and-quench} defines locally weak$^\ast$, quenched and doubly-quenched convergence, and establishes some basic properties, including Theorem A.  Then Section~\ref{sec:mod-meas-sof-ent} uses such convergent sequences of measures to define the associated entropy-notions and prove some preparatory results for them.  These then lead to proofs of all the main theorems stated above: Theorems B and C in Section~\ref{sec:prod}; Theorem D in Section~\ref{sec:finite-state-spaces}; and Theorem E in Section~\ref{sec:when-equal}.

When a factor map between $G$-processes is converted into maps between the model spaces, those can then be used to convert a convergent sequence of measures for the domain process into a convergent sequence of measures for the target process.  This construction is crucial for many of the proofs in Part II, but, as might be expected from Part I, it requires careful control of several different approximations.  This is the most technical aspect the paper.

Section~\ref{sec:open} collects some open questions about this new notion of entropy.

Many of the ideas in Part I are just small variations on~\cite{Bowen10,KerLi11a,KerLi11b,KerLi13}.  Certainly none of Section~\ref{sec:model-spaces} is really original.  The principal difference from those works is our explicit development of maps between model spaces corresponding to factor maps between systems.  This gives us some very versatile tools for passing between model spaces, as illustrated by their use in the proofs of the main theorems.  We develop the somewhat new formalism for sofic entropy in Section~\ref{sec:model-spaces} in order to make this analysis of maps simpler and more natural, and to have the same effect on our definitions of model-measure sofic entropies in Part II.  This is why we include this new formalism, rather than just using the definitions from~\cite{KerLi13}, for example.

\subsubsection*{Acknowledgements}

I am grateful for several discussions with Mikl\'os Ab\'ert, Lewis Bowen, David Fisher, Alex Lubotzky and Brandon Seward. After the first versions of this paper appeared, I learned that Mikl\'os Ab\'ert and Benjamin Weiss have been studying some quite similar definitions using measures on model spaces.  I also thank the referee for correcting several small errors.

This research was supported partly by a fellowship from the Clay Mathematics Institute and partly by the Simons Collaboration on Algorithms and Geometry

\part{Model spaces and maps}

\section{Some notation and preliminaries}\label{sec:background}

\subsection{Elementary analysis}

We use Landau (`big-$O$' and `little-$o$') notation without further comment. Among real numbers, we sometimes write `$a \approx_\eps b$' in place of `$|a-b| < \eps$'.  The notation `$\eps_n \downarrow 0$' means that $(\eps_n)_{n\geq 1}$ is a non-increasing sequence of strictly positive real numbers which tends to $0$.

Now let $\rmP$ be a property that holds for some non-decreasing sequences of non-negative integers (that is, a subset of the set of non-decreasing members of $\bbN^\bbN$).  We will say that $\rmP$ holds \textbf{whenever $m_1 \leq m_2 \leq \dots$ grows sufficiently slowly} if there is a fixed non-decreasing sequence $(m_n^\circ)_{n\geq 1}$ with $m_n^\circ \uparrow \infty$ such that, for any other non-decreasing sequence $(m_n)_{n\geq 1} \in \bbN^\bbN$, we have
\[[m_n \to\infty\ \hbox{and}\ m_n \leq m_n^\circ\ \forall n\ ] \quad \Longrightarrow \quad (m_n)_{n \geq 1}\ \hbox{has}\ \rmP. \]

The following nomenclature from probabilistic combinatorics will be very convenient.  If $(\O_n,\bbP_n)_{n \geq 1}$ is a sequence of probability spaces, and $\rm{P}$ is a property that holds for some elements of $\O_n$ for each $n$, then $\rm{P}$ holds \textbf{with high probability} (`\textbf{w.h.p.}') if
\[\bbP_n\{\omega \in \O_n:\ \rm{P}\ \hbox{holds for}\ \omega\} \to 1 \quad \hbox{as}\ n\to\infty.\]
If $\rm{P}$ is written out explicitly in terms of $\omega$, then we may also write that $\rm{P}$ holds \textbf{w.h.p. in $\omega$}.  If each $\O_n$ is finite and $\bbP_n$ is not specified, then this is to be understood with $\bbP_n$ equal to the uniform measure on $\O_n$.

\subsection{Metric spaces and almost Lipschitz maps}

Let $(X,d_X)$ and $(Y,d_Y)$ be metric spaces, let $\eps > 0$, and let $L < \infty$.  A map $\phi:X \to Y$ is \textbf{$\eps$-almost $L$-Lipschitz} if it is Borel measurable and satisfies
\[d_Y(\phi(x),\phi(x')) \leq \eps + Ld_X(x,x') \quad \forall x,x' \in X.\]
A map is \textbf{$\eps$-almost Lipschitz} if it is so for some $L$.

The following requires only an immediate check.

\begin{lem}\label{lem:almost-Lip-with-Lip}
If $\phi:X\to Y$ is $\eps$-almost $L$-Lipschitz and $f:Y\to \bbR$ is $K$-Lipschitz, then $f\circ \phi$ is $(K\eps)$-almost $(KL)$-Lipschitz. \qed
\end{lem}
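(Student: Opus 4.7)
The plan is to verify the inequality directly from the two hypotheses, chaining the Lipschitz bound for $f$ on top of the almost-Lipschitz bound for $\phi$.

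First I would note that $f\circ\phi$ is Borel measurable: $\phi$ is Borel by hypothesis, and $f$ is continuous (being Lipschitz), so the composite is Borel. This disposes of the measurability requirement in the definition of ``$\eps'$-almost $L'$-Lipschitz.''

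Next, for arbitrary $x,x'\in X$ I would estimate
\[
\bigl|f(\phi(x))-f(\phi(x'))\bigr|\ \leq\ K\,d_Y\bigl(\phi(x),\phi(x')\bigr)\ \leq\ K\bigl(\eps+L\,d_X(x,x')\bigr)\ =\ K\eps+(KL)\,d_X(x,x'),
\]
using the $K$-Lipschitz property of $f$ for the first inequality and the $\eps$-almost $L$-Lipschitz property of $\phi$ for the second. Since $f$ is real-valued, the ``metric'' on $\bbR$ is just absolute value, so this is precisely the statement that $f\circ\phi$ is $(K\eps)$-almost $(KL)$-Lipschitz.

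There is no real obstacle here: the result is a one-line chained inequality, and the only substantive observation is the measurability remark. I would present the proof in a single short display, preceded by the measurability comment.
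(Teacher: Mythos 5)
Your proof is correct and is exactly the "immediate check" the paper has in mind: the paper states the lemma with no written proof at all, and the chained inequality you display is the intended verification. The measurability remark is a reasonable nicety but not something the paper dwells on.
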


Maps that are $\eta$-almost Lipschitz for arbitrarily small $\eta$ have the following simple characterization.

\begin{lem}\label{lem:cts-alm-Lip}
If $(X,d_X)$ and $(Y,d_Y)$ are compact and $f:X\to Y$, then $f$ is continuous if and only if it is $\eta$-almost Lipschitz for every $\eta > 0$.
\end{lem}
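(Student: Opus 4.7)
The plan is to prove both implications directly from definitions, using compactness (hence uniform continuity and boundedness) in the forward direction and an $\eta/L$-$\delta$ argument in the reverse.

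First I would handle the forward direction. Assume $f$ is continuous. Since $X$ is compact, $f$ is uniformly continuous, and since $Y$ is compact it has finite diameter $D := \mathrm{diam}(Y)$. Given $\eta > 0$, choose $\delta > 0$ from uniform continuity so that $d_X(x,x') < \delta$ implies $d_Y(f(x),f(x')) < \eta$. Set $L := D/\delta$. Then for $d_X(x,x') < \delta$ we have
\[ d_Y(f(x),f(x')) < \eta \leq \eta + L d_X(x,x'), \]
while for $d_X(x,x') \geq \delta$ we have $L d_X(x,x') \geq L\delta = D \geq d_Y(f(x),f(x'))$, so again the desired inequality holds. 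Continuity gives Borel measurability, so $f$ is $\eta$-almost $L$-Lipschitz.

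Second, for the reverse direction, suppose $f$ is $\eta$-almost Lipschitz for every $\eta > 0$. Fix $\eta > 0$ and choose $L$ with $d_Y(f(x),f(x')) \leq \eta + L d_X(x,x')$ for all $x,x' \in X$. Taking $\delta := \eta/L$, any $x,x'$ with $d_X(x,x') < \delta$ satisfy $d_Y(f(x),f(x')) < 2\eta$. Since $\eta > 0$ was arbitrary, this shows $f$ is (uniformly) continuous.

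There is no real obstacle here: the forward direction relies on the standard fact that uniform continuity on a bounded target lets one absorb the ``large distance'' case into the slope term $L d_X$, and the reverse direction is essentially a rewording of uniform continuity. Compactness of $Y$ is used only to ensure the target's diameter is finite; compactness of $X$ is used only to upgrade continuity to uniform continuity.
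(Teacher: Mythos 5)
Your proof is correct and uses essentially the same argument as the paper for the forward direction: exploit uniform continuity and the bounded diameter of $Y$ to absorb the far-apart case into the $L\,d_X(x,x')$ term (the paper simply normalizes $\mathrm{diam}(Y) \le 1$ and takes $L = 1/\delta$, while you keep $D$ explicit). You also spell out the reverse direction, which the paper declares simple and omits; your $\delta = \eta/L$ argument is a fine way to do it.
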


\begin{proof}
The reverse implication is simple, so we focus on the forward implication.

Suppose for simplicity that $d_Y$ has diameter at most $1$. Given $\eta > 0$, let $\delta > 0$ be so small that if $x,x' \in X$ then
\[d_X(x,x') < \delta \quad \Longrightarrow \quad d_Y(f(x),f(x')) < \eta.\]
Now define $L := 1/\delta$.  For any $x,x' \in X$, we obtain
\[d_Y(f(x),f(x')) \leq \left\{\begin{array}{lll}\eta &\quad \hbox{if}\ d_X(x,x') < \delta\\
1 < \eta + L\delta &\quad \hbox{if}\ d_X(x,x') \geq \delta\end{array}\right\} \leq \eta + Ld_X(x,x').\]
\end{proof}

For a general target space, an $\eps$-almost Lipschitz map need not be close to a truly Lipschitz map, even if $\eps$ is very small. However, this does hold among $\bbR$-valued maps.

\begin{lem}\label{lem:almost-Lip-near-Lip}
If $(X,d)$ is a metric space, $U \subseteq X$ is nonempty, and $f:X \to \bbR$ is a function such that $f|U$ is $\eps$-almost $L$-Lipschitz, then there is an $L$-Lipschitz map $g:X \to \bbR$ such that $|f(x) - g(x)| \leq \eps$ for all $x \in U$.
\end{lem}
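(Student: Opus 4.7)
The plan is to adapt the classical McShane extension formula to the almost-Lipschitz setting. Specifically, I would define
\[ g(x) := \inf_{u \in U} \big(f(u) + L\, d(x,u)\big), \qquad x \in X. \]
First I would check that this infimum is finite. Fixing any $u_0 \in U$ and using the hypothesis $f(u) \geq f(u_0) - \eps - L\, d(u,u_0)$ for all $u \in U$, together with the triangle inequality $d(u,u_0) \leq d(u,x) + d(x,u_0)$, one gets $f(u) + L\, d(x,u) \geq f(u_0) - \eps - L\, d(x,u_0)$ uniformly in $u$, so $g$ is everywhere real-valued (and bounded below on bounded subsets of $X$).

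Next I would verify that $g$ is $L$-Lipschitz on $X$. This is the standard McShane argument: for any $x,y \in X$ and $u \in U$,
\[ f(u) + L\, d(x,u) \leq f(u) + L\, d(y,u) + L\, d(x,y), \]
so taking the infimum over $u$ gives $g(x) \leq g(y) + L\, d(x,y)$, and symmetry yields $|g(x) - g(y)| \leq L\, d(x,y)$.

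Finally I would pin down the agreement with $f$ on $U$. Fix $u_0 \in U$. Plugging $u = u_0$ into the infimum gives immediately $g(u_0) \leq f(u_0)$. In the other direction, for every $u \in U$ the almost-Lipschitz hypothesis provides
\[ f(u) + L\, d(u_0,u) \geq \big(f(u_0) - \eps - L\, d(u_0,u)\big) + L\, d(u_0,u) = f(u_0) - \eps, \]
so $g(u_0) \geq f(u_0) - \eps$. Combining these bounds gives $|f(u_0) - g(u_0)| \leq \eps$ for every $u_0 \in U$, as required.

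There is no real obstacle here: the only subtlety is making sure the $\eps$ enters only through the lower bound $g(u_0) \geq f(u_0) - \eps$ and not into the Lipschitz constant, which is why one uses $L\, d(x,u)$ rather than $\eps + L\, d(x,u)$ in the defining infimum. The argument is essentially one-dimensional — it uses crucially that the codomain is $\bbR$, so that infima of families of Lipschitz functions remain Lipschitz — and this is exactly why the preceding remark in the excerpt points out that the analogous statement fails for general targets.
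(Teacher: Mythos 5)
Your proposal uses exactly the same McShane-type formula $g(x) = \inf_{u\in U}(f(u) + L\,d(x,u))$ as the paper, which simply states the construction and declares it standard; you have correctly supplied the routine verifications (finiteness, $L$-Lipschitzness, and $\eps$-closeness on $U$) that the paper omits. The proof is correct.
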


\begin{proof}
The following standard construction gives a suitable approximant:
\[g(x) := \inf_{x' \in U}(f(x') + Ld(x,x')).\]
\end{proof}

As in the Introduction, if $V$ is a nonempty finite set and $(X,d)$ is a metric space, then $d^{(V)}$ denotes the Hamming average metric on $X^V$ defined by
\[d^{(V)}(x,x') := \frac{1}{|V|}\sum_{v\in V}d(x_v,x'_v).\]
If $V \subseteq U$ with $V$ nonempty and finite, then $d^{(V)}(x,x')$ is still well-defined for pairs $x,x' \in X^U$.  It defines a pseudometric on $X^U$.  In particular, if $G$ is a group, then this gives a pseudometric $d^{(F)}$ on $X^G$ for every nonempty finite $F\subseteq G$.

Probability measures on metric spaces will always be defined on their Borel $\s$-algebras.  The set of Borel probability measures on a metric space $X$ is denoted by $\Pr(X)$.  If $X$ is compact then $\Pr(X)$ is given the weak$^\ast$ topology.

\subsection{Approximating Borel maps by almost Lipschitz maps}

In order to study factor maps between systems, we will need to approximate Borel maps by maps that have a fairly explicit kind of `approximate continuity'.  For our purposes, the best-adapted approximants seem to be almost Lipschitz maps.  In this subsection we prove the existence of such approximants using Lusin's Theorem.

The following definition is not standard, but will be very convenient.  It is a prelude to Definition~\ref{dfn:eta-approx}, which is a dynamical version.

\begin{dfn}\label{dfn:eps-good}
Let $(X,d_X)$ and $(Y,d_Y)$ be compact metric spaces, let $\mu \in \Pr(X)$, let $\phi:X\to Y$ be Borel, and let $\eta > 0$.  Then an \textbf{$\eta$-almost Lipschitz} (or \textbf{$\eta$-AL}) \textbf{approximation to $\phi$ rel $\mu$} is a Borel map $\psi:X\to Y$ with the following properties:
\begin{itemize}
\item[i)] the map $\psi$ approximates $\phi$ in the sense that
\[\int d_Y(\phi(x),\psi(x))\,\mu(\d x) < \eta;\]
\item[ii)] there is an open subset $U \subseteq X$ such that $\mu(U) > 1 - \eta$, and such that $\psi|U$ is $\eta$-almost Lipschitz from $d_X$ to $d_Y$.
\end{itemize}
\end{dfn}

\begin{lem}\label{lem:Lusin1}
Let $(X,d_X)$ and $(Y,d_Y)$ be compact metric spaces, let $\mu \in \Pr(X)$, and let $\phi:X\to Y$ be Borel.  Then $\phi$ has $\eta$-AL approximations rel $\mu$ for all $\eta > 0$.
\end{lem}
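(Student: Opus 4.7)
The plan is to construct $\psi$ by using Lusin's theorem to restrict $\phi$ to a compact set of nearly full measure on which it is continuous, and then to extend that restriction to all of $X$ by pulling back along a Borel-measurable nearest-point retraction onto $K$. The open set $U$ demanded by condition~(ii) will be taken to be a thin tubular neighbourhood of $K$ in $X$.

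First I would fix an auxiliary parameter $\eta_1 > 0$ small enough that $2\eta_1 \leq \eta$ and $\mathrm{diam}(Y,d_Y)\cdot\eta_1 < \eta$, and then apply Lusin's theorem to obtain a compact set $K \subseteq X$ with $\mu(X\setminus K) < \eta_1$ such that $\phi|K$ is continuous. Since $(K,d_X)$ is compact, Lemma~\ref{lem:cts-alm-Lip} upgrades continuity to the quantitative statement that $\phi|K$ is $\eta_1$-almost $L$-Lipschitz for some $L < \infty$.

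Next I would choose $\delta > 0$ with $2L\delta < \eta_1$, set $U := \{x \in X : d_X(x,K) < \delta\}$, which is open and contains $K$ (hence $\mu(U) \geq \mu(K) > 1 - \eta_1 > 1 - \eta$), and invoke a measurable selection theorem (Kuratowski--Ryll-Nardzewski, applied to the closed-graph, compact-valued multifunction $x \mapsto \{k \in K : d_X(x,k) = d_X(x,K)\}$) to obtain a Borel map $r : X \to K$ with $d_X(x,r(x)) = d_X(x,K)$, so that in particular $r$ fixes $K$ pointwise. Setting $\psi(x) := \phi(r(x))$ then gives a Borel map $X\to Y$ which agrees with $\phi$ on $K$.

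The verifications become routine parameter bookkeeping. For~(i), since $\psi = \phi$ on $K$ we have
\[ \int d_Y(\phi(x),\psi(x))\,\mathrm{d}\mu(x) \leq \mathrm{diam}(Y,d_Y)\cdot \mu(X\setminus K) < \eta. \]
For~(ii), if $x,x' \in U$ then the triangle inequality yields $d_X(r(x),r(x')) < 2\delta + d_X(x,x')$, and since $r(x),r(x')\in K$ the almost-Lipschitz estimate for $\phi|K$ gives
\[ d_Y(\psi(x),\psi(x')) \leq \eta_1 + L\bigl(2\delta + d_X(x,x')\bigr) < 2\eta_1 + L\, d_X(x,x') \leq \eta + L\, d_X(x,x'), \]
so $\psi|U$ is $\eta$-almost $L$-Lipschitz. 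The only place where care is required is the Borel measurability of the nearest-point retraction $r$; once that is in hand, the proof reduces to choosing the parameters $\eta_1$ and $\delta$ in the right order.
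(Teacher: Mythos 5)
Your proof is correct and follows essentially the same route as the paper's: Lusin's theorem to get a compact $K$ of large measure on which $\phi$ is continuous, Lemma~\ref{lem:cts-alm-Lip} to upgrade this to an almost-Lipschitz bound, a thin open tubular neighbourhood $U$ of $K$, and a Borel retraction onto $K$ to define $\psi$. The only difference is cosmetic: the paper simply asserts that a suitable Borel map $\xi : X \to K$ exists, while you construct it explicitly as a nearest-point selection via Kuratowski--Ryll-Nardzewski, and you track $\mathrm{diam}(Y)$ in your choice of $\eta_1$ rather than normalizing $d_Y$ to have diameter at most $1$ at the outset.
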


\begin{proof}
We may assume for simplicity that $d_Y$ has diameter at most $1$.

Lusin's Theorem gives a compact subset $K\subseteq X$ such that $\mu(K) > 1 - \eta$ and $\phi|K$ is continuous.  Then Lemma~\ref{lem:cts-alm-Lip} gives $L < \infty$ such that $\phi|K$ is $(\eta/3)$-almost $L$-Lipschitz.

Now choose $\eps$ so small that $L\eps < \eta/3$, and let $U := B_{\eps}(K)$.  Certainly $\mu(U) > 1 - \eta$.  Let $\xi: X \to K$ be a Borel map such that $\xi|K = \rm{id}_K$ and $d_X(x,\xi(x)) < \eps$ for all $x \in U$, and define $\psi:= \phi\circ \xi$.  This gives $\phi(x) = \psi(x)$ for all $x \in K$, and hence
\[\int d_Y(\phi(x),\psi(x))\,\mu(\d x) \leq \mu(X\setminus K) < \eta.\]

Finally, for any $x,x' \in U$, we have $\xi(x),\xi(x') \in K$, and therefore
\begin{multline*}
d_Y(\psi(x),\psi(x')) = d_Y\big(\phi(\xi(x)),\phi(\xi(x'))\big) \leq \eta/3 + Ld_X(\xi(x),\xi(x'))\\
\leq \eta/3 + 2L\eps + Ld_X(x,x') < \eta + Ld_X(x,x').
\end{multline*}
So $\psi|U$ is $\eta$-almost $L$-Lipschitz.
\end{proof}

\begin{rmk}
The key difference between Lusin's Theorem itself and Lemma~\ref{lem:Lusin1} is that $\psi$ is almost Lipschitz on an \emph{open} set of large measure.  This tweak will be important for some applications of the Portmanteau Theorem later. \fin
\end{rmk}

\subsection{Covering and packing numbers}

If $(X,d)$ is a metric space and $\delta > 0$, then a subset $F \subseteq X$ is \textbf{$\delta$-separated} if any distinct $x,y \in F$ satisfy $d(x,y) \geq \delta$. The \textbf{$\delta$-covering} and \textbf{$\delta$-packing numbers} of the space are defined by
\[\cov_\delta(X,d) := \min\{|F|:\ B_\delta(F) = X\}\]
and
\[\pack_\delta(X,d) := \max\big\{|F|:\ F\ \hbox{is $\delta$-separated in}\ (X,d)\big\},\]
where either value may be $+\infty$. More generally, if $Y \subseteq X$ then we abbreviate
\[\cov_\delta\big(Y,\,d|Y\times Y\big) =: \cov_\delta(Y,d) \quad \hbox{and} \quad \pack_\delta\big(Y,\,d|Y\times Y\big) =: \pack_\delta(Y,d).\]
These definitions lead quickly to the standard inequalities
\begin{equation}\label{eq:pre-cov-and-pack}
\cov_{\delta/2}(Y,d) \geq \pack_\delta(Y,d) \geq \cov_\delta(Y,d) \quad \forall \delta > 0.
\end{equation}

Now suppose in addition that $\mu \in \Pr(X)$.  For $\eps,\delta > 0$, the \textbf{$(\eps,\delta)$-covering number of $\mu$ according to $d$} is
\[\cov_{\eps,\delta}(\mu,d) := \min\big\{\cov_\delta(V,d):\ V \subseteq X\ \hbox{such that}\ \mu(V) > 1 - \eps\big\}.\]
Similarly, the \textbf{$(\eps,\delta)$-packing number of $\mu$ according to $d$} is
\[\pack_{\eps,\delta}(\mu,d) := \min\big\{\pack_\delta(V,d):\ V \subseteq X\ \hbox{such that}\ \mu(V) > 1 - \eps\big\}.\]

From these definitions, the inequalities~(\ref{eq:pre-cov-and-pack}) translate immediately into
\begin{equation}\label{eq:cov-and-pack}
\cov_{\eps,\delta/2}(\mu,d) \geq \pack_{\eps,\delta}(\mu,d) \geq \cov_{\eps,\delta}(\mu,d) \quad \forall \eps,\delta > 0.
\end{equation}

Now let $(X,d_X)$ and $(Y,d_Y)$ be metric spaces and let $\mu$ and $\nu$ be Borel probabilities on $X$ and $Y$ respectively.  Assume that $X$ and $Y$ are separable, so that the Borel $\s$-algebra of $X\times Y$ agrees with the product of their separate Borel $\s$-algebras.  Let $d$ be the \textbf{Hamming average} on $X\times Y$ of the metrics $d_X$ and $d_Y$:
\[d\big((x,y),(x',y')\big) := \frac{1}{2}d_X(x,x') + \frac{1}{2}d_Y(y,y').\]
The following is quite close to standard results for covering and packing numbers, but the switch to the setting of probability measures requires a little extra work.

\begin{lem}\label{lem:sum-cov}
The following hold for all $\eps,\delta > 0$:
\begin{itemize}
\item[i)] any coupling $\l$ of $\mu$ and $\nu$ satisfies
\[\cov_{\eps,\delta}(\l,d) \leq \cov_{\eps/2,\delta}(\mu,d_X) \cdot \cov_{\eps/2,\delta}(\nu,d_Y);\]
\item[ii)] the product measure satisfies
\[\pack_{\eps,\delta/2}(\mu\times \nu,d) \geq \pack_{\sqrt{\eps},\delta}(\mu,d_X) \cdot \pack_{\sqrt{\eps},\delta}(\nu,d_Y).\]
\end{itemize}
\end{lem}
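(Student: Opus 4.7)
The plan is to prove each inequality by the obvious ``tensor'' construction, with an extra Fubini ingredient in part~(ii) to convert the hypothesis on the product measure into a per-fiber statement.

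For part~(i), I would choose optimal measurable sets $A\subseteq X$ and $B\subseteq Y$ with $\mu(A)>1-\eps/2$, $\nu(B)>1-\eps/2$, and $\delta$-nets $F\subseteq A$, $G\subseteq B$ achieving $\cov_{\eps/2,\delta}(\mu,d_X)$ and $\cov_{\eps/2,\delta}(\nu,d_Y)$ respectively. For any coupling $\lambda$, a one-line union bound
\[
\lambda\big((A\times B)^c\big) \;\leq\; \lambda(A^c\times Y)+\lambda(X\times B^c) \;=\; \mu(A^c)+\nu(B^c) \;<\;\eps
\]
gives $\lambda(A\times B)>1-\eps$. Meanwhile $F\times G$ is a $\delta$-net for $A\times B$ in the Hamming average, since for any $(a,b)\in A\times B$ one can pick $f\in F$ and $g\in G$ with $d_X(a,f),d_Y(b,g)<\delta$, whence $d((a,b),(f,g))<\delta$. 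This gives $\cov_{\eps,\delta}(\lambda,d)\leq |F|\cdot |G|$ as required.

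For part~(ii), fix any measurable $W\subseteq X\times Y$ with $(\mu\times\nu)(W)>1-\eps$; I will produce a $(\delta/2)$-separated subset of $W$ of the claimed cardinality. Writing $W_x:=\{y:(x,y)\in W\}$, Fubini gives $\int \nu(W_x^c)\,\mu(\d x) <\eps$, so Markov's inequality yields
\[
\mu(A)>1-\sqrt{\eps}, \qquad \text{where }\; A:=\{x\in X:\nu(W_x)>1-\sqrt{\eps}\}.
\]
By definition of $\pack_{\sqrt{\eps},\delta}$, there exists a $\delta$-separated set $F\subseteq A$ in $d_X$ with $|F|\geq \pack_{\sqrt{\eps},\delta}(\mu,d_X)$; likewise, for each $x\in F$ there exists a $\delta$-separated set $G_x\subseteq W_x$ in $d_Y$ with $|G_x|\geq \pack_{\sqrt{\eps},\delta}(\nu,d_Y)$. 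The set $S:=\{(x,y):x\in F,\ y\in G_x\}$ lies in $W$, has cardinality $\geq \pack_{\sqrt{\eps},\delta}(\mu,d_X)\cdot \pack_{\sqrt{\eps},\delta}(\nu,d_Y)$, and is $(\delta/2)$-separated in $d$: two distinct points of $S$ either differ in the $x$-coordinate (so $d_X\geq\delta$, whence $d\geq\delta/2$) or share the $x$-coordinate and differ in $y$ (so $d_Y\geq\delta$, again $d\geq\delta/2$). Taking the minimum over admissible $W$ yields the inequality.

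Neither part presents a genuine obstacle. The only noteworthy feature is the quadratic loss $\eps\mapsto\sqrt{\eps}$ in part~(ii), which is precisely what Markov's inequality forces: if both $\mu\{x:\nu(W_x^c)\geq \sqrt{\eps}\}$ and $\sqrt{\eps}$ were larger, the product would exceed the integral $\int\nu(W_x^c)\,\d\mu<\eps$. A secondary bookkeeping point is the measurability of the function $x\mapsto\nu(W_x)$; this can be arranged by passing to a Borel subset of $W$ of equal $(\mu\times\nu)$-measure, after which Fubini applies verbatim.
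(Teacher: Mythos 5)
Your proof is correct and follows essentially the same path as the paper's: for (i), tensor the optimal nets and use a union bound (together with the fact that both marginals of a coupling are determined) to show the product net covers all but $\eps$-mass; for (ii), disintegrate the set via Fubini, apply Chebyshev/Markov to find a $\sqrt\eps$-good set of base points, and build a $(\delta/2)$-separated set by stacking $\delta$-separated fibers over a $\delta$-separated base. The only cosmetic differences are that the paper phrases (i) directly in terms of the sets $B_\delta(E)$ rather than choosing a large set $A$ and then a net inside it, and the paper implicitly handles the measurability of $x\mapsto\nu(W_x)$ via the standing separability assumption, so your closing remark about passing to a Borel subset is unnecessary.
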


\begin{proof}\emph{(i).}\quad If $E\subseteq X$ and $F\subseteq Y$ are such that $\mu(B_\delta(E)) > 1 - \eps/2$ and $\nu(B_\delta(F)) > 1 - \eps/2$ (where these neighbourhoods are taken according to the metrics $d_X$ and $d_Y$, respectively), then $B_\delta(E\times F) \supseteq B_\delta(E)\times B_\delta(F)$, and so
\begin{align*}
\l\big((X\times Y)\setminus B_\delta(E\times F)\big) &\leq \l\big((X\times Y)\setminus (B_\delta(E)\times B_\delta(F))\big)\\ &\leq \l\big((X\setminus B_\delta(E))\times Y\big) + \l\big(X\times (Y\setminus B_\delta(F))\big)\\ &= \mu(X\setminus B_\delta(E)) + \nu(Y\setminus B_\delta(F))\\ &< 2(\eps/2) = \eps.
\end{align*}

\vspace{7pt}

\emph{(ii).}\quad Let $m := \pack_{\sqrt{\eps},\delta}(\mu,d_X)$ and $n := \pack_{\sqrt{\eps},\delta}(\nu,d_Y)$.

Suppose that $V \subseteq X\times Y$ has $(\mu\times \nu)(V) > 1 - \eps$.  For each $x \in X$ let
\[V_x := \{y \in Y:\ (x,y) \in V\}.\]
Fubini's Theorem and Chebyshev's Inequality imply that the set
\[U := \{x:\ \nu(V_x) > 1 - \sqrt{\eps}\}\]
has $\mu(U) > 1 - \sqrt{\eps}$.  The latter inequality implies that there is some $F\subseteq U$ with cardinality $m$ and which is $\delta$-separated according to $d_X$.  On the other hand, for each $x \in F$, the inequality $\nu(V_x) > 1 - \sqrt{\eps}$ implies that there is some $E_x \subseteq V_x$ with cardinality $n$ and which is $\delta$-separated according to $d_Y$.  Now the set of pairs
\[\{(x,y):\ x \in F\ \hbox{and}\ y \in E_x\}\]
is contained in $V$, has cardinality $nm$, and is $(\delta/2)$-separated according to $d$.
\end{proof}

Combining Lemma~\ref{lem:sum-cov} with the inequalities in~(\ref{eq:cov-and-pack}) gives the following.

\begin{cor}\label{cor:sum-cov}
For all $\eps,\delta > 0$ one has
\[\cov_{\eps,\delta/4}(\mu\times \nu,d) \geq \cov_{\sqrt{\eps},\delta}(\mu,d_X) \cdot \cov_{\sqrt{\eps},\delta}(\nu,d_Y).\]
\qed
\end{cor}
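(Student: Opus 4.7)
The plan is to obtain the desired inequality by chaining together three facts already recorded in the excerpt: the left half of~(\ref{eq:cov-and-pack}) for the product space, Lemma~\ref{lem:sum-cov}(ii), and the right half of~(\ref{eq:cov-and-pack}) applied separately to $\mu$ and to $\nu$. There is no real obstacle; the only thing one must track carefully is the way the parameter $\delta$ halves with each passage between covering and packing numbers, which is exactly why the statement uses $\delta/4$ on the left.

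First, I would apply the inequality $\cov_{\eps,\delta'/2}(\lambda,d) \geq \pack_{\eps,\delta'}(\lambda,d)$ from~(\ref{eq:cov-and-pack}) to the coupling $\lambda = \mu\times\nu$ on $(X\times Y,d)$, taking $\delta' := \delta/2$. This yields
\[
\cov_{\eps,\delta/4}(\mu\times\nu,d) \ \geq\ \pack_{\eps,\delta/2}(\mu\times\nu,d).
\]

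Second, I would invoke Lemma~\ref{lem:sum-cov}(ii), which gives
\[
\pack_{\eps,\delta/2}(\mu\times\nu,d) \ \geq\ \pack_{\sqrt{\eps},\delta}(\mu,d_X)\cdot \pack_{\sqrt{\eps},\delta}(\nu,d_Y).
\]

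Finally, I would apply the inequality $\pack_{\eps',\delta}(\cdot) \geq \cov_{\eps',\delta}(\cdot)$ from~(\ref{eq:cov-and-pack}) to each of $\mu$ on $(X,d_X)$ and $\nu$ on $(Y,d_Y)$ with $\eps' := \sqrt{\eps}$, and multiply the two resulting bounds. Concatenating these three steps produces
\[
\cov_{\eps,\delta/4}(\mu\times\nu,d) \ \geq\ \cov_{\sqrt{\eps},\delta}(\mu,d_X)\cdot \cov_{\sqrt{\eps},\delta}(\nu,d_Y),
\]
which is the claim.
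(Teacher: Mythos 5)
Your argument is correct and is exactly the route the paper intends: the paper states the corollary as an immediate combination of Lemma~\ref{lem:sum-cov}(ii) with the inequalities in~(\ref{eq:cov-and-pack}), and you have carried out that combination with the correct bookkeeping of $\delta$. Nothing to add.
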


The next lemma gives some control over the covering numbers of the pushforward of a measure under an almost Lipschitz map.  After that, Lemma~\ref{lem:comparing-nearby-covnos} shows that the covering numbers of pushforward measures are somewhat stable under small changes to the maps.  These results will be used for the proofs that our new entropy-notions are isomorphism-invariant in Subsection~\ref{subs:iso-invar}.

\begin{lem}\label{lem:img-covnos}
Let $(X,d_X)$ and $(Y,d_Y)$ be metric spaces, let $\mu \in \Pr(X)$, and let $\phi:X\to Y$ be $\eta$-almost $L$-Lipschitz.  Then
\[\cov_{\eps,\eta + L\delta}(\phi_\ast\mu,d_Y) \leq \cov_{\eps,\delta}(\mu,d_X).\]
\end{lem}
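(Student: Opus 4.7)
The proof is essentially direct from the definitions, so the plan is to unwind what $\cov_{\eps,\delta}(\mu,d_X)$ asserts, push forward a good covering set through $\phi$, and use the almost-Lipschitz bound to control how open neighbourhoods dilate under $\phi$.

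Let $N := \cov_{\eps,\delta}(\mu,d_X)$. If $N = \infty$ there is nothing to prove, so assume $N$ is finite. By the definition of $\cov_{\eps,\delta}$, I can choose a Borel set $V \subseteq X$ with $\mu(V) > 1 - \eps$ together with a set $F \subseteq X$ of cardinality $N$ such that $V \subseteq B_\delta(F)$ (the $\delta$-neighbourhood taken in $(X,d_X)$).

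The key step is to show that $\phi(F)$ serves as an $(\eta+L\delta)$-dense set for $\phi_\ast\mu$ up to mass $\eps$. For any $x \in V$ pick $f \in F$ with $d_X(x,f) < \delta$; then the $\eta$-almost $L$-Lipschitz hypothesis gives
\[d_Y(\phi(x),\phi(f)) \leq \eta + L \cdot d_X(x,f) < \eta + L\delta,\]
so $\phi(x) \in B_{\eta + L\delta}(\phi(F))$. Consequently $\phi(V) \subseteq B_{\eta+L\delta}(\phi(F))$, and since $V \subseteq \phi^{-1}(\phi(V))$ we get
\[\phi_\ast\mu\big(B_{\eta + L\delta}(\phi(F))\big) \geq \phi_\ast\mu(\phi(V)) = \mu(\phi^{-1}(\phi(V))) \geq \mu(V) > 1 - \eps.\]
Therefore $\phi(F)$ witnesses the desired covering bound, giving
\[\cov_{\eps,\eta + L\delta}(\phi_\ast\mu,d_Y) \leq |\phi(F)| \leq |F| = N,\]
which is the claim.

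There is no real obstacle here: the almost-Lipschitz condition is global (not restricted to a subset of $X$), so no measurability subtleties arise beyond noting that $\phi(V)$ is Borel-contained in the open set $B_{\eta+L\delta}(\phi(F))$, which has full $\phi_\ast\mu$-measure at least $\mu(V)$. The only small care needed is distinguishing strict from non-strict inequality when passing from $d_X(x,f) < \delta$ to $d_Y(\phi(x),\phi(f)) < \eta + L\delta$, which is automatic from the almost-Lipschitz estimate.
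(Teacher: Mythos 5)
Your proof is correct and takes essentially the same approach as the paper: choose a finite set $F$ witnessing $\cov_{\eps,\delta}(\mu,d_X)$, observe that the $\eta$-almost $L$-Lipschitz estimate carries the $\delta$-neighbourhood of $F$ into the $(\eta+L\delta)$-neighbourhood of $\phi(F)$, and pass to pushforward measures via preimages. The paper streamlines the measure step by going directly through $\phi^{-1}\big(B_{\eta+L\delta}(\phi(F))\big) \supseteq B_\delta(F)$ rather than mentioning $\phi_\ast\mu(\phi(V))$, which sidesteps the measurability remark you make at the end, but this is cosmetic.
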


\begin{proof}
Choose $F \subseteq X$ such that $\mu(B_\delta(F)) > 1 - \eps$.  Since $\phi$ is $\eta$-almost $L$-Lipchitz, we have
\[B_{\eta + L\delta}(\phi(F)) \supseteq \phi(B_\delta(F)),\]
and hence
\[\phi^{-1}\big(B_{\eta + L\delta}(\phi(F))\big) \supseteq B_\delta(F).\]
Therefore
\[(\phi_\ast\mu)\big(B_{\eta + L\delta}(\phi(F))\big) = \mu\big(\phi^{-1}\big(B_{\eta + L\delta}(\phi(F))\big)\big) \geq \mu(B_\delta(F)) > 1 - \eps.\]
\end{proof}

\begin{lem}\label{lem:comparing-nearby-covnos}
Let $(X,\mu)$ be a probability space, let $(Y,d)$ be a separable metric space, and let $\phi,\psi:X\to Y$ be measurable functions.  Also let $\eps,\eps',\delta > 0$.  If
\[\mu\{d(\phi(\cdot),\psi(\cdot)) > \delta/2\} < \eps'\]
then
\[\cov_{\eps + \eps',\delta}(\phi_\ast\mu,d) \leq \cov_{\eps,\delta/2}(\psi_\ast\mu,d).\]
\end{lem}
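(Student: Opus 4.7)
The plan is to take an optimal covering set for $\psi_\ast\mu$ at scale $\delta/2$ with error $\eps$, and show the same set works as a covering for $\phi_\ast\mu$ at the coarser scale $\delta$ once we absorb the bad event of size $\eps'$ on which $\phi$ and $\psi$ differ appreciably. The mechanism is just the triangle inequality on $(Y,d)$.

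Concretely, first I would fix a finite $F \subseteq Y$ realizing $\cov_{\eps,\delta/2}(\psi_\ast\mu,d)$, so that
\[(\psi_\ast\mu)\big(B_{\delta/2}(F)\big) = \mu\{x : \psi(x) \in B_{\delta/2}(F)\} > 1 - \eps.\]
Next I would observe that if $x \in X$ satisfies both $\psi(x) \in B_{\delta/2}(F)$ and $d(\phi(x),\psi(x)) \leq \delta/2$, then by the triangle inequality $\phi(x) \in B_\delta(F)$. Thus
\[\{x : \phi(x) \in B_\delta(F)\} \supseteq \{x : \psi(x) \in B_{\delta/2}(F)\} \setminus \{x : d(\phi(x),\psi(x)) > \delta/2\}.\]

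Then I would take $\mu$-measure of both sides and apply subadditivity together with the hypothesis $\mu\{d(\phi(\cdot),\psi(\cdot)) > \delta/2\} < \eps'$ to obtain
\[(\phi_\ast\mu)\big(B_\delta(F)\big) = \mu\{x : \phi(x) \in B_\delta(F)\} > (1 - \eps) - \eps' = 1 - (\eps + \eps'),\]
so $F$ witnesses $\cov_{\eps+\eps',\delta}(\phi_\ast\mu,d) \leq |F| = \cov_{\eps,\delta/2}(\psi_\ast\mu,d)$, which is the desired inequality.

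There is essentially no obstacle here: the only mild point to watch is measurability of the set $\{x : d(\phi(x),\psi(x)) > \delta/2\}$, which is fine since $Y$ is separable (so $d \circ (\phi,\psi)$ is a measurable real-valued function), and the fact that the infimum in $\cov_{\eps,\delta/2}$ is attained by a finite set, which is immediate from the definition. No compactness or Lipschitz structure on $\phi,\psi$ is needed.
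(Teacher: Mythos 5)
Your argument is correct and is essentially the paper's own proof: you choose an optimal covering set $F$ for $\psi_\ast\mu$ at scale $\delta/2$, use the triangle inequality to see that off the small bad set where $d(\phi,\psi)>\delta/2$ the point $\phi(x)$ lands in $B_\delta(F)$ whenever $\psi(x)$ lands in $B_{\delta/2}(F)$, and then take measures. The paper phrases the conclusion as a chain of inequalities through $\mu\{x \in U \text{ and } \phi(x) \in B_\delta(F)\}$ where $U = \{d(\phi(\cdot),\psi(\cdot)) \le \delta/2\}$, but this is the same set-theoretic containment you wrote.
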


As before, the separability of $Y$ ensures that the Borel $\s$-algebra of $Y\times Y$ agrees with the product $\s$-algebra, and hence that $d$ is measurable with respect to the latter.  This is needed for the first displayed inequality above to make sense.

\begin{proof}
Let $U := \{d(\phi(\cdot),\psi(\cdot)) \leq \delta/2\}$, and let $F \subseteq Y$ be such that
\[\mu\{x:\ \psi(x) \in B_{\delta/2}(F)\} = \psi_\ast\mu(B_{\delta/2}(F)) > 1 - \eps.\]
Then
\begin{multline*}
\phi_\ast\mu(B_\delta(F)) \geq \mu\{x:\ x \in U\ \hbox{and}\ \phi(x) \in B_\delta(F)\}\\
\geq \mu\{x:\ x \in U\ \hbox{and}\ \psi(x) \in B_{\delta/2}(F)\} > 1 - \eps - \eps'.
\end{multline*}
\end{proof}

\section{Empirical distributions, good models, and sofic entropy}\label{sec:model-spaces}

\subsection{Definitions}\label{subs:defs}

Suppose that $G$ is a countable discrete group.  Suppose further that $V$ is a finite set and that $\s:G\to \Sym(V)$ is any map.  Think of this $\s$ as an `attempt' at a representation of $G$ by permutations of $V$.  Given $g,h \in G$ and $v \in V$, it may not be the case that
\begin{equation}\label{eq:gp-action}
\s^g(\s^h(v)) = \s^{gh}(v).
\end{equation}
The `quality' of $\s$ as an attempt at a representation can be quantified by the number of $v$ at which~(\ref{eq:gp-action}) holds, say for some finite list of groups elements $g$, $h$ of interest.

A \textbf{sofic approximation} to $G$ is a sequence of finite sets $V_n$ and maps
\[\s_n:G\to \Sym(V_n), \quad n\geq 1,\]
such that
\begin{equation}\label{eq:sofic1}
\big[\quad \s_n^g(\s_n^h(v)) = \s_n^{gh}(v) \quad \hbox{w.h.p. in}\ v \quad \big] \quad \forall g,h \in G
\end{equation}
and
\begin{equation}\label{eq:sofic2}
\big[ \quad \s_n^g(v) \neq v \quad \hbox{w.h.p. in}\ v \quad \big]\quad \forall g \in G\setminus \{e_G\}, 
\end{equation}
both as $n\to\infty$.  Note the order of the quantifiers: we certainly do \emph{not} ask that
\[[\ \s_n^g(\s_n^h(v)) = \s_n^{gh}(v) \quad \forall g,h \in G\ ] \quad \hbox{w.h.p. in}\ v.\]
The group itself is \textbf{sofic} if it has a sofic approximation.  This definition essentially follows Weiss~\cite{Weiss00} and (with a different nomenclature) Gromov~\cite{Gro99}.

Now consider also a compact metric space $(\X,d)$, and let $V$ and $\s$ be as above.  Elements of $\X^V$ will be denoted by boldface letters, to distinguish them from elements of shift-spaces such as $\X^G$.  For $\bf{x} = (x_v)_{v \in V}\in \X^V$ and $v \in V$, the \textbf{pullback name of $\bf{x}$ at $v$} is defined by
\[\Pi^\s_v(\bf{x}) := (x_{\s^g(v)})_{g \in G} \in \X^G.\]
This defines a map $\Pi^\s_v:\X^V\to \X^G$ for each $v$.  Let $S$ be the right-shift action of $G$ on $\X^G$. Properties~(\ref{eq:sofic1}) and~(\ref{eq:sofic2}) have the following simple but important consequence.

\begin{lem}\label{lem:approx-equiv}
If $(\s_n)_{n\geq 1}$ is a sofic approximation to $G$, $F\subseteq G$ is finite, and $g \in G$, then the following holds w.h.p. in $v \in V_n$:
\[\Pi^{\s_n}_{\s_n^g(v)}(\cdot)|_F = (S^g(\Pi^{\s_n}_v(\cdot)))|_F.\]
\end{lem}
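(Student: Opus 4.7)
The plan is to unpack both sides of the claimed equality using only the definitions of $\Pi^\s_v$ and of the right-shift action $S$, and then apply property~(\ref{eq:sofic1}) of the sofic approximation to each coordinate $h \in F$ separately, taking a finite intersection at the end.

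In detail: for any $\bf{x} = (x_v)_{v \in V_n} \in \X^{V_n}$ and any $h \in F$, the $h$-coordinate of $\Pi^{\s_n}_{\s_n^g(v)}(\bf{x})$ is, by the definition of $\Pi$, equal to $x_{\s_n^h(\s_n^g(v))}$. On the other side, since the right-shift action is given by $(S^g y)_h = y_{hg}$, the $h$-coordinate of $S^g(\Pi^{\s_n}_v(\bf{x}))$ equals the $hg$-coordinate of $\Pi^{\s_n}_v(\bf{x})$, which is $x_{\s_n^{hg}(v)}$. Thus the two functions of $\bf{x}$ under consideration agree at coordinate $h$ for every $\bf{x}$, provided only that the \emph{indices} match, i.e. $\s_n^h(\s_n^g(v)) = \s_n^{hg}(v)$. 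This reduces the lemma to a purely combinatorial statement about the maps $\s_n$, with no further reference to $\bf{x}$ or $\X$.

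Now I would invoke property~(\ref{eq:sofic1}) of the sofic approximation with the particular pair $(h,g)$: it says exactly that
\[\s_n^h(\s_n^g(v)) = \s_n^{hg}(v) \quad \hbox{w.h.p. in}\ v\]
as $n \to \infty$, for each fixed $h$. Applying this for each of the finitely many $h \in F$, and taking the intersection of the resulting sets of good $v$'s, yields a subset $V_n' \subseteq V_n$ with $|V_n'|/|V_n| \to 1$ on which $\s_n^h(\s_n^g(v)) = \s_n^{hg}(v)$ holds \emph{simultaneously} for all $h \in F$. For every $v \in V_n'$ and every $\bf{x} \in \X^{V_n}$, the preceding paragraph then gives termwise equality $\Pi^{\s_n}_{\s_n^g(v)}(\bf{x})|_F = (S^g(\Pi^{\s_n}_v(\bf{x})))|_F$, as required.

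There is essentially no substantive obstacle here; the only subtle point is the bookkeeping between the convention $S^g((y_h)_h) = (y_{hg})_h$ for the right-shift and the definition $\Pi^\s_v(\bf{x}) = (x_{\s^h(v)})_h$, which together force the composition to be read in the order $\s_n^h \circ \s_n^g$ matched against $\s_n^{hg}$ — precisely the format of condition~(\ref{eq:sofic1}). The finiteness of $F$ is what allows a finite intersection of high-probability events to remain high-probability; without it the statement would be false in general.
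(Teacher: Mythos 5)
Your proof is correct and matches the paper's argument: both unpack the $h$-coordinate of each side to reduce the statement to $\s_n^h(\s_n^g(v)) = \s_n^{hg}(v)$, which is exactly condition~(\ref{eq:sofic1}), and both rely on the finiteness of $F$ to intersect the resulting high-probability events (the paper phrases this as ``it suffices to prove this when $F$ is a singleton,'' you spell out the intersection explicitly).
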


More fully, this conclusion asserts that
\[[\quad \Pi^{\s_n}_{\s_n^g(v)}(\bf{x})|_F = \Pi^{\s_n}_v(\bf{x})|_{Fg} \quad \forall \bf{x} \in \X^{V_n}\quad ] \quad \hbox{w.h.p. in}\ v,\]
where we identify elements of $\X^{Fg}$ with elements of $\X^F$ in the obvious way.

\begin{proof}
It suffices to prove this when $F$ is an arbitrary singleton, say $\{h\}$.  Then it holds w.h.p. in $v$ that
\[\s_n^h(\s_n^g(v)) = \s_n^{hg}(v).\]
If $v$ satisfies this, and $\bf{x} \in \X^{V_n}$, then
\[\big(\Pi^{\s_n}_{\s_n^g(v)}(\bf{x})\big)_h = x_{\s_n^h(\s_n^g(v))} = x_{\s_n^{hg}(v)} = \big(\Pi^{\s_n}_v(\bf{x})\big)_{hg}.\]
\end{proof}

\begin{rmk}
The map $\s$ also gives rise to an `adjoint' map $\rho:G\to \Sym(\X^V)$:
\[\rho^g((x_v)_{v \in V}) := (x_{\s^{g^{-1}}(v)})_{v\in V}.\]
Similarly to the proof above, one can show that, if $( \s_n)_{n\geq 1}$ is a sofic approximation, then for most $v$ the pullback-name map $\Pi^{\s_n}_v$ approximately intertwines $\rho_n^g$ with the left-shift action $\t{S}$ of $G$ on $\X^G$, defined by
\[\t{S}^g((x_h)_{h \in G}) := (x_{g^{-1}h})_{h\in G}.\]
This observation does not seem to be useful unless our measure $\mu$ on $\X^G$ is invariant under $\t{S}$ as well as $S$.  In that case, each $\t{S}^g$ defines an isomorphism from the system $(\X^G,\mu,S)$ to itself, and the above relationship to $\rho_n^g$ becomes a special case of a result for general factor maps: see Lemma~\ref{lem:psi-sig-compatible} below. \fin
\end{rmk}

Each $\bf{x} \in \X^{V_n}$ has an associated probability measure on $\X^G$ called its \textbf{empirical distribution}:
\begin{eqnarray}\label{eq:emp}
P^\s_\bf{x} := \frac{1}{|V|}\sum_{v \in V}\delta_{\Pi^\s_v(\bf{x})}.
\end{eqnarray}

Lemma~\ref{lem:approx-equiv} will mostly be used through the following consequence, which asserts an approximate invariance for empirical distributions.

\begin{lem}\label{lem:approx-invar}
Let $F\subseteq G$ be finite and $g \in G$.  Then
\[\sup_{\bf{x} \in \X^{V_n}}\|(P^{\s_n}_\bf{x})_F - (S^g_\ast P^{\s_n}_\bf{x})_F\|_{\rm{TV}} \to 0 \quad \hbox{as} \ n\to\infty.\]
\end{lem}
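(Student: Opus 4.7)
The plan is to show that the two marginals differ only through the failure of the sofic identity $\s_n^h\circ \s_n^g = \s_n^{hg}$ on a small set of vertices, and this set can be chosen independently of $\bf x$, which gives the uniform bound.

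First I would write both measures as uniform averages of point masses over $V_n$. By definition,
\[
(P^{\s_n}_\bf{x})_F = \frac{1}{|V_n|}\sum_{v \in V_n}\delta_{\Pi^{\s_n}_v(\bf x)|_F}
\qquad\text{and}\qquad
(S^g_\ast P^{\s_n}_\bf{x})_F = \frac{1}{|V_n|}\sum_{v\in V_n}\delta_{(S^g \Pi^{\s_n}_v(\bf x))|_F}.
\]
Since $\s_n^g$ is a permutation of $V_n$, reindexing the first sum by $v\mapsto \s_n^g(v)$ rewrites it as $\frac{1}{|V_n|}\sum_{v}\delta_{\Pi^{\s_n}_{\s_n^g(v)}(\bf x)|_F}$. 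Thus the TV distance between the two marginals is at most $2/|V_n|$ times the number of $v\in V_n$ for which
\[
\Pi^{\s_n}_{\s_n^g(v)}(\bf x)|_F \neq (S^g\Pi^{\s_n}_v(\bf x))|_F.
\]

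Next I would apply Lemma~\ref{lem:approx-equiv} to the finite set $F$ and group element $g$: it provides a subset $V_n' \subseteq V_n$, depending only on $F$, $g$, and $\s_n$ (not on $\bf x$), such that $|V_n'|/|V_n|\to 1$ and for every $v\in V_n'$ and every $\bf x \in \X^{V_n}$ the above equality \emph{does} hold. Consequently the number of `bad' vertices is at most $|V_n\setminus V_n'|$, uniformly in $\bf x$, and so
\[
\sup_{\bf x\in \X^{V_n}}\big\|(P^{\s_n}_\bf{x})_F - (S^g_\ast P^{\s_n}_\bf{x})_F\big\|_{\rm{TV}} \leq \frac{2|V_n\setminus V_n'|}{|V_n|} \longrightarrow 0.
\]

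There is no genuine obstacle here; the only point to watch is that the set of bad vertices supplied by Lemma~\ref{lem:approx-equiv} must be independent of $\bf x$, which is already built into the statement of that lemma (the equality it asserts holds simultaneously for all $\bf x$ once $v$ lies in the high-probability set). The reindexing step is what converts a quantitative statement about pullback names at $\s_n^g(v)$ into a statement about the shifted empirical distribution, and is the only slightly non-obvious manipulation.
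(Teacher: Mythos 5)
Your proof is correct and follows essentially the same route as the paper's: apply Lemma~\ref{lem:approx-equiv}, reindex the sum using the fact that $\s_n^g$ is a permutation, and bound the total-variation distance by the fraction of vertices where the identity fails (which is independent of $\bf{x}$ and $o(1)$). The paper performs the substitution and the reindexing in the opposite order, but the content is identical.
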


\begin{proof}
Lemma~\ref{lem:approx-equiv} gives that
\[(S^g\Pi^{\s_n}_v(\cdot))|_F = \Pi^{\s_n}_{\s_n^gv}(\cdot)|_F \quad \hbox{w.h.p. in}\ v,\]
and therefore
\begin{eqnarray*}
(P^{\s_n}_\bf{x})_F - (S^g_\ast P^{\s_n}_\bf{x})_F &=& \frac{1}{|V_n|}\sum_{v \in V_n}(\delta_{\Pi^{\s_n}_v(\bf{x})|_F} - \delta_{(S^g\Pi^{\s_n}_v(\bf{x}))|_F})\\
&=& \frac{1}{|V_n|}\sum_{v \in V_n}(\delta_{\Pi^{\s_n}_v(\bf{x})|_F} - \delta_{\Pi^{\s_n}_{\s_n^gv}(\bf{x})|_F}) + o(1).
\end{eqnarray*}
But the sum on this last line vanishes, because $\s_n^g$ is a permutation of $V_n$.
\end{proof}

Now suppose that $\mu$ is a right-shift-invariant probability measure on $\X^G$.  For any $F \subseteq G$, let $\mu_F$ denote the marginal of $\mu$ on $X^F$, as previously.

For $G$ and $\s$ as above, and for any $\calO \subseteq \Pr(\X^G)$, let
\begin{eqnarray}\label{eq:basic-neigh}
\Omega(\calO,\s) := \big\{\bf{x} \in \X^V:\ P_{\bf{x}}^\s \in \calO \big\}.
\end{eqnarray}
In case $\calO$ is a w$^\ast$-neighbourhood of $\mu$, elements of $\O(\calO,\s)$ are called \textbf{$\calO$-good models} for $\mu$ over $\s$.

This definition is easiest to visualize in case $\calO$ is of the form $\{\nu:\ \nu_F \in \calO'\}$ for some finite $F\subseteq G$ and some w$^\ast$-neighbourhood $\calO'$ of $\mu_F$.  Neighbourhoods $\calO$ of this form are a basis of neighbourhoods around $\mu$, so this assumption does not lose much generality.  For this $\calO$ we can write
\[\O(\calO,\s_n) = \Big\{\bf{x} \in \X^{V_n}:\ \frac{1}{|V_n|}\sum_{v \in V_n}\delta_{\Pi^{\s_n}_v(\bf{x})|_F} \in \calO' \Big\}.\]
Given the finite set $F$, it follows from conditions~(\ref{eq:sofic1}) and~(\ref{eq:sofic2}) that the mapping
\[g\mapsto \s_n^g(v)\]
defines a map from $F$ to a corresponding `window' $\s_n^F(v)$ around $v$ in $V_n$, and this map is a bijection w.h.p. in $v$.  Consequently, one may regard the map $\bf{x} \mapsto \Pi^{\s_n}_v(\bf{x})|_F$ as restricting $\bf{x}$ to $\bf{x}|_{\s_n^F(v)}$, and then making a copy of this restriction indexed by $F$ itself.  The good models $\O(\calO,\s_n)$ are those $\bf{x}$ such that, on average over $v$, the frequency with which one sees a particular restriction through this window is close to the $F$-marginal of $\mu$ itself.  This is the sense in which $\bf{x}$ is `modeling' $\mu$.

In many examples $\X$ is a finite alphabet.  In that case one can instead work with total-variation neighbourhoods of finite-dimensional marginals of $\mu$. Specifically, if $\X$ is finite then the \textbf{$(F,\eps)$-good models} for $\mu$ over $\s$ are the elements of
\[\Omega_\mu(F,\eps,\s) := \Big\{\bf{x} \in \X^V:\ \Big\|\frac{1}{|V|}\sum_{v \in V}\delta_{\Pi^\s_v(\bf{x})|_F} - \mu_F \Big\|_{\rm{TV}} < \eps \Big\}.\]

Finally, as in the Introduction, we define the sofic entropy of a metric $G$-process $(\X^G,\mu,S,d)$ to be
\[\rmh_\S(\mu) := \sup_{\delta > 0}\ \inf_{\calO}\ \limsup_{n\to\infty}\frac{1}{|V_n|}\log \rm{cov}_\delta\big(\O(\calO,\s_n),d^{(V_n)}\big),\]
where $\calO$ ranges over w$^\ast$-neighbourhoods of $\mu$. Similarly, the lower sofic entropy is
\[\ul{\rmh}_\S(\mu) := \sup_{\delta > 0}\ \inf_{\calO}\ \liminf_{n\to\infty}\ \frac{1}{|V_n|}\log \rm{cov}_\delta\big(\O(\calO,\s_n),d^{(V_n)}\big).\]
We do not record $d$ in the notation for these quantities because it turns out that they depend only on the measure-theoretic structure of the process $(\X^G,\mu,S)$, as will be shown in the next subsection.

Examples in which $\ul{\rmh}_\S(\mu) < \rmh_\S(\mu)$ can be obtained from examples in which $\rmh_\S(\mu) \neq \rmh_{\S'}(\mu)$ for two different sofic approximations $\S$ and $\S'$ to $G$.  Interleaving $\S$ and $\S'$ into a single sofic approximation then gives the former inequality. For instance,~\cite[Subsection 8.3]{Bowen10b} includes examples of free-group Markov chains whose f-invariant is finite and negative.  Since (i) sofic entropies can take values only in $\{-\infty\} \cup [0,+\infty]$, and (ii) the f-invariant can be expressed as a kind of average of sofic entropies over randomly-chosen sofic approximations~\cite{Bowen10c}, it follows that these systems must have some sofic approximations which give non-negative real values for the sofic entropy, and others which give $-\infty$.

However, it is an important open problem whether one can obtain two different finite real values for $\rmh_\S(\mu)$ and $\rmh_{\S'}(\mu)$ using two different sofic approximations.

\subsection{Agreement with the Kerr-Li definition}\label{subs:KerrLi}

Bowen defined sofic entropy for systems with finite generating partitions in~\cite{Bowen10}.  Kerr and Li give a new definition in~\cite{KerLi11b} in terms of approximate homomorphism between commutative von Neumann algebras, and showed that it gives the same values as Bowen's if there is a finite generating partition.  Then, in~\cite[Section 3]{KerLi13}, Kerr and Li gave another, more elementary definition of general sofic entropy, and proved its equivalence to their previous definition.

In this subsection we show that our definition gives the same values as the entropy of~\cite[Definition 3.3]{KerLi13}.  We refer to that paper for a careful introduction to their definition.  If $(X,\mu,T)$ is a $G$-system, let us write $\t{\rmh}_\S(\mu,T)$ for the entropy defined there.

It suffices to show that any metric $G$-process $(\X^G,\mu,S,d)$ satisfies
\[\rmh_\S(\mu) = \t{\rmh}_\S(\mu,S).\]
That is, rather than analyze a $G$-action on an arbitrary probability space $(X,\mu)$, we may restrict our attention to the case of processes, so $X = \X^G$ for another standard measurable space $\X$ which is equipped with a particular compact generating metric $d$.  We may also assume that $d$ has diameter at most $1$.

On $X = \X^G$, define the pseudometric
\[\rho(x,y) = d(x_e,y_e) \quad \hbox{for}\ x = (x_g)_g,\ y = (y_g)_g \in \X^G.\]
Let $\S = (\s_n:G\to \rm{Sym}(V_n))_{n\geq 1}$ be the sofic approximation.  If $F$ is a finite subset of $G$, $L$ is a finite subset of $C(X)$, and $\eps > 0$, then define $\rm{Map}_\mu(\rho,F,L,\eps,\s_n)$ to be the set of those $x \in X^{V_n}$ such that
\begin{itemize}
 \item[i)] we have
\[\sqrt{\frac{1}{|V_n|}\sum_{v \in V_n}\rho(x_{\s_n^g(v)},S^g(x_v))^2} < \eps \quad \forall g \in F\]
\item[ii)] and
\[\frac{1}{V_n}\sum_{v \in V_n}f(x_v) \approx_\eps \int f\,\d\mu \quad \forall f\in L.\]
\end{itemize}

The pseudometric $\rho$ is clearly dynamically generating (see~\cite[Section 2]{KerLi13} or~\cite[Section 4]{Li12}).  Therefore~\cite[Proposition 3.4]{KerLi13} gives that
\begin{equation}\label{eq:KL-def}
\t{\rmh}_\S(\mu,S) = \sup_{\delta > 0}\inf_{F,L,\eps}\limsup_{n\to\infty}\frac{1}{|V_n|}\log \rm{pack}_\delta\big(\rm{Map}_\mu(\rho,F,L,\eps,\s_n),\rho^{2,(V_n)}\big),
\end{equation}
where $F$, $L$ and $\eps$ are as above, and $\rho^{2,(V_n)}$ is the $\ell_2$-analog of the Hamming-average metric on $X^{V_n}$:
\[\rho^{2,(V_n)}(x,y) := \sqrt{\frac{1}{|V_n|}\sum_{v \in V_n}\rho(x_v,y_v)^2} \quad \hbox{for}\ (x_v)_v,\ (y_v)_v \in X^{V_n}.\]
(I have adjusted some of the notation from~\cite{KerLi13} to match the present paper.)

\begin{prop}\label{prop:equals-KL}
 In the setting above we have $\rmh_\S(\mu) = \t{\rmh}_\S(\mu,S)$.
\end{prop}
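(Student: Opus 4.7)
The plan is to compare the two definitions through the pullback-name map $\Phi:\X^{V_n}\to (\X^G)^{V_n}$, $\Phi(\bf{x})_v := \Pi^{\s_n}_v(\bf{x})$, and its $e$-coordinate companion $\Psi:(\X^G)^{V_n}\to \X^{V_n}$, $\Psi(x)_v := x_{v,e}$.  We may assume $d$ has diameter at most $1$ and that $\s_n^e = \rm{id}_{V_n}$; the latter holds for most $v$ by~(\ref{eq:sofic1}) and, after a modification on an $o(|V_n|)$-set of vertices, can be enforced exactly without changing any of the relevant asymptotic quantities.  Under these assumptions $\Psi\circ\Phi = \rm{id}$, and since $\rho(a,b) = d(a_e,b_e)$, the pseudometric $\rho^{2,(V_n)}$ on $(\X^G)^{V_n}$ is the pullback under $\Psi$ of the $\ell^2$-Hamming average $d^{2,(V_n)}$ on $\X^{V_n}$.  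The inequalities $d^{(V_n)}\leq d^{2,(V_n)}\leq \sqrt{d^{(V_n)}}$ on $\X^{V_n}$ (using $d\leq 1$), combined with~(\ref{eq:pre-cov-and-pack}), imply that $\sup_{\delta > 0}\limsup_n\frac{1}{|V_n|}\log\cov_\delta(Z_n,d^{(V_n)})$ is unchanged if $\cov_\delta$ is replaced by $\pack_\delta$ or $d^{(V_n)}$ is replaced by $d^{2,(V_n)}$.

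The heart of the argument is to match up the good-model sets in both directions.  Given finite $L\subseteq C(\X^G)$ and $\eps > 0$, set $\calO := \{\nu:\ |\int f\,\d\nu - \int f\,\d\mu| < \eps\ \forall f\in L\}$.  For $\bf{x}\in\O(\calO,\s_n)$ and $x := \Phi(\bf{x})$, condition (ii) of $\rm{Map}_\mu(\rho,F,L,\eps,\s_n)$ holds because $\frac{1}{|V_n|}\sum_v f(x_v) = \int f\,\d P^{\s_n}_{\bf{x}}$, and condition (i) holds identically for any $F$ since $\s_n^e = \rm{id}$ gives $\rho(x_{\s_n^g(v)},S^g(x_v)) = d(\bf{x}_{\s_n^g(v)},\bf{x}_{\s_n^g(v)}) = 0$.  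Conversely, given any weak$^\ast$-neighbourhood $\calO$ of $\mu$, we may assume it is of the form $\{\nu:\ \nu_F\in\calO'\}$ for some finite $F\subseteq G$ and $\calO'$ a weak$^\ast$-neighbourhood of $\mu_F$.  Choose $L$ to consist of continuous test functions on $\X^G$ that depend only on $F$-coordinates and detect $\calO'$, and choose $\eps'$ small.  Then Lemma~\ref{lem:approx-equiv} ensures that for $x\in\rm{Map}_\mu(\rho,F,L,\eps',\s_n)$, condition (i) forces $\Pi^{\s_n}_v(\Psi(x))|_F$ to be close to $x_v|_F$ for most $v$.  Hence the empirical $F$-marginal of $\Psi(x)$ is close to the average $\frac{1}{|V_n|}\sum_v \delta_{x_v|_F}$, which by condition (ii) tests inside $\calO'$, so $\Psi(x)\in \O(\calO,\s_n)$ for all sufficiently large $n$.

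Combining these inclusions with the isometric identity $\rho^{2,(V_n)}(\Phi(\bf{x}),\Phi(\bf{x}')) = d^{2,(V_n)}(\bf{x},\bf{x}')$ yields two-sided bounds between $\cov_\delta(\O(\calO,\s_n),d^{(V_n)})$ and $\pack_\delta(\rm{Map}_\mu(\rho,F,L,\eps,\s_n),\rho^{2,(V_n)})$, and the outer supremum $\sup_{\delta > 0}$ absorbs both the $\sqrt{\delta}$-distortion from the metric comparison and the $\delta/2$-distortion from~(\ref{eq:pre-cov-and-pack}).  The main technical obstacle is the converse direction of the matching step: one must balance the choices of $F$, $L$ and $\eps'$ so that the approximate equivariance built into a Kerr--Li map quantitatively upgrades condition (ii)---which on its own controls only the $\{e\}$-marginal of the empirical distribution---to weak$^\ast$-closeness of the full empirical distribution of $\Psi(x)$ to $\mu$ in $\Pr(\X^G)$.
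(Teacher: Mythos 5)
Your proof is correct and takes essentially the same approach as the paper's: good models and Kerr--Li maps are matched via the pullback-name map and the $e$-coordinate projection (your $\Phi$, $\Psi$ are the paper's $\Psi_n$, $\Phi_n$ with names swapped), both of which preserve the relevant Hamming pseudometrics, and the $\ell^1$/$\ell^2$ metric distortion together with the covering/packing gap are absorbed by the outer supremum over $\delta$. The only differences are cosmetic (the paper works with $\rho^{1,(V_n)}$ rather than the $\ell^2$ version and does not normalize $\s_n^e = \mathrm{id}$, instead treating the discrepancy as an $o(1)$ error), apart from one small misattribution: in your converse step, the closeness of $\Pi^{\s_n}_v(\Psi(x))|_F$ to $x_v|_F$ for most $v$ is forced directly by condition~(i) of the Kerr--Li definition of $\mathrm{Map}_\mu$, not by Lemma~\ref{lem:approx-equiv}.
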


\begin{proof}
\emph{Step 1.}\quad Let
\[\rho^{1,(V_n)}(x,y) := \frac{1}{|V_n|}\sum_{v \in V_n}\rho(x_v,y_v) \quad \hbox{for}\ (x_v)_v,\ (y_v)_v \in X^{V_n}.\]
Since $\rho$ is bounded by $1$, we have
\[\rho^{1,(V_n)} \leq \rho^{2,(V_n)} \leq \sqrt{\rho^{1,(V_n)}}.\]
Since the right-hand side of~(\ref{eq:KL-def}) takes a supremum over $\delta > 0$, we may therefore replace $\rho^{2,(V_n)}$ with $\rho^{1,(V_n)}$ in that equation.  Also, recalling the inequalities~(\ref{eq:pre-cov-and-pack}), the same reasoning lets us replace $\pack_\delta$ with $\cov_\delta$ in~(\ref{eq:KL-def}).

\vspace{7pt}

\emph{Step 2.}\quad For each $n$, define
\[\Phi_n:X^{V_n} \to \X^{V_n}:\big((x_{v,g})_{g\in G}\big)_{v\in V_n} \mapsto (x_{v,e})_{v \in V_n}.\]
This is an isometry from the pseudometric $\rho^{1,(V_n)}$ to the true metric $d^{(V_n)}$.

Now suppose that $\calO$ is a w$^\ast$-neighbourhood of $\mu$.  Then there are a finite $L \subseteq C(X)$, say consisting of $[0,1]$-valued functions, and $\eps_1 > 0$ such that
\[\calO \supseteq \calO' := \Big\{\nu \in \Pr(X):\ \int f\,\d\nu \approx_{3\eps_1} \int f\,\d\mu\ \forall f \in L\Big\}.\]
Since $L$ is finite, there are a finite set $F \subseteq G$ and an $\eps_2 > 0$ such that, for any $x,y \in X$, we have
\[\big[\ d(x_g,y_g) < \eps_2 \quad \forall g \in F\ \big] \quad \Longrightarrow \quad \big[\ f(x) \approx_{\eps_1} f(y) \quad \forall f\in L\ \big].\]

Given $\eps_2$, we may now choose $\eps \in (0,\eps_1)$ so small that the following holds.  If
\[x = \big((x_{v,g})_{g\in G}\big)_{v\in V_n} \in \rm{Map}_\mu(\rho,F,L,\eps,\s_n),\]
then condition (i) implies that
\[\big|\big\{v \in V_n:\ d(x_{\s_n^g(v),e},x_{v,g}) < \eps_2 \ \forall g \in F\big\}\big| > (1 - \eps_1)|V_n|.\]
It follows that
\[\int f\,\d P^{\s_n}_{\Phi_n(x)} = \frac{1}{|V_n|}\sum_{v \in V_n}f\big((x_{\s_n^g(v),e})_{g\in G}\big) \approx_{2\eps_1} \frac{1}{|V_n|}\sum_{v \in V_n}f\big((x_{v,g})_{g\in G}\big),\]
and now condition (ii) gives that this is within $\eps_1$ of $\int f\,\d\mu$.  This shows that
\[\Phi_n\big(\rm{Map}_\mu(\rho,F,L,\eps,\s_n)\big) \subseteq \O(\calO',\s_n) \subseteq \O(\calO,\s_n).\]
Since $\Phi_n$ is an isometry, it preserves covering numbers.  Taking infima over $\calO$ (on the right-hand side) or $F$, $L$ and $\eps$ (on the left-hand side), this shows that
\[\t{\rmh}_\S(\mu,S) \leq \rmh_\S(\mu).\]

\vspace{7pt}

\emph{Step 3.}\quad The proof of the reverse inequality is very similar.  Now we define
\[\Psi_n:\X^{V_n}\to X^{V_n}:\bf{x} \mapsto \big(\Pi^{\s_n}_v(\bf{x})\big)_{v\in V_n},\]
which is an isometry from $d^{(V_n)}$ to $\rho^{1,(V_n)}$.  For any $F$, $L$, and $\eps > 0$, there is a w$^\ast$-neighbourhood $\calO$ of $\mu$ such that
\[\Psi_n\big(\O(\calO,\s_n)\big) \subseteq \rm{Map}_\mu(\rho,F,L,\eps,\s_n).\]
This $\calO$ can be obtained by reversing the construction in Step 2.  The only point worth remarking is that condition (ii) in the definition of $\rm{Map}_\mu(\rho,F,L,\eps,\s_n)$ is obtained as a consequence of Lemma~\ref{lem:approx-equiv}.

Taking infima, this leads to
\[\rmh_\S(\mu) \leq \t{\rmh}_\S(\mu,S).\]
\end{proof}

Since Kerr and Li have already proved isomorphism-invariance of sofic entropy using their definitions (see~\cite{KerLi11b}), that property follows for ours.  This justifies the omission of $d$ from the notation $\rmh_\S(\mu)$. A proof of this invariance using our definition would be similar to the proof that model-measure sofic entropy is isomorphism-invariant, which is given in Subsection~\ref{subs:iso-invar} below.

\begin{rmk}
The proof of Proposition~\ref{prop:equals-KL} uses a particular generating pseudometric on $X = \X^G$, obtained from a metric on the single coordinate-space $\X$.  However, it turns out that for any abstract $G$-system $(X,\mu,T)$, any totally bounded and Borel measurable pseudometric $\rho$ on $X$ arises in this way, up to isomorphism.  Indeed, letting $(\X_0,d_0)$ be the quotient of $X$ by the relation $\{\rho = 0\}$, and letting $(\X,d)$ be the completion of $(\X_0,d_0)$, it is easy to check that the quotient map $q:X\to \X$ is Borel, and now this can be extended to a factor map $q^G:X\to \X^G$ similarly to~(\ref{eq:system-process}).  If $\rho$ is dynamically generating, then $q^G$ is an isomorphism, and $\rho$ is now obtained by pulling back the identity-coordinate metric $d$ through this isomorphism. \fin
\end{rmk}

\subsection{Subadditivity and failure of additivity}\label{subs:subadd}

\begin{prop}\label{prop:subadd}
Let $(\X^G,\mu,S)$ and $(\Y^G,\nu,S)$ be two $G$-processes, and let $\l$ be a joining of them.  Then
\[\rmh_\S(\l) \leq \rmh_\S(\mu) + \rmh_\S(\nu).\]
\end{prop}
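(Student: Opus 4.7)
\medskip

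\noindent\textbf{Proof proposal.}\quad The plan is to show that every good model of $\l$ projects to a pair of good models for the two factors, and then to upper-bound the covering numbers of $\O(\calN,\s_n) \subseteq (\X\times \Y)^{V_n}$ by a product of the separate covering numbers.

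First I would fix compact generating metrics $d_\X$ on $\X$ and $d_\Y$ on $\Y$, each of diameter at most $1$, and take as the generating metric on $\X\times \Y$ the Hamming average $d\big((a,b),(a',b')\big) := \tfrac{1}{2}d_\X(a,a') + \tfrac{1}{2}d_\Y(b,b')$. Then the Hamming-average metric on $(\X\times \Y)^{V_n}$ satisfies
\[
d^{(V_n)}\big((\bf{x},\bf{y}),(\bf{x}',\bf{y}')\big) = \tfrac{1}{2}d_\X^{(V_n)}(\bf{x},\bf{x}') + \tfrac{1}{2}d_\Y^{(V_n)}(\bf{y},\bf{y}').
\]
In particular, if $\bf{x}$ lies within $d_\X^{(V_n)}$-distance $\delta$ of a set $F_\X \subseteq \X^{V_n}$ and $\bf{y}$ lies within $d_\Y^{(V_n)}$-distance $\delta$ of a set $F_\Y \subseteq \Y^{V_n}$, then $(\bf{x},\bf{y})$ lies within $d^{(V_n)}$-distance $\delta$ of $F_\X\times F_\Y$, so
\[
\cov_\delta\big(A\times B,\,d^{(V_n)}\big) \leq \cov_\delta\big(A,\,d_\X^{(V_n)}\big)\cdot \cov_\delta\big(B,\,d_\Y^{(V_n)}\big)
\]
for any $A \subseteq \X^{V_n}$ and $B\subseteq \Y^{V_n}$.

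Next, let $\pi_\X:\X\times \Y \to \X$ and $\pi_\Y:\X\times \Y \to \Y$ be the coordinate projections, which extend coordinatewise to continuous factor maps $(\X\times \Y)^G \to \X^G$ and $(\X\times \Y)^G \to \Y^G$ intertwining the shifts. Because a joining has the correct marginals, the pushforwards of $\l$ under these maps are $\mu$ and $\nu$. Since $\pi_\X$ and $\pi_\Y$ act coordinatewise in $V_n$ as well, the empirical distribution of a pair $(\bf{x},\bf{y}) \in (\X\times \Y)^{V_n}$ pushes forward to the empirical distribution of $\bf{x}$ under $\pi_\X$, and likewise for $\bf{y}$. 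By continuity of the pushforward in the weak$^\ast$ topology, for any weak$^\ast$-neighbourhoods $\calO_\mu$ of $\mu$ and $\calO_\nu$ of $\nu$ there is a weak$^\ast$-neighbourhood $\calN$ of $\l$ such that
\[
\O(\calN,\s_n) \subseteq \O(\calO_\mu,\s_n) \times \O(\calO_\nu,\s_n) \quad \text{for all } n.
\]
Combining this with the product-of-coverings bound gives
\[
\cov_\delta\big(\O(\calN,\s_n),d^{(V_n)}\big) \leq \cov_\delta\big(\O(\calO_\mu,\s_n),d_\X^{(V_n)}\big) \cdot \cov_\delta\big(\O(\calO_\nu,\s_n),d_\Y^{(V_n)}\big).
\]

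Finally, I would take $\tfrac{1}{|V_n|}\log$, apply $\limsup_{n\to\infty}$, and use subadditivity of $\limsup$ under sums to split the two factors. Taking the infimum over $\calO_\mu$ and $\calO_\nu$ separately (which only makes the right-hand side smaller) and then the supremum over $\delta > 0$, the right-hand side becomes $\rmh_\S(\mu) + \rmh_\S(\nu)$, while the left-hand side, once we further take the infimum over arbitrary weak$^\ast$-neighbourhoods $\calN$ of $\l$ (a larger class than those obtained from $\calO_\mu,\calO_\nu$), is bounded below by $\rmh_\S(\l)$. There is no real obstacle here; the only mild point is to make sure the choice of generating metric on $\X\times \Y$ yields the Hamming-average identity above, so that a product of $\delta$-covers is a $\delta$-cover (rather than, say, a $2\delta$-cover), which avoids any loss when taking $\sup_{\delta>0}$.
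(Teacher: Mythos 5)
Your proposal is correct and follows essentially the same approach as the paper's proof: both use the Hamming-average generating metric on $\X\times \Y$, observe that a weak$^\ast$-neighbourhood $\calN$ of $\l$ can be chosen so that $\O(\calN,\s_n) \subseteq \O(\calO_\mu,\s_n)\times \O(\calO_\nu,\s_n)$, and then apply the product bound on covering numbers before taking limits. The only difference is that you spell out more of the elementary details (the Hamming-average identity and the resulting covering-number bound) that the paper leaves implicit.
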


\begin{proof}
Let $d_\X$ and $d_\Y$ be compact generating metrics on $\X$ and $\Y$ respectively, and let $d$ be their Hamming average on $\X\times \Y$.  All subsequent topologies are those determined by these metrics.

For any w$^\ast$-neighbourhood $\calO_1$ of $\mu$ and $\calO_2$ of $\nu$, there is a w$^\ast$-neighbourhood $\calN$ of $\l$ such that every $\theta \in \calN$ has first marginal in $\calO_1$ and second marginal in $\calO_2$.  This implies that
\[\O(\calN,\s_n) \subseteq \O(\calO_1,\s_n)\times \O(\calO_2,\s_n) \quad \forall n\geq 1,\]
and now the inequality
\[\cov_\delta\big(\O(\calN,\s_n),d^{(V_n)}\big) \leq \cov_\delta\big(\O(\calO_1,\s_n),d_\X^{(V_n)}\big)\cdot \cov_\delta\big(\O(\calO_2,\s_n),d_\Y^{(V_n)}\big)\]
completes the proof.
\end{proof}

In particular,
\[\rmh_\S(\mu\times \nu) \leq \rmh_\S(\mu) + \rmh_\S(\nu).\]
We now describe examples in which this inequality is strict.

\begin{ex}\label{ex:coind-ex}
Some standard probabilistic estimates are required to justify this example carefully, but we omit these for brevity.

Let $H = \langle a,b\rangle$ be the free group on two generators, let $H' = \langle a',b'\rangle$ be a copy of $H$, and let $G = H\ast H'$.  Then $G$ is a free group on four generators, and we may regard $H$ as a subgroup of $G$.

Let $T_0$ be the trivial $H$-action on the set $\X = \{0,1\}$, and endow this set with its discrete metric.  Let
\[\mu_0 := \frac{3}{4}\delta_0 + \frac{1}{4}\delta_1 \in \Pr(\X).\]
(It will be clear in what follows that `$\frac{3}{4}$' could be replaced with any value in $(1/2,1)$.)

Now co-induce $T_0$ to the $G$-action $\rm{CInd}_H^G T_0$ on the space
\[(X,\mu) := (\X^{H\backslash G},\mu_0^{\times H\backslash G}).\]
This co-induced system is isomorphic to a $G$-process $(\X^G,\nu,S)$ where $\nu$ is defined by the following three properties:
\begin{itemize}
\item every one-dimensional marginal of $\nu$ equals $\mu_0$;
 \item if $Hg = Hg'$ for some $g,g' \in G$, then $x_g = x_{g'}$ for $\nu$-a.e. $x$;
\item if the cosets $Hg_1$, \dots, $Hg_k$ are distinct, and $x$ is drawn at random from $\nu$, then the coordinates $x_{g_1}$, \dots, $x_{g_k}$ are independent.
\end{itemize}
See~\cite[Subsection II.10.(G)]{Kec10} or~\cite{DooZha12} for the definition and basic properties of co-induction.

Now for each $n$ let $U_n := \{1,\dots,3n\}$, $W_n := \{3n+1,\dots,4n\}$ and $V_n := U_n \cup W_n$. Choose four elements of $\Sym(V_n)$ in the following randomized way:
\begin{itemize}
 \item Let $\s_n^{a'}$ and $\s_n^{b'}$ be independent, uniformly random elements of $\Sym(V_n)$.
\item Let $\tau_{n,0}^a$ and $\tau_{n,0}^b$ be uniformly random elements of $\Sym(U_n)$ and let $\tau_{n,1}^a$ and $\tau_{n,1}^b$ be uniformly random elements of $\Sym(W_n)$, all independent.  Let $\s_n^a := \tau_{n,0}^a \cup \tau_{n,1}^a$ and $\s_n^b := \tau_{n,0}^b \cup \tau_{n,1}^b$.  Thus, $\s_n^a$ and $\s_n^b$ are chosen uniformly and independently from among those elements of $\Sym(V_n)$ that preserve the partition $\{U_n,W_n\}$.
\end{itemize}
For each $n$, the four permutations $\s_n^a$, $\s_n^b$, $\s_n^{a'}$ and $\s_n^{b'}$ generate a random homomorphisms $\s_n:G\to \Sym(V_n)$.  Standard arguments show that the resulting sequence $\S = (\s_n)_{n\geq 1}$ is a sofic approximation to $G$ with high probability.

For each $n$, let $\bf{x}_n \in \X^{V_n}$ be the indicator function $1_{W_n}$.  It is now easily checked that $P^{\s_n}_{\bf{x}_n} \stackrel{\rm{weak}^\ast}{\to} \nu$, and so for every w$^\ast$-neighbourhood $\calO$ of $\nu$ we have $\O(\calO,\s_n) \neq \emptyset$ for all sufficiently large $n$.  Therefore $\rmh_\S(\nu,S) \geq 0$.

The random Schreier graph on $V_n$ generated by the random permutations $\s_n^a$ and $\s_n^b$ is an expander within each of $U_n$ and $W_n$ with high probability; this follows by the usual counting argument (see, for instance,~\cite[Proposition 1.2.1]{Lubot--book}).  In this case, any other partition of $V_n$ with small edge boundary in this Schreier graph must be very close to the partition $\{U_n,W_n\}$.  This implies that any other good model $\bf{y} \in \X^{V_n}$ of $\nu$ must be very close to $\bf{x}_n$ in normalized Hamming distance as $n\to\infty$.  It follows that in fact $\rmh_\S(\nu,S) = 0$.

(Note that at this point, we are using the fact that the atom-sizes of the measure $\mu_0$ correspond to the ratios $|U_n|/|V_n|$ and $|W_n|/|V_n|$.  If these were all equal to $1/2$, instead of $3/4$ and $1/4$, then both $\bf{x}_n$ and $1_{U_n}$ would be good models of $\nu$, and all other good models would lie close to one of these two in normalized Hamming distance.  In this case, the rest of the argument below can still be completed, but there would be slightly more work to do.)

However, we can now show that $\rmh_\S(\nu\times \nu,S\times S) = -\infty$.  If $(\bf{y},\bf{y'}) \in (\X\times \X)^{V_n}$ were a good model for $\nu\times \nu$, then both $\bf{y}$ and $\bf{y'}$ would be good models for $\nu$, hence close to $\bf{x}_n$ in Hamming distance.  But this would imply that
\[P^{\s_n}_{(\bf{y},\bf{y}')}\{(1,0)\} = \frac{1}{4n}|\{v \in V_n:\ y_v = 1\ \hbox{and}\ y'_v = 0\}|\]
is close to $0$, whereas a good model for $\nu\times \nu$ should have this probability close to $\frac{1}{4}\cdot \frac{3}{4}$. So $\S$ does not provide arbitrarily good models for $\nu\times \nu$ as $n\to\infty$.

Note that this argument is really only about the $H$-subaction of $S$.  The only reason to co-induce to $G$ is to make an example which is ergodic overall and free.  It would be interesting to know whether one can produce such an example which is totally ergodic by starting with a more subtle choice of $H$-system.

If one replaces each $\S$ with $\S' := (\s_n^{\times k}:G\to V_n^{\times k})_{n\geq 1}$ for some fixed $k \geq 1$, then similar reasoning shows that
\[\rmh_\S(\mu^{\times \ell},S^{\times \ell}) = \left\{\begin{array}{ll}0& \quad \hbox{for}\ \ell \leq k\\ -\infty& \quad \hbox{for}\ \ell > k. \end{array}\right.\]
\fin
\end{ex}

\begin{ex}\label{ex:planted-bis}
Let us speculate about a second example.  The details required for its analysis are not available in full, but it would arguably be more natural than Example~\ref{ex:coind-ex}.

Let $H = \langle a,b\rangle$ be the free group and let $\X = \{0,1\}$, as above.  We start by constructing some finite quotients of $H$ as a variant of the `planted bisection model'.  This classical model has a long history in statistical physics and computer science: see~\cite{MosNeeSly15} for its definition and some references.

Let $\a \in (0,1)$ be a small parameter.  Let $V_n = U_n \cup W_n$ be as in Example~\ref{ex:coind-ex}, but now construct $\s_n^a,\s_n^b \in \Sym(V_n)$ as follows.

First let $\G_n$ be a random $4$-regular graph on $V_n$ drawn uniformly from those graphs that have roughly $6n(1-\a)$ edges within $U_n$, $2n(1-\a)$ edges within $W_n$, and $8n\a$ edges between $U_n$ and $W_n$.

Using this random graph $\G_n$, one can construct the pair of permutations $\s_n^a$ and $\s_n^b$ as follows. First, a simple greedy algorithm finds a disjoint union of cycles in $\G_n$ that contains all vertices in $V_n$.  Choose an orientation for each of these cycles, and let those directed edges define the permutation $\s_n^a$.  After removing these edges from $\G_n$, the remaining $2$-regular graph decomposes into another disjoint union of cycles; orienting those gives the permutation $\s_n^b$.  Let $\s_n:G\to \Sym(V_n)$ be the homomorphism generated by $\s_n^a$ and $\s_n^b$.  Now $\G_n$ is the Schreier graph of the homomorphism $\s_n$ and generating set $\{a^{\pm 1},b^{\pm 1}\}$.

As with other simple random-graph models, it should hold that $\G_n$ looks like a tree in a large neighbourhood around most points of $V_n$, and this would imply that $\S = (\s_n)_{n\geq 1}$ is a sofic approximation to $H$.

Finally, let $\bf{x}_n = 1_{W_n}$ as in Example~\ref{ex:coind-ex}, and now use w$^\ast$-compactness to choose a subsequence $n_1  < n_2 < \dots$ such that $P_{\bf{x}_{n_i}}^{\s_{n_i}} \stackrel{\rm{weak}^\ast}{\to} \mu$ for some $\mu \in \Pr(\X^H)$.  Of course, this guarantees that for any w$^\ast$-neighbourhood $\calO$ of $\mu$ we have $\bf{x}_{n_i} \in \O(\calO,\s_{n_i})$ for all sufficiently large $i$, and so $\rmh_\S(\mu) \geq 0$.

On the other hand, our intuition is that, if $\a$ is extremely small, so the graph $\G_n$ has sufficiently few of its edges crossing from $U_n$ to $W_n$, then any other partition of $V_n$ into subsets of sizes roughly $3n$ and $n$ and with so few edges between must be very close to $\{U_n,W_n\}$ (up to an error depending on $\a$).  Some hope for a proof of this is offered by the recent work~\cite{MosNeeSly15} on the original planted bisection model, which shows that if the two edge-densities in the model are sufficiently well-separated, then one can reconstruct the values of those edge-densities with high probability if one is given only the output graph $\G_n$.

If this prediction is correct, then the same argument as for Example~\ref{ex:coind-ex} will show that $\rmh_\S(\mu\times \mu) = -\infty$. \fin
\end{ex}

\section{Factor maps and maps between model spaces}\label{sec:factors}

Before introducing measures on model spaces, we need to consider how a factor map between systems can be approximated by a sequence of somewhat `regular' maps between their model spaces.

This section is rather technical, but it lays essential foundations for many of the arguments that follow.  In particular, it is the basis for the proof that $\rmh_\S^\rm{q}$ and $\rmh^\rm{dq}_\S$ are isomorphism-invariant.

\subsection{Approximating factor maps}

Suppose that $\phi: \X^G \to \Y$ is measurable, so it gives rise to the measurable equivariant map
\[\Phi := (\phi \circ S^g)_{g \in G}: \X^G\to \Y^G.\]
More generally, suppose that $E,F \subseteq G$ and that $\phi:\X^E\to \Y$.  Then one defines \[\phi^F:\X^{EF}\to \Y^F:(x_g)_{g \in EF} \mapsto \big(\phi((x_{hg})_{h\in E})\big)_{g \in F};\]
in this notation, $\Phi = \phi^G$.

In order to study such equivariant maps, we need the ability to approximate $\phi$ by a map which is `roughly continuous'.  This can be done in two steps.  The first is to replace $\phi$ with a map depending on only finitely many coordinates.

\begin{dfn}\label{dfn:local-fn}
If $\phi:\X^G\to \Y$ and $D \subseteq G$ is finite, then $\phi$ is \textbf{$D$-local} if it is measurable with respect to $\pi_D$.  A function is \textbf{local} if it is \textbf{$D$-local} for some $D$.

Similarly, a subset $U\subseteq \X^G$ is $D$-local if it equals $\pi_D^{-1}(V)$ for some $V \subseteq \X^D$.
\end{dfn}

If a function is described as `$D$-local', then it is always implied that $D$ is finite.

We now introduce a choice of metrics on $\X$ and $\Y$.  The next definition is a simple adaptation of Definition~\ref{dfn:eps-good}.

\begin{dfn}\label{dfn:eta-approx}
Let $(\X,d_\X)$ and $(\Y,d_\Y)$ be compact metric spaces, let $\mu \in \Pr^S(\X^G)$, let $\phi: \X^G\to \Y$ be a measurable function, and let $\eta > 0$.  An \textbf{$\eta$-almost Lipschitz} (or \textbf{$\eta$-AL}) \textbf{approximation to $\phi$ rel $(\mu,d_\X,d_\Y)$} is a measurable map $\psi:\X^G \to \Y$ with the following properties.
\begin{itemize}
\item[i)] The map $\psi$ approximates $\phi$ in the sense that
\begin{eqnarray}\label{eq:int-approx}
\int d_\Y(\phi(x),\psi(x))\,\mu(\d x) < \eta.
\end{eqnarray}
\item[ii)] There is a finite $D \subseteq G$ such that $\psi$ is $D$-local.
\item[iii)] There is a $D$-local open subset $U \subseteq \X^G$ such that $\mu(U) > 1 -\eta$ and such that $\psi|U$ is $\eta$-almost Lipschitz from $d_\X^{(D)}$ to $d_\Y$.
\end{itemize}
\end{dfn}

In this definition, since $\psi$ and $U$ are both $D$ local, we may regard $\psi|U$ as a function on $\X^D$.  Part (iii) of the definition can be understood this way, or by considering almost Lipschitz functions with respect to the pseudometric $d_\X^{(D)}$ on $\X^G$.

Definition~\ref{dfn:eta-approx} really does depend on the measure $\mu$ and on the specific metrics $d_\X$ and $d_\Y$.  However, we may sometimes drop the qualifier `rel $(\mu,d_\X,d_\Y)$' when these data are clear from the context.

Formally, the $\psi$ in this definition is a $D$-local function on the whole space $\X^G$.  We sometimes commit the abuse of writing $\psi(x|_D)$ in place of $\psi(x)$ when the local nature of the function is important.

\begin{lem}\label{lem:approx-by-Lip}
There exist $\eta$-AL approximations to $\phi$ for all $\eta > 0$.
\end{lem}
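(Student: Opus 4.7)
The strategy is to first replace $\phi$ by a function that depends on only finitely many coordinates, and then apply the Lusin-type Lemma~\ref{lem:Lusin1} to that finite-dimensional approximant.

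First I would produce a $D$-local Borel approximant $\phi_0$ satisfying $\int d_\Y(\phi(x),\phi_0(x))\,\mu(\d x) < \eta/2$ for some finite $D \subseteq G$. To build $\phi_0$, use compactness of $\Y$ to pick a finite Borel partition $\{B_1,\dots,B_N\}$ of $\Y$ with each $B_i$ of $d_\Y$-diameter less than $\eta/4$, and representatives $y_i \in B_i$. The sets $A'_i := \phi^{-1}(B_i)$ form a Borel partition of $\X^G$. Since the finite-dimensional cylinder sets form an algebra generating the Borel $\s$-algebra of $\X^G$, a standard measure-theoretic approximation yields, for some single finite $D$, a family of $D$-local Borel sets $A_i$ with $\sum_i \mu(A_i \triangle A'_i)$ as small as I wish; after disjointifying, I may assume the $A_i$ partition $\X^G$. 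Setting $\phi_0 := y_i$ on $A_i$ gives a $D$-local simple function, and combining the diameter bound on the $B_i$ with the symmetric-difference estimate gives the claimed $L^1$ bound.

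Second, I would regard $\phi_0$ as a Borel map $\X^D \to \Y$ and apply Lemma~\ref{lem:Lusin1} to it, with the compact metric $d_\X^{(D)}$ on $\X^D$, the metric $d_\Y$ on $\Y$, and the pushforward measure $\mu_D$, taking its tolerance parameter to be $\eta/2$. This produces a Borel map $\psi_0 : \X^D \to \Y$ and an open set $U_0 \subseteq \X^D$ such that $\mu_D(U_0) > 1 - \eta/2$, such that $\int d_\Y(\phi_0,\psi_0)\,\d\mu_D < \eta/2$, and such that $\psi_0|U_0$ is $(\eta/2)$-almost Lipschitz from $d_\X^{(D)}$ to $d_\Y$. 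Lifting back to $\X^G$, I define $\psi(x) := \psi_0(x|_D)$ and $U := \pi_D^{-1}(U_0)$, both $D$-local, with $U$ open and $\mu(U) = \mu_D(U_0) > 1 - \eta$.

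Finally I would verify the three conditions of Definition~\ref{dfn:eta-approx}. Condition (ii) and the measure bound in (iii) are immediate; condition (i) follows from the triangle inequality applied to $\int d_\Y(\phi,\phi_0)\,\d\mu + \int d_\Y(\phi_0,\psi)\,\d\mu < \eta/2 + \eta/2$, using that $\int d_\Y(\phi_0,\psi)\,\d\mu = \int d_\Y(\phi_0,\psi_0)\,\d\mu_D$ because both sides only see coordinates in $D$. For the almost Lipschitz clause in (iii), note that the pseudometric $d_\X^{(D)}$ on $\X^G$ is precisely the pullback under $\pi_D$ of the Hamming average metric $d_\X^{(D)}$ on $\X^D$, so the almost-Lipschitz estimate for $\psi_0|U_0$ passes directly to $\psi|U$. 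There is no deep obstacle; the only thing requiring care is the joint choice of $D$ and of the $A_i$ in the first step, and the bookkeeping of the two $\eta/2$ contributions in the final triangle inequality.
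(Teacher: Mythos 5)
Your proof is correct and follows essentially the same route as the paper: reduce to a $D$-local approximant, then apply the Lusin-based Lemma~\ref{lem:Lusin1} to that map on $(\X^D,d_\X^{(D)},\mu_D)$ and lift. The only difference is that you spell out the construction of the $D$-local approximant (small-diameter partition of $\Y$, cylinder-set approximation of the preimages), where the paper simply cites it as standard measure theory.
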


\begin{proof}
Let $\eta > 0$.  Standard measure theory gives some finite $D \subseteq G$ and a measurable function $\phi':\X^D\to \Y$ such that
\[\int d_\Y(\phi(x),\phi'(x|_D))\,\mu(\d x) < \eta.\]
Since $\eta$ is arbitrary, it now suffices to approximate $\phi'$ instead of $\phi$; or, equivalently, to assume that $\phi$ itself is a function on $\X^D$.  Having done so, we apply Lemma~\ref{lem:Lusin1} to this map and the metric spaces $(\X^D,d_\X^{(D)})$ and $(\Y,d_\Y)$.
\end{proof}

In case $\X$ is a finite set, all finite-dimensional Cartesian powers of $\X$ are finite, and so one can use a simplified form of Definition~\ref{dfn:eta-approx}. In that case, an $\eta$-AL approximation to $\phi:\X^G\to \Y$ is simply a local map $\psi: \X^G\to \Y$ such that $\mu\{\phi = \psi\} > 1 - \eta$.

It will be helpful to know that Definition~\ref{dfn:eta-approx} behaves well in relation to Hamming averages.  The next lemma describes this.

\begin{lem}\label{lem:good-approx-Ham-sum}
Suppose that $d_\Y$ has diameter at most $1$.  If $\psi$ is an $\eta$-AL approximation to $\phi$ rel $(\mu,d_\X,d_\Y)$ for some $\eta \in (0,1)$, then $\psi^F$ is a $(3\sqrt{\eta})$-AL approximation to $\phi^F:\X^G\to \Y^F$ rel $(\mu,d_\X,d_\Y^{(F)})$ for every finite $F\subseteq G$.
\end{lem}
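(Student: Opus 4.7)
The plan is to verify each of the three clauses of Definition~\ref{dfn:eta-approx} for $\psi^F$, with tolerance $3\sqrt{\eta}$, using the data $(D, U, L)$ provided by the assumption that $\psi$ is an $\eta$-AL approximation to $\phi$ (so $\psi$ is $D$-local, $U\subseteq \X^G$ is $D$-local open with $\mu(U)>1-\eta$, and $\psi|U$ is $\eta$-almost $L$-Lipschitz from $d_\X^{(D)}$ to $d_\Y$).

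Clause (i) is immediate from shift-invariance of $\mu$:
\[
\int d_\Y^{(F)}(\phi^F(x),\psi^F(x))\,\mu(\d x) = \frac{1}{|F|}\sum_{g\in F}\int d_\Y(\phi(S^g x),\psi(S^g x))\,\mu(\d x) < \eta \leq 3\sqrt{\eta}.
\]
Clause (ii) is also immediate: $\psi^F$ is $DF$-local because each coordinate $g \in F$ of $\psi^F(x)$ depends only on $x|_{Dg}$.

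The content of the proof is clause (iii). Define the counting function $N(x) := |\{g\in F : S^g x \notin U\}|$ on $\X^G$, and set
\[
U' := \{x \in \X^G:\ N(x) < \sqrt{\eta}|F|\}.
\]
Writing $U'$ as the union over all $A\subseteq F$ with $|A|>|F|-\sqrt{\eta}|F|$ of $\bigcap_{g\in A}S^{-g}U$ shows that $U'$ is open and $DF$-local. Shift-invariance gives $\int N\,\d\mu = |F|\mu(\X^G\setminus U) < |F|\eta$, so Markov's inequality yields $\mu(\X^G\setminus U') < \sqrt{\eta} \leq 3\sqrt{\eta}$.

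It remains to show $\psi^F|U'$ is $3\sqrt{\eta}$-almost $L'$-Lipschitz, for $L' := L|DF|/|F|$. Fix $x,y \in U'$ and split $F$ into the ``good'' set $F_0 := \{g\in F : S^g x, S^g y \in U\}$ and its complement; since $N(x)+N(y) < 2\sqrt{\eta}|F|$, we have $|F\setminus F_0| < 2\sqrt{\eta}|F|$. On $F_0$ the almost-Lipschitz bound for $\psi|U$ applies, and on $F\setminus F_0$ we use $d_\Y\leq 1$:
\[
d_\Y^{(F)}(\psi^F(x),\psi^F(y)) \leq \eta + 2\sqrt{\eta} + \frac{L}{|F|}\sum_{g\in F}d_\X^{(D)}(S^g x, S^g y).
\]
Expanding $d_\X^{(D)}(S^g x, S^g y) = \frac{1}{|D|}\sum_{h\in D} d_\X(x_{hg},y_{hg})$ and noting that each $g'\in DF$ appears in this double sum as $hg=g'$ for at most $|D|$ pairs $(h,g)$, the last term is bounded by $L\frac{|DF|}{|F|}d_\X^{(DF)}(x,y)$. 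Since $\eta+2\sqrt{\eta}\leq 3\sqrt{\eta}$ when $\eta\in(0,1)$, this proves the claim.

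The only nonroutine step is the design of $U'$: one must avoid the naive choice $\bigcap_{g\in F}S^{-g}U$, whose measure bound $1-|F|\eta$ is far too weak. Replacing the intersection by a Markov-type sublevel set of $N$ trades the $|F|\eta$ loss for $\sqrt{\eta}$ at the cost of a $2\sqrt{\eta}$ additive contribution from the bad coordinates, which is exactly what the $3\sqrt{\eta}$ tolerance absorbs.
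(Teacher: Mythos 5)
Your proof is correct and follows essentially the same route as the paper: you define the same sublevel set (the paper's $U_F$ is your $U'$ viewed as a subset of $\X^{DF}$), apply the same Markov/Chebyshev estimate to get $\mu(U')>1-\sqrt{\eta}$, and carry out the same decomposition into good and bad coordinates for the Lipschitz bound. The only cosmetic difference is the final Lipschitz constant: you record $L|DF|/|F|$, whereas the paper uses the slightly looser but simpler $L|D|$ (valid since $|DF|\le|D||F|$); both are acceptable because an $\eta$-AL approximation only demands almost-Lipschitz for \emph{some} constant.
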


\begin{proof}
Let $\psi$ be $D$-local, and in this proof let us regard $\psi$ as a function on $\X^D$ itself.  Let $U\subseteq \X^D$ be an open subset with $\mu_D(U) > 1 - \eta$ and such that $\psi|U$ is $\eta$-almost $L$-Lipschitz from $d_\X^{(D)}$ to $d_\Y$.

Firstly, the shift-invariance of $\mu$ and inequality~(\ref{eq:int-approx}) imply that
\begin{equation}\label{eq:int-approx2}
\int d_\Y^{(F)}\big(\phi^F(x),\psi^F(x)\big)\,\mu(\d x) = \frac{1}{|F|} \sum_{g \in F}\int d_\Y(\phi(S^gx),\psi(S^gx))\,\mu(\d x) < \eta.
\end{equation}

Next, it is clear that $\psi^F$ is $(DF)$-local.  Let
\[U_F := \big\{x\in \X^{DF}:\ |\{g \in F:\ x|_{Dg} \not\in U\}| < \sqrt{\eta}|F|\big\}.\]
This set is open, and
\[\int |\{g \in F:\ x|_{Dg} \not\in U\}|\,\mu(\d x) = \sum_{g \in F}\mu\{x:\ x|_{Dg} \not\in U\} = |F|\cdot \mu_D(\X^D\setminus U) < \eta|F|,\]
so Chebyshev's Inequality proves that $\mu_{DF}(U_F) > 1 - \sqrt{\eta}$.

Finally, if $x,x' \in U_F$, then
\begin{eqnarray*}
d_\Y^{(F)}\big(\psi^F(x),\psi^F(x')\big) &=& \frac{1}{|F|}\sum_{g\in F}d_\Y\big(\psi(x|_{Dg}),\psi(x'|_{Dg})\big)\\
&\leq& \frac{|\{g \in F:\ x|_{Dg} \not\in U\ \hbox{or}\ x'|_{Dg} \not\in U\}|}{|F|}\\
&& + \frac{1}{|F|}\sum_{\scriptsize{\begin{array}{c}g\in F,\\ \,x|_{Dg},x'|_{Dg} \in U\end{array}}}\big(\eta + Ld_\X^{(D)}(x|_{Dg},x'|_{Dg})\big) \\
&\leq& 3\sqrt{\eta} + L\frac{1}{|D||F|}\sum_{g \in F,\ h\in D}d_\X(x_{hg},x'_{hg}).
\end{eqnarray*}
Any point of $DF$ can be represented as a product $hg$ with $h \in D$ and $g \in G$ in no more than $|D|$ ways, and so the last line above is at most
\[3\sqrt{\eta} + L|D|d_\X^{(DF)}(x,x').\]
This shows that $\psi^F|U_F$ is $(3\sqrt{\eta})$-almost $(L|D|)$-Lipschitz.
\end{proof}

Sometimes it is preferable to use the approximation~(\ref{eq:int-approx2}) through its consequence that
\begin{eqnarray}\label{eq:nonint-approx}
\mu\big\{x:\ d_\Y^{(F)}\big(\phi^F(x),\psi^F(x)\big) < \sqrt{\eta}\big\} > 1 - \sqrt{\eta},
\end{eqnarray}
which follows by Chebyshev's inequality.

Now suppose that $\Phi = \phi^G:(\X^G,\mu,S) \to (\Y^G,\nu,S)$ is a factor map, and that $d_\X$ and $d_\Y$ are generating compact metrics on $\X$ and $\Y$ with diameter at most $1$.  If $\phi$ is not continuous for the resulting topologies on $\X^G$ and $\Y$, then $\Phi_\ast:\Pr(\X^G) \to \Pr(\Y^G)$ cannot be w$^\ast$-continuous.  However, the next lemma shows that, if $\eta$ is sufficiently small, then an $\eta$-AL approximation to $\phi$ rel $(\mu,d_\X,d_\Y)$ acts approximately continuously on measures which are w$^\ast$-close to $\mu$.  This fact will be used several times later.

\begin{lem}\label{lem:pushfwd-approx}
For every w$^\ast$-neighbourhood $\calN$ of $\nu$ there is an $\eta > 0$ with the following property.  For any $\eta$-AL approximation $\psi$ to $\phi$, there is a w$^\ast$-neighbourhood $\calO$ of $\mu$ such that
\[(\psi^G)_\ast(\calO) \subseteq \calN.\]
\end{lem}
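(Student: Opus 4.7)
The plan is to reduce to testing against a finite family of bounded Lipschitz functions, and then use Lemma~\ref{lem:good-approx-Ham-sum} together with Lemma~\ref{lem:almost-Lip-near-Lip} to replace $f_i \circ \psi^F$ by a genuinely continuous function up to a controlled error on an open set of large $\mu$-measure.

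First, since the $f_i$-neighbourhoods form a basis, we may assume that $\calN$ has the form
\[\calN = \Big\{\theta \in \Pr(\Y^G):\ \Big|\smint f_i\,\d\theta - \smint f_i\,\d\nu\Big| < \eps,\ i=1,\dots,k\Big\},\]
where, by enlarging $F$ if necessary, each $f_i$ may be taken to be a bounded $K$-Lipschitz function on $(\Y^F,d_\Y^{(F)})$ with $|f_i| \leq M$, for some common finite $F \subseteq G$. I would then choose $\eta > 0$ so small that the quantities $K\eta$, $\sqrt{\eta}$, and $(K+M)\sqrt{\eta}$ are all much smaller than $\eps$, with explicit constants to be worked out during the proof.

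Now fix an $\eta$-AL approximation $\psi$ to $\phi$, which is $D$-local for some finite $D$, with open $U \subseteq \X^D$ such that $\mu_D(U) > 1 - \eta$ and $\psi|U$ is $\eta$-almost $L$-Lipschitz. By Lemma~\ref{lem:good-approx-Ham-sum}, the map $\psi^F$ is a $(3\sqrt{\eta})$-AL approximation to $\phi^F$ rel $(\mu,d_\X,d_\Y^{(F)})$, witnessed by an open set $U_F \subseteq \X^{DF}$ with $\mu_{DF}(U_F) > 1 - \sqrt{\eta}$ on which $\psi^F$ is $(3\sqrt{\eta})$-almost $(L|D|)$-Lipschitz. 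For each $i$, Lemma~\ref{lem:almost-Lip-with-Lip} then gives that $f_i \circ \psi^F|U_F$ is $(3K\sqrt{\eta})$-almost $(KL|D|)$-Lipschitz, and by Lemma~\ref{lem:almost-Lip-near-Lip} we can find a $(KL|D|)$-Lipschitz function $\t g_i:\X^{DF}\to \bbR$ with $|f_i \circ \psi^F - \t g_i| \leq 3K\sqrt{\eta}$ on $U_F$. Crucially, $\t g_i$ is continuous on the compact space $\X^{DF}$.

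Now let $\calO$ be the weak$^\ast$-neighbourhood of $\mu$ defined by the two families of conditions
\[\theta_{DF}(U_F) > 1 - 2\sqrt{\eta} \qquad \hbox{and} \qquad \Big|\smint \t g_i\,\d\theta_{DF} - \smint \t g_i\,\d\mu_{DF}\Big| < \eps/2 \quad (i=1,\dots,k).\]
The first is weak$^\ast$-open because $U_F$ is open (so $\theta\mapsto \theta_{DF}(U_F)$ is lower semicontinuous) and $\mu_{DF}(U_F) > 1 - \sqrt{\eta}$; the second is weak$^\ast$-open because each $\t g_i$ is continuous. For $\theta \in \calO$, I would split
\[\smint f_i\,\d(\psi^G)_\ast\theta - \smint f_i\,\d\nu = \smint f_i \circ \psi^F\,\d\theta_{DF} - \smint f_i\circ\phi^F\,\d\mu_{DF}\]
into three pieces: $(\int f_i \circ \psi^F - \t g_i\,\d\theta_{DF})$, which is bounded by $3K\sqrt{\eta} + 4M\sqrt{\eta}$ using the estimate on $U_F$ and the bound $\theta_{DF}(U_F^c) < 2\sqrt{\eta}$; $(\int \t g_i\,\d\theta_{DF} - \int \t g_i\,\d\mu_{DF})$, which is at most $\eps/2$ by the choice of $\calO$; and $(\int \t g_i - f_i\circ\phi^F\,\d\mu_{DF})$, which is bounded by $3K\sqrt{\eta} + 2M\sqrt{\eta} + K\eta$ using the almost-approximation on $U_F$ and the integrated inequality~(\ref{eq:int-approx2}) for $\phi^F$ and $\psi^F$. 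For $\eta$ small enough depending only on $K$, $M$ and $\eps$, the sum of these three pieces is less than $\eps$, yielding $(\psi^G)_\ast\theta \in \calN$.

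The main subtlety is the interplay between the opennesss of $U_F$ and the almost-Lipschitz control: we need $\theta\mapsto \theta_{DF}(U_F^c)$ to be small in a weak$^\ast$-open condition, which is why part (iii) of Definition~\ref{dfn:eta-approx} insists that the set of good behaviour be open rather than merely Borel. Everything else is a routine splitting of integrals.
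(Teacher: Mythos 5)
Your proof is correct and follows essentially the same approach as the paper's: reduce to testing against local Lipschitz functions, apply Lemmas~\ref{lem:good-approx-Ham-sum}, \ref{lem:almost-Lip-with-Lip}, and \ref{lem:almost-Lip-near-Lip} to replace $f_i\circ\psi^F$ by a truly continuous (Lipschitz) function on an open set of large measure, then define $\calO$ via that function together with a Portmanteau-style condition on $U_F$. The only differences are cosmetic: the paper reduces to a sub-basic neighbourhood (a single $[0,1]$-valued Lipschitz function) and organizes the final estimate into two pieces, whereas you use a basic neighbourhood (finitely many functions) and three pieces; you have also correctly identified the key subtleties, namely that $\eta$ must depend only on $\calN$ and that the openness of $U_F$ is what makes $\calO$ weak$^\ast$-open.
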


\begin{rmk}
It is very important that $\eta$ can be chosen depending only on $\calN$. \fin
\end{rmk}

\begin{proof}
It suffices to prove this for any sub-basis of w$^\ast$-neighbourhoods of $\nu$, so we may assume that
\[\calN = \Big\{\theta \in \Pr(\Y^G):\ \int h\,\d\theta \approx_\k \int h\,\d\nu\Big\},\]
where $h:\Y^G\to [0,1]$ is $F$-local for some finite $F\subseteq G$ and is $1$-Lipschitz according to $d_\Y^{(F)}$, and $\k > 0$.

In this case we will show that any $\eta < (\k/18)^2$ has the required property.  Let $\psi$ be an $\eta$-AL approximation to $\phi$, and let $D \subseteq G$ and $U\subseteq \X^G$ be as in Definition~\ref{dfn:eta-approx} for this $\psi$.  Let $U_F$ be the $(DF)$-local open subset of $\X^G$ constructed in the proof of Lemma~\ref{lem:good-approx-Ham-sum}, so $\psi^F|U_F$ is $(3\sqrt{\eta})$-almost Lipschitz according to $d_\X^{(DF)}$.  Let us abbreviate $3\sqrt{\eta} =: \a$.

The composition $h\circ \psi^G$ is $(DF)$-local. We may regard $h$ as a map $\Y^F \to \bbR$ and $\psi^F$ as a map $\X^{DF} \to \Y^F$.  Applying Lemma~\ref{lem:almost-Lip-with-Lip}, it follows that the restriction $(h\circ\psi^F)|U_F$ is $\a$-almost Lipschitz according to $d_\X^{(DF)}$.  Therefore Lemma~\ref{lem:almost-Lip-near-Lip} gives a $(DF)$-local function $f:\X^G \to [0,1]$ which is truly Lipschitz according to $d^{(DF)}_\X$ and satisfies
\[\|(h\circ \psi^F - f)|U_F\|_\infty \leq \a.\]
Let
\[\calO := \Big\{\g \in \Pr(\X^G):\ \g(U_F) > 1 - \a\ \hbox{and}\ \int f\,\d\g \approx_\a \int f\,\d\mu\Big\}.\]

Suppose that $\g \in \calO$.  Then
\begin{multline*}
\Big|\int h\,\d(\psi^G_\ast\g) - \int h\,\d\nu\Big| = \Big|\int h\circ \psi^G\,\d\g - \int h\circ \phi^G\,\d\mu\Big|\\
\leq \Big|\int h\circ \psi^G\,\d\g - \int h\circ \psi^G\,\d\mu\Big| + \int |h(\psi^G(x)) - h(\phi^G(x))|\,\mu(\d x)
\end{multline*}
We now bound these two terms separately.  The first is at most
\begin{multline*}
\Big|\int f\,\d\g - \int f\,\d\mu\Big| + \Big|\int h\circ \psi^G\,\d\g - \int f\,\d\g\Big| + \Big|\int f\,\d\mu - \int h\circ \psi^G\,\d\mu\Big|\\
< \a + (\a + \g(\X^G\setminus U_F)) + (\a + \mu(\X^G\setminus U_F)) < 5\a,
\end{multline*}
using the definition of $\calO$. Since $h$ is $1$-Lipschitz according to $d_\Y^{(F)}$, the second term is at most
\[\int d^{(F)}_\Y(\psi^F(x),\phi^F(x))\,\mu(\d x) = \int d_\Y(\psi(x),\phi(x))\,\mu(\d x),\]
and this is at most $\a$ according to Definition~\ref{dfn:eps-good}.  Adding these estimates gives
\[\Big|\int h\,\d(\psi^G_\ast\g) - \int h\,\d\nu\Big| < 6\a < \k.\]
\end{proof}

\begin{rmk}
The importance of AL approximations to $\Phi = \phi^G$ is that we can control their interactions with the w$^\ast$-neighbourhoods that appear in the definition of good models.  Almost Lipschitz maps are not the only way to do this, but they are very convenient.  On the one hand, we cannot use truly continuous maps in general, since there are choices of $\X$ and $\Y$ for which there are not enough continuous maps $\X^G\to \Y$.  This is why we use maps $\psi$ for which continuity can fail, but only up to an additive error that we control.  On the other hand, in the next subsection we will use such approximants $\psi$ to construct a family of maps acting between model-spaces, and it will be important to exert some uniform control over the `approximate continuity' of all these other maps.  A simple way is to show that they are all $\eta'$-almost $L'$-Lipschitz for some common $\eta'$ and $L'$: see Lemma~\ref{lem:Lip-still-Lip} below. \fin
\end{rmk}

The following lemma and corollary give some simple ways of combining AL approximations.

\begin{lem}\label{lem:combine}
Let $\eta > 0$. Suppose that
\begin{itemize}
 \item $(\X_i^G,\mu_i,S,d_i)$ for $i=1,2$ are metric $G$-process,
\item $(\Y_i,d'_i)$ for $i=1,2$ are compact metric spaces,
\item $\phi_i:\X_i^G\to \Y_i$ for $i=1,2$ are measurable functions,
\item and $\psi_i:\X_i^G\to \Y_i$ is an $\eta$-AL approximation to $\phi_i$ rel $(\mu,d_i,d'_i)$ for each $i=1,2$.
\end{itemize}
Let $d$ be the Hamming average metric of $d_1$ and $d_2$ on $\X_1\times \X_2$, and similarly let $d'$ be the Hamming averagee of $d'_1$ and $d'_2$.  Finally, let $\l$ be any joining of $\mu_1$ and $\mu_2$. Then the map
\[\psi_1\times \psi_2:(\X_1\times \X_2)^G\to \Y_1\times \Y_2\]
is a $(2\eta)$-AL approximation to $\phi_1\times \phi_2$ rel $(\l,d,d')$.
\end{lem}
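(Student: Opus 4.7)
The plan is to verify directly the three conditions of Definition~\ref{dfn:eta-approx} for $\psi_1\times\psi_2$, taking $2\eta$ in place of $\eta$, by combining the corresponding data for $\psi_1$ and $\psi_2$ and exploiting the fact that $\l$ has marginals $\mu_1$ and $\mu_2$.

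For condition (i), I would use that $d' = \tfrac{1}{2}(d'_1 + d'_2)$ on $\Y_1\times \Y_2$ and that the marginals of $\l$ are the $\mu_i$ to compute
\[\int d'\big((\phi_1\times\phi_2)(x),(\psi_1\times\psi_2)(x)\big)\,\l(\d x) = \tfrac{1}{2}\int d'_1(\phi_1,\psi_1)\,\d\mu_1 + \tfrac{1}{2}\int d'_2(\phi_2,\psi_2)\,\d\mu_2 < \eta,\]
which is certainly less than $2\eta$. For condition (ii), if $\psi_i$ is $D_i$-local, then $\psi_1\times \psi_2$ is manifestly $D$-local with $D := D_1\cup D_2$.

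For condition (iii), let $U_i\subseteq \X_i^G$ be the $D_i$-local open sets provided by Definition~\ref{dfn:eta-approx} for $\psi_i$, with $\mu_i(U_i) > 1-\eta$ and $\psi_i|U_i$ being $\eta$-almost $L_i$-Lipschitz. Identifying $(\X_1\times \X_2)^G$ with $\X_1^G\times \X_2^G$, set $U := U_1\times U_2$; this is $D$-local and open, and a union bound on complements using the marginal property of $\l$ gives $\l(U) > 1 - 2\eta$. The one technical point is the almost-Lipschitz estimate: for $(x_1,x_2),(x_1',x_2') \in U$,
\[d'\big((\psi_1\times\psi_2)(x_1,x_2),(\psi_1\times\psi_2)(x_1',x_2')\big) \leq \eta + \tfrac{L_1}{2}d_1^{(D_1)}(x_1,x_1') + \tfrac{L_2}{2}d_2^{(D_2)}(x_2,x_2'),\]
so I will convert each $d_i^{(D_i)}$ into a bound involving $d^{(D)}$. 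Since $D_i \subseteq D$ and all summands are non-negative,
\[d_i^{(D_i)}(x_i,x_i') \leq \tfrac{1}{|D_i|}\sum_{g\in D}d_i(x_{i,g},x_{i,g}') \leq \tfrac{2|D|}{|D_i|}\,d^{(D)}\big((x_1,x_2),(x_1',x_2')\big),\]
where the last step uses $\tfrac{1}{2}d_i \leq d$. Collecting gives $\eta$-almost $L$-Lipschitz behaviour with $L := L_1|D|/|D_1| + L_2|D|/|D_2|$, and $\eta \leq 2\eta$ completes (iii).

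There is no genuine obstacle: the only mildly delicate point is the rescaling of Hamming averages when $D_1$ and $D_2$ have different sizes and one passes to their union, and this is handled by the elementary inequality $d_i^{(D_i)} \leq (2|D|/|D_i|)\,d^{(D)}$ above. All other steps are immediate from the definitions and the marginal property of the joining.
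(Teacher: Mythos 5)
Your proof is correct and follows the same route as the paper's: verify conditions (i)--(iii) of Definition~\ref{dfn:eta-approx} directly, using the marginal property of $\l$ for (i) and the union bound for the measure of $U_1\times U_2$ in (iii). In fact you supply somewhat more detail than the paper on the Hamming-average rescaling $d_i^{(D_i)} \leq (2|D|/|D_i|)\,d^{(D)}$ needed for the Lipschitz estimate, which the paper dispatches with the phrase ``the definition of $d$ implies.''
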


\begin{proof}
For $i=1,2$, let $D_i \subseteq G$ be the finite subsets and $U_i \subseteq \X_i^G$ the open subsets promised by Definition~\ref{dfn:eta-approx} for the maps $\psi_i$.  Then~(\ref{eq:int-approx}) and the definition of $d$ give
\begin{multline*}
\int d'\big((\phi_1(x_1),\phi_2(x_2)),(\psi_1(x_1),\psi_2(x_2))\big)\,\l(\d x_1,\d x_2)\\ = \frac{1}{2}\Big(\int d'_1(\phi_1(x_1),\psi_1(x_1))\,\mu_1(\d x_1) + \int d'_2(\phi_2(x_2),\psi_2(x_2))\,\mu_2(\d x_2)\Big) < \eta.
\end{multline*}
The map $\psi_1\times \psi_2$ is $(D_1 \cup D_2)$-local, and so is the open set $U:= U_1\times U_2$.  This set $U$ has
\[\l(U) \geq 1 - \mu_1(\X_1^G\setminus U_1) - \mu_2(\X_2^G\setminus U_2) > 1 - 2\eta,\]
and the definition of $d$ implies that $(\psi_1\times \psi_2)|U$ is $\eta$-almost Lipschitz.
\end{proof}

\begin{cor}\label{cor:combine}
Let $\eta > 0$. Suppose that
\begin{itemize}
 \item $(\X^G,\mu,S,d)$ is a metric $G$-process,
\item $(\Y_i,d'_i)$ for $i=1,2$ are compact metric spaces,
\item $\phi_i:\X^G\to \Y_i$ for $i=1,2$ are measurable functions,
\item and $\psi_i:\X^G\to \Y_i$ is an $\eta$-AL approximation to $\phi_i$ rel $(\mu,d,d'_i)$ for each $i=1,2$.
\end{itemize}
Let $d'$ be the Hamming average metric of $d'_1$ and $d'_2$ on $\Y_1\times \Y_2$.  Then the map
\[(\psi_1,\psi_2):\X^G\to \Y_1\times \Y_2\]
is a $(2\eta)$-AL approximation to $(\phi_1,\phi_2)$ rel $(\mu,d,d')$.
\end{cor}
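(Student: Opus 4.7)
The plan is to derive Corollary~\ref{cor:combine} directly from Lemma~\ref{lem:combine} by exhibiting $(\psi_1,\psi_2)$ as the composition of $\psi_1\times\psi_2$ with a diagonal embedding, and transferring the conclusions through that embedding. This avoids re-doing any of the counting work already carried out in the proof of the lemma.

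First I would apply Lemma~\ref{lem:combine} with both processes taken to be $(\X^G,\mu,S,d)$ and with joining $\l := \Delta_\ast\mu$, where
\[\Delta: \X^G \to (\X\times\X)^G,\qquad \Delta((x_g)_{g\in G}) := ((x_g,x_g))_{g\in G}\]
is the diagonal. Since $\Delta_\ast\mu$ is certainly a joining of $\mu$ with itself, the lemma yields that $\psi_1\times\psi_2:(\X\times\X)^G\to \Y_1\times\Y_2$ is a $(2\eta)$-AL approximation to $\phi_1\times\phi_2$ rel $(\l,\bar{d},d')$, where $\bar{d}$ is the Hamming average of $d$ with itself on $\X\times\X$.

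Next I would observe the key isometry property: for any $x,x'\in\X^G$ and any $g\in G$,
\[\bar{d}\big(\Delta(x)_g,\Delta(x')_g\big) = \tfrac{1}{2}\big(d(x_g,x'_g)+d(x_g,x'_g)\big) = d(x_g,x'_g),\]
so $\Delta$ is an isometric embedding from $(\X^G,d^{(D)})$ into $((\X\times\X)^G,\bar{d}^{(D)})$ for every finite $D\subseteq G$. Since $(\phi_1,\phi_2) = (\phi_1\times\phi_2)\circ\Delta$ and $(\psi_1,\psi_2) = (\psi_1\times\psi_2)\circ\Delta$, I would then read off the three requirements of Definition~\ref{dfn:eta-approx} for $(\psi_1,\psi_2)$ rel $(\mu,d,d')$: condition (i) becomes
\[\int d'\big((\phi_1,\phi_2)(x),(\psi_1,\psi_2)(x)\big)\,\mu(\d x) = \int d'\big(\phi_1\times\phi_2,\,\psi_1\times\psi_2\big)\,\d\l < 2\eta;\]
condition (ii) follows because $D$-locality of $\psi_1\times\psi_2$ pulls back to $D$-locality of $(\psi_1,\psi_2)$; and for condition (iii), if $U'\subseteq(\X\times\X)^G$ is a $D$-local open set on which $\psi_1\times\psi_2$ is $2\eta$-almost $L$-Lipschitz with $\l(U')>1-2\eta$, then $U := \Delta^{-1}(U')$ is a $D$-local open set with $\mu(U)=\l(U')>1-2\eta$, and the isometric property of $\Delta$ transfers the $(2\eta,L)$-almost-Lipschitz bound verbatim.

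There is no real obstacle: the only item that could conceivably fail is ensuring that $U'$ pulls back to a set of the same measure and the same $D$-locality, and both follow immediately from $\l=\Delta_\ast\mu$ and the definition of $\Delta$. If instead one preferred to give a direct, lemma-free argument, the same bookkeeping as in the proof of Lemma~\ref{lem:combine} goes through after replacing the product set $U_1\times U_2$ with the intersection $U_1\cap U_2$ in $\X^G$, at the cost of inflating the Lipschitz constant by factors of the form $|D_1\cup D_2|/|D_i|$; but invoking the lemma via the diagonal keeps the proof short and modular.
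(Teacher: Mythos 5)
Your proof is correct and takes the same approach as the paper, which simply states that the corollary is the special case of Lemma~\ref{lem:combine} with both processes equal to $(\X^G,\mu,S,d)$ and $\l$ the diagonal joining. You fill in the (genuinely needed but routine) step of transferring the three conditions of Definition~\ref{dfn:eta-approx} through the diagonal embedding $\Delta$, which the paper leaves implicit; the key observation that $\Delta$ is an isometry from $d^{(D)}$ to $\bar d^{(D)}$ is exactly right and makes the transfer immediate.
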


\begin{proof}
This is the special case of Lemma~\ref{lem:combine} in which both processes $(\X_i^G,\mu_i,S,d_i)$ are equal to $(\X^G,\mu,S,d)$ and $\l$ is the diagonal joining.
\end{proof}

\subsection{Action of approximations on good models}\label{subs:action-of-approx}

Now suppose that $\psi:\X^G\to \Y$, that $V$ is a finite set and that $\s:G\to \Sym(V)$.  We define a new mapping $\psi^\s:\X^V\to \Y^V$ by
\[\psi^\s(\bf{x}) := \big(\psi(\Pi^\s_v(\bf{x}))\big)_{v\in V}.\]

This is easily visualized if $\psi$ is $F$-local for some finite $F \subseteq G$, and if $v$ is such that the map $F\to V:g\mapsto \s^g(v)$ is injective.  In this case the tuple $\Pi^\s_v(\bf{x})|_F$ may be regarded as a copy of the restriction $\bf{x}|_{\s^F(v)}$, `pulled back' so that it is labeled by $F$ itself.  Then $\psi^\s(\bf{x})$ is simply the result of applying $\psi$ to this restriction around each $v \in V$.  For a fixed finite $F$, sofic approximations give that most points $v\in V_n$ satisfy that injectivity requirement once $n$ is large.  A general measurable function $\X^G \to \Y$ may not be local, but it can be approximated by local functions.  As a result, the general definition of $\psi^\s$ still resembles that special case, up to some errors that we have to control from time to time.

The following lemma gives a useful compatibility between $\psi^\s$ and $\Pi^\s_v$.

\begin{lem}\label{lem:psi-sig-compatible}
Let $\S$ be a sofic approximation, let $F \subseteq G$ be finite, and suppose that $\psi:\X^G \to \Y$ is local.  Then the following holds w.h.p. in $v$:
\[\Pi^{\s_n}_v(\psi^{\s_n}(\cdot))|_F = \psi^F(\Pi^{\s_n}_v(\cdot)).\]
\end{lem}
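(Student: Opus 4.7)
The plan is to unpack both sides coordinate-by-coordinate and reduce to the sofic approximation property already distilled in Lemma~\ref{lem:approx-equiv}.

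Since $\psi$ is local, fix a finite $D\subseteq G$ such that $\psi$ is $D$-local. For each fixed $g \in F$, I will compare the $g$-coordinates of the two sides as elements of $\Y$. Unpacking the definition of $\psi^{\s_n}$, the LHS $g$-coordinate is
\[\big(\Pi^{\s_n}_v(\psi^{\s_n}(\bf{x}))\big)_g = (\psi^{\s_n}(\bf{x}))_{\s_n^g(v)} = \psi\big(\Pi^{\s_n}_{\s_n^g(v)}(\bf{x})\big),\]
and, unpacking the definition of $\psi^F$, the RHS $g$-coordinate is $\psi$ applied to the $D$-restriction of the $Dg$-shift of $\Pi^{\s_n}_v(\bf{x})$; by $D$-locality of $\psi$, both sides depend only on the values $(x_{\s_n^h(\s_n^g(v))})_{h \in D}$ and $(x_{\s_n^{hg}(v)})_{h \in D}$ respectively.

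Now apply Lemma~\ref{lem:approx-equiv} with $F$ replaced by $D$: for each $g \in F$,
\[\Pi^{\s_n}_{\s_n^g(v)}(\bf{x})|_D = \Pi^{\s_n}_v(\bf{x})|_{Dg} \quad \forall \bf{x} \in \X^{V_n}, \quad \text{w.h.p. in } v.\]
Since $F$ is finite, the intersection of these finitely many "w.h.p." events over $g \in F$ is itself w.h.p. in $v$. On this good set of vertices, the two $D$-tuples feeding into $\psi$ coincide for every $g \in F$, so the LHS and RHS agree as elements of $\Y^F$ for every $\bf{x}$, as required.

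I do not expect any real obstacle here: the lemma is a bookkeeping transfer of Lemma~\ref{lem:approx-equiv} through the $D$-local function $\psi$, and the only point that needs a moment of care is that one takes a union of $|F|$ `bad' vertex sets — one per $g \in F$ — each of vanishing density, so the conjunction of the good events still holds w.h.p. in $v$. Locality of $\psi$ is essential: it is what lets us convert the $D$-restriction identity of pullback names into an actual equality after applying $\psi$.
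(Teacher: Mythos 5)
Your proof is correct and takes essentially the same route as the paper: the paper first reduces to the singleton case $F=\{h\}$ (which is the same finite-intersection-of-w.h.p.-events step you make explicit) and then redoes the computation from Lemma~\ref{lem:approx-equiv} directly using the sofic property $\s_n^g(\s_n^h(v))=\s_n^{gh}(v)$ for $g\in D$, whereas you cite Lemma~\ref{lem:approx-equiv} instead of re-deriving it. The coordinatewise unpacking, the role of $D$-locality, and the key identity $\Pi^{\s_n}_{\s_n^g(v)}(\cdot)|_D = \Pi^{\s_n}_v(\cdot)|_{Dg}$ all match.
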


More fully, this conclusion asserts that
\[[\quad \Pi^{\s_n}_v(\psi^{\s_n}(\bf{x}))|_F = \psi^F(\Pi^{\s_n}_v(\bf{x})) \quad \forall \bf{x} \in \X^{V_n} \quad ] \quad \hbox{w.h.p. in}\ v.\]

\begin{proof}
This is similar to the proof of Lemma~\ref{lem:approx-equiv}.  It suffices to prove it when $F$ is an arbitrary singleton, say $\{h\}$.  If $\psi$ is $D$-local, then it holds w.h.p. in $v$ that
\[\s_n^g(\s_n^h(v)) = \s_n^{gh}(v) \quad \forall g \in D.\]
For such $v$ we have
\begin{multline*}
\big(\Pi^{\s_n}_v(\psi^{\s_n}(\bf{x}))\big)_h = (\psi^{\s_n}(\bf{x}))_{\s_n^h(v)} = \psi(\Pi^{\s_n}_{\s_n^h(v)}(\bf{x})) = \psi\big((x_{\s_n^g(\s_n^h(v))})_{g \in G}\big)\\
 = \psi\big((x_{\s_n^{gh}(v)})_{g \in D}\big) = \psi((\Pi^{\s_n}_v(\bf{x}))|_{Dh}) = \psi^{\{h\}}(\Pi^{\s_n}_v(\bf{x})).
\end{multline*}
\end{proof}

Now fix compact generating metrics $d_\X$ and $d_\Y$. The next result shows that the maps $\psi^\s$ inherit some regularity from $\psi$.

\begin{lem}\label{lem:Lip-still-Lip}
Suppose that $D\subseteq G$ is finite, that $\psi:\X^G\to \Y$ is $D$-local, and that $U\subseteq \X^G$ is a $D$-local open set such that $\mu(U) > 1 -  \eta$ and such that $\psi|U$ is $\eta$-almost $L$-Lipschitz from $d_\X^{(D)}$ to $d_\Y$. Then there is a w$^\ast$-neighbourhood $\calO$ of $\mu$ such that
\[\psi^{\s_n}|\,\O(\calO,\s_n)\]
is $3\eta$-almost $(L|D|)$-Lipschitz from $d_\X^{(V_n)}$ to $d_\Y^{(V_n)}$ for all sufficiently large $n$.
\end{lem}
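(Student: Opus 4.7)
The plan is straightforward in outline: for $\bf{x}$ in a sufficiently small neighbourhood of good models, the pullback names $\Pi^{\s_n}_v(\bf{x})$ will lie in the $D$-local open set $U$ for all but a small fraction of $v \in V_n$; at those vertices we apply the $\eta$-almost $L$-Lipschitz property of $\psi|U$ pointwise, and at the remaining ones we bound $d_\Y$ using its diameter, which (as in the standing convention of Section~\ref{sec:model-spaces}) we may assume is at most $1$.

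First I would choose $\calO$ using lower semi-continuity. Since $U$ is $D$-local and open, its realization in $\X^G$ (the preimage of an open subset of $\X^D$ under the continuous coordinate projection) is open for the product topology, so $\nu \mapsto \nu(U)$ is lower semi-continuous on $\Pr(\X^G)$. Consequently
\[\calO := \big\{\nu \in \Pr(\X^G) : \nu(U) > 1 - \eta\big\}\]
is a w$^\ast$-neighbourhood of $\mu$ (using $\mu(U) > 1 - \eta$). For any $\bf{x} \in \O(\calO,\s_n)$, the identity
\[P^{\s_n}_{\bf{x}}(U) = \frac{1}{|V_n|}\big|\{v \in V_n : \Pi^{\s_n}_v(\bf{x}) \in U\}\big|\]
forces the set $A(\bf{x}) := \{v : \Pi^{\s_n}_v(\bf{x}) \in U\}$ to have size at least $(1-\eta)|V_n|$; this works uniformly in $n$.

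Given two such good models $\bf{x}, \bf{x}' \in \O(\calO,\s_n)$, put $A := A(\bf{x}) \cap A(\bf{x}')$, so that $|V_n \setminus A| < 2\eta|V_n|$. I would split the Hamming sum defining $d_\Y^{(V_n)}\big(\psi^{\s_n}(\bf{x}),\psi^{\s_n}(\bf{x}')\big)$ according to whether $v \in A$. Vertices outside $A$ contribute at most $2\eta$ using the diameter bound. On $A$, both pullback names lie in $U$, so the almost-Lipschitz hypothesis gives
\[d_\Y\big(\psi(\Pi^{\s_n}_v \bf{x}),\psi(\Pi^{\s_n}_v \bf{x}')\big) \leq \eta + L \cdot d_\X^{(D)}\big(\Pi^{\s_n}_v \bf{x}, \Pi^{\s_n}_v \bf{x}'\big).\]
Summing over $v \in A$ and dominating by the sum over all of $V_n$, then expanding $d_\X^{(D)}$ as an average over $g \in D$ and interchanging the two summations, the key observation is that each $\s_n^g \in \rm{Sym}(V_n)$ is a \emph{permutation}, so $v \mapsto \s_n^g(v)$ is a bijection; consequently the change of variables $w = \s_n^g(v)$ gives
\[\frac{1}{|V_n|}\sum_{v \in V_n}d_\X^{(D)}\big(\Pi^{\s_n}_v \bf{x}, \Pi^{\s_n}_v \bf{x}'\big) = d_\X^{(V_n)}(\bf{x},\bf{x}').\]
Adding the three contributions produces $d_\Y^{(V_n)}(\psi^{\s_n}(\bf{x}),\psi^{\s_n}(\bf{x}')) \leq 3\eta + L \cdot d_\X^{(V_n)}(\bf{x},\bf{x}')$, which is in particular at most $3\eta + L|D| \cdot d_\X^{(V_n)}(\bf{x},\bf{x}')$ as claimed.

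No single step is really an obstacle, but the point that most warrants care is the selection of $\calO$ via lower semi-continuity: it is the openness of $U$, rather than merely its having large measure, that permits a w$^\ast$-neighbourhood of $\mu$ to \emph{force} most pullback names into $U$ uniformly in $n$. The factor $|D|$ in the stated Lipschitz constant is strictly looser than what the argument yields; it presumably appears only to parallel the bound in Lemma~\ref{lem:good-approx-Ham-sum}, where the non-injectivity of the map $(g,h) \mapsto hg$ on $F \times D$ does introduce a genuine $|D|$-fold overcount that is avoided here precisely because sofic approximations act by permutations.
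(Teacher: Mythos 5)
Your proof is correct and follows essentially the same route as the paper: define $\calO$ by requiring $\nu(U) > 1-\eta$ (using the Portmanteau Theorem, i.e.\ lower semi-continuity from the openness of $U$), split the Hamming sum over $V_n$ according to whether both pullback names lie in $U$, and exploit that each $\s_n^g$ is a genuine permutation to collapse the double sum. Your observation about the constant is also accurate: since $\s_n^g \in \Sym(V_n)$ is a bijection, the change of variables yields the sharp bound $3\eta + L\,d_\X^{(V_n)}(\bf{x},\bf{x}')$, and the $|D|$ in the stated Lipschitz constant is indeed slack, present only to parallel Lemma~\ref{lem:good-approx-Ham-sum} where the map $(g,h)\mapsto hg$ on $F\times D$ is genuinely non-injective.
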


\begin{proof}
This is similar to the almost-Lipschitz estimate in the proof of Lemma~\ref{lem:good-approx-Ham-sum}.  Assume that $d_\Y$ has diamater at most $1$ for simplicity.  Let
\[\calO = \{\theta\in \Pr(\X^G):\ \theta(U) > 1 - \eta\},\]
which is w$^\ast$-open by the Portmanteau Theorem. Suppose that $\bf{x},\bf{x}' \in \O(\calO,\s_n)$ for some $n$.  Then the definition of $\calO$ implies that
\[|\{v \in V:\ \Pi^{\s_n}_v(\bf{x}) \not\in U\}| = P^{\s_n}_\bf{x}(\X^G\setminus U) < \eta,\]
and similarly for $\bf{x}'$.  It follows that
\begin{align*}
&d_\Y^{(V_n)}(\psi^{\s_n}(\bf{x}),\psi^{\s_n}(\bf{x}'))\\
&\leq \frac{1}{|V_n|}\sum_{\scriptsize{\begin{array}{cc}v \in V_n,\\ \Pi^{\s_n}_v(\bf{x}),\Pi^{\s_n}_v(\bf{x}') \in U\end{array}}}d_\Y\big(\psi(\Pi^{\s_n}_v(\bf{x})),\psi(\Pi^{\s_n}_v(\bf{x}'))\big) + 2\eta\\
&\leq \frac{1}{|V_n|}\sum_{\scriptsize{\begin{array}{cc}v \in V_n,\\ \Pi^{\s_n}_v(\bf{x}),\Pi^{\s_n}_v(\bf{x}') \in U\end{array}}}Ld_\X^{(D)}(\Pi^{\s_n}_v(\bf{x}),\Pi^{\s_n}_v(\bf{x}')) + 3\eta\\
&\leq L|D|d_\X^{(V_n)}(\bf{x},\bf{x}') + 3\eta.
\end{align*}
\end{proof}

Now suppose that $\Phi = \phi^G:(\X^G,\mu,S) \to (\Y^G,\nu,S)$ is a factor map, and that $d_\X$ and $d_\Y$ are compact generating metrics on $\X$ and $\Y$ with diameter at most $1$. The next proposition shows that AL approximations to $\phi$ approximately preserve good models.  Related facts can be found within the proofs of~\cite[Theorem 2.6]{KerLi11b} and~\cite[Proposition 3.4]{KerLi13}, which show that the Kerr-Li definition of sofic entropy is independent of an underlying choice of a `dynamically generating' sequence of bounded functions or of a `dynamically generating' pseudometric.

\begin{prop}\label{prop:approx-by-Lip-maps}
For every w$^\ast$-neighbourhood $\calN$ of $\nu$ there is an $\eta > 0$ with the following property.  If $\psi$ is an $\eta$-AL approximation to $\phi$ rel $(\mu,d_\X,d_\Y)$, then there is a w$^\ast$-neighbourhood $\calO$ of $\mu$ such that
\[\psi^{\s_n}\big(\O(\calO,\s_n)\big) \subseteq \O(\calN,\s_n)\]
for all sufficiently large $n$.
\end{prop}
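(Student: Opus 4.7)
The plan is to argue by duality against a sub-basis of weak$^\ast$-neighbourhoods of $\nu$ of the form
\[
\calN = \Big\{\theta \in \Pr(\Y^G):\ \Big|\int h\,\d\theta - \int h\,\d\nu\Big| < \k\Big\},
\]
where $h:\Y^G \to [0,1]$ is $F$-local and $1$-Lipschitz with respect to $d_\Y^{(F)}$ for some finite $F \subseteq G$ and some $\k > 0$. The entire proof reduces to showing that, for a suitable $\eta$ depending only on $\k$ (importantly, not on $F$ or $h$), one can build a weak$^\ast$-neighbourhood $\calO$ of $\mu$ so that good models $\bf{x} \in \O(\calO,\s_n)$ force $\int h\,\d P^{\s_n}_{\psi^{\s_n}(\bf{x})}$ to be within $\k$ of $\int h\,\d\nu$ once $n$ is large.

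The first step is to rewrite $\int h\,\d P^{\s_n}_{\psi^{\s_n}(\bf{x})} = \frac{1}{|V_n|}\sum_v h(\Pi^{\s_n}_v(\psi^{\s_n}(\bf{x})))$ and invoke Lemma~\ref{lem:psi-sig-compatible} to replace $h(\Pi^{\s_n}_v(\psi^{\s_n}(\bf{x})))$ with $h(\psi^F(\Pi^{\s_n}_v(\bf{x})))$ (using $F$-locality of $h$), at the cost of an $o(1)$ term as $n\to\infty$. This converts the quantity of interest into $\int (h\circ\psi^F)\,\d P^{\s_n}_\bf{x}$. It now suffices to show that this integral is close to $\int (h\circ \phi^F)\,\d\mu = \int h\,\d\nu$.

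The second step handles the error $\int h\circ\psi^F\,\d\mu - \int h\circ\phi^F\,\d\mu$ directly: by Lemma~\ref{lem:good-approx-Ham-sum}, $\psi^F$ is a $(3\sqrt{\eta})$-AL approximation to $\phi^F$ relative to $(\mu, d_\X, d_\Y^{(F)})$, so that $\int d_\Y^{(F)}(\phi^F,\psi^F)\,\d\mu < 3\sqrt{\eta}$; the $1$-Lipschitz property of $h$ then bounds this error by $3\sqrt{\eta}$. The third and main step handles $\int h\circ \psi^F\,\d P^{\s_n}_\bf{x} - \int h\circ\psi^F\,\d\mu$ in the spirit of Lemma~\ref{lem:pushfwd-approx}: letting $D$ be such that $\psi$ is $D$-local and $U_F \subseteq \X^{DF}$ be the open set produced in the proof of Lemma~\ref{lem:good-approx-Ham-sum} on which $\psi^F$ is $(3\sqrt{\eta})$-almost $(L|D|)$-Lipschitz, Lemma~\ref{lem:almost-Lip-with-Lip} gives that $(h\circ\psi^F)|U_F$ is $(3\sqrt{\eta})$-almost Lipschitz, and Lemma~\ref{lem:almost-Lip-near-Lip} produces a genuinely Lipschitz $(DF)$-local $f:\X^G \to [0,1]$ with $\|(h\circ\psi^F - f)|U_F\|_\infty \leq 3\sqrt{\eta}$. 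The neighbourhood
\[
\calO := \Big\{\g \in \Pr(\X^G):\ \g(U_F) > 1 - 3\sqrt{\eta}\ \hbox{and}\ \Big|\int f\,\d\g - \int f\,\d\mu\Big| < 3\sqrt{\eta}\Big\}
\]
is weak$^\ast$-open around $\mu$ by the Portmanteau Theorem (this is exactly why Definition~\ref{dfn:eta-approx} insists that $U$ be \emph{open}). Splitting the difference $\int h\circ\psi^F\,\d P^{\s_n}_\bf{x} - \int h\circ\psi^F\,\d\mu$ through the Lipschitz surrogate $f$, controlling the swap $h\circ\psi^F \leftrightarrow f$ on both sides by the defect on $U_F$, and applying the defining conditions of $\calO$, one bounds this difference by a constant multiple of $\sqrt{\eta}$.

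Combining the three steps, the total error is $O(\sqrt{\eta}) + o(1)$ as $n\to\infty$, which is less than $\k$ for all sufficiently large $n$ provided $\eta$ was chosen small in terms of $\k$ alone. The main technical obstacle is simply the bookkeeping: one must confirm that the threshold $\eta$ really depends only on $\k$ and not on the ancillary data $(D, U, L)$ of the particular approximation $\psi$, and that the auxiliary open set $U_F$ remains a weak$^\ast$-semicontinuity witness when transported from $\mu$ to empirical measures $P^{\s_n}_\bf{x}$. Both are accomplished by the estimates above exactly as in the proof of Lemma~\ref{lem:pushfwd-approx}, with the extra input of Lemma~\ref{lem:psi-sig-compatible} to pass from the image empirical distribution $P^{\s_n}_{\psi^{\s_n}(\bf{x})}$ back to the domain empirical distribution $P^{\s_n}_\bf{x}$.
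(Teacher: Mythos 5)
Your proof is correct, and it rests on the same two pillars as the paper's: Lemma~\ref{lem:psi-sig-compatible} to pass between $P^{\s_n}_{\psi^{\s_n}(\bf{x})}$ and $P^{\s_n}_{\bf{x}}$ at most vertices, and the Lipschitz-surrogate estimate from the proof of Lemma~\ref{lem:pushfwd-approx}. The difference lies in how these are combined. The paper works with sub-basic neighbourhoods of the form $\{\theta : \theta_E \in \calN_1\}$, invokes Lemma~\ref{lem:pushfwd-approx} as a black box to obtain $(\psi^G)_\ast(\calO) \subseteq \calN'$ for a \emph{shrunken} neighbourhood $\calN'$, and then uses the fact that $(P^{\s_n}_{\psi^{\s_n}(\bf{x})})_E$ agrees with $(\psi^E)_\ast P^{\s_n}_{\bf{x}}$ up to $o(1)$ in total variation, uniformly in $\bf{x}$. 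The shrinking from $\calN_1$ to $\calN'_1$ is exactly what is needed to absorb that $o(1)$ TV error when passing from $\calN'$ back to the target $\calN$. You instead work against the sub-basis of neighbourhoods defined by a single $F$-local $1$-Lipschitz test function $h$. Since that sub-basis is an inequality $|\int h\,\d\theta - \int h\,\d\nu| < \k$ with a built-in quantitative margin, the $o(1)$ error coming from Lemma~\ref{lem:psi-sig-compatible} is simply a scalar error in $\int h\,\d P^{\s_n}_{\psi^{\s_n}(\bf{x})}$ and can be absorbed directly; there is no need to introduce a shrunken intermediate neighbourhood. The price is that you inline the surrogate estimate of Lemma~\ref{lem:pushfwd-approx} (applied pointwise to the empirical measure $P^{\s_n}_{\bf{x}}$) rather than citing it as a black box, since the black-box statement is only qualitative. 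Both routes are sound and comparably long; yours is arguably cleaner because it avoids the somewhat ad hoc total-variation slack step, and it makes visible why $\eta$ depends only on $\k$ and not on $(D,U,L)$, which is the crux of the proposition.
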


\begin{proof}
It suffices to prove this for a sub-basis of w$^\ast$-neighbourhoods $\calN$ of $\nu$, so we may assume that
\[\calN = \big\{\theta \in \Pr(\Y^G):\ \theta_E \in \calN_1\big\}\]
for some finite $E\subseteq G$ and w$^\ast$-neighbourhood $\calN_1$ of $\nu_E$.

Now let $\calN_1' \subseteq \calN_1$ be another w$^\ast$-neighbourhood of $\nu_E$ with the property that any measure which lies sufficiently close to $\calN'_1$ in total variation must lie inside $\calN_1$.  This is possible because the total variation norm is stronger than the w$^\ast$-topology.  Let
\[\calN' := \big\{\theta \in \Pr(\Y^G):\ \theta_E \in \calN_1'\big\}.\]

Let $\eta > 0$ be given by Lemma~\ref{lem:pushfwd-approx} so that, for any $\eta$-AL approximation $\psi$ to $\phi$, there is a w$^\ast$-neighbourhood $\calO$ of $\mu$ such that
\[(\psi^G)_\ast(\calO) \subseteq \calN'.\]
We will show that this $\eta$ also has the property required for the present proposition, and that we can use the same $\calO$ for the function $\psi$.

Indeed, since $\psi$ is a local function, Lemma~\ref{lem:psi-sig-compatible} gives that
\[\Pi^{\s_n}_v(\psi^{\s_n}(\cdot))|_E = \psi^E(\Pi^{\s_n}_v(\cdot)) \quad \hbox{w.h.p. in}\ v.\]
Using this, for any $\bf{x} \in \X^{V_n}$ the definition of empirical measures gives
\[(P^{\s_n}_{\psi^{\s_n}(\bf{x})})_E = \frac{1}{|V_n|}\sum_{v \in V_n} \delta_{\Pi^{\s_n}_v(\psi^{\s_n}(\bf{x}))|_E}
= \frac{1}{|V_n|}\sum_{v \in V_n}\delta_{\psi^E(\Pi^{\s_n}_v(\bf{x}))} + o(1)
= (\psi^E)_\ast P_\bf{x}^{\s_n} + o(1)\]
as $n\to\infty$, where this approximation is in total variation and is uniform in $\bf{x}$. Therefore, if $\bf{x} \in \O(\calO,\s_n)$, then $P^{\s_n}_\bf{x} \in \calO$, and so
\[(\psi^G)_\ast P^{\s_n}_\bf{x} \in \calN', \quad \hbox{i.e.} \quad (\psi^E)_\ast P^{\s_n}_\bf{x} \in \calN'_1.\]
Once $(P^{\s_n}_{\psi^{\s_n}(\bf{x})})_E$ lies close enough to $(\psi^E)_\ast P^{\s_n}_\bf{x}$ in total variation, it lies inside $\calN_1$.  Since the total variation estimate above was uniform in $\bf{x}$, this gives that $P^{\s_n}_{\psi^{\s_n}(\bf{x})} \in \calN$ for all $\bf{x} \in \O(\calO,\s_n)$, for all sufficiently large $n$.
\end{proof}

We will sometimes need to compare different approximations to the same factor map.

\begin{lem}\label{lem:comparing-approxs}
Fix $\eta > 0$, and let $\psi$ and $\psi'$ be two $\eta$-AL approximations to $\phi$ rel $(\mu,d_\X,d_\Y)$.  Then there is a w$^\ast$-neighbourhood $\calO$ of $\mu$ such that
\[d_\Y^{(V)}\big(\psi^\s(\bf{x}),(\psi')^\s(\bf{x})\big) < 10\eta \quad \forall \bf{x} \in \O(\calO,\s)\]
for any map $\s:G\to\Sym(V)$.
\end{lem}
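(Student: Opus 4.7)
The plan is to reduce the lemma to controlling a single scalar integral. Using the definition of $\psi^\s$ and the empirical measure,
\[d_\Y^{(V)}\big(\psi^\s(\bf{x}),(\psi')^\s(\bf{x})\big) \;=\; \frac{1}{|V|}\sum_{v\in V} d_\Y\big(\psi(\Pi^\s_v \bf{x}),\psi'(\Pi^\s_v \bf{x})\big) \;=\; \int h\,\d P^\s_{\bf{x}},\]
where $h(x) := d_\Y(\psi(x),\psi'(x))$. So it suffices to arrange, for a suitable w$^\ast$-neighbourhood $\calO$ of $\mu$, that $\int h\,\d P^\s_{\bf{x}} < 10\eta$ whenever $P^\s_{\bf{x}} \in \calO$. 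Note $h$ depends only on $\psi$ and $\psi'$, not on $V$ or $\s$, so once $\calO$ is chosen the conclusion is uniform in $\s$.

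First I would collect the a priori facts about $h$. The triangle inequality and Definition~\ref{dfn:eta-approx}(i) applied to $\psi$ and $\psi'$ give $\int h\,\d\mu \le 2\eta$. Let $D$, $D'$ be the locality sets and $U \subseteq \X^G$, $U' \subseteq \X^G$ the open sets from Definition~\ref{dfn:eta-approx} for $\psi$, $\psi'$ respectively, and set $F := D \cup D'$ and $W := U \cap U'$. Then $W$ is an $F$-local open set with $\mu(W) > 1 - 2\eta$, the function $h$ is $F$-local, and on $W$ it is $2\eta$-almost Lipschitz from $d_\X^{(F)}$ to $\bbR$ (the Lipschitz constants of $\psi|U$ and $\psi'|U'$ combine, with an extra factor $|F|/\min(|D|,|D'|)$ accounting for the passage from $d_\X^{(D)}$, $d_\X^{(D')}$ to the common $d_\X^{(F)}$).

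Next I would replace $h$ by a continuous surrogate. Lemma~\ref{lem:almost-Lip-near-Lip} yields an $L$-Lipschitz $g:\X^G\to\bbR$ (with respect to $d_\X^{(F)}$) such that $|g-h|\le 2\eta$ throughout $W$; the construction keeps $g$ $F$-local. Truncating to $\tilde g := \max(0,\min(1,g))$ preserves the $F$-locality and the Lipschitz constant, remains within $2\eta$ of $h$ on $W$ (since $h$ is $[0,1]$-valued), and is globally bounded. Crucially $\tilde g$ is bounded $F$-local and continuous, so $\theta\mapsto\int \tilde g\,\d\theta$ is weak$^\ast$-continuous on $\Pr(\X^G)$, and $\{\theta:\theta(W)>1-2\eta\}$ is w$^\ast$-open by the Portmanteau Theorem. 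Both sets contain $\mu$, so
\[\calO := \Big\{\theta\in\Pr(\X^G):\ \theta(W)>1-2\eta \ \text{and}\ \Big|\textstyle\int\tilde g\,\d\theta - \int\tilde g\,\d\mu\Big|<\eta\Big\}\]
is a legitimate w$^\ast$-neighbourhood of $\mu$.

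Finally, for any $\bf{x}\in\O(\calO,\s)$ (and any $V$, $\s$), split the integral of $h$ against $P^\s_{\bf{x}}$ into its $W$ and $W^c$ parts, use $h\le \tilde g + 2\eta$ on $W$ and $h\le 1$ on $W^c$, push $\int\tilde g\,\d P^\s_{\bf{x}}$ to $\int\tilde g\,\d\mu$ using the definition of $\calO$, and finally estimate $\int\tilde g\,\d\mu$ in terms of $\int h\,\d\mu + \int_{W}|\tilde g - h|\,\d\mu + \mu(W^c)$. Adding the resulting contributions gives a bound of the form $C\eta$ with $C$ an absolute constant; a careful choice of the two tolerances defining $\calO$ trims this down to $10\eta$.

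The principal difficulty is that $h$ is only measurable, not continuous, so empirical-measure convergence against $h$ is not automatic from weak$^\ast$ closeness. Everything hinges on being able to replace $h$ by a bounded continuous local surrogate $\tilde g$ with good $L^1(\mu)$-behaviour, which is exactly what the common almost-Lipschitz-on-a-large-open-set structure of $\psi$ and $\psi'$ provides. The remaining steps are bookkeeping of constants.
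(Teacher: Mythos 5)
Your proposal is correct and follows essentially the same route as the paper's own proof: identify $h = d_\Y(\psi,\psi')$, observe $\int h\,\d\mu < 2\eta$ and that $h$ is $2\eta$-almost Lipschitz on the open set $W = U\cap U'$ with $\mu(W) > 1-2\eta$, replace $h$ by a truly Lipschitz $[0,1]$-valued local surrogate via Lemma~\ref{lem:almost-Lip-near-Lip}, and use the Portmanteau Theorem plus the surrogate's weak$^\ast$-continuity to define $\calO$. The only cosmetic differences are that the paper replaces $D,D'$ by their union at the outset (avoiding your explicit conversion factor between $d_\X^{(D)}$ and $d_\X^{(F)}$) and uses the one-sided tolerance $\int f\,\d\theta < 6\eta$ in $\calO$, which, as you note, is the adjustment needed to land exactly on the constant $10$.
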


\begin{proof}
Recall that we assume $d_\Y$ has diameter at most $1$.

Let $D$ and $U$ (respectively $D'$ and $U'$) be as in Definition~\ref{dfn:eta-approx} for the map $\psi$ (respectively $\psi'$). By replacing each of $D$ and $D'$ with their union, we may assume they are equal.  Having done so, both $U$ and $U'$ are $D$-local.  Consider the function $h:\X^D\to \bbR$ defined by
\[h(x) := d_\Y(\psi(x),\psi'(x)).\]
This is $D$-local, and an easy check shows that $h|U\cap U'$ is $(2\eta)$-almost Lipschitz according to $d_\X^{(D)}$.

Invoking Lemma~\ref{lem:almost-Lip-near-Lip}, let $f:\X^G \to [0,1]$ be a $D$-local function which is truly Lipschitz according to $d_\X^{(D)}$ and has the property that
\[\|(h - f)|U\cap U'\|_\infty \leq 2\eta.\]
It follows that
\begin{multline*}
\int f\,\d\mu \leq \mu(\X^G\setminus (U\cap U')) + 2\eta + \int_{U\cap U'}h\,\d\mu\\ < 4\eta + \int d_\Y(\phi(x),\psi(x))\,\mu(\d x) + \int d_\Y(\phi(x),\psi'(x))\,\mu(\d x) < 6\eta,
\end{multline*}
using~(\ref{eq:int-approx}), the definition of $h$ and the triangle inequality for $d_\Y$.

Now let
\[\calO := \Big\{\theta\in \Pr(\X^G):\ \theta(U\cap U') > 1 - 2\eta\ \hbox{and}\ \int f\,\d\theta < 6\eta\Big\}.\]
This is a w$^\ast$-open set which contains $\mu$ by construction.  The function $f$ was introduced for the sake of defining $\calO$, since $h$ itself may not be strictly continuous and so cannot be used to define a w$^\ast$-open set in the same way.  For $\bf{x} \in \O(\calO,\s)$, it follows that
\[d_\Y^{(V)}\big(\psi^\s(\bf{x}),(\psi')^\s(\bf{x})\big) = \int h\,\d P_{\bf{x}}^\s \leq \int f\,\d P_{\bf{x}}^\s + 2\eta + P_{\bf{x}}^\s(\X^G\setminus (U\cap U')) < 10\eta.\]
\end{proof}

\subsection{Formulations in terms of sequences}

In many of the arguments in Part 2, instead of working with a single AL-approximation to a factor map, it will be more convenient to work with a sequence of increasingly good approximations.  We therefore make the following relative of Definition~\ref{dfn:eta-approx}.

\begin{dfn}\label{dfn:eta-approx-seq}
An \textbf{almost Lipschitz} (or \textbf{AL}) \textbf{approximating sequence for $\phi$ rel $(\mu,d_\X,d_\Y)$} is a sequence $(\psi_m)_{m\geq 1}$ such that each $\psi_m$ is an $\eta_m$-AL approximation to $\phi$ rel $(\mu,d_\X,d_\Y)$ for some sequence $\eta_m \downarrow 0$. This situation is denoted by
\[\psi_m \stackrel{\rm{aL}}{\to} \phi \quad \hbox{rel}\ (\mu,d_\X,d_\Y).\]
\end{dfn}

As before, we sometimes drop the qualifier `rel $(\mu,d_\X,d_\Y)$' when these data are clear.

We now reformulate a few of the results above in terms of AL approximating sequences.  This will assist in their application later in the paper.  We will rely on the standard fact that the w$^\ast$-topology on $\Pr(\X^G)$ is first countable, since it is metrizable.  This will be used again later without further explanation.

We start with the reformulation of Lemma~\ref{lem:pushfwd-approx}.

\begin{cor}\label{cor:pushfwd-approx}
Let $\mu$, $\nu$ and $\phi$ be as above, let $(\psi_k)_{k\geq 1}$ be an AL approximating sequence to $\phi$ rel $(\mu,d_\X,d_\Y)$, and let $\calO_1 \supseteq \calO_2 \supseteq \dots$ and $\calN_1 \supseteq \calN_2 \supseteq \dots$ be bases for the w$^\ast$ topologies at $\mu$ and $\nu$ respectively.  Then whenever the sequence $(k_n)_{n\geq 1}$ grows sufficiently slowly, it holds that whenever the sequence $(m_n)_{n\geq 1}$ grows sufficiently slowly, we have
\[(\psi_{k_n}^G)_\ast(\calO_n) \subseteq \calN_{m_n}\]
for all sufficiently large $n$.
\end{cor}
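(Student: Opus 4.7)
The plan is to feed Lemma~\ref{lem:pushfwd-approx} one neighbourhood $\calN_j$ at a time, turn the output into a two-parameter family of w$^\ast$-neighbourhoods of $\mu$, and then diagonalise to respect the nested quantifiers.

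For each $j$, Lemma~\ref{lem:pushfwd-approx} yields a constant $\eta(j) > 0$ such that every $\eta(j)$-AL approximation $\psi$ to $\phi$ admits some w$^\ast$-neighbourhood of $\mu$ whose image under $(\psi^G)_\ast$ lies in $\calN_j$. Because $(\calN_j)$ is decreasing, I may replace $\eta(j)$ by $\min_{j' \leq j} \eta(j')$ and assume it is non-increasing, and by passing to a subsequence of indices if needed, $\eta(j) \downarrow 0$. Since the $\eta_k$ associated with $(\psi_k)$ also tend to $0$, there exists for each $j$ a least integer $K(j)$ with $\eta_k \leq \eta(j)$ whenever $k \geq K(j)$, and one arranges that $K(j) \uparrow \infty$. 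For every pair $(k,j)$ with $k \geq K(j)$, let $\Q_{k,j}$ denote the specific w$^\ast$-neighbourhood of $\mu$ produced by applying the lemma to $\psi_k$ and $\calN_j$, so that $(\psi_k^G)_\ast \Q_{k,j} \subseteq \calN_j$. Since $(\calO_n)$ is a basis at $\mu$, there is a least $N(k,j)$ with $\calO_{N(k,j)} \subseteq \Q_{k,j}$, and hence $(\psi_k^G)_\ast \calO_n \subseteq \calN_j$ for all $n \geq N(k,j)$.

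The core of the argument is the choice of slow sequences. The dependence of $\Q_{k,j}$ on $k$, via $\psi_k$, is opaque, so $k \mapsto N(k,j)$ need not be monotone or quantitatively controlled; I would therefore replace it by its running maximum $\tilde N(K,j) := \max\{N(k,j) : K(j) \leq k \leq K\}$, which is non-decreasing in $K$ and finite on $[K(j),\infty)$. A standard diagonal construction then produces a non-decreasing $k_n^\circ \uparrow \infty$ such that $\tilde N(k_n^\circ, j) \leq n$ for every fixed $j$ and all sufficiently large $n$. This $k_n^\circ$ is the first of our two slow sequences. Given any non-decreasing $k_n \leq k_n^\circ$ with $k_n \uparrow \infty$, set
\[
m_n^\circ := \sup\bigl\{m : K(m) \leq k_n \text{ and } N(k_n, m) \leq n\bigr\},
\]
and observe that $m_n^\circ \uparrow \infty$: for any fixed $m$, $K(m) \leq k_n$ eventually because $k_n \to \infty$, and $N(k_n, m) \leq \tilde N(k_n^\circ, m) \leq n$ eventually by the choice of $k_n^\circ$.

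To finish, any non-decreasing $m_n \leq m_n^\circ$ with $m_n \uparrow \infty$ satisfies, for $n$ large, both $k_n \geq K(m_n)$ and $n \geq N(k_n, m_n)$. The first makes $\psi_{k_n}$ an $\eta(m_n)$-AL approximation and justifies $\Q_{k_n, m_n}$ being defined; the second gives $\calO_n \subseteq \Q_{k_n, m_n}$. Pushing forward, $(\psi_{k_n}^G)_\ast \calO_n \subseteq \calN_{m_n}$, as required. The main obstacle is the diagonal construction of $k_n^\circ$: Lemma~\ref{lem:pushfwd-approx} provides no quantitative control on how $\Q_{k,j}$ depends on $k$, so $k_n^\circ$ must be chosen slowly enough to tame $\tilde N(\cdot, j)$ simultaneously for every fixed $j$, and only then is there room left for $m_n^\circ$ to grow to infinity as well.
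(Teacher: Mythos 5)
Your overall strategy is the same as the paper's: apply Lemma~\ref{lem:pushfwd-approx} to each $\calN_j$, extract thresholds $K(j)$ and indices $N(k,j)$, regularise $N$, and choose $(k_n)$ and then $(m_n)$ by a diagonal argument. However, there is a gap at the end. You correctly replaced $N(\cdot,j)$ by its running maximum $\tilde N$ in the $k$-variable, but you left $N(k,\cdot)$ uncontrolled in the $j$-variable. Lemma~\ref{lem:pushfwd-approx} gives no comparison between $\Q_{k,j}$ and $\Q_{k,j'}$ for $j'<j$, so $j\mapsto N(k,j)$ need not be non-decreasing, and therefore the set $\{m:\ K(m)\leq k_n\ \text{and}\ N(k_n,m)\leq n\}$ defining $m_n^\circ$ need not be downward closed. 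Consequently $m_n\leq m_n^\circ$ does \emph{not} yield $n\geq N(k_n,m_n)$, which is exactly the claim your closing paragraph rests on. (The containment $(\psi_{k_n}^G)_\ast\calO_n\subseteq\calN_{m_n}$ does still hold, but via a different chain: $N(k_n,m_n^\circ)\leq n$ gives $(\psi_{k_n}^G)_\ast\calO_n\subseteq\calN_{m_n^\circ}\subseteq\calN_{m_n}$, using that $(\calN_j)$ is decreasing.) The paper avoids all this by replacing $N(k,m)$ with $\max\{N(k',m'):\ k'\leq k,\ m'\leq m\}$, i.e.\ a running maximum over \emph{both} indices, so that $N(k_n,m_n)\leq N(s_n,s_n)\leq n$ is automatic.

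Two smaller issues. First, to arrange $\eta(j)\downarrow 0$ you should simply shrink $\eta(j)$ directly (permissible because an $\eta$-AL approximation is an $\eta'$-AL approximation for every $\eta'>\eta$, so Lemma~\ref{lem:pushfwd-approx}'s conclusion persists when $\eta$ is decreased); ``passing to a subsequence'' of a non-increasing sequence bounded below by some $c>0$ still leaves the limit equal to $c$. Second, your $m_n^\circ$ is not obviously non-decreasing in $n$, which the paper's definition of ``grows sufficiently slowly'' requires of the bounding sequence; since you show $m_n^\circ\to\infty$, this is repaired by replacing it with $\min_{j\geq n}m_j^\circ$.
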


\begin{rmk}
The conclusion here needs to be parsed carefully: both sequences $(k_n)_{n\geq 1}$ and $(m_n)_{n\geq 1}$ must be chosen to grow sufficiently slowly, but the bound on the growth of the second sequence may depend on the choice of the first sequence. \fin
\end{rmk}

\begin{proof}
The desired conclusion is not disrupted if we change $\calN_1$, so let us assume that $\calN_1 = \Pr(\Y^G)$.

Each $\psi_k$ is an $\eta_k$-AL approximation to $\phi$ for some parameters $\eta_k \downarrow 0$.  Therefore, for each $m$, Lemma~\ref{lem:pushfwd-approx} gives some $K(m)$ such that for every $k\geq K(m)$ there is an $N(k,m)$ for which
\[(\psi_k^G)_\ast(\calO_n) \subseteq \calN_m \quad \forall n\geq N(k,m).\]
Since $\calN_1 = \Pr(\Y^G)$, we may take $K(1) = 1$ and $N(k,1) = 1$ for all $k$.

By replacing each $N(k,m)$ with the value
\[\max\{N(k',m'):\ 1 \leq k' \leq k,\ 1 \leq m' \leq m\},\]
we may also assume that
\[N(k_1,m_1) \leq N(k_2,m_2) \quad \hbox{whenever}\ k_1 \leq k_2\ \hbox{and}\ m_1 \leq m_2.\]

Now for each $n$ define
\[s_n := \max\{s:\ N(s,s) \leq n\}.\]\
This is well-defined for all $n\geq 1$ because we have assumed that ${N(1,1) = 1}$. Clearly $s_1 \leq s_2 \leq \dots$ and these values tend to $\infty$.

Finally, assume that $(k_n)_{n\geq 1}$ is a sequence tending to $\infty$ for which $k_n \leq s_n$ for all $n$.   Having done so, assume that $(m_n)_{n\geq 1}$ is a sequence tending to $\infty$ which grows slowly enough that
\[m_n \leq s_n \quad \hbox{and} \quad K(m_n) \leq k_n \quad \forall n.\]
Then for any $n$ we have $k_n \geq K(m_n)$ and also
\[k_n,m_n \leq s_n \quad \Longrightarrow \quad N(k_n,m_n) \leq N(s_n,s_n) \leq n.\]
Therefore
\[(\psi_{k_n}^G)_\ast(\calO_n) \subseteq (\psi_{k_n}^G)_\ast(\calO_{N(k_n,m_n)}) \subseteq \calN_{m_n}.\]
\end{proof}

Proposition~\ref{prop:approx-by-Lip-maps} has an analogous reformulation in terms of AL approximating sequences.

\begin{cor}\label{cor:approx-by-Lip-maps}
Let $\mu$, $\nu$ and $\phi$ be as above, let $(\psi_k)_{k\geq 1}$ be an AL approximating sequence to $\phi$ rel $(\mu,d_\X,d_\Y)$, and let $\calO_1 \supseteq \calO_2 \supseteq \dots$ and $\calN_1 \supseteq \calN_2 \supseteq \dots$ be bases for the w$^\ast$ topologies at $\mu$ and $\nu$ respectively.  Then whenever the sequence $(k_n)_{n\geq 1}$ grows sufficiently slowly, it holds that whenever the sequence $(m_n)_{n\geq 1}$ grows sufficiently slowly, we have
\[\psi_{k_n}^{\s_n}\big(\O(\calO_n,\s_n)\big) \subseteq \O(\calN_{m_n},\s_n)\]
for all sufficiently large $n$.
\end{cor}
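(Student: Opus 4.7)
The plan is to mimic the proof of Corollary~\ref{cor:pushfwd-approx} step for step, simply substituting Proposition~\ref{prop:approx-by-Lip-maps} for Lemma~\ref{lem:pushfwd-approx}. As there, the statement is not disturbed if we take $\calN_1 = \Pr(\Y^G)$, so let us do so.

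First, for each $m\geq 1$, Proposition~\ref{prop:approx-by-Lip-maps} supplies some $\eta(m) > 0$ such that whenever $\psi$ is an $\eta(m)$-AL approximation to $\phi$ there is a w$^\ast$-neighbourhood $\calO(\psi,m)$ of $\mu$ and an integer $N(\psi,m)$ with
\[\psi^{\s_n}\big(\O(\calO(\psi,m),\s_n)\big) \subseteq \O(\calN_m,\s_n) \quad \forall n\geq N(\psi,m).\]
Since the $\psi_k$ are $\eta_k$-AL approximations with $\eta_k \downarrow 0$, there is for each $m$ some $K(m)$ such that every $\psi_k$ with $k\geq K(m)$ qualifies; our normalization $\calN_1 = \Pr(\Y^G)$ lets us take $K(1)=1$. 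For each admissible pair $(k,m)$, use that $\calO_1 \supseteq \calO_2 \supseteq \dots$ is a neighbourhood basis at $\mu$ to pick $n_0(k,m)$ with $\calO_n \subseteq \calO(\psi_k,m)$ for all $n\geq n_0(k,m)$, and set
\[M(k,m) := \max\{N(\psi_k,m),\,n_0(k,m)\}.\]
Replacing $M(k,m)$ with $\max\{M(k',m'):k'\leq k,\,m'\leq m\}$, we may assume $M$ is non-decreasing in each coordinate. Then for any $n\geq M(k,m)$,
\[\psi_k^{\s_n}\big(\O(\calO_n,\s_n)\big) \subseteq \psi_k^{\s_n}\big(\O(\calO(\psi_k,m),\s_n)\big) \subseteq \O(\calN_m,\s_n).\]

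Now, exactly as in the proof of Corollary~\ref{cor:pushfwd-approx}, define $s_n := \max\{s:\,M(s,s) \leq n\}$; this is well-defined and tends to $\infty$. If $(k_n)_{n\geq 1}$ is any sequence tending to infinity with $k_n \leq s_n$, and then $(m_n)_{n\geq 1}$ is any sequence tending to infinity with $m_n \leq s_n$ and $K(m_n) \leq k_n$, the monotonicity of $M$ gives $M(k_n,m_n) \leq M(s_n,s_n) \leq n$ for every $n$, and the displayed inclusion above becomes the conclusion of the corollary.

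The only real issue is the nested diagonal bookkeeping: the quality parameter controlling the $m_n$ depends on $k_n$ through $K(m_n) \leq k_n$, so the two majorization conditions must be chosen in the stated order. Imposing monotonicity of $M$ by hand and then using a single majorant $s_n$ for both sequences is the same device used for Corollary~\ref{cor:pushfwd-approx}, and it handles the dependency cleanly; there is no further obstacle.
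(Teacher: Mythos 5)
Your proposal is correct and is exactly the argument the paper intends: the paper's proof of this corollary is the single sentence that it follows from Proposition~\ref{prop:approx-by-Lip-maps} ``just as Corollary~\ref{cor:pushfwd-approx} follows from Lemma~\ref{lem:pushfwd-approx}'', and you have carried out that substitution faithfully, including the normalization $\calN_1 = \Pr(\Y^G)$, the monotonicity adjustment of the threshold function, and the diagonal majorant $s_n$.
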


\begin{proof}
This follows from Proposition~\ref{prop:approx-by-Lip-maps} just as Corollary~\ref{cor:pushfwd-approx} follows from Lemma~\ref{lem:pushfwd-approx}.
\end{proof}

It will also be convenient to have a version of Lemma~\ref{lem:psi-sig-compatible} in terms of sequences.

\begin{cor}\label{cor:psi-sig-compatible}
Let $\S$ be a sofic approximation, let $F_1 \subseteq F_2 \subseteq \cdots$ be finite subsets of $G$, and let $\psi_1$, $\psi_2$, \dots be a sequence of local functions from $\X^G$ to $\Y$.  Then it holds that
\[\Pi^{\s_n}_v(\psi_{k_n}^{\s_n}(\cdot))|_{F_{m_n}} = \psi_{k_n}^{F_{m_n}}(\Pi^{\s_n}_v(\cdot)) \quad \hbox{w.h.p. in}\ v\ \hbox{as}\ n\to\infty\]
for any two sequences $k_1 \leq k_2 \leq \dots$ and $m_1 \leq m_2 \leq \dots$ which both grow sufficiently slowly.
\end{cor}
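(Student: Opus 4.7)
The proof is a standard diagonalization from Lemma~\ref{lem:psi-sig-compatible}, exactly parallel to how Corollary~\ref{cor:pushfwd-approx} is derived from Lemma~\ref{lem:pushfwd-approx}. Since each $\psi_k$ is local by hypothesis and each $F_m$ is finite, Lemma~\ref{lem:psi-sig-compatible} applies to every pair $(\psi_k,F_m)$.

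First I would record the fixed-parameter input. For each pair of positive integers $(k,m)$, Lemma~\ref{lem:psi-sig-compatible} yields an integer $N(k,m)$ such that for all $n \geq N(k,m)$ the proportion of $v \in V_n$ for which
\[\Pi^{\s_n}_v(\psi_k^{\s_n}(\bf{x}))|_{F_m} = \psi_k^{F_m}(\Pi^{\s_n}_v(\bf{x})) \quad \forall \bf{x} \in \X^{V_n}\]
is at least $1 - 1/(k+m)$. By replacing $N(k,m)$ with $\max\{N(k',m') : k' \leq k,\ m' \leq m\}$, I may assume $N$ is non-decreasing in each argument separately, without changing the truth of the property.

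Next I would define the majorizing sequence
\[s_n := \max\{s \geq 1 : N(s,s) \leq n\},\]
which is well-defined for all sufficiently large $n$ (set $s_n := 1$ for small $n$), non-decreasing, and satisfies $s_n \uparrow \infty$. I then declare that $(k_n)$ and $(m_n)$ grow \emph{sufficiently slowly} if $k_n,m_n \to \infty$ and $k_n, m_n \leq s_n$ for every $n$; a single bounding sequence suffices because the parameters $k$ and $m$ enter the conclusion symmetrically. Monotonicity of $N$ then gives
\[N(k_n, m_n) \leq N(s_n, s_n) \leq n,\]
so the identity above holds at stage $n$ with parameters $(k_n,m_n)$ for a proportion of $v$ which is at least $1 - 1/(k_n + m_n) \to 1$, as required.

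There is essentially no obstacle here beyond the bookkeeping of the diagonal argument. The only mild point to check is that the conclusion asks for $(k_n)$ and $(m_n)$ to be constrained \emph{jointly} rather than in the nested fashion of Corollary~\ref{cor:pushfwd-approx}, but this is handled automatically by using the diagonal threshold $N(s,s)$ in the definition of $s_n$.
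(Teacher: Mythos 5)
Your proof is correct and takes essentially the same diagonalization approach as the paper. The only cosmetic difference is that you invoke Lemma~\ref{lem:psi-sig-compatible} once per pair $(k,m)$ and obtain a threshold $N(k,m)$ with tolerance $1/(k+m)$, whereas the paper invokes it over all pairs $k,m\leq\ell$ simultaneously to get a single threshold $N(\ell)$ with tolerance $2^{-\ell}$; both reduce to the same slowly-growing-sequence extraction, and your definition of $s_n$ matches the paper's $\max(\{1\}\cup\{\ell:N(\ell)\leq n\})$ in spirit.
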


\begin{proof}
For each integer $\ell \geq 1$, Lemma~\ref{lem:psi-sig-compatible} gives some $N(\ell) \in \bbN$ such that
\begin{multline*}
\Big|\Big\{v \in V_n:\ \Pi^{\s_n}_v(\psi_k^{\s_n}(\cdot))|_{F_m} = \psi_k^{F_m}(\Pi^{\s_n}_v(\cdot))\ \forall k,m \in \{1,2,\dots,\ell\}\Big\}\Big|\\ \geq (1 - 2^{-\ell})|V_n| \quad \forall n \geq N(\ell).
\end{multline*}
Clearly we may assume that $N(1) \leq N(2) \leq \dots$.  Now the desired conclusion holds provided
\[k_n,m_n \leq \max(\{1\} \cup \{\ell:\ N(\ell) \leq n\}) \quad \forall n.\]
\end{proof}

\subsection{A categorial point of view}

Section 3 has shown how a metric $G$-process $(\X^G,\mu,S,d)$ may be converted into the sequences of good-model-spaces $\O(\calO,\s_n)$ for $n\geq 1$ and w$^\ast$-neighbourhoods $\calO$ of $\mu$.  Then, Section 4 has shown how a factor map
\[\Phi = \phi^G:(\X^G,\mu,S,d_\X) \to (\Y^G,\nu,S,d_\Y)\]
may be converted into the maps
\[\psi^{\s_n}:\X^{V_n}\to \Y^{V_n},\]
where $\psi$ is an $\eta$-AL approximation to $\phi$ for some small $\eta$.  These maps respect the subsets of good models in the sense of Proposition~\ref{prop:approx-by-Lip-maps}.

One can describe all this work as setting up a functor from the category of metric $G$-processes to another category.  The target category here should have objects that are families of sequences of subsets $\X^{V_n}$, such as our sets $\O(\calO,\s_n)$, or possibly equivalence classes of such families under a kind of `asymptotic equivalence'.  The morphisms should be (equivalence classes of) families of sequences of maps $\X^{V_n}\to \Y^{V_n}$ which respect those sequences of subsets, such as the maps $\psi^{\s_n}$ for the possible choices of $\eta$-AL approximation $\psi$ to $\phi$ as $\eta \to 0$.

\part{Measures on model spaces and associated entropies}

\section{Convergence of measures on model spaces}\label{sec:loc-and-quench}

\subsection{Local weak$^\ast$ convergence and distributions on measures}

Let $\S = (\s_n)_{n\geq 1}$ be a sofic approximation to $G$, and let $(\X^G,\mu,S,d)$ be a metric $G$-process.  It will be clear that all the notions and results of this subsection depend on $d$, as well as on the process $(\X^G,\mu,S)$.

Suppose also that $\mu_n \in \Pr(\X^{V_n})$ for each $n \geq 1$.

\begin{dfn}\label{dfn:localweak}
The sequence $(\mu_n)_{n\geq 1}$ \textbf{locally weak$^\ast$ converges} to $\mu$ if for every w$^\ast$-neighbourhood $\calO$ of $\mu$ it is the case that
\[(\Pi^{\s_n}_v)_\ast\mu_n \in \calO \quad \hbox{w.h.p. in}\ v\ \hbox{as}\ n\to\infty.\]
This is denoted by $\mu_n \lws \mu$.
\end{dfn}

That is, once $n$ is large, the local marginals of $\mu_n$ resemble those of $\mu$ around most vertices of $V_n$.  This kind of convergence depends on $d$ through the resulting w$^\ast$ topology on $\Pr(\X^G)$.

Bernoulli shifts give the obvious examples.

\begin{lem}\label{lem:Bern-q}
If $(\X^G,\nu^{\times G},S)$ is a Bernoulli process over $G$ and $d$ is any choice of compact generating metric on $\X$, then $\nu^{\times V_n} \lws \nu^{\times G}$. \qed
\end{lem}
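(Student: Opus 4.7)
The plan is to verify the definition directly by exploiting the product structure of $\nu^{\times V_n}$ together with the near-injectivity of the coordinate maps $g \mapsto \s_n^g(v)$ guaranteed by the sofic property.

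First, I would reduce to a sub-basis of w$^\ast$-neighbourhoods. Since the w$^\ast$-topology on $\Pr(\X^G)$ is generated by neighbourhoods of the form
\[\calO = \{\theta \in \Pr(\X^G):\ \theta_F \in \calO'\},\]
where $F \subseteq G$ is finite and $\calO'$ is a w$^\ast$-neighbourhood of $\nu^{\times F}$ in $\Pr(\X^F)$, it suffices to verify local weak$^\ast$ convergence against such $\calO$. The advantage is that one can compute the relevant marginal exactly rather than in the limit.

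Second, I would identify the $F$-marginal of $(\Pi^{\s_n}_v)_\ast \nu^{\times V_n}$. By the definition of $\Pi^{\s_n}_v$, this is precisely the distribution of the tuple $(x_{\s_n^g(v)})_{g \in F}$ when $\bf{x} = (x_w)_{w \in V_n}$ is sampled from $\nu^{\times V_n}$. The key observation is that if the map $g \mapsto \s_n^g(v)$ happens to be injective on $F$, then these $|F|$ coordinates are distinct and hence jointly i.i.d.\ $\sim \nu$, so the marginal equals $\nu^{\times F}$ \emph{exactly} (not merely approximately). In that case $(\Pi^{\s_n}_v)_\ast \nu^{\times V_n} \in \calO$ automatically.

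Third, I would invoke the sofic approximation properties to see that injectivity on $F$ holds w.h.p.\ in $v$. For any fixed distinct $g,h \in F$, applying the approximate composition property~(\ref{eq:sofic1}) to the pair $(h^{-1},g)$ shows that $\s_n^{h^{-1}}\s_n^g(v) = \s_n^{h^{-1}g}(v)$ w.h.p.\ in $v$, while the freeness property~(\ref{eq:sofic2}) applied to $h^{-1}g \neq e_G$ gives $\s_n^{h^{-1}g}(v) \neq v$ w.h.p.; combining these and using that $\s_n^{h^{-1}}$ is a permutation, we conclude $\s_n^g(v) \neq \s_n^h(v)$ w.h.p. Taking a union bound over the finitely many pairs $(g,h)$ with $g \neq h$ in $F$ shows that $g \mapsto \s_n^g(v)$ is injective on $F$ w.h.p.\ in $v$, which combined with the second step completes the verification.

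There is no real obstacle: the entire proof reduces to the exact identification of the $F$-marginal via independence, with the sofic conditions supplying injectivity for free. The only subtlety worth care is making sure one reduces to basic cylinder neighbourhoods before trying to read off the marginal, since without that reduction one would be forced to argue approximately rather than exactly.
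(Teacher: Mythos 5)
Your proof is correct in its overall structure and is essentially the only natural verification: reduce to cylinder neighbourhoods, use exact identification of the $F$-marginal via independence when $g\mapsto\s_n^g(v)$ is injective on $F$, and supply that injectivity from the sofic axioms. The paper states this lemma without proof precisely because this computation is considered immediate, and indeed the paper records the supporting injectivity fact in passing in Subsection~\ref{subs:defs} (``this map is a bijection w.h.p.\ in $v$''). One small wobble in your third step deserves attention: from $\s_n^{h^{-1}}\s_n^g(v)\neq v$ and the fact that $\s_n^{h^{-1}}$ is a permutation you may only conclude $\s_n^g(v)\neq(\s_n^{h^{-1}})^{-1}(v)$, and a sofic approximation is under no obligation to satisfy $\s_n^{h^{-1}}=(\s_n^h)^{-1}$ or even $\s_n^{e}=\mathrm{id}$. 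To land on $\s_n^g(v)\neq\s_n^h(v)$ cleanly, apply~(\ref{eq:sofic1}) to the pair $(h,h^{-1}g)$ to get $\s_n^h(\s_n^{h^{-1}g}(v))=\s_n^g(v)$ w.h.p., use~(\ref{eq:sofic2}) to get $\s_n^{h^{-1}g}(v)\neq v$ w.h.p., and then injectivity of $\s_n^h$ gives $\s_n^g(v)=\s_n^h(\s_n^{h^{-1}g}(v))\neq\s_n^h(v)$ w.h.p.\ for each fixed pair, after which your union bound closes the argument.
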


Definition~\ref{dfn:localweak} and some relatives have an established role in statistical physics.  They appear naturally in analyses of the asymptotic behaviour of some classical statistical physics models, such as the Ising model, over sequences of sparse graphs.  In case those underlying graphs converge to a limiting infinite graph in a suitable sense, one can ask whether the Gibbs measures constructed over them converge to a Gibbs measure over that infinite graph.  See, for instance,~\cite[Subsection 2.2]{MontMosSly12}, where local weak$^\ast$ convergence is called `[weak] local convergence in probability'.  Such convergence for sequences of measures is in much the same spirit as Benjamini-Schramm convergence for sparse random graphs themselves~\cite{BenSch01}.

The next definition connects local weak$^\ast$ convergence and model spaces.

\begin{dfn}\label{dfn:approxs}
The sequence $(\mu_n)_{n\geq 1}$ \textbf{quenched converges} to $\mu$ if
\begin{itemize}
\item[i)] $\mu_n \lws \mu$, and
\item[ii)] $\mu_n(\O(\calO,\s_n)) \to 1$ as $n\to\infty$ for any w$^\ast$-neighbourhood $\calO$ of $\mu$.
\end{itemize}
This is denoted by $\mu_n \q \mu$.
\end{dfn}

One can strengthen Lemma~\ref{lem:Bern-q} to show that $\nu^{\times V_n} \q \nu^{\times G}$.  This can be proved directly using the Law of Large Numbers, but we will deduce it after developing some more general theory: see Corollary~\ref{cor:Bern-q} below.

It is sometimes more convenient to replace Definition~\ref{dfn:localweak} or~\ref{dfn:approxs} with the following variants.  The proofs are immediate, and are omitted.

\begin{lem}\label{lem:approxs-reform}
Let $\calO_1 \supseteq \calO_2 \supseteq \dots$ be a fixed basis of w$^\ast$-neighbourhoods of $\mu$.  Then $\mu_n \lws \mu$ if and only if it holds that
\begin{equation}\label{eq:localweak2}
(\Pi_v^{\s_n})_\ast\mu_n \in \calO_{k_n}  \quad \hbox{w.h.p. in}\ v\ \hbox{as}\ n\to\infty
\end{equation}
whenever the seqeuence $k_1 \leq k_2 \leq \dots$ grows sufficiently slowly.  Similarly, $\mu_n \q \mu$ if and only if we have both~(\ref{eq:localweak2}) and
\[\mu_n(\O(\calO_{k_n},\s_n)) \to 1 \quad \hbox{as}\ n\to\infty\]
whenever the seqeuence $k_1 \leq k_2 \leq \dots$ grows sufficiently slowly. \qed
\end{lem}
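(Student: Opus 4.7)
The statement is an instance of a standard diagonal-extraction lemma, so the plan is to prove both directions of the first equivalence (for $\lws$) carefully, and then observe that the quenched version follows by exactly the same argument applied simultaneously to both of its defining conditions.

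For the easy direction (sequence version implies the definitional version), suppose the condition~(\ref{eq:localweak2}) holds whenever $k_1 \leq k_2 \leq \dots$ grows sufficiently slowly, with distinguished sequence $k_n^\circ \uparrow \infty$. Take the particular choice $k_n := k_n^\circ$; this is a valid slowly-growing sequence, so by assumption $(\Pi_v^{\s_n})_\ast\mu_n \in \calO_{k_n^\circ}$ w.h.p.\ in $v$. Now fix any w$^\ast$-neighbourhood $\calO$ of $\mu$. Since $(\calO_k)_{k\geq 1}$ is a neighbourhood basis at $\mu$, there is some $k$ with $\calO_k \subseteq \calO$, and since $k_n^\circ \to \infty$ we have $\calO_{k_n^\circ} \subseteq \calO_k \subseteq \calO$ for all sufficiently large $n$. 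Combining the two, $(\Pi_v^{\s_n})_\ast\mu_n \in \calO$ w.h.p.\ in $v$, which is exactly Definition~\ref{dfn:localweak}.

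For the forward direction, suppose $\mu_n \lws \mu$. For each fixed $k$, the definition gives that the proportion
\[\a_n(k) := \frac{1}{|V_n|}\big|\{v \in V_n:\ (\Pi^{\s_n}_v)_\ast \mu_n \in \calO_k\}\big|\]
tends to $1$ as $n\to\infty$. Choose $N(1) \leq N(2) \leq \dots$ with $\a_n(k) > 1 - 1/k$ whenever $n \geq N(k)$, and set $k_n^\circ := \max\{k:\ N(k) \leq n\}$ (taking $k_n^\circ = 1$ for $n < N(1)$). Then $k_n^\circ \uparrow \infty$, and for any slowly-growing sequence $k_n \to \infty$ with $k_n \leq k_n^\circ$ we have $N(k_n) \leq N(k_n^\circ) \leq n$, hence $\a_n(k_n) > 1 - 1/k_n \to 1$. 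This is precisely~(\ref{eq:localweak2}).

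For the quenched case one runs the same two arguments in parallel, treating the additional condition $\mu_n(\O(\calO_{k_n},\s_n)) \to 1$ exactly as the local weak$^\ast$ condition: in the forward direction, combine both $N(k)$-thresholds into a single non-decreasing sequence and extract a common $k_n^\circ$; in the reverse direction, use monotonicity of the neighbourhoods $\calO_{k} \supseteq \calO_{k+1}$ together with the implication $\calO' \subseteq \calO \Rightarrow \O(\calO',\s_n) \subseteq \O(\calO,\s_n)$ noted in Section~\ref{sec:model-spaces}. No genuine obstacle arises; the only point requiring mild care is to respect the definition of ``grows sufficiently slowly,'' which demands both $k_n \to \infty$ and $k_n \leq k_n^\circ$, so one cannot simply take a constant sequence $k_n = k$ in the reverse direction and must instead exploit that $\calO_{k_n^\circ}$ eventually lies inside $\calO_k$.
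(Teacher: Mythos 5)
The paper itself omits this proof as ``immediate,'' so there is no written argument to compare against; your diagonal-extraction strategy is exactly the natural one, and both directions of the first equivalence, as well as the extension to the quenched case, are handled with the right ideas. Your reverse direction is entirely correct, and the observation at the end---that you cannot take $k_n$ constant and must instead use that $\calO_{k_n^\circ}$ eventually lies inside $\calO_k$---is the key point and you get it right.

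There is one small technical slip in the forward direction. After choosing $N(1)\leq N(2)\leq\dots$ with $\a_n(k) > 1 - 1/k$ for $n\geq N(k)$, you set $k_n^\circ := \max\{k:\ N(k)\leq n\}$. This maximum need not exist: nothing in your choice forces $N(k)\to\infty$, and if, say, $N(k)=5$ for all $k$ then $\{k:\ N(k)\leq n\}=\bbN$ for every $n\geq 5$. Since the definition of ``grows sufficiently slowly'' requires the distinguished sequence $(k_n^\circ)_{n\geq 1}$ to be a non-decreasing sequence \emph{in} $\bbN^\bbN$, this is a genuine (if minor) defect. The fix is trivial: when making the monotone choice, replace $N(k)$ by $\max(N(k),k)$ (or define $k_n^\circ := \max\bigl(\{1\}\cup\{k\leq n:\ N(k)\leq n\}\bigr)$), which forces $N(k)\to\infty$, makes the maximum finite, and still gives $k_n^\circ\uparrow\infty$. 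The rest of the argument then goes through unchanged: for any $k_n\to\infty$ with $k_n\leq k_n^\circ$ one has $N(k_n)\leq n$, so $\a_n(k_n) > 1 - 1/k_n\to 1$. The parallel treatment of the extra quenched condition (using $\calO'\subseteq\calO\Rightarrow\O(\calO',\s_n)\subseteq\O(\calO,\s_n)$) is fine.
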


In general, it may happen that $\mu_n \lws \mu$ as in Definition~\ref{dfn:localweak}, but condition (ii) of Definition~\ref{dfn:approxs} is not satisfied.  In this case, consider the empirical-distribution maps
\[P^{\s_n}:\X^{V_n} \to \Pr(\X^G):\bf{x}\mapsto P^{\s_n}_{\bf{x}}.\]
Pushing forward through these maps gives a sequence of distributions on measures
\[P^{\s_n}_\ast\mu_n \in \Pr(\Pr(\X^G)).\]
Since the weak$^\ast$ topology on $\Pr(\X^G)$ is compact and metrizable, this space of distributions on measures carries a weak$^\ast$ topology of its own, which is also compact and metrizable.  In the sequel it should always be clear which of these weak$^\ast$ topologies is being referred to.

The distributions on measures $P^{\s_n}_\ast\mu_n$ give the following useful characterization of the difference between Definitions~\ref{dfn:localweak} and~\ref{dfn:approxs}.

\begin{lem}\label{lem:lwa}
If $\mu_n \lws \mu$, then $\mu_n \q \mu$ if and only if
\[P^{\s_n}_\ast\mu_n \stackrel{\rm{weak}^\ast}{\to} \delta_{\mu}.\]
\end{lem}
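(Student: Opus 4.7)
The proof is essentially a tautology combined with one application of the Portmanteau characterization of weak$^\ast$ convergence, so the plan is quite short.

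The crucial observation is that, directly from the definition~(\ref{eq:basic-neigh}), we have
\[\O(\calO,\s_n) = \{\bf{x} \in \X^{V_n}:\ P^{\s_n}_\bf{x} \in \calO\} = (P^{\s_n})^{-1}(\calO),\]
so pushing forward through $P^{\s_n}$ gives the tautological identity
\[\mu_n\big(\O(\calO,\s_n)\big) = (P^{\s_n}_\ast \mu_n)(\calO)\]
for every w$^\ast$-measurable $\calO \subseteq \Pr(\X^G)$. Consequently, condition (ii) in Definition~\ref{dfn:approxs} translates exactly into the assertion that $(P^{\s_n}_\ast \mu_n)(\calO) \to 1$ for every w$^\ast$-open neighbourhood $\calO$ of $\mu$ in $\Pr(\X^G)$.

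It then remains to observe that this latter condition is equivalent to $P^{\s_n}_\ast \mu_n \to \delta_\mu$ in the weak$^\ast$ topology of $\Pr(\Pr(\X^G))$. This is a standard Portmanteau-type fact on the compact metric space $\Pr(\X^G)$: in the forward direction, weak$^\ast$ convergence to a Dirac mass $\delta_\mu$ gives $\liminf_n (P^{\s_n}_\ast \mu_n)(\calO) \geq \delta_\mu(\calO) = 1$ for every open $\calO \ni \mu$; conversely, given any bounded continuous $F:\Pr(\X^G)\to \bbR$ and any $\eps > 0$, continuity of $F$ at $\mu$ supplies an open neighbourhood $\calO \ni \mu$ on which $|F - F(\mu)| < \eps$, and then
\[\Big|\int F\,\d(P^{\s_n}_\ast \mu_n) - F(\mu)\Big| \leq \eps + 2\|F\|_\infty\cdot (P^{\s_n}_\ast \mu_n)(\Pr(\X^G)\setminus \calO),\]
whose second term tends to $0$ by hypothesis.

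Combining these two steps gives the equivalence. Note that under this argument, the assumption $\mu_n \lws \mu$ plays no role in either implication beyond being part of the definition of $\mu_n \q \mu$; it is included in the hypothesis simply so that condition (ii) is the only remaining ingredient to characterize. No serious obstacle appears, since the argument reduces to one definitional unpacking plus the standard Portmanteau theorem.
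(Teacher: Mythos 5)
Your proof is correct and takes essentially the same approach as the paper's: both rest on the tautological identity $\mu_n(\O(\calO,\s_n)) = (P^{\s_n}_\ast\mu_n)(\calO)$ followed by an appeal to the Portmanteau Theorem, with the only difference being that you spell out the two directions of the Portmanteau step explicitly where the paper merely cites it.
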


\begin{proof}
For any w$^\ast$-neighbourhood $\calO$ of $\mu$, we have
\[\mu_n(\O(\calO,\s_n)) = \mu_n\big\{\bf{x} \in \X^{V_n}:\ P^{\s_n}_{\bf{x}} \in \calO\big\} = (P^{\s_n}_\ast\mu_n)(\calO).\]
By the Portmanteau Theorem, the w$^\ast$-convergence of $P^{\s_n}_\ast\mu_n$ to $\delta_\mu$ is equivalent to the convergence $(P^{\s_n}_\ast\mu_n)(\calO) \to 1$ for every such $\calO$.
\end{proof}

Given only that $\mu_n \lws \mu$, the distributions on measures $P^{\s_n}_\ast\mu_n$ converge to a decomposition of $\mu$ into other invariant measures.  The next lemma describes this.

\begin{lem}\label{lem:inv-bary}
If $\theta \in \Pr(\Pr(\X^G))$ is a subsequential w$^\ast$-limit of the sequence of distributions on measures $(P^{\s_n}_\ast\mu_n)_{n\geq 1}$, then $\theta(\Pr^S(\X^G)) = 1$, and the barycentre of $\theta$ is equal to $\mu$, meaning that
\[\int \nu\,\theta(\d\nu) = \mu.\]
\end{lem}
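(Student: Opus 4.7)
The plan is to verify the two assertions separately. Both will follow by testing against a suitable countable family of continuous functions on $\X^G$, using the key facts that empirical distributions are approximately shift-invariant (Lemma~\ref{lem:approx-invar}) and that local weak$^\ast$ convergence controls the average of the local marginals $(\Pi^{\s_n}_v)_\ast \mu_n$.

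For the shift-invariance assertion, I would fix $g \in G$ and a continuous $f : \X^G \to \bbR$ that depends on only finitely many coordinates (i.e., $f$ is the composition of a continuous function on some $\X^F$ with $\pi^G_F$). Local such $f$'s form a weak$^\ast$-determining family. For each $\eps > 0$ the set
\[A_{f,g,\eps} := \Big\{\nu \in \Pr(\X^G):\ \Big|\smint f\,\d\nu - \smint f\circ S^g\,\d\nu\Big|\geq \eps\Big\}\]
is closed in the weak$^\ast$ topology. Lemma~\ref{lem:approx-invar} together with the locality of $f$ gives that the quantity $|\smint f\,\d P^{\s_n}_{\bf{x}} - \smint f\circ S^g\,\d P^{\s_n}_{\bf{x}}|$ tends to $0$ uniformly in $\bf{x}$, so $(P^{\s_n}_\ast \mu_n)(A_{f,g,\eps}) = 0$ for all sufficiently large $n$. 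By the Portmanteau theorem applied along the convergent subsequence, $\theta(A_{f,g,\eps}) = 0$. Letting $\eps \downarrow 0$ and ranging over a countable dense family of such $f$ and over $g \in G$ shows that $\theta$ is concentrated on $\Pr^S(\X^G)$.

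For the barycentre assertion, I would test against an arbitrary continuous $f : \X^G \to \bbR$. A direct Fubini computation gives
\[\int\! \Big(\smint f\,\d\nu\Big)\,(P^{\s_n}_\ast \mu_n)(\d\nu) = \int\! \smint f\,\d P^{\s_n}_{\bf{x}}\,\mu_n(\d\bf{x}) = \frac{1}{|V_n|}\sum_{v \in V_n}\smint f\,\d\big((\Pi^{\s_n}_v)_\ast \mu_n\big).\]
Set $c_n(v) := \int f\,\d((\Pi^{\s_n}_v)_\ast \mu_n)$, so $|c_n(v)| \leq \|f\|_\infty$. Given $\eps > 0$, weak$^\ast$-continuity of $\nu \mapsto \int f\,\d\nu$ supplies a weak$^\ast$-neighbourhood $\calO$ of $\mu$ on which this functional differs from $\int f\,\d\mu$ by less than $\eps$; the hypothesis $\mu_n \lws \mu$ then ensures $(\Pi^{\s_n}_v)_\ast \mu_n \in \calO$ w.h.p. in $v$. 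Splitting the average over the `good' and `bad' vertices and using the uniform bound $\|f\|_\infty$ on the latter yields
\[\limsup_{n\to\infty}\Big|\frac{1}{|V_n|}\sum_{v}c_n(v) - \smint f\,\d\mu\Big| \leq \eps,\]
and since $\eps$ is arbitrary the average converges to $\int f\,\d\mu$. Passing to the convergent subsequence defining $\theta$ and using weak$^\ast$-continuity of $\nu \mapsto \int f\,\d\nu$ on $\Pr(\X^G)$, the left-hand side of the Fubini identity tends to $\int (\int f\,\d\nu)\,\theta(\d\nu)$. Equating limits and noting that continuous $f$ separate Borel probabilities on $\Pr(\X^G)$ identifies the barycentre of $\theta$ with $\mu$.

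Neither step involves a substantive obstacle: the only mild care required is in step one, where one must ensure the chosen countable family of local continuous $f$ is rich enough to detect failure of $S^g$-invariance for every $g \in G$, which is handled by fixing once and for all a countable dense subset of local continuous functions (possible because each $\X^F$ with $F$ finite is compact metrizable). Everything else is Portmanteau, Fubini, and a bounded-convergence estimate driven by local weak$^\ast$ convergence.
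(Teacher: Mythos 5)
Your overall architecture matches the paper's proof exactly: shift-invariance of the support is extracted from Lemma~\ref{lem:approx-invar} (uniform approximate invariance of empirical distributions) via a weak$^\ast$-convergence argument, and the barycentre identity is obtained from the Fubini rewriting of $\iint f\,\d\nu\,\theta(\d\nu)$ together with local weak$^\ast$ convergence. The barycentre half of your argument is correct and essentially identical to the paper's.

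There is, however, a slip in the shift-invariance half. You observe that $A_{f,g,\eps}$ is \emph{closed} and that $(P^{\s_n}_\ast\mu_n)(A_{f,g,\eps}) = 0$ eventually, and then invoke Portmanteau to conclude $\theta(A_{f,g,\eps}) = 0$. But the closed-set form of Portmanteau runs the other way: it gives
\[\limsup_{n}\,(P^{\s_n}_\ast\mu_n)(A_{f,g,\eps}) \leq \theta(A_{f,g,\eps}),\]
which yields only the vacuous $0 \leq \theta(A_{f,g,\eps})$. The conclusion you want is not a direct consequence of applying Portmanteau to that closed set. Two easy fixes are available, both relying on the fact that the convergence in Lemma~\ref{lem:approx-invar} is \emph{uniform} in $\bf{x}$ (not just a statement about most $\bf{x}$). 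Writing $H(\nu) := |\smint f\,\d\nu - \smint f\circ S^g\,\d\nu|$: (1) for any $\eps' < \eps$, the set $\{H > \eps'\}$ is \emph{open} and $(P^{\s_n}_\ast\mu_n)\{H > \eps'\} = 0$ for large $n$, so the open-set form $\liminf_n \mu_n(U) \geq \theta(U)$ gives $\theta\{H > \eps'\} = 0$, whence $\theta(A_{f,g,\eps}) = 0$; or (2) more cleanly, $H$ is bounded and continuous on $\Pr(\X^G)$, so $\int H\,\d(P^{\s_n}_\ast\mu_n) \to \int H\,\d\theta$ along the subsequence, and the left-hand side is dominated by $\sup_{\bf{x}} H(P^{\s_n}_{\bf{x}}) \to 0$; hence $\int H\,\d\theta = 0$ and $H = 0$ $\theta$-a.e. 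The paper itself also invokes Portmanteau somewhat tersely here, but version (2) is the mechanism that actually justifies the step. With this repair, your proof is sound and coincides with the paper's.
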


\begin{proof}
By passing to a subsequence we may simply assume that
\begin{eqnarray}\label{eq:randmeasweakstar}
P^{\s_n}_\ast\mu_n \stackrel{\rm{weak}\ast}{\to} \theta.
\end{eqnarray}

\vspace{7pt}

\emph{Support.}\quad For each local function $f \in C(\X^G)$ and each $g \in G$, Lemma~\ref{lem:approx-invar} gives that
\[\sup_{\bf{x} \in \X^{V_n}}\Big|\int f\,\d P^{\s_n}_{\bf{x}} - \int f\circ S^g\,\d P^{\s_n}_{\bf{x}}\Big| \to 0.\]
Since local functions are uniformly dense in $C(\X^G)$, this implies that
\[(P^{\s_n}_\ast\mu_n)\Big\{\nu \in \Pr(\X^G):\ \int f\,\d\nu \approx_\eps \int f\circ S^g\,\d\nu \Big\} \to 1\]
for all $f \in C(X^G)$ and $\eps > 0$.  By the Portmeanteau Theorem, it follows that $\theta$ is supported on $\Pr^S(\X^G)$.

\vspace{7pt}

\emph{Barycentre.}\quad For any $f \in C(\X^G)$, the weak$^\ast$ convergence in~(\ref{eq:randmeasweakstar}) gives
\begin{align*}
\iint f(x)\,\nu(\d x)\,\theta(\d\nu) &= \lim_{n\to\infty} \iint f(x)\,\nu(\d x)\,(P^{\s_n}_\ast\mu_n)(\d\nu)\\ &= \lim_{n\to\infty} \iint f(x)\,P^{\s_n}_{\bf{x}}(\d x)\,\mu_n(\d\bf{x})\\
&= \lim_{n\to\infty} \frac{1}{|V_n|}\sum_{v\in V_n}\int f(\Pi^{\s_n}_v(\bf{x}))\,\mu_n(\d\bf{x})\\ &= \lim_{n\to\infty} \frac{1}{|V_n|}\sum_{v\in V_n}\int f\,\d((\Pi^{\s_n}_v)_\ast\mu_n),
\end{align*}
and this converges to $\int f\,\d\mu$ because $\mu_n \lws \mu$.
\end{proof}

The two previous lemmas have the following immediate consequence.

\begin{cor}\label{cor:lws-lwa}
If $(\X^G,\mu,S,d)$ is ergodic then
\[\mu_n \lws \mu \quad \Longrightarrow \quad \mu_n \q \mu.\]
\end{cor}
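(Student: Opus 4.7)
The plan is to combine the two lemmas immediately preceding the corollary. Assume $\mu_n \lws \mu$. By Lemma~\ref{lem:lwa}, the desired conclusion $\mu_n \q \mu$ is equivalent to the assertion that $P^{\s_n}_\ast \mu_n \stackrel{\rm{weak}^\ast}{\to} \delta_\mu$ in $\Pr(\Pr(\X^G))$. Since the latter space is compact and metrizable, convergence to $\delta_\mu$ follows once every subsequential weak$^\ast$ limit of $(P^{\s_n}_\ast \mu_n)_{n\geq 1}$ is shown to equal $\delta_\mu$. So the task reduces to showing: if $\theta \in \Pr(\Pr(\X^G))$ is such a subsequential limit, then $\theta = \delta_\mu$.

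By Lemma~\ref{lem:inv-bary}, any such $\theta$ is supported on $\Pr^S(\X^G)$ and has barycentre $\mu$. The key point is then to use the hypothesis that $\mu$ is ergodic together with uniqueness of the ergodic decomposition. Concretely, I would invoke the ergodic decomposition to write each $\nu \in \Pr^S(\X^G)$ as $\nu = \int \rho \, \kappa_\nu(\d\rho)$ with $\kappa_\nu$ supported on the ergodic invariant measures. Integrating against $\theta$ and using that $\mu$ is the barycentre yields
\[
\mu \;=\; \int \nu \, \theta(\d\nu) \;=\; \int\!\!\int \rho \, \kappa_\nu(\d\rho) \, \theta(\d\nu) \;=\; \int \rho \, \tilde\theta(\d\rho),
\]
where $\tilde\theta := \int \kappa_\nu \, \theta(\d\nu)$ is a probability measure on the ergodic invariant measures. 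By uniqueness of the ergodic decomposition of the ergodic measure $\mu$, one must have $\tilde\theta = \delta_\mu$. This in turn forces $\kappa_\nu = \delta_\mu$ for $\theta$-a.e.\ $\nu$, i.e., $\nu = \mu$ for $\theta$-a.e.\ $\nu$, which is precisely $\theta = \delta_\mu$.

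I do not anticipate a serious obstacle: both halves of the argument are standard, and Lemmas~\ref{lem:lwa} and~\ref{lem:inv-bary} have already done the real work of connecting the two notions of convergence to the pushed-forward empirical-distribution measures. The only mild subtlety is that the subsequential limit $\theta$ need not a priori be supported on ergodic measures, which is why one has to pass through the ergodic decomposition of a general invariant $\nu$ rather than appealing directly to $\mu$ being an extreme point of $\Pr^S(\X^G)$.
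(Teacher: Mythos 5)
Your proposal is correct and follows the same route as the paper: pass to subsequential limits of $P^{\s_n}_\ast\mu_n$, apply Lemma~\ref{lem:inv-bary} to see any such limit $\theta$ is supported on invariant measures with barycentre $\mu$, use ergodicity to conclude $\theta=\delta_\mu$, and close with Lemma~\ref{lem:lwa}. The paper compresses the ergodicity step into the one-line observation that an ergodic measure admits no nontrivial barycentre representation over invariant measures, whereas you unwind this by passing through the ergodic decomposition of each $\nu$ and invoking uniqueness of that decomposition --- a slightly longer but entirely equivalent justification of the same fact.
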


\begin{proof}
Since $\mu$ is ergodic, it has no nontrivial representation as a barycentre of other invariant measures.  In view of Lemma~\ref{lem:inv-bary}, it follows that the only possible subsequential limit of $( P^{\s_n}_\ast\mu_n)_{n\geq 1}$ is the Dirac mass at $\mu$ itself.  Now Lemma~\ref{lem:lwa} completes the proof.
\end{proof}

In light of Lemma~\ref{lem:Bern-q}, we can immediately deduce the following.

\begin{cor}\label{cor:Bern-q}
If $(\X^G,\nu^{\times G},S)$ is a Bernoulli process over $G$ and $d$ is any choice of compact generating metric on $\X$, then $\nu^{\times V_n} \q \nu^{\times G}$. \qed
\end{cor}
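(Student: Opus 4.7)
The statement follows by combining two results already established in this subsection, so the proof should be essentially a one-line deduction. The plan is as follows.

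First, I would apply Lemma~\ref{lem:Bern-q} to obtain that $\nu^{\times V_n} \lws \nu^{\times G}$ for any choice of compact generating metric $d$ on $\X$. This provides condition (i) of Definition~\ref{dfn:approxs}, but not yet condition (ii) concerning concentration on good model spaces.

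Second, I would invoke the standard fact that Bernoulli shifts $(\X^G, \nu^{\times G}, S)$ over a countable discrete group $G$ are ergodic (indeed, they are mixing in the strongest senses one could reasonably ask of a $G$-action). Since ergodicity is an invariant of the measure-theoretic system, the choice of metric $d$ is irrelevant for this property.

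Third, with ergodicity of the target process in hand, I would apply Corollary~\ref{cor:lws-lwa}, which upgrades any locally weak$^\ast$-convergent sequence to a quenched-convergent one whenever the limit process is ergodic. Combined with the first step, this immediately yields $\nu^{\times V_n} \q \nu^{\times G}$. There is no real obstacle here: the entire content has been pre-packaged in Lemma~\ref{lem:Bern-q}, Lemma~\ref{lem:inv-bary}, and Lemma~\ref{lem:lwa}, whose combination is exactly Corollary~\ref{cor:lws-lwa}. The only fact external to this subsection that one uses is ergodicity of Bernoulli shifts over countable groups, which is classical.
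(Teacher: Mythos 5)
Your proof is correct and follows exactly the route the paper intends: Lemma~\ref{lem:Bern-q} gives local weak$^\ast$ convergence, Bernoulli shifts over an infinite countable group are ergodic, and Corollary~\ref{cor:lws-lwa} then upgrades this to quenched convergence. The paper presents this corollary with a \qed precisely because it is this immediate deduction, and you have spelled out the same chain of lemmas.
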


This result is already implicit in the proofs of~\cite[Theorem 8]{Bowen10} and~\cite[Lemma 2.2]{KerLi11a}, which calculate the sofic entropies of Bernoulli processes.

\begin{rmk}
Another notion of convergence, also introduced in~\cite{MontMosSly12}, is `local-on-average weak$^\ast$ convergence'.  It requires only that
\[\frac{1}{|V_n|}\sum_{v\in V_n}(\Pi^{\s_n}_v)_\ast\mu_n \stackrel{\rm{weak}^\ast}{\to} \mu.\]
This is clearly weaker than local weak$^\ast$ convergence.  Indeed, any sequence of measures which satisfy only condition (ii) in Definition~\ref{dfn:approxs} must have this property, and a variation of the proof of Corollary~\ref{cor:lws-lwa} shows that these are equivalent if $\mu$ is ergodic.  We do not use this kind of convergence in this paper. \fin
\end{rmk}

\subsection{Doubly-quenched convergence}

For any $\mu_n \in \Pr(\X^{V_n})$ we have
\[(\Pi^{\s_n}_v)_\ast(\mu_n \times \mu_n) = (\Pi^{\s_n}_v)_\ast\mu_n \times (\Pi^{\s_n}_v)_\ast\mu_n \quad \forall v \in V_n.\]
Therefore
\[\mu_n \lws \mu \quad \Longrightarrow \quad \mu_n\times \mu_n \lws \mu\times \mu.\]

However, the analogous implication can fail for quenched convergence.

\begin{ex}\label{ex:T}
Let $G$ be a residually finite group and let $G > G_1 > G_2 > \ldots$ be finite-index normal subgroups whose intersection is $\{e\}$.  Let $X$ be the compact inverse limit of the tower of finite groups
\[\dots \onto G/G_2 \onto G/G_1.\]
This $X$ is a compact group. Let $m$ be its Haar measure, and let $T$ be the action of $G$ on $X$ by left rotations.  Let $d$ be a left-invariant compact group metric on $X$.  The map~(\ref{eq:system-process}) gives a metric $G$-process $(X^G,\mu,S,d)$ isomorphic to the Kronecker system $(X,m,T)$.

For each $n$, let $\mu_{(n)}$ be the pushforward of the measure $\mu$ under the coordinate-wise factor map $X^G \to (G/G_n)^G$.  Then $\mu_{(n)}$ is supported on the $G_n$-periodic elements of $(G/G_n)^G$.

Let $V_n := G/G_n$ and let $\s_n:G\to \Sym(V_n)$ be the left-rotation action of $G$ for each $n$.  These together give a sofic  approximation $\S$ to $G$.

For each $n$ we now construct a measure $\mu_n \in \Pr(X^{V_n})$ as follows.  Let $S_n \subseteq G$ be a cross-section of $G_n$ in $G$. The identity mapping $V_n\to G/G_n$ may be regarded as an element $\bf{z}_n \in (G/G_n)^{V_n}$.  Let $\mu_n^\circ \in \Pr((G/G_n)^{V_n})$ be the law of a random rotate of $\bf{z}_n$: that is,
\[\mu_n^\circ := \frac{1}{|S_n|}\sum_{g \in S_n}\delta_{\bf{z}_n\circ \s_n^g}.\]
Finally, let $\mu_n$ be any lift of $\mu_n^\circ$ to a measure on $X^{V_n}$.

For each $n$, the measure $\mu_n^\circ$ is the Haar measure on the $G$-orbit of $\bf{z}_n$.  That orbit is a free and transitive $(G/G_n)$-space.  Therefore the $G$-action on the finitely-supported measure $\mu_n^\circ$ is isomorphic to the left-rotation action on $G/G_n$ with Haar measure.  Composing with the map~(\ref{eq:system-process}), this isomorphism converts the elements $\bf{z}_n\circ \s_n^g \in (G/G_n)^{V_n}$ into the $G_n$-periodic  elements of $(G/G_n)^G$.  Therefore the points in the support of $\mu_n^\circ$ have empirical distribution that actually equals $\mu_{(n)}$, and now the local marginals of $\mu_n^\circ$ are also all equal to $\mu_{(n)}$.  It follows that $\mu_n \q \mu$ as $n\to\infty$.

However, we also have
\[\mu_n^\circ \times \mu_n^\circ = \frac{1}{|S_n|^2}\sum_{g,h \in S_n}\delta_{\bf{z}_n\circ \s_n^g}\times \delta_{\bf{z}_n\circ \s_n^h} = \frac{1}{|S_n|^2}\sum_{g,h \in S_n}\delta_{(\bf{z}_n\circ \s_n^g,\bf{z}_n\circ \s_n^g)\circ (\rm{id}_{V_n}\times \s_n^h)}.\]
From this we can calculate that the distribution
\[P^{\s_n}_\ast(\mu_n^\circ\times \mu_n^\circ) \in \Pr\big(\Pr\big((G/G_n\times G/G_n)^G\big)\big)\]
is the law of the random measure
\[\int \delta_{(x,S^hx)}\,\mu_{(n)}(\d x),\]
where $h$ is a uniform random element of $S_n$.  This shows that $P^{\s_n}_\ast(\mu_n \times \mu_n)$ does not converge weakly$^\ast$ to $\delta_{\mu\times \mu}$, but rather to the disintegration of $\mu\times \mu$ into the ergodic components supported on the cosets of the diagonal subgroup in $X\times X$.  Therefore, by Lemma~\ref{lem:lwa}, $\mu_n\times\mu_n$ does not quenched-converge to $\mu\times \mu$.

This example is particularly striking if $G$ has Kazhdan's property (T).  In that case, the Schreier graphs of the quotients $\s_n$ are expanders (see~\cite[Section 3.3]{Lubot--book} or~\cite[Section 6.1]{BekdelaHVal08}). Therefore the sofic approximation $\S$ cannot support arbitrarily good models for any non-ergodic $G$-system: in particular, $\rmh_\S(\mu\times \mu) = -\infty$.  In this case there can be no sequence of measures $\nu_n \in \Pr(X^{V_n})$ such that $\nu_n\times \nu_n \q \mu\times \mu$. \fin
\end{ex}

To rule out examples like these, we make the following definition.

\begin{dfn}
The sequence $(\mu_n)_{n\geq 1}$ \textbf{doubly-quenched converges} to $\mu$ if
\[\mu_n \times \mu_n \q \mu\times \mu.\]
This is denoted by $\mu_n \dq \mu$.
\end{dfn}

Once again, the obvious positive examples are Bernoulli processes.  Indeed, if $(\X^G,\nu^{\times G},S)$ is a Bernoulli process, then $\nu^{\times G}\times \nu^{\times G}$ may be identified with $(\nu\times \nu)^{\times G}$ and $\nu^{\times V_n}\times \nu^{\times V_n}$ may be identified with $(\nu\times \nu)^{\times V_n}$.  Therefore applying Corollary~\ref{cor:Bern-q} directly to $(\nu\times \nu)^{\times G}$ gives the following.

\begin{lem}\label{lem:Bern-dq}
If $(\X^G,\nu^{\times G},S)$ is a Bernoulli process over $G$ and $d$ is any choice of compact generating metric on $\X$, then $\nu^{\times V_n} \dq \nu^{\times G}$. \qed
\end{lem}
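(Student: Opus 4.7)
The plan is essentially to reduce the doubly-quenched convergence for $\nu^{\times G}$ to the quenched convergence statement (Corollary~\ref{cor:Bern-q}) applied to a different Bernoulli process, namely the one with base space $\X\times\X$ and base measure $\nu\times\nu$. So the proof is a one-line invocation once the identifications are set up carefully.

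First I would fix a compact generating metric $d$ on $\X$ and take $d'$ to be the Hamming average of $d$ with itself on $\X\times\X$; this is a compact generating metric for $\X\times\X$ and is precisely the metric used throughout Section~5 when discussing convergence for product systems. Next, I would write down the canonical identifications
\[
\X^G \times \X^G \;\cong\; (\X\times\X)^G
\quad\text{and}\quad
\X^{V_n}\times \X^{V_n} \;\cong\; (\X\times \X)^{V_n},
\]
both given by $\big((x_g)_g,(y_g)_g\big)\mapsto ((x_g,y_g))_g$ (and similarly over $V_n$). Under these identifications, the right-shift action of $G$ on $\X^G\times \X^G$ becomes the shift on $(\X\times\X)^G$, the permutation action of $\s_n$ on $\X^{V_n}\times \X^{V_n}$ becomes the permutation action of $\s_n$ on $(\X\times\X)^{V_n}$, the product measure $\nu^{\times G}\times \nu^{\times G}$ becomes $(\nu\times\nu)^{\times G}$, and $\nu^{\times V_n}\times \nu^{\times V_n}$ becomes $(\nu\times\nu)^{\times V_n}$.

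The key (routine) verification is that these identifications are also compatible with the empirical-distribution machinery: the pullback-name map $\Pi^{\s_n}_v$ for the product system corresponds exactly to the pair of pullback-name maps for each factor, so for every $v\in V_n$ and every pair $(\bf{x},\bf{y})\in \X^{V_n}\times \X^{V_n}$,
\[
\Pi^{\s_n}_v(\bf{x},\bf{y}) \;=\; \big(\Pi^{\s_n}_v(\bf{x}),\,\Pi^{\s_n}_v(\bf{y})\big)
\]
under the identification $\X^G\times \X^G\cong (\X\times\X)^G$. Consequently the empirical-distribution map $P^{\s_n}_{(\bf{x},\bf{y})}$ on the product side equals the pushforward of the empirical-distribution map for the Bernoulli process with base $\nu\times \nu$. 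Moreover, the weak$^\ast$ topology on $\Pr((\X\times\X)^G)$ coincides with the subspace topology inherited from $\Pr(\X^G\times \X^G)$ under these identifications.

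With these identifications in hand, Corollary~\ref{cor:Bern-q} applied to the Bernoulli process $((\X\times\X)^G,(\nu\times\nu)^{\times G},S,d')$ yields
\[
(\nu\times\nu)^{\times V_n} \;\q\; (\nu\times\nu)^{\times G},
\]
which by the identifications above is exactly
\[
\nu^{\times V_n}\times \nu^{\times V_n} \;\q\; \nu^{\times G}\times \nu^{\times G}.
\]
By definition this says $\nu^{\times V_n}\dq \nu^{\times G}$, as required. There is no genuine obstacle here beyond bookkeeping; the content is entirely contained in Corollary~\ref{cor:Bern-q}, and the role of the present lemma is just to observe that Bernoulli processes are closed under Cartesian squaring both at the level of $\X^G$ and at the level of the model spaces $\X^{V_n}$.
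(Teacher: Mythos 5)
Your proof is correct and is exactly the argument the paper gives: identify $\nu^{\times G}\times\nu^{\times G}$ with $(\nu\times\nu)^{\times G}$ and $\nu^{\times V_n}\times\nu^{\times V_n}$ with $(\nu\times\nu)^{\times V_n}$, then apply Corollary~\ref{cor:Bern-q} to the Bernoulli process with base $\nu\times\nu$. The paper states this identification in the sentence immediately preceding the lemma; you have simply spelled out the bookkeeping more fully.
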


The `quenched' condition in Definition~\ref{dfn:localweak} asserts that, once $n$ is large, $\mu_n$ is mostly supported on individual models $\bf{x}$ whose empirical distribution is close to $\mu$.  This is a kind of `equidistribution' of the points that support $\mu$, and is an analog of ergodicity in the setting of a locally weak$^\ast$ convergent sequence of probability measures.  With this in mind, doubly-quenched convergence is the analog of weak mixing.

The main result of this subsection is Theorem A, which gives two other conditions that are equivalent to $\mu_n \dq \mu$.  It shows that doubly-quenched convergence is preserved by other Cartesian products as well.  This is analogous to some of the classical equivalent conditions for weak mixing of a probability-preserving transformation: see, for instance,~\cite[Theorem 2.6.1]{Pet83}

The proof of Theorem A requires little more than a few applications of the Cauchy--Bunyakowski--Schwartz Inequality, via the following easy consequence.

\begin{lem}\label{lem:CBS}
Let $(X,\mu)$ be a probability space, let $H$ be a real Hilbert space with inner product $\langle \cdot,\cdot\rangle$, let $a:X \to H$ be strongly measurable, and let $b \in H$ have norm at most $1$.  Then
\[\int |\langle a(x),b\rangle|\,\mu(\d x) \leq \sqrt{\iint |\langle a(x),a(x')\rangle|\,\mu(\d x)\,\mu(\d x')}.\]
\qed
\end{lem}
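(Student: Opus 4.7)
The plan is a standard $TT^*$-style trick: linearize the absolute value on the left by a measurable sign, apply Cauchy--Schwarz once inside $H$, and then expand the resulting squared norm as a double integral over $X\times X$. Specifically, I would set $\varepsilon(x) := \mathrm{sgn}\langle a(x),b\rangle \in \{-1,0,1\}$, which is measurable since $x\mapsto \langle a(x),b\rangle$ is. Then
\[\int |\langle a(x),b\rangle|\,\mu(\d x) = \int \varepsilon(x)\langle a(x),b\rangle\,\mu(\d x) = \langle c,b\rangle,\]
where $c := \int \varepsilon(x)a(x)\,\mu(\d x)$, interpreted as a Bochner integral in $H$. One may assume that the right-hand side of the claimed inequality is finite, for otherwise there is nothing to prove; this assumption forces $a\in L^2(\mu;H)$, which justifies defining $c$ and the Fubini step below. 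Cauchy--Schwarz in $H$ together with $\|b\|\leq 1$ then gives $\langle c,b\rangle \leq \|c\|$.

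Next I would expand the squared norm by Fubini:
\[\|c\|^2 = \Big\langle \int \varepsilon(x)a(x)\,\mu(\d x),\,\int \varepsilon(x')a(x')\,\mu(\d x')\Big\rangle = \iint \varepsilon(x)\varepsilon(x')\langle a(x),a(x')\rangle\,\mu(\d x)\,\mu(\d x'),\]
and since $|\varepsilon(x)\varepsilon(x')|\leq 1$ this last quantity is bounded above by $\iint |\langle a(x),a(x')\rangle|\,\mu(\d x)\,\mu(\d x')$. Combining these steps and taking square roots yields the stated inequality.

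There is essentially no obstacle: the only points needing care are the measurability of $\varepsilon$ (which is immediate from strong measurability of $a$) and the Fubini--Tonelli interchange used to expand $\|c\|^2$, both of which are routine once one reduces to the case $a\in L^2(\mu;H)$.
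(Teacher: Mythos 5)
Your overall approach---linearize the absolute value with the measurable sign $\varepsilon$, fold the integral inside the inner product, apply Cauchy--Schwarz in $H$ once, and expand $\|c\|^2$ as a double integral---is the right one, and it is surely the argument the author had in mind when stating the lemma with a bare \qed. However, your reduction step contains an error: finiteness of $\iint |\langle a(x),a(x')\rangle|\,\mu(\d x)\,\mu(\d x')$ does \emph{not} force $a\in L^2(\mu;H)$, nor even $a\in L^1(\mu;H)$, so the Bochner integral $c=\int\varepsilon a\,\d\mu$ need not exist. For a concrete counterexample, take $H=\ell^2(\bbN)$, $X=\bbN$, $\mu(\{n\})$ proportional to $n^{-2}$, and $a(n)=n\,e_n$. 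Then $\langle a(n),a(n')\rangle=n^2\delta_{nn'}$, so the double integral reduces to $\sum_n n^2\,\mu(\{n\})^2<\infty$, while $\int\|a\|\,\d\mu=\sum_n n\,\mu(\{n\})$ diverges; here the inequality is true (a direct $\ell^2$--Cauchy--Schwarz check), but your argument does not reach it because $c$ is undefined.

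The repair is a routine truncation, which is what your sentence should have said in place of the $L^2$ claim. Set $a_R(x):=a(x)\,1_{\{\|a(x)\|\le R\}}$; each $a_R$ is bounded, hence Bochner integrable, and your sign-and-Cauchy--Schwarz argument applies verbatim with the same $\varepsilon$ (since $\varepsilon(x)\langle a_R(x),b\rangle=|\langle a_R(x),b\rangle|$ pointwise: on $\{\|a\|\le R\}$ because $a_R=a$ there, and trivially elsewhere). This yields
\[
\int|\langle a_R(x),b\rangle|\,\mu(\d x)\ \le\ \sqrt{\iint|\langle a_R(x),a_R(x')\rangle|\,\mu(\d x)\,\mu(\d x')},
\]
and the Fubini expansion of $\|c_R\|^2$ is unproblematic since the integrand is bounded by $R^2$. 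Finally, $|\langle a_R(x),b\rangle|$ and $|\langle a_R(x),a_R(x')\rangle|$ increase pointwise to the untruncated quantities as $R\uparrow\infty$, so monotone convergence passes the inequality to the limit. With this modification the proof is complete; in the paper's only application the map $a$ takes values in a ball of $H$, so the issue never arises there, but as stated the lemma needs the truncation, not the incorrect $L^2$ reduction.
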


\begin{proof}[Proof of Theorem A]
\emph{(i) $\Longrightarrow$ (ii).}\quad It suffices to prove (ii) for a sub-basis of w$^\ast$-neighbourhoods of $\mu\times \nu$, so suppose that
\[\calN = \Big\{\lambda \in \Pr(\X^G\times \Y^G):\ \int f\otimes h\,\d\l \approx_\eps \int f\,\d\mu \int h\,\d\nu\Big\}\]
for some $f \in C(\X^G)$ and $h \in C(\Y^G)$ with $\|f\|_\infty, \|h\|_\infty \leq 1$ and some $\eps > 0$.

Let
\[\calO := \Big\{\theta \in \Pr(\Y^G):\ \int h\,\d\theta \approx_{\eps/2} \int h\,\d\nu\Big\}.\]
We will show that this $\calO$ suffices.  Suppose that $n_i \uparrow \infty$ and $\O(\calO,\s_{n_i}) \neq \emptyset$ for all $i$.  By relabeling the subsequence, we may assume that $n_i = i$ for all $i$, and so ignore the indexing by $i$.  Now conclusion (ii) will follow if we prove that
\[\mu_n\Big\{\bf{x} \in \X^{V_n}:\ \frac{1}{|V_n|}\sum_{v\in V_n}f(\Pi^{\s_n}_v \bf{x})h(\Pi^{\s_n}_v\bf{y}_n) \approx_\eps \int f\,\d\mu \int h\,\d\nu\Big\} \to 1\]
for any sequence of models $\bf{y}_n \in \O(\calO,\s_n)$.

Since $\|f\|_\infty \leq 1$, we have
\begin{multline*}
\Big|\frac{1}{|V_n|}\sum_{v\in V_n}f(\Pi^{\s_n}_v \bf{x})h(\Pi^{\s_n}_v\bf{y}_n) - \int f\,\d\mu \int h\,\d\nu\Big|\\ \leq \Big|\frac{1}{|V_n|}\sum_{v\in V_n}\Big(f(\Pi^{\s_n}_v \bf{x}) - \int f\,\d\mu\Big)h(\Pi^{\s_n}_v\bf{y}_n)\Big| + \Big|\frac{1}{|V_n|}\sum_{v\in V_n}h(\Pi^{\s_n}_v\bf{y}_n) - \int h\,\d\nu\Big|.
\end{multline*}
The last term here is less than $\eps/2$ by the definition of $\calO$.  It therefore remains to prove that
\[\mu_n\Big\{\bf{x} \in \X^{V_n}:\ \Big|\frac{1}{|V_n|}\sum_{v\in V_n}f(\Pi^{\s_n}_v \bf{x})h(\Pi^{\s_n}_v\bf{y}_n)\Big| < \eps/2\Big\} \to 1\]
under the extra assumption that $\int f\,\d\mu = 0$. By Chebyshev's Inequality, this will follow if we prove that
\[\int \Big|\frac{1}{|V_n|}\sum_{v\in V_n}f(\Pi^{\s_n}_v \bf{x})h(\Pi^{\s_n}_v\bf{y}_n)\Big|\,\mu_n(\d\bf{x}) \to 0\]
under that extra assumption.

To do this, consider the Hilbert spaces $H_n := \ell^2(V_n)$ with the inner products
\[\langle a,b\rangle_n := \frac{1}{|V_n|}\sum_{v\in V_n} a_vb_v.\]
Let
\[b:= (h(\Pi^{\s_n}_v\bf{y}_n))_{v\in V_n}\]
and define
\[a:\X^{V_n} \to H_n:\bf{x} \mapsto (f(\Pi^{\s_n}_v\bf{x}))_{v\in V_n}.\]
Now Lemma~\ref{lem:CBS} gives
\begin{multline*}
\int\Big|\frac{1}{|V_n|}\sum_{v\in V_n}f(\Pi^{\s_n}_v\bf{x})h(\Pi^{\s_n}_v\bf{y}_n)\Big|\,\mu_n(\d\bf{x}) = \int |\langle a(\bf{x}),b\rangle_n|\,\mu_n(\d\bf{x})\\
\leq \sqrt{\iint |\langle a(\bf{x}),a(\bf{x}')\rangle_n|\,\mu_n(\d\bf{x})\,\mu_n(\d\bf{x}')}.
\end{multline*}

However,
\[\iint |\langle a(\bf{x}),a(\bf{x}')\rangle_n|\,\mu_n(\d\bf{x})\,\mu_n(\d\bf{x}') = \iint \Big|\int f\otimes f\,\d P^{\s_n}_{(\bf{x},\bf{x}')}\Big|\,\mu_n(\d\bf{x})\,\mu_n(\d\bf{x}'),\]
and this converges to
\[\int f\otimes f\,\d(\mu\times\mu) = \Big(\int f\,\d\mu\Big)^2 = 0\]
as $n\to\infty$, by assumption (i).

\vspace{7pt}

\emph{(ii) $\Longrightarrow$ (iii).}\quad We have seen that $\mu_{n_i}\times \nu_i \lws \mu\times \nu$; the only issue is to show that $\mu_{n_i} \times \nu_i$ is asymptotically supported on good models for $\mu\times \nu$.  This now follows from conclusion (ii) and Fubini's Theorem, since $\nu_i$ is asymptotically supported on good models for $\nu$.

\vspace{7pt}

\emph{(iii) $\Longrightarrow$ (i).}\quad Clearly (i) is a special case of (iii).
\end{proof}

Conclusion (ii) of Theorem A has a corollary whose conclusion does not involve measures on model spaces.  It asserts that, asymptotically as $n\to\infty$, every sufficiently good model for $\nu$ in $\Y^{V_n}$ can be lifted to a good model for $\mu\times \nu$ in $(\X\times \Y)^{V_n}$.

\begin{cor}
Let $(\X^G,\mu,S,d_\X)$ and $(\Y^G,\mu,S,d_\Y)$ be metric $G$-processes, and suppose that $\mu_n \dq \mu$ over $\S$.  Let $\pi:(\X\times \Y)^G\to \Y$ be the projection onto the $\Y$-component of the identity coordinate, so that $\pi^{\s_n}:(\X\times \Y)^{V_n} \to \Y^{V_n}$ is the coordinate-projection for each $n$.  Then for every w$^\ast$-neighbourhood $\calN$ of $\mu\times \nu$ there is a w$^\ast$-neighbourhood $\calO$ of $\nu$ such that
\[\pi^{\s_n}\big(\O(\calN,\s_n)\big) \supseteq \O(\calO,\s_n)\]
for all sufficiently large $n$.  \qed
\end{cor}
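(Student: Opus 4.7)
The plan is to derive this corollary essentially as a restatement of condition (ii) of Theorem~A, which is equivalent to the hypothesis $\mu_n \dq \mu$. Given the w$^\ast$-neighbourhood $\calN$ of $\mu\times\nu$, I would apply Theorem~A (ii) to obtain a w$^\ast$-neighbourhood $\calO$ of $\nu$ with the property that along any subsequence $n_i \uparrow \infty$ such that $\O(\calO,\s_{n_i}) \neq \emptyset$ for all $i$, one has
\[\inf_{\bf{y}\in\O(\calO,\s_{n_i})}\mu_{n_i}\big\{\bf{x}\in\X^{V_{n_i}}:\ (\bf{x},\bf{y})\in\O(\calN,\s_{n_i})\big\} \to 1.\]
This same $\calO$ should witness the corollary: the underlying observation is purely set-theoretic, namely that whenever the fibre set $\{\bf{x}:(\bf{x},\bf{y})\in\O(\calN,\s_n)\}$ has strictly positive $\mu_n$-measure it is nonempty, so some $\bf{x}$ satisfies $(\bf{x},\bf{y}) \in \O(\calN,\s_n)$, and since $\pi^{\s_n}$ is the coordinate projection this places $\bf{y}$ in $\pi^{\s_n}(\O(\calN,\s_n))$.

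To promote this to the uniform statement ``for all sufficiently large $n$,'' I would argue by contradiction. Suppose $\pi^{\s_n}(\O(\calN,\s_n)) \not\supseteq \O(\calO,\s_n)$ for infinitely many $n$; along such a subsequence $(n_i)$, select witnesses $\bf{y}_i \in \O(\calO,\s_{n_i}) \setminus \pi^{\s_{n_i}}(\O(\calN,\s_{n_i}))$. Then $\O(\calO,\s_{n_i})$ is nonempty for every $i$ (it contains $\bf{y}_i$), while the fibre set over $\bf{y}_i$ is empty, forcing the infimum above to equal $0$ for each $i$ and contradicting the conclusion of Theorem~A (ii). Values of $n$ with $\O(\calO,\s_n)=\emptyset$ satisfy the desired containment vacuously and need no further comment.

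The argument has essentially no genuine obstacle once Theorem~A is in hand: the corollary amounts to translating condition (ii) from a quantitative statement about $\mu_n$-measures of fibres into a qualitative existence statement for lifts of good models, and then uniformising in $n$ by the short contradiction argument above. The only care required is to isolate the subsequence where $\O(\calO,\s_{n_i})$ is nonempty so that the hypothesis of Theorem~A (ii) actually applies.
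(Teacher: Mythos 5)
Your argument is correct, and it is the paper's intended argument: the corollary carries only a $\qed$ in the text, reflecting that it is treated as an immediate translation of Theorem~A(ii), which is exactly what you spell out (nonempty fibre over $\bf{y}$ once its $\mu_n$-measure is positive, plus the contradiction argument to uniformise in $n$).
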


The next theorem gives another equivalent characterization of the convergence $\mu_n \dq \mu$.  We separate it from Theorem A because it is not involved in the rest of this paper.

\begin{thm}\label{thm:autocorr}
Assume that $\mu_n \q \mu$.  Then $\mu_n \dq \mu$ if and only if
\[(\Pi_v^{\s_n},\Pi^{\s_n}_{v'})_\ast\mu_n \stackrel{\rm{weak}^\ast}{\to} \mu\times \mu \quad \hbox{w.h.p. in}\ (v,v') \in V_n\times V_n \ \hbox{as}\ n\to \infty.\]
\end{thm}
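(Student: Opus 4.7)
The plan is to reduce both implications to a single bilinear $L^2$-identity derived from Fubini.

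By Lemma~\ref{lem:lwa}, and using that $\mu_n \lws \mu$ trivially gives $\mu_n \times \mu_n \lws \mu \times \mu$, the hypothesis $\mu_n \dq \mu$ (i.e., $\mu_n \times \mu_n \q \mu \times \mu$) is equivalent to the weak$^\ast$ convergence $P^{\s_n}_\ast(\mu_n \times \mu_n) \to \delta_{\mu \times \mu}$. Since products $\{f \otimes g : f,g \in C(\X^G)\}$ are sup-norm dense in $C(\X^G \times \X^G)$ by Stone--Weierstrass and the relevant testing quantities are uniformly bounded, fixing a countable sup-norm-dense subfamily reduces this to the family of $L^2$-statements
\[\int f \otimes g\, d P^{\s_n}_{(\bf{x}, \bf{y})} = \frac{1}{|V_n|}\sum_v f(\Pi_v^{\s_n}\bf{x})\,g(\Pi_v^{\s_n}\bf{y}) \xrightarrow[n\to\infty]{L^2(\mu_n \times \mu_n)} \bar f \bar g,\]
where $\bar f := \int f\,d\mu$. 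Symmetrically, the autocorrelation conclusion is equivalent to the family of $L^2$-statements
\[h_n(f, g; v, v') := \int f \otimes g\,d\big((\Pi_v^{\s_n}, \Pi_{v'}^{\s_n})_\ast\mu_n\big) \xrightarrow[n\to\infty]{L^2(V_n \times V_n)} \bar f \bar g.\]

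The bridge between the two formulations is the pair of identities---obtained by expanding squares and invoking the independence of $\bf{x}$ and $\bf{y}$ under $\mu_n \times \mu_n$---
\begin{align*}
\int \Big(\int f\otimes g\,dP^{\s_n}_{(\bf{x}, \bf{y})}\Big)^{\!2} d(\mu_n \times \mu_n) &= \frac{1}{|V_n|^2}\sum_{v,v'} h_n(f, f; v, v')\,h_n(g, g; v, v'),\\
\frac{1}{|V_n|^2}\sum_{v,v'} h_n(f, g; v, v')^2 &= \int \Big(\int f\otimes f\,dP^{\s_n}_{(\bf{x}, \bf{y})}\Big)\Big(\int g\otimes g\,dP^{\s_n}_{(\bf{x}, \bf{y})}\Big) d(\mu_n \times \mu_n).
\end{align*}
The corresponding first moments are controlled by the standing hypotheses alone: $\tfrac{1}{|V_n|^2}\sum_{v,v'}h_n(f, g; v, v') = \int \big(\int f\,dP^{\s_n}_\bf{x}\big)\big(\int g\,dP^{\s_n}_\bf{x}\big)\,d\mu_n \to \bar f \bar g$ by Lemma~\ref{lem:lwa} applied to $(\mu_n)_{n\geq 1}$ alone, while $\int \int f \otimes g\,dP^{\s_n}_{(\bf{x}, \bf{y})}\,d(\mu_n \times \mu_n) = \tfrac{1}{|V_n|}\sum_v \bigl(\int f\,d(\Pi_v^{\s_n})_\ast\mu_n\bigr)\bigl(\int g\,d(\Pi_v^{\s_n})_\ast\mu_n\bigr) \to \bar f \bar g$ by $\mu_n \lws \mu$ and bounded convergence.

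With these ingredients the two directions become symmetric applications of $\mathrm{Var} = \mathbf{E}[\cdot^2] - \mathbf{E}[\cdot]^2$. For the forward direction, $\mu_n \dq \mu$ supplies $L^2(\mu_n \times \mu_n)$-convergence of $\int f \otimes f\,dP^{\s_n}_{(\bf{x},\bf{y})}$ and $\int g \otimes g\,dP^{\s_n}_{(\bf{x},\bf{y})}$ to $\bar f^2$ and $\bar g^2$; uniform boundedness promotes these to $L^1$-convergence of their product to $(\bar f \bar g)^2$, and the second identity then yields $\tfrac{1}{|V_n|^2}\sum h_n(f, g; v, v')^2 \to (\bar f \bar g)^2$, whence the variance of $h_n(f, g; v, v')$ over uniform $(v, v')$ tends to $0$. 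For the reverse direction, the autocorrelation hypothesis supplies the $L^2(V_n \times V_n)$-convergence of $h_n(f, f)$ and $h_n(g, g)$; boundedness again yields $L^1$-convergence of their product, and the first identity then produces the $L^2$-convergence of $\int f \otimes g\,dP^{\s_n}_{(\bf{x}, \bf{y})}$ to $\bar f \bar g$. I do not expect a substantial obstacle beyond the routine diagonal extraction required to upgrade the countable family of $L^2$-statements into the uniform weak$^\ast$ conclusions (w.h.p.\ in $(v,v')$, respectively in probability under $\mu_n \times \mu_n$).
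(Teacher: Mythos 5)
Your proof is correct and is at heart the same Fubini--Chebyshev computation as the paper's, but your symmetric first-and-second-moment bookkeeping is noticeably more careful, and the first of your two Fubini identities genuinely strengthens the reverse implication as it is written in the paper. In the forward direction, your second identity is the correct form of the paper's chain of equalities: the paper's middle two lines factor the double sum over $(v,v')$ into a product of two \emph{separate} integrals against $\mu_n\times\mu_n$, which does not hold at finite $n$; your single integral of the product of $\int f\otimes f\,dP^{\s_n}_{(\bf{x},\bf{y})}$ and $\int g\otimes g\,dP^{\s_n}_{(\bf{x},\bf{y})}$ is the valid identity, and it gives the same limit by boundedness of the second factor. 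In the reverse direction, the paper's written argument stops after establishing $\iint(\int f_1\otimes f_2\,dP)\,d(\mu_n\times\mu_n)\to\int f_1\,d\mu\int f_2\,d\mu$, i.e.\ convergence of the barycentre of $P^{\s_n}_\ast(\mu_n\times\mu_n)$ to $\mu\times\mu$; but barycentre convergence alone does not yield $P^{\s_n}_\ast(\mu_n\times\mu_n)\to\delta_{\mu\times\mu}$ as Lemma~\ref{lem:lwa} requires---Example~\ref{ex:T} has barycentre convergence while $\mu_n\times\mu_n$ fails to quenched-converge to $\mu\times\mu$. Your identity
\[\int\Big(\int f\otimes g\,dP^{\s_n}_{(\bf{x},\bf{y})}\Big)^{\!2} d(\mu_n\times\mu_n) = \frac{1}{|V_n|^2}\sum_{v,v'}h_n(f,f;v,v')\,h_n(g,g;v,v')\]
goes directly to the second moment for arbitrary $f,g$, so the autocorrelation hypothesis gives the required $L^2(\mu_n\times\mu_n)$-concentration in one stroke, without polarization; and this is important, because the exchange symmetry between $\int f_1\otimes f_2\,dP$ and $\int f_2\otimes f_1\,dP$ that the paper's polarization uses holds under $\mu_n\times\mu_n$ only in distribution, not pointwise, so it cannot be promoted to the $L^2$ level on its own. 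In short, you supply explicitly the concentration estimate that Lemma~\ref{lem:lwa} needs and that the paper's write-up leaves implicit, which is a genuine improvement.
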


\begin{proof}
 \emph{($\Longrightarrow$).}\quad Let $f_1,f_2 \in C(\X^G)$, and suppose that $\int f_1\,\d\mu = 0$.  Then
\begin{eqnarray*}
&&\frac{1}{|V_n|^2}\sum_{v,v' \in V_n}\Big(\int f_1(\Pi^{\s_n}_v\bf{x})f_2(\Pi^{\s_n}_{v'}\bf{x})\,\mu_n(\d\bf{x})\Big)^2\\
&&= \frac{1}{|V_n|^2}\sum_{v,v' \in V_n}\iint f_1(\Pi^{\s_n}_v\bf{x})f_1(\Pi^{\s_n}_v\bf{x}')f_2(\Pi^{\s_n}_{v'}\bf{x})f_2(\Pi^{\s_n}_{v'}\bf{x}')\,\mu_n(\d\bf{x})\,\mu_n(\d\bf{x}')\\
&&= \frac{1}{|V_n|}\sum_{v\in V_n}\iint f_1(\Pi^{\s_n}_v\bf{x})f_1(\Pi^{\s_n}_v\bf{x}')\,\mu_n(\d\bf{x})\,\mu_n(\d\bf{x}')\\
&&\qquad \qquad \qquad \qquad \cdot\frac{1}{|V_n|}\sum_{v'\in V_n}\iint f_2(\Pi^{\s_n}_{v'}\bf{x})f_2(\Pi^{\s_n}_{v'}\bf{x}')\,\mu_n(\d\bf{x})\,\mu_n(\d\bf{x}')\\
&&= \iint \Big(\int f_1\otimes f_1\,\d P^{\s_n}_{(\bf{x},\bf{x}')}\Big)\,\mu_n(\d\bf{x})\,\mu_n(\d\bf{x}')\\
&&\qquad \qquad \qquad \qquad \cdot \iint \Big(\int f_2\otimes f_2\,\d P^{\s_n}_{(\bf{x},\bf{x}')}\Big)\,\mu_n(\d\bf{x})\,\mu_n(\d\bf{x}').
\end{eqnarray*}
Doubly-quenched convergence implies that the first integral in this product tends to
\[\int f_1\otimes f_1\,\d(\mu\times \mu) = \Big(\int f_1\,\d\mu\Big)^2 = 0.\]
Therefore Chebyshev's Inequality gives that
\[\int f_1(\Pi^{\s_n}_v\bf{x})f_2(\Pi^{\s_n}_{v'}\bf{x})\,\mu_n(\d\bf{x}) \to 0 \quad \hbox{w.h.p. in}\ (v,v').\]
Finally, adjusting by constants as in the proof of (i) $\Longrightarrow$ (ii) in Theorem A, it follows that
\[\int f_1(\Pi^{\s_n}_v\bf{x})f_2(\Pi^{\s_n}_{v'}\bf{x})\,\mu_n(\d\bf{x}) \to \int f_1\otimes f_2\,\d(\mu\times \mu) \quad \hbox{w.h.p. in}\ (v,v')\]
for arbitrary $f_1,f_2 \in C(\X^G)$.

\vspace{7pt}

\emph{($\Longleftarrow$).}\quad Let $f_1 := f_2 := f \in C(\X^G)$ have mean zero according to $\mu$.  Reversing the chain of equalities in the previous step, we see that the assumed weak$^\ast$ convergence implies that
\[\iint \Big(\int f\otimes f\,\d P^{\s_n}_{(\bf{x},\bf{x}')}\Big)\,\mu_n(\d\bf{x})\,\mu_n(\d\bf{x}') \to 0 \quad \hbox{as}\ n\to\infty.\]
Also, it is clear that
\begin{multline*}
\iint \Big(\int f_1\otimes f_2\,\d P^{\s_n}_{(\bf{x},\bf{x}')}\Big)\,\mu_n(\d\bf{x})\,\mu_n(\d\bf{x}')\\
 = \iint \Big(\int f_2\otimes f_1\,\d P^{\s_n}_{(\bf{x},\bf{x}')}\Big)\,\mu_n(\d\bf{x})\,\mu_n(\d\bf{x}')
 \end{multline*}
for any other $f_1,f_2 \in C(\X^G)$, and so the Polarization Identity gives that these integrals also tend to $0$ if $\int f_1\,\d\mu = \int f_2\,\d\mu = 0$.

Finally, this convergence generalizes to
\[\iint \Big(\int f_1\otimes f_2\,\d P^{\s_n}_{(\bf{x},\bf{x}')}\Big)\,\mu_n(\d\bf{x})\,\mu_n(\d\bf{x}') \to \int f_1\,\d\mu \int f_2\,\d\mu\]
for arbitrary $f_1,f_2 \in C(\X^G)$, because the assumption that $\mu_n \q \mu$ handles the case when either $f_1$ or $f_2$ is constant.
\end{proof}

Theorem~\ref{thm:autocorr} continues the analogy between doubly-quenched convergence and weak mixing: it corresponds to the classical fact that an ergodic probability-preserving transformation $(X,\mu,T)$ is weakly mixing if and only if
\[ \frac{1}{n}\sum_{m=1}^n\Big|\int f(x)g(T^mx)\,\mu(\d x) - \int f\,\d\mu \int g\,\d\mu\Big| \to 0 \quad \forall f,g \in L^2(\mu)\]
as $n\to\infty$ (see again~\cite[Theorem 2.6.1]{Pet83}).

Doubly-quenched convergence is not only analogous to weak mixing, but also logically related to it.

\begin{lem}\label{lem:q-dq-and-wm}
 If $(\X^G,\mu,S)$ is weakly mixing, then
\[\mu_n \q \mu \quad \Longrightarrow \quad \mu_n \dq \mu.\]
\end{lem}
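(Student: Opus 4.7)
The proof should be a short combination of three ingredients already present in the excerpt. First, $\mu_n \q \mu$ implies $\mu_n \lws \mu$ by the very definition of quenched convergence (condition (i) in Definition~\ref{dfn:approxs}). Second, the observation recorded at the start of Subsection on doubly-quenched convergence—namely that $(\Pi^{\s_n}_v)_\ast(\mu_n \times \mu_n) = (\Pi^{\s_n}_v)_\ast \mu_n \times (\Pi^{\s_n}_v)_\ast \mu_n$—immediately upgrades $\mu_n \lws \mu$ to $\mu_n \times \mu_n \lws \mu \times \mu$. Third, since weak mixing of $(\X^G,\mu,S)$ is precisely the statement that $\mu \times \mu$ is ergodic for the diagonal $G$-action, we can apply Corollary~\ref{cor:lws-lwa} to the system $((\X\times\X)^G, \mu\times\mu, S)$ to conclude that local weak$^\ast$ convergence of $\mu_n \times \mu_n$ actually upgrades to quenched convergence.

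So the plan is: write the short chain
\[\mu_n \q \mu \;\Longrightarrow\; \mu_n \lws \mu \;\Longrightarrow\; \mu_n\times\mu_n \lws \mu\times\mu \;\stackrel{\text{ergodicity}}{\Longrightarrow}\; \mu_n\times\mu_n \q \mu\times\mu,\]
which by definition is $\mu_n \dq \mu$. The only point requiring a remark is the last implication, where one must invoke Corollary~\ref{cor:lws-lwa} applied to the product system, and justify using a product generating metric on $\X \times \X$ (for instance, the Hamming average of $d$ with itself, which generates the product Borel $\sigma$-algebra and induces the product weak$^\ast$ topology on $\Pr((\X\times\X)^G)$).

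There is no real obstacle here—each of the three steps is either immediate from a definition or is a direct citation of an earlier statement. The only thing to be slightly careful about is that the notion $\lws$ depends on the choice of compact generating metric, so when passing to $\X \times \X$ one should fix the Hamming-average metric (as done elsewhere in the paper, e.g.\ in the proof of Proposition~\ref{prop:subadd}) and note that the weak$^\ast$ topology it induces on $\Pr((\X\times\X)^G)$ is the product of the weak$^\ast$ topologies, so the implication $\mu_n \lws \mu \Rightarrow \mu_n\times \mu_n \lws \mu \times \mu$ is robust under this choice.
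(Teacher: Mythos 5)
Your proposal is correct and follows exactly the same route as the paper: reduce to $\mu_n\times\mu_n \lws \mu\times\mu$ via the product identity for $\Pi^{\s_n}_v$, note that weak mixing of $\mu$ means $\mu\times\mu$ is ergodic, and then invoke Corollary~\ref{cor:lws-lwa} applied to the product process. The paper states this in one line; you have simply unfolded the chain, including the (correct) remark about the Hamming-average metric on $\X\times\X$ inducing the product weak$^\ast$ topology.
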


\begin{proof}
 This follows from Corollary~\ref{cor:lws-lwa}, since if $\mu$ is weakly mixing then $\mu\times \mu$ is ergodic.
\end{proof}

\begin{rmk}
Another property which seems related to doubly-quenched convergence is `replica symmetry'.  In the study of spin glasses and other disorded systems in statistical physics, the term `replica symmetry' (or its negation, `replica symmetry breaking') is used for a variety of phenomena that are expected to occur or fail together in most models of interest.

Often such a model consists of a special sequence of probability measures $\mu_n$ on $\{\pm 1\}^{V_n}$ for some finite sets $V_n$, for instance given by a particular Hamiltonian. In spin glasses, the measures (and the Hamiltonians) are usually random themselves.  For such a sequence of measures, one popular meaning of `replica symmetry' is that the sequence of `overlaps'
\[R(\s,\s') := \frac{1}{n}\sum_{i=1}^n \s_i\s'_i, \quad (\s,\s') \in \{\pm 1\}^n\times \{\pm 1\}^n,\]
regarded as a sequence of random variables for the probabilities $\mu_n\times \mu_n$, should concentrate as $n\to\infty$.

Clearly this holds in case the sets $V_n$ are associated to some sofic approximation of a group $G$ and $\mu_n \times \mu_n \q \mu \times \mu$ for some shift-invariant measure $\mu$ on $\{\pm 1\}^G$.  But doubly-quenched convergence could be stronger in general.  We should be careful about bringing the term `replica symmetry' into ergodic theory, since it does have several meanings for the physicists and it is not yet clear under what conditions they coincide.  However, it would be very interesting to know whether such ideas or models can shed further light on doubly-quenched convergence.

An introduction to replica symmetry and replica symmetry breaking from a physical point of view can be found in Chapters 8, 12 and 19 of~\cite{MezMon09}. \fin
\end{rmk}

\subsection{Behaviour under factor maps}

This subsection considers how local weak$^\ast$ and quenched convergence behave under applying AL approximants to factor maps.

\begin{prop}\label{prop:approx-meas-and-factor}
Let $\Phi = \phi^G:(\X^G,\mu,S,d_\X) \to (\Y^G,\nu,S,d_\Y)$ be a factor map of metric $G$-processes.  Suppose that $\mu_n \lws \mu$, and let $\psi_k \aL \phi$ rel $(\mu,d_\X,d_\Y)$.  Then
\[(\psi_{k_n}^{\s_n})_\ast\mu_n \lws \nu\]
whenever the sequence $k_1 \leq k_2 \leq \dots$ grows sufficiently slowly.  The same holds if both instances of `$\lws$ ' are replaced with `$\q$' or if both are replaced with `$\dq$'.
\end{prop}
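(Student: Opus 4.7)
The plan is to establish the three convergence modes in the order $\lws$, $\q$, $\dq$, each reducing to the previous. The central mechanism is the compatibility relation of Lemma~\ref{lem:psi-sig-compatible}: outside a small set of vertices $v$, the map $\Pi^{\s_n}_v\circ\psi_{k_n}^{\s_n}$ agrees with $\psi_{k_n}^G\circ\Pi^{\s_n}_v$ on any finite window in $G$, so the local marginals of $(\psi_{k_n}^{\s_n})_\ast\mu_n$ around $v$ are controlled by the action of $\psi_{k_n}^G$ on $(\Pi^{\s_n}_v)_\ast\mu_n$.

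For the $\lws$ case, fix descending bases $(\calO_m)$ and $(\calN_m)$ of w$^\ast$-neighborhoods of $\mu$ and $\nu$ respectively, arranging that each $\calN_m=\{\theta:\theta_{F_m}\in\calN_m'\}$ for an increasing exhaustion $F_1\subseteq F_2\subseteq\cdots$ of $G$. By Lemma~\ref{lem:approxs-reform}, after thinning the basis $(\calO_m)$ if needed, we may assume $(\Pi^{\s_n}_v)_\ast\mu_n\in\calO_n$ w.h.p.\ in $v$ as $n\to\infty$. Corollary~\ref{cor:pushfwd-approx} then furnishes slowly-growing sequences $k_n,m_n\to\infty$ with $(\psi_{k_n}^G)_\ast\calO_n\subseteq\calN_{m_n}$ eventually, while Corollary~\ref{cor:psi-sig-compatible} (after possibly slowing $k_n$ or $m_n$ further) yields
\[
\Pi^{\s_n}_v(\psi_{k_n}^{\s_n}(\cdot))|_{F_{m_n}}=\psi_{k_n}^{F_{m_n}}(\Pi^{\s_n}_v(\cdot))\quad\text{w.h.p. in }v.
\]
Combining these three facts, w.h.p.\ in $v$ the $F_{m_n}$-marginal of $(\Pi^{\s_n}_v)_\ast(\psi_{k_n}^{\s_n})_\ast\mu_n$ agrees with the $F_{m_n}$-marginal of $(\psi_{k_n}^G)_\ast(\Pi^{\s_n}_v)_\ast\mu_n$, and the latter lies in $\calN_{m_n}'$; consequently $(\Pi^{\s_n}_v)_\ast(\psi_{k_n}^{\s_n})_\ast\mu_n\in\calN_{m_n}$ w.h.p.\ in $v$, and since $m_n\to\infty$ this gives $(\psi_{k_n}^{\s_n})_\ast\mu_n\lws\nu$.

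The $\q$ case requires, in addition, concentration on good models. The hypothesis $\mu_n\q\mu$ gives (after further thinning of the basis) $\mu_n(\O(\calO_n,\s_n))\to 1$, and Corollary~\ref{cor:approx-by-Lip-maps} supplies $\psi_{k_n}^{\s_n}(\O(\calO_n,\s_n))\subseteq\O(\calN_{m_n},\s_n)$ eventually, for possibly slower $k_n,m_n$. Hence $(\psi_{k_n}^{\s_n})_\ast\mu_n(\O(\calN_{m_n},\s_n))\to 1$, which, as $m_n\to\infty$, is the required concentration.

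For the $\dq$ case, square everything. Lemma~\ref{lem:combine} applied with the product joining $\l=\mu\times\mu$ shows that $\psi_k\times\psi_k\aL\phi\times\phi$ rel $\mu\times\mu$, using the Hamming averages of $d_\X$ and $d_\Y$ with themselves as the relevant metrics. The hypothesis $\mu_n\dq\mu$ is by definition $\mu_n\times\mu_n\q\mu\times\mu$, so the $\q$ case applied to these product data yields, for slowly-growing $k_n$,
\[
\bigl((\psi_{k_n}\times\psi_{k_n})^{\s_n}\bigr)_\ast(\mu_n\times\mu_n)\q\nu\times\nu.
\]
A direct unwinding of the definitions identifies this pushforward with $(\psi_{k_n}^{\s_n})_\ast\mu_n\times(\psi_{k_n}^{\s_n})_\ast\mu_n$, which is exactly the assertion $(\psi_{k_n}^{\s_n})_\ast\mu_n\dq\nu$. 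The main difficulty throughout is purely bookkeeping---coordinating several ``sufficiently slowly growing'' sequences so that all the ``eventually'' conditions hold simultaneously---and this is already packaged into the diagonal arguments underlying Corollaries~\ref{cor:pushfwd-approx},~\ref{cor:approx-by-Lip-maps} and~\ref{cor:psi-sig-compatible}, so no new technical ingredient is required beyond the machinery of Section~\ref{sec:factors}.
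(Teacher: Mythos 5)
Your proof is correct and follows the same approach as the paper: Corollaries~\ref{cor:pushfwd-approx}, \ref{cor:approx-by-Lip-maps}, \ref{cor:psi-sig-compatible} for the quenched case, with Lemma~\ref{lem:combine} and the identity $(\psi\times\psi)^\s = \psi^\s\times\psi^\s$ reducing the doubly-quenched case to the quenched case on Cartesian squares. The only cosmetic difference is that the paper treats $\q$ first and derives $\lws$ as a simplification, whereas you build up from $\lws$; the ingredients and combining steps are identical.
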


\begin{proof}
First consider the case of quenched convergence. Suppose $\mu_n \q \mu$.

Let $\calN_1 \supseteq \calN_2 \supseteq \dots$ be a basis of w$^\ast$-neighbourhoods at $\nu$.  By shrinking each $\calN_m$ if necessary, we may assume that each of them has the form
\[\calN_m = \big\{\theta \in \Pr(\Y^G):\ \theta_{F_m} \in \calN'_m\big\}\]
for some finite set $F_m \subseteq G$ and w$^\ast$-neighbourhood $\calN'_m$ of $\nu_{F_m}$.

Using Lemma~\ref{lem:approxs-reform}, we can choose a basis $\calO_1 \supseteq \calO_2 \supseteq \dots$ of w$^\ast$-neighbourhoods at $\mu$ for which
\begin{equation}\label{eq:lwtick}
(\Pi_v^{\s_n})_\ast\mu_n \in \calO_n \quad \hbox{w.h.p. in}\ v\ \hbox{as}\ n\to\infty
\end{equation}
and
\begin{equation}\label{eq:qtick}
\mu_n(\O(\calO_n,\s_n))\to 1 \quad \hbox{as}\ n\to\infty.
\end{equation}

By the combination of Corollaries~\ref{cor:pushfwd-approx} and~\ref{cor:approx-by-Lip-maps}, if we now choose $(k_n)_{n\geq 1}$ growing sufficiently slowly, then any choice of $(m_n)_{n\geq 1}$ which grows sufficiently slowly (depending on $(k_n)_{n\geq 1}$) gives
\begin{equation}\label{eq:containment}
(\psi_{k_n}^G)_\ast(\calO_n) \subseteq \calN_{m_n} \quad \hbox{and} \quad \psi_{k_n}^{\s_n}\big(\O(\calO_n,\s_n)\big) \subseteq \O(\calN_{m_n},\s_n)
\end{equation}
for all sufficiently large $n$.  In addition, by Corollary~\ref{cor:psi-sig-compatible}, if we choose $(k_n)_{n\geq 1}$ and $(m_n)_{n\geq 1}$ both growing slowly enough, then
\[\big((\Pi^{\s_n}_v)_\ast(\psi^{\s_n}_{k_n})_\ast\mu_n\big)_{F_{m_n}} = (\psi^{F_{m_n}}_{k_n})_\ast(\Pi^{\s_n}_v)_\ast\mu_n \quad \hbox{w.h.p. in}\ v\ \hbox{as}\ n\to\infty.\]

Having chosen such sequences $(k_n)_{n\geq 1}$ and $(m_n)_{n\geq 1}$, we obtain from~(\ref{eq:lwtick}) and~(\ref{eq:containment}) that the following all hold w.h.p. in $v$ as $n\to\infty$:
\begin{multline*}
\big((\Pi^{\s_n}_v)_\ast(\psi^{\s_n}_{k_n})_\ast\mu_n\big)_{F_{m_n}} = \big((\psi^G_{k_n})_\ast(\Pi^{\s_n}_v)_\ast\mu_n\big)_{F_{m_n}} \in \big((\psi_{k_n}^G)_\ast(\calO_n)\big)_{F_{m_n}} \\
\subseteq \{\theta_{F_{m_n}}:\ \theta \in \calN_{F_{m_n}}\} = \calN_{F_{m_n}}',
\end{multline*}
and hence $(\psi^{\s_n}_{k_n})_\ast\mu_n \lws \nu$.  Similarly, from~(\ref{eq:qtick}) and~(\ref{eq:containment}) we obtain that
\[((\psi^{\s_n}_{k_n})_\ast\mu_n)\big(\O(\calN_{m_n},\s_n)\big) \geq ((\psi^{\s_n}_{k_n})_\ast\mu_n)\big(\psi^{\s_n}_{k_n}\big(\O(\calO_n,\s_n)\big)\big) \geq \mu_n(\O(\calO_n,\s_n)) \to 1\]
as $n \to\infty$, so in fact $(\psi^{\s_n}_{k_n})_\ast\mu_n \q \nu$, by Lemma~\ref{lem:approxs-reform}.

For the case of local weak$^\ast$ convergence, we argue in the same way, except ignoring the lower bounds on $((\psi_{k_n}^{\s_n})_\ast\mu_n)(\O(\calN_{m_n},\s_n))$ and omitting the appeal to Corollary~\ref{cor:approx-by-Lip-maps}.

Finally, Lemma~\ref{lem:combine} gives that $\psi_k \times \psi_k \aL \phi\times \phi$ rel $\mu\times \mu$.  Therefore, if $\mu_n \dq \mu$, then we may apply the argument for the quenched case to $\mu_n \times \mu_n$.
\end{proof}

If $\phi:\X^G\to \Y$ is itself local and continuous, then by Lemma~\ref{lem:cts-alm-Lip} we may regard $\phi$ as an $\eta$-AL approximation to itself for every $\eta > 0$.  Therefore Proposition~\ref{prop:approx-meas-and-factor} has the following special case.

\begin{cor}\label{cor:approx-meas-and-factor}
If $\Phi = \phi^G$ is as in Proposition~\ref{prop:approx-meas-and-factor} with $\phi$ local and continuous, and $\mu_n \lws \mu$, then
\[\phi^{\s_n}_\ast\mu_n \lws \nu,\]
and similarly for quenched and doubly-quenched convergence.

In particular, suppose that $\l$ is a joining of the metric $G$-processes $(\X^G,\mu,S,d_\X)$ and $(\Y^G,\nu,S,d_\Y)$, and that $\l_n \in \Pr((\X\times \Y)^{V_n})$ satisfies
\[\l_n \lws \l.\]
If $\mu_n$ and $\nu_n$ are the marginals of $\l_n$ on $\X^{V_n}$ and $\Y^{V_n}$ respectively, then
\[\mu_n \lws \mu \quad \hbox{and} \quad \nu_n \lws \nu.\]

The same conclusions hold if locally weak$^\ast$ convergence is replaced with quenched or doubly-quenched convergence throughout. \qed
\end{cor}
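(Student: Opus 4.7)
The entire corollary will follow at once from Proposition~\ref{prop:approx-meas-and-factor} by taking the constant AL approximating sequence $\psi_k := \phi$ for all $k \geq 1$.

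First I would verify that this constant sequence genuinely qualifies as an AL approximating sequence to $\phi$ rel $(\mu,d_\X,d_\Y)$. Since $\phi$ is $D$-local for some finite $D \subseteq G$, we may regard it as a continuous function on the compact metric space $(\X^D, d_\X^{(D)})$. By Lemma~\ref{lem:cts-alm-Lip}, $\phi$ is $\eta$-almost Lipschitz from $d_\X^{(D)}$ to $d_\Y$ for every $\eta > 0$. Taking $U := \X^G$, which is open and has $\mu(U) = 1$, and noting that condition (i) of Definition~\ref{dfn:eta-approx} holds trivially when $\psi = \phi$, we see that $\phi$ is itself an $\eta$-AL approximation to $\phi$ for every $\eta > 0$. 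Thus the constant sequence $\psi_k := \phi$ (with witness parameters $\eta_k := 1/k \downarrow 0$) satisfies $\psi_k \aL \phi$ rel $(\mu, d_\X, d_\Y)$.

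Next I would invoke Proposition~\ref{prop:approx-meas-and-factor} for this sequence. Because $\psi_{k_n}^{\s_n} = \phi^{\s_n}$ regardless of the choice of $(k_n)_{n\geq 1}$, the ``grows sufficiently slowly'' clause becomes vacuous, and the three cases of the proposition immediately give
\[\mu_n \lws \mu \ \Longrightarrow \ \phi^{\s_n}_\ast\mu_n \lws \nu,\]
and analogously for `$\q$' and `$\dq$'. This establishes the first assertion.

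For the joining statement, I would apply the first assertion to the two coordinate projections $\pi_\X : (\X\times\Y)^G \to \X$ and $\pi_\Y : (\X\times\Y)^G \to \Y$ sending $(x_g, y_g)_{g \in G}$ to $x_{e_G}$ and $y_{e_G}$ respectively. Each is $\{e_G\}$-local and continuous with respect to the Hamming-average metric on $\X \times \Y$, and their equivariant extensions convert $\l$ into $\mu$ and $\nu$. Granted the standard convention $\s_n^{e_G} = \rm{id}_{V_n}$ (which in any case follows w.h.p.\ from the sofic axioms applied with $g=h=e_G$, and does not affect the asymptotic convergence notions at issue), the model-space map $(\pi_\X)^{\s_n}$ is literally the coordinate projection $(\X\times\Y)^{V_n} \to \X^{V_n}$, so $((\pi_\X)^{\s_n})_\ast\l_n = \mu_n$, and similarly $((\pi_\Y)^{\s_n})_\ast\l_n = \nu_n$. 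The first assertion then yields $\mu_n \lws \mu$ and $\nu_n \lws \nu$, with the same argument in the quenched and doubly-quenched cases. The only substantive point in the entire proof is the use of Lemma~\ref{lem:cts-alm-Lip} to validate a continuous local function as its own AL approximating sequence; the rest is routine unpacking.
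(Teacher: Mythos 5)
Your proof is correct and follows exactly the same route as the paper's. The paper's justification is the one-line remark preceding the statement: by Lemma~\ref{lem:cts-alm-Lip}, a local continuous $\phi$ is an $\eta$-AL approximation to itself for every $\eta>0$, so Proposition~\ref{prop:approx-meas-and-factor} applies with the constant approximating sequence; the joining case is then the special case where $\phi$ is a coordinate projection. Your write-up simply makes explicit the verification (taking $U=\X^G$ in Definition~\ref{dfn:eta-approx}, noting the ``grows sufficiently slowly'' clause is vacuous for a constant sequence, and handling the $\s_n^{e_G}$ technicality) that the paper leaves implicit.
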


The mere existence of a convergent sequence of measures on model spaces is a feature of a process that can be useful.  Proposition~\ref{prop:approx-meas-and-factor} has the following important consequence for this feature.

\begin{cor}
For a given metric $G$-process $(\X^G,\mu,S,d_\X)$ and sofic approximation $\S$, the property that there exists a sequence $\mu_n \in \Pr(\X^{V_n})$ which locally weak$^\ast$ (respectively, quenched or doubly-quenched) converges to $\mu$ is preserved by all factor maps, including all isomorphisms. In particular, it is a property of the process $(\X^G,\mu,S)$, not depending on the choice of the metric $d_\X$.

Therefore the definitions of these properties may be extended unambiguously to abstract $G$-systems.  \qed
\end{cor}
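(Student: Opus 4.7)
The plan is to deduce the corollary directly from Proposition~\ref{prop:approx-meas-and-factor} and its attendant Lemma~\ref{lem:approx-by-Lip}. Suppose $\Phi = \phi^G:(\X^G,\mu,S,d_\X) \to (\Y^G,\nu,S,d_\Y)$ is a factor map of metric $G$-processes, and that we are given a sequence $(\mu_n)_{n\geq 1}$ with $\mu_n \in \Pr(\X^{V_n})$ converging to $\mu$ in one of the three senses. Lemma~\ref{lem:approx-by-Lip} furnishes an AL approximating sequence $\psi_k \aL \phi$ rel $(\mu,d_\X,d_\Y)$. Proposition~\ref{prop:approx-meas-and-factor} then produces a slowly-growing sequence $(k_n)_{n\geq 1}$ such that the pushforwards $\nu_n := (\psi_{k_n}^{\s_n})_\ast\mu_n \in \Pr(\Y^{V_n})$ converge to $\nu$ in the same sense. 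This gives preservation under factor maps, and in particular under isomorphisms.

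For independence of the choice of compact generating metric, I would apply the preceding paragraph in both directions to the identity map. If $d_\X$ and $d'_\X$ are two compact generating metrics on $\X$, then the identity on $\X^G$ is a measurable equivariant map from $(\X^G,\mu,S,d_\X)$ to $(\X^G,\mu,S,d'_\X)$, hence a factor map in our sense. The existence of a locally weak$^\ast$ (respectively, quenched or doubly-quenched) convergent sequence $\mu_n \to \mu$ relative to $d_\X$ therefore implies the existence of such a sequence relative to $d'_\X$, and vice versa.

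For the extension to an abstract $G$-system $(X,\mu,T)$, use the canonical measure-theoretic isomorphism from~(\ref{eq:system-process}) to realize $(X,\mu,T)$ as a $G$-process $(X^G,\Phi_\ast\mu,S)$ on the standard space $X$. Pick any compact generating metric on $X$; the previous two paragraphs then guarantee that the convergence property depends neither on this metric nor on the particular realization of the system as a process (since any two realizations differ by an isomorphism of $G$-processes). Thus the definitions extend unambiguously to abstract $G$-systems.

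The main (and essentially only) obstacle has already been dealt with: namely, making sense of the pushforward of $\mu_n$ under a factor map that need not be continuous. This is precisely the role of the AL approximations developed in Section~\ref{sec:factors} and packaged into Proposition~\ref{prop:approx-meas-and-factor}. Once that proposition is available, the present corollary is a short formal consequence, with no further analytic input required.
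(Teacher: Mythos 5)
Your argument is correct and takes the same route the paper intends. The corollary is stated in the paper with only a \qed because it is viewed as an immediate consequence of Proposition~\ref{prop:approx-meas-and-factor}, and you have simply spelled out the deduction: Lemma~\ref{lem:approx-by-Lip} supplies the AL approximating sequence, Proposition~\ref{prop:approx-meas-and-factor} pushes a convergent sequence of model measures forward along it, and applying this to the (possibly discontinuous, but Borel) coordinate-projection factor map between the two choices of compact generating metric --- and to its inverse --- gives metric-independence. The final reduction to abstract $G$-systems via the map~(\ref{eq:system-process}) is exactly the paper's convention. No gaps.
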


Beware of the following distinction: this corollary tells us that the \emph{existence} of a sequence $\mu_n \lws \mu$ is independent of $d_\X$, but whether a \emph{given} sequence $\mu_n$ satisfies this certainly does depend on $d_\X$.

\section{Model-measure sofic entropies}\label{sec:mod-meas-sof-ent}

Let $(\X^G,\mu,S)$ be a $G$-process and $d$ a compact generating metric on $\X$.  As in the Introduction, we define the \textbf{quenched model-measure sofic entropy} to be
\begin{multline*}
\rmh^{\rm{q}}_\S(\mu) := \sup\Big\{\sup_{\delta,\eps > 0}\limsup_{i \to\infty}\frac{1}{|V_{n_i}|}\log\rm{cov}_{\eps,\delta}\big(\mu_i,d^{(V_{n_i})}\big):\\ n_i \uparrow \infty\ \hbox{and}\ \mu_i \q \mu\ \hbox{over}\ (\s_{n_i})_{i\geq 1}\Big\},
\end{multline*}
and the \textbf{doubly-quenched model-measure sofic entropy} to be
\begin{multline*}
\rmh^{\rm{dq}}_\S(\mu) := \sup\Big\{\sup_{\delta,\eps > 0}\limsup_{i \to\infty}\frac{1}{|V_{n_i}|}\log\rm{cov}_{\eps,\delta}\big(\mu_i,d^{(V_{n_i})}\big):\\ n_i \uparrow \infty\ \hbox{and}\ \mu_i \dq \mu\ \hbox{over}\ (\s_{n_i})_{i\geq 1}\Big\}.
\end{multline*}

As for sofic entropy, it turns out that these do not depend on the choice of compact generating metric, and so it is omitted from the notation.

In general, it could happen that there are
\[n_1 < m_1 < n_2 < m_2 < \dots,\]
a w$^\ast$-neighbourhood $\calO$ of $\mu$, and a sequence of measure $\mu_i \in \Pr(\X^{V_{n_i}})$ such that
\[\mu_{n_i} \q \mu \quad \hbox{over}\ (\s_{n_i})_{i\geq 1},\]
but on the other hand
\begin{equation}\label{eq:some-empty}
\O(\calO,\s_{m_j}) = \emptyset \quad \forall j \geq 1.
\end{equation}

The first of these conditions implies that $\O(\calO,\s_{n_i}) \neq \emptyset$ for all sufficiently large $i$, and hence $\rmh_\S(\mu)\geq 0$.  Thus, the presence of any sofic sub-approximation along which one can find good models gives a lower bound on the sofic entropy.  We wish to define the model-measure sofic entropies so that they have the analogous property.

However, if~(\ref{eq:some-empty}) holds then there is no way to insert $(\mu_i)_{i\geq 1}$ into a sequence of measures which quenched-converges to $\mu$ over the whole of the original sofic approximation.  This is why we must explicitly allow a supremum over arbitrary sub-sequences $(n_i)_{i\geq 1}$ in the definitions of $\rmh^\rm{q}_\S$ and $\rmh^\rm{dq}_\S$, in addition to taking a limit supremum along those sub-approximations.

Just as for sofic entropy, it is sometimes important to know whether a restriction to sofic sub-approximations is really necessary in computing $\rmh^\rm{dq}_\S$.  This can be expressed by comparing with the \textbf{lower doubly-quenched model-measure sofic entropy}:
\[\ul{\rmh}_\S^\rm{dq}(\mu) := \sup\Big\{\sup_{\delta,\eps > 0}\liminf_{n \to\infty}\frac{1}{|V_n|}\log\rm{cov}_{\eps,\delta}\big(\mu_n,d^{(V_n)}\big):\ \mu_n \dq \mu\ \hbox{over}\ \S\Big\}.\]
This time we do insist that $\mu_n \dq \mu$ over the whole of the original sofic approximation $\S$, and then we take a limit infimum as $n\to\infty$, rather than a limit supremum.

The simplest relationship between sofic entropy and its model-measure variants is as follows.

\begin{lem}\label{lem:simple-ineq}
For any $G$-system $(\X^G,\mu,S,d)$ we have
\[\rmh_\S(\mu) \geq \rmh^\rm{q}_\S(\mu) \geq \rmh^{\rm{dq}}_\S(\mu).\]
\end{lem}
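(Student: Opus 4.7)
The plan is to handle the two inequalities separately, as each reduces to a short direct comparison between covering numbers and the definitions involved.

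For the first inequality $\rmh_\S(\mu) \geq \rmh^\rm{q}_\S(\mu)$, I would fix an arbitrary admissible sequence in the sup defining $\rmh^\rm{q}_\S$: a subsequence $n_i \uparrow \infty$ and measures $\mu_i \in \Pr(\X^{V_{n_i}})$ with $\mu_i \q \mu$ over $(\s_{n_i})_{i\geq 1}$. For any w$^\ast$-neighbourhood $\calO$ of $\mu$ and any $\eps > 0$, the quenched condition gives $\mu_i(\O(\calO,\s_{n_i})) > 1 - \eps$ for all sufficiently large $i$. Directly from the definition of $\cov_{\eps,\delta}(\mu_i,d^{(V_{n_i})})$, this yields
\[\cov_{\eps,\delta}\big(\mu_i,d^{(V_{n_i})}\big) \leq \cov_\delta\big(\O(\calO,\s_{n_i}),d^{(V_{n_i})}\big)\]
for all such $i$. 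Dividing by $|V_{n_i}|$, taking $\log$, then $\limsup_{i\to\infty}$, and noting that the limsup along a subsequence is bounded by the limsup along the whole sequence, one gets
\[\limsup_{i\to\infty}\frac{1}{|V_{n_i}|}\log\cov_{\eps,\delta}\big(\mu_i,d^{(V_{n_i})}\big) \leq \limsup_{n\to\infty}\frac{1}{|V_n|}\log\cov_\delta\big(\O(\calO,\s_n),d^{(V_n)}\big).\]
The right-hand side does not depend on $\eps$ and the left-hand side does not depend on $\calO$, so taking $\inf_\calO$ on the right and then $\sup_{\delta,\eps}$ on the left gives $\rmh_\S(\mu)$ as an upper bound. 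Finally one takes the sup over all admissible $(n_i, \mu_i)$.

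For the second inequality $\rmh^\rm{q}_\S(\mu) \geq \rmh^\rm{dq}_\S(\mu)$, the key observation is that the set of admissible sequences in the definition of $\rmh^\rm{dq}_\S$ is contained in the corresponding set for $\rmh^\rm{q}_\S$. Concretely, if $\mu_i \dq \mu$ over $(\s_{n_i})_{i\geq 1}$, i.e.\ $\mu_i\times\mu_i \q \mu\times\mu$, then applying Corollary~\ref{cor:approx-meas-and-factor} to the first-coordinate projection $(\X\times\X)^G \to \X^G$ (which is local and continuous) gives $\mu_i \q \mu$. Thus every sequence admissible for $\rmh^\rm{dq}_\S$ is also admissible for $\rmh^\rm{q}_\S$, and the displayed inner quantity is the same for both; the inequality is just monotonicity of $\sup$ over a larger indexing set.

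The only mild subtlety is the interchange of the sub-approximation chosen for $\rmh^\rm{q}_\S$ with the full sofic approximation appearing in $\rmh_\S$; this is handled by the standard inequality $\limsup$ along a subsequence $\leq \limsup$ along the whole sequence, and no further obstacle arises. No appeal to Theorem A or to the detailed structure of AL approximations is needed for this lemma.
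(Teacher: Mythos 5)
Your proof is correct and takes essentially the same route as the paper: the first inequality by comparing $\cov_{\eps,\delta}(\mu_i,d^{(V_{n_i})})$ with $\cov_\delta(\O(\calO,\s_{n_i}),d^{(V_{n_i})})$ via the quenched condition (ii) of Definition~\ref{dfn:approxs}, and the second by observing that the supremum defining $\rmh^\rm{dq}_\S$ ranges over a subset of the one defining $\rmh^\rm{q}_\S$. The only difference is that you spell out the implication ``doubly-quenched $\Rightarrow$ quenched'' via Corollary~\ref{cor:approx-meas-and-factor} applied to the coordinate projection, whereas the paper states this implication as evident; your justification is a fine way to make it precise.
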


\begin{proof}
Firstly, suppose that $n_i \uparrow \infty$ and that $\mu_i \q \mu$ over $(\s_{n_i})_{i\geq 1}$.   For any $\eps,\delta > 0$ and any w$^\ast$-neighbourhood $\calO$ of $\mu$, condition (ii) in Definition~\ref{dfn:approxs} gives that
\[\mu_i(\O(\calO,\s_{n_i})) > 1 - \eps \quad \hbox{and hence} \quad \cov_{\eps,\delta}(\mu_i,d^{(V_{n_i})}) \leq \cov_\delta\big(\O(\calO,\s_{n_i}),d^{(V_{n_i})}\big)\]
for all sufficiently large $i$.  This shows that $\rmh^\rm{q}_\S(\mu) \leq \rmh_\S(\mu)$.

Second, since doubly-quenched convergence implies quenched convergence, the supremum defining $\rmh^\rm{dq}_\S(\mu)$ is over a subset of that defining $\rmh^\rm{q}_\S(\mu)$, so $\rmh^\rm{dq}_\S(\mu) \leq \rmh^\rm{q}_\S(\mu)$.
\end{proof}

\begin{ex}\label{ex:h-neq-hq}
Let $\X = \{0,1\}$, let $G$ be the free group on four generators, and let $(\X^G,\mu,S)$ be the $G$-process constructed in Example~\ref{ex:coind-ex}.  We saw in that example that $\rmh_\S(\mu) = 0$, but also that, once $n$ is large, any sufficiently good model for $\mu$ in $\X^{V_n}$ must be very close to the particular model $\bf{x}_n = 1_{W_n}$.  Therefore, if the measures $\mu_n \in \Pr(\X^{V_n})$ are asymptotically supported on good models of $\mu$, then they must be mostly supported on smaller and smaller Hamming balls around $\bf{x}_n$.  Once $n$ is large this has the following consequence: for most vertices $v \in V_n$, the marginal of $\mu_n$ around $v$ is close to either the Dirac mass at $0$ or the Dirac mass at $1$.  This violates the definition of local weak$^\ast$ convergence, and so there are no subsequence $n_1 < n_2 < \dots$ and measures satisfying $\mu_i \q \mu$ over $(\s_{n_i})_{i\geq 1}$.  Thus $\rmh^\rm{q}_\S(\mu) = -\infty$. \fin
\end{ex}

Of course, Lemma~\ref{lem:q-dq-and-wm} gives immediately that $\rmh^\rm{q}_\S(\mu) = \rmh^\rm{dq}_\S(\mu)$ if $(\X^G,\mu,S)$ is weakly mixing.  Kronecker systems can be examples in which $\rmh^\rm{q}_\S(\mu) > \rmh^\rm{dq}_\S(\mu)$.

\begin{ex}\label{ex:hq-neq-hdq}
If $G$ is a group with Kazhdan's property (T) in Example~\ref{ex:T}, then we produced a sequence $\mu_n \q \mu$ over the given sofic approximation $\S$, but also showed that there can be no sequence $\nu_n \dq \mu$.  The latter argument still holds over any sofic sub-approximation.  Therefore $\rmh_\S^\rm{q}(\mu) \geq 0$ (indeed, simple estimates show that it equals $0$) but $\rmh_\S^\rm{dq}(\mu) = -\infty$ in that example.

We also saw that $\rmh_\S(\mu\times \mu) = -\infty$ for this system.  Therefore this is an example in which one cannot replace $\rmh_\S^\rm{dq}$ with $\rmh_\S^\rm{q}$ in the conclusion of Theorem B. \fin
\end{ex}

\subsection{Invariance under isomorphism}\label{subs:iso-invar}

\begin{thm}\label{thm:iso-invar}
For a fixed sofic approximation $\S$, the quantities $\rmh^\rm{q}_\S(\mu)$ and $\rmh_\S^{\rm{dq}}(\mu)$ are isomorphism-invariants of the metric $G$-process $(\X^G,\mu,S,d)$.  In particular, they do not depend on the choice of $d$.
\end{thm}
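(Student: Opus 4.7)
The plan is to reduce Theorem~\ref{thm:iso-invar} to a factor-map inequality: for any factor map $\Phi = \phi^G\colon (\X^G,\mu,S,d_\X) \to (\Y^G,\nu,S,d_\Y)$ between metric $G$-processes, I will establish
\[\rmh^\rm{q}_\S(\nu) \leq \rmh^\rm{q}_\S(\mu) \quad \hbox{and} \quad \rmh^\rm{dq}_\S(\nu) \leq \rmh^\rm{dq}_\S(\mu).\]
Applying this twice---once to $\Phi$ and once to its measurable inverse $\Phi^{-1}$ when $\Phi$ is an isomorphism---gives equality, and the metric-independence clause is the special case in which $\Phi$ is the identity between two metric $G$-processes on $\X^G$ equipped with different compact generating metrics on $\X$.

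For the factor-map inequality, fix any sequence $\mu_i \q \mu$ over a sub-approximation $(\s_{n_i})_{i\geq 1}$, and an AL-approximating sequence $\psi_k \aL \phi$ rel $(\mu,d_\X,d_\Y)$. By Proposition~\ref{prop:approx-meas-and-factor}, for a sufficiently slowly-growing sequence $k_i$ the pushforwards $\nu_i := (\psi_{k_i}^{\s_{n_i}})_\ast \mu_i$ satisfy $\nu_i \q \nu$ over $(\s_{n_i})_{i\geq 1}$. Using Lemma~\ref{lem:combine}, which gives $\psi_k \times \psi_k \aL \phi \times \phi$, the same proposition also transfers $\dq$-convergent sequences to $\dq$-convergent sequences; so each qualifying sequence on the $\mu$-side yields one on the $\nu$-side along the matching subsequence.

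To compare covering numbers, Lemma~\ref{lem:Lip-still-Lip} provides for each $k$ a w$^\ast$-neighbourhood $\calO_k$ of $\mu$ such that $\psi_k^{\s_n}|\,\O(\calO_k,\s_n)$ is $3\eta_k$-almost $L_k|D_k|$-Lipschitz, and quenched convergence gives $\mu_i(\O(\calO_{k_i},\s_{n_i})) \to 1$. Adapting the proof of Lemma~\ref{lem:img-covnos} to this setting---I will start from a near-optimal $(\eps,\delta)$-cover $F$ of $\mu_i$, replace each $y \in F$ with a neighbour $y' \in B_\delta(y) \cap \O(\calO_{k_i},\s_{n_i})$ when one exists, and push forward through $\psi_{k_i}^{\s_{n_i}}$---the triangle inequality restricted to the good-model set yields, for some $\alpha_i \to 0$,
\[\cov_{\eps+\alpha_i,\,3\eta_{k_i}+2L_{k_i}|D_{k_i}|\delta}(\nu_i,\,d_\Y^{(V_{n_i})}) \leq \cov_{\eps,\delta}(\mu_i,\,d_\X^{(V_{n_i})}).\]

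The main technical difficulty lies in converting this bound into $\rmh^\rm{q}_\S(\nu) \leq \rmh^\rm{q}_\S(\mu)$, since the Lipschitz constants $L_{k_i}|D_{k_i}|$ depend on the approximation and may grow without bound, so the LHS scale does not stay controlled for fixed $\delta$. The idea is to exploit the freedom in the parameters inside the outer supremum defining $\rmh^\rm{q}_\S(\mu)$: for any target $(\eps^\ast,\delta^\ast)$ on the $\nu$-side, choose $\delta := (\delta^\ast - 3\eta_{k_i})/(2L_{k_i}|D_{k_i}|)$ on the $\mu$-side, making the LHS parameters exactly $(\eps^\ast,\delta^\ast)$ for $i$ large; then argue that the resulting shrinking-scale covering numbers of $\mu_i$ on the right are still controlled by $\rmh^\rm{q}_\S(\mu)$---this will be the subtlest step. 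The reverse inequality for isomorphisms comes from the symmetric construction using an AL-approximating sequence for the inverse factor, and the $\rmh^\rm{dq}_\S$ case is identical throughout via the $\dq$ clause of Proposition~\ref{prop:approx-meas-and-factor}.
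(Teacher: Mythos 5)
The argument has a logical gap that technical effort cannot repair. You aim to prove a factor-map monotonicity $\rmh^\rm{q}_\S(\nu) \leq \rmh^\rm{q}_\S(\mu)$, but the covering-number estimate you derive, $\cov_{\eps+\alpha_i,\,3\eta_{k_i}+2L_{k_i}|D_{k_i}|\delta}(\nu_i)\leq\cov_{\eps,\delta}(\mu_i)$, only controls the specific pushforward sequences $\nu_i := (\psi_{k_i}^{\s_{n_i}})_\ast\mu_i$, not every sequence $\nu'_i \q \nu$. Since $\rmh^\rm{q}_\S(\nu)$ is a \emph{supremum} over all $\nu'_i \q \nu$, bounding the growth rate of the pushforwards alone gives no upper bound on it. The monotonicity itself is also false: the Ornstein--Weiss example over a free group exhibits a Bernoulli shift with a factor isomorphic to a Bernoulli shift of strictly larger base Shannon entropy, and Lemma~\ref{lem:Bern-dq} together with the discussion after Theorem E shows $\rmh^\rm{q}_\S$ equals base Shannon entropy for Bernoulli shifts, so $\rmh^\rm{q}_\S$ can increase under factor maps.

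The proof must instead go in the opposite direction and must exploit the invertibility of $\Phi$: one shows $\rmh^\rm{q}_\S(\mu) \leq \rmh^\rm{q}_\S(\nu)$ by producing, from each $\mu_i \q \mu$, \emph{some} $\nu_i \q \nu$ with $\cov(\mu_i)$ bounded \emph{above} by $\cov(\nu_i)$. Since pushing forward through an AL approximant only shrinks covering numbers, this forces a round trip: write $\Phi^{-1} = \t{\phi}^G$, take $\t{\psi}_m \aL \t{\phi}$, set $\nu_n := (\psi^{\s_n}_{k_n})_\ast\mu_n$, and track the diagonal joining of $\mu$ with itself to show that the composition $\t{\psi}^{\s_n}_{m_n}\circ\psi^{\s_n}_{k_n}$ moves $\mu_n$-most points a vanishing $d_\X^{(V_n)}$-distance. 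Lemma~\ref{lem:comparing-nearby-covnos} then gives $\cov_{\eps,\delta}(\mu_n)\leq\cov_{\eps/2,\delta/2}((\t{\psi}^{\s_n}_{m_n})_\ast\nu_n)$, and Proposition~\ref{prop:control-cov-of-img} bounds the latter by $\cov_{\eps/8,\delta'}(\nu_n)$. That proposition is also where the Lipschitz blowup you flag is actually resolved: one fixes a single approximant $\psi_{m'}$ with $\eta_{m'}$ small relative to the target $\delta$, keeps the constant $L_{m'}|D_{m'}|$ fixed, and uses Lemma~\ref{lem:comparing-approxs} to replace $\psi_{m_n}$ by $\psi_{m'}$ on the good-model set at small cost. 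Letting $\delta$ shrink with $i$, as you propose, is incompatible with the definition of $\rmh^\rm{q}_\S$, which fixes $\eps,\delta$ before the limit supremum.
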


Proposition~\ref{prop:approx-meas-and-factor} is the key to this result, together with the following.

\begin{prop}\label{prop:control-cov-of-img}
Suppose that $\Phi = \phi^G:(\X^G,\mu,S) \to (\Y^G,\nu,S)$ is a factor map, that $d_\X$ and $d_\Y$ are compact generating metrics of diameter at most $1$, that $\mu_n \q \mu$, and that $\psi_m \aL \phi$ rel $(\mu,d_\X,d_\Y)$.  For any $\eps,\delta \in (0,1)$ there is a $\delta' > 0$ for which the following holds.  Provided $m_1 \leq m_2 \leq \dots$ grows sufficiently slowly, we have
\[\cov_{\eps,\delta}\big((\psi^{\s_n}_{m_n})_\ast\mu_n,d_\Y^{(V_n)}\big) \leq \cov_{\eps/4,\delta'}(\mu_n,d_\X^{(V_n)})\]
for all sufficiently large $n$.
\end{prop}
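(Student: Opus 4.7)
The plan is to reduce the problem — in which the approximations $\psi_{m_n}$ have Lipschitz constants that may blow up as $m_n \to \infty$ — to one involving a single fixed reference $\eta$-AL approximation $\psi^*$ whose geometric data then determine $\delta'$. The enabling tool is Lemma~\ref{lem:comparing-approxs}, which shows that any two sufficiently fine $\eta$-AL approximations to $\phi$ agree to within $10\eta$ in $d_\Y^{(V)}$ on good models, so that the varying maps $\psi_{m_n}^{\s_n}$ can be controlled by a single $(\psi^*)^{\s_n}$.

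Given $\eps, \delta \in (0,1)$, I would first pick $\eta$ so small that $10\eta < \delta/4$ (say $\eta := \delta/40$) and fix once and for all one $\eta$-AL approximation $\psi^*$ to $\phi$ rel $(\mu, d_\X, d_\Y)$, with associated data $D^*, U^*, L^*$ from Definition~\ref{dfn:eta-approx}. Then set
\[\delta' := \frac{\delta/4 - 3\eta}{2L^* |D^*|} > 0.\]
Lemma~\ref{lem:Lip-still-Lip} yields a w$^\ast$-neighbourhood $\calO^*$ of $\mu$ such that $(\psi^*)^{\s_n}|\O(\calO^*, \s_n)$ is $3\eta$-almost $(L^*|D^*|)$-Lipschitz from $d_\X^{(V_n)}$ to $d_\Y^{(V_n)}$ for all large $n$. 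Separately, for each $m$ with $\eta_m \leq \eta$ (so $\psi_m$ is itself $\eta$-AL), Lemma~\ref{lem:comparing-approxs} provides a w$^\ast$-neighbourhood $\calO_m$ of $\mu$ on whose good models $(\psi^*)^\s$ and $\psi^\s_m$ differ pointwise by at most $10\eta$. Using Lemma~\ref{lem:approxs-reform} together with $\mu_n \q \mu$, I would then impose slow growth on $(m_n)$ so that, for all sufficiently large $n$, one has both $\eta_{m_n} \leq \eta$ and $\mu_n\big(\O(\calO^* \cap \calO_{m_n}, \s_n)\big) > 1 - \eps/10$.

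With $E_n := \O(\calO^* \cap \calO_{m_n}, \s_n)$, the covering estimate is then direct. Take $F \subseteq \X^{V_n}$ realising $\cov_{\eps/4, \delta'}(\mu_n, d_\X^{(V_n)})$, so that $\mu_n(B_{\delta'}(F)) > 1 - \eps/4$, and build $F'' \subseteq E_n$ of cardinality at most $|F|$ by replacing each $\bf{y} \in F$ with a chosen point $\bf{y}' \in B_{\delta'}(\bf{y}) \cap E_n$ whenever this intersection is nonempty, and discarding $\bf{y}$ otherwise; set $G := (\psi^*)^{\s_n}(F'') \subseteq \Y^{V_n}$. For any $\bf{x} \in E_n \cap B_{\delta'}(F)$, pick $\bf{y} \in F$ with $d_\X^{(V_n)}(\bf{x}, \bf{y}) < \delta'$; then $\bf{x} \in B_{\delta'}(\bf{y}) \cap E_n$ witnesses the replacement, producing $\bf{y}' \in F''$ with $d_\X^{(V_n)}(\bf{x}, \bf{y}') < 2\delta'$. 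Both $\bf{x}, \bf{y}'$ lie in $E_n$, so the almost-Lipschitz bound gives $d_\Y^{(V_n)}((\psi^*)^{\s_n}(\bf{x}), (\psi^*)^{\s_n}(\bf{y}')) < 3\eta + L^*|D^*| \cdot 2\delta' = \delta/4$, while Lemma~\ref{lem:comparing-approxs} gives $d_\Y^{(V_n)}(\psi^{\s_n}_{m_n}(\bf{x}), (\psi^*)^{\s_n}(\bf{x})) < 10\eta < \delta/4$. Combining, $\psi^{\s_n}_{m_n}(\bf{x}) \in B_{\delta/2}(G) \subseteq B_\delta(G)$. Since $\mu_n(E_n \cap B_{\delta'}(F)) > 1 - \eps/10 - \eps/4 > 1 - \eps$, we conclude $(\psi^{\s_n}_{m_n})_\ast\mu_n(B_\delta(G)) > 1 - \eps$, with $|G| \leq |F|$, which is the claim.

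The main obstacle is precisely the growth of the Lipschitz constants $L_m|D_m|$ along the approximating sequence: a naive application of Lemma~\ref{lem:img-covnos} to $\psi^{\s_n}_{m_n}$ would force $\delta' = \delta'_n \to 0$, defeating the statement. The key insight enabling a uniform $\delta'$ is that all sufficiently fine $\eta$-AL approximations are $10\eta$-close on good models (Lemma~\ref{lem:comparing-approxs}), so the varying $\psi_{m_n}$ can be absorbed into the single reference $\psi^*$. A secondary technicality — the modification $F \to F''$ — costs a factor of $2$ in $\delta'$, but is necessary so that the almost-Lipschitz bound, which holds only on $E_n$, can be applied to the pair $(\bf{x}, \bf{y}')$.
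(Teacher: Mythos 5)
Your proof is correct and takes essentially the same approach as the paper: fix one reference $\eta$-AL approximation to pin down $\delta'$, use Lemma~\ref{lem:Lip-still-Lip} for the almost-Lipschitz bound on good models, and use Lemma~\ref{lem:comparing-approxs} to absorb the moving maps $\psi_{m_n}$ into that reference; you merely unpack the abstract covering-number lemmas (Lemmas~\ref{lem:img-covnos} and~\ref{lem:comparing-nearby-covnos}) and the conditioning-on-good-models step into a direct construction of the covering set, with the $F\to F''$ projection into $E_n$ playing the role that the paper delegates to those lemmas. One tiny arithmetic slip: with $\eta := \delta/40$ one has $10\eta = \delta/4$ (not strictly less), but this is harmless since the final estimate $\delta/4 + \delta/4 = \delta/2 < \delta$ still closes; taking $\eta$ strictly below $\delta/40$ would tidy it.
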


\begin{proof}
Suppose that $\psi_m$ is an $\eta_m$-AL approximation to $\phi$ for each $m$, where $\eta_m \downarrow 0$.  We may assume that $(\eta_m)_{m\geq 1}$ is non-increasing. Then there are finite sets $D_m\subseteq G$, $D_m$-local open sets $U_m\subseteq \X^G$, and constants $L_m < \infty$ such that each $\psi_m:\X^G \to \Y$ is $\eta_m$-almost $L_m$-Lipschitz from $d_\X^{(D_m)}$ to $d_\Y$.

For each $m$, Lemma~\ref{lem:Lip-still-Lip} gives w$^\ast$-neighbourhoods $\calN_m$ of $\mu$ such that
\[\psi^{\s_n}_m|\O(\calN_m,\s_n)\]
is $(3\eta_m)$-almost $(L_m|D_m|)$-Lipschitz for all sufficiently large $n$.  Choose $m'$ large enough that $3\eta_{m'} < \delta/1000$.  Now choose $\delta'$ small enough that
\begin{equation}\label{eq:delta-prime-bound}
3\eta_{m'} + L_{m'}|D_{m'}|\delta' < \delta/2.
\end{equation}

Once $n$ is sufficiently large, we have
\[\mu_n(\O(\calN_{m'},\s_n)) > 1 - \eps/4 > 3/4.\]
This implies that
\begin{equation}\label{eq:put-cond-in}
\cov_{3\eps/4,\delta/2}\big((\psi^{\s_n}_{m'})_\ast\mu_n,d_\Y^{(V_n)}\big) \leq \cov_{\eps/2,\delta/2}\Big((\psi^{\s_n}_{m'})_\ast\big(\mu_{n|\O(\calN_{m'},\s_n)}\big),\ d_\Y^{(V_n)}\Big),
\end{equation}
where $\mu_{n|\O(\calN_{m'},\s_n)}$ is the measure $\mu_n$ conditioned on the subset $\O(\calN_{m'},\s_n)$.  It also implies that
\begin{equation}\label{eq:cov-cond-uncond}
\cov_{\eps/2,\delta'}(\mu_{n|\O(\calN_{m'},\s_n)},d_\X^{(V_n)}) \leq \cov_{\eps/4,\delta'}(\mu_n,d_\X^{(V_n)}),
\end{equation}
because if $\mu_n(F) < \eps/4$ then
\[\mu_{n|\O(\calN_{m'},\s_n)}(F) \leq \frac{\mu_n(F)}{\mu_n(\O(\calN_{m'},\s_n))} < \frac{\eps/4}{1 - \eps/4} < \eps/2.\]

For sufficiently large $n$, the fact that $\psi_{m'}^{\s_n}|\O(\calN_{m'},\s_n)$ is $(3\eta_{m'})$-almost $(L_{m'}|D_{m'}|)$-Lipschitz may be combined with Lemma~\ref{lem:img-covnos} to conclude that
\[\cov_{\eps/2,3\eta_{m'} + L_{m'}|D_{m'}|\delta'}\Big((\psi^{\s_n}_{m'})_\ast\nu,d_\Y^{(V_n)}\Big) \leq \cov_{\eps/2,\delta'}(\nu,d_\X^{(V_n)})\]
for any Borel probability measure $\nu$ supported on $\O(\calN_{m'},\s_n)$. Applying this with $\nu = \mu_{n|\O(\calN_{m'},\s_n)}$ and using~(\ref{eq:delta-prime-bound}) and~(\ref{eq:cov-cond-uncond}), we obtain
\begin{equation}\label{eq:img-control}
\cov_{\eps/2,\delta/2}\Big((\psi^{\s_n}_{m'})_\ast\big(\mu_{n|\O(\calN_{m'},\s_n)}\big),\ d_\Y^{(V_n)}\Big) \leq \cov_{\eps/4,\delta'}(\mu_n,d_\X^{(V_n)})
\end{equation}
for all sufficiently large $n$.

Finally, since $\eta_m$ decreases as $m\to \infty$, Lemma~\ref{lem:comparing-approxs} gives w$^\ast$-neighbourhoods $\calO_m$ of $\mu$ such that
\[d_\Y^{(V_n)}\big(\psi^{\s_n}_m(\bf{x}),\psi^{\s_n}_{m'}(\bf{x})\big) < 10 \eta_{m'} < \delta/2 \quad \forall \bf{x} \in \calO_m\ \forall m \geq m'.\]
By forming running intersections, we may assume that $\calO_1 \supseteq \calO_2 \supseteq \dots$.  Since $\mu_n(\O(\calO_m,\s_n))$ tends to $1$ for every $m$, it follows that, if $m_1 \leq m_2 \leq \dots$ grows sufficiently slowly, then
\[\mu_n\big\{\bf{x}:\ d_\Y^{(V_n)}\big(\psi^{\s_n}_{m_n}(\bf{x}),\psi^{\s_n}_{m'}(\bf{x})\big) > \delta/2\big\} \to 0 \quad \hbox{as}\ n\to\infty.\]
Therefore, for such a slowly-growing sequence, Lemma~\ref{lem:comparing-nearby-covnos} gives
\[\cov_{\eps,\delta}\big((\psi^{\s_n}_{m_n})_\ast\mu_n,d_\Y^{(V_n)}\big) \leq \cov_{3\eps/4,\delta/2}\big((\psi^{\s_n}_{m'})_\ast\mu_n,d_\Y^{(V_n)}\big)\]
for all sufficiently large $n$.  This completes the proof in combination with inequalities~(\ref{eq:put-cond-in}) and~(\ref{eq:img-control}).
\end{proof}

\begin{proof}[Proof of Theorem~\ref{thm:iso-invar}]
Let $\Phi = \phi^G:(\X^G,\mu,S)\to (\Y^G,\nu,S)$ be an isomorphism, choose compact generating metrics $d_\X$ for $\X$ and $d_\Y$ for $\Y$ of diameter at most $1$, and suppose that $\Phi^{-1} = \t{\phi}^G$.  Let $d$ be the Hamming average of $d_\X$ and $d_\Y$, and let $d_\X^{(2)}$ be the Hamming average of two copies of $d_\X$.

We will show that $\rmh_\S^{\rm{q}}(\mu) \leq \rmh_\S^{\rm{q}}(\nu)$; the reverse must also hold by symmetry.  An exactly analogous argument gives the proof for $\rmh_\S^{\rm{dq}}$.

If the left-hand side is $-\infty$ then there is nothing to prove. So let $n_i \uparrow \infty$ be a subsequence, suppose that $\mu_{n_i} \q \mu$ over $(\s_{n_i})_{i\geq 1}$, and let $\eps > 0$ and $\delta > 0$. We will produce measures $\nu_{n_i} \q \nu$ over $(\s_{n_i})_{i\geq 1}$ and a $\delta' > 0$ such that
\[\cov_{\eps,\delta}\big(\mu_{n_i},d_\X^{(V_{n_i})}\big) \leq \cov_{\eps/8,\delta'}\big(\nu_{n_i},d_\Y^{(V_{n_i})}\big)\]
for all sufficiently large $i$.  To this end, it suffices to consider only the sofic sub-approximation $(\s_{n_i})_{i\geq 1}$, so we may relabel this sub-approximation and simply assume that it equals $(\s_n)_{n\geq 1}$.

Now let $\psi_k \aL \phi$ rel $(\mu,d_\X,d_\Y)$ and $\t{\psi}_m \aL \t{\phi}$ rel $(\nu,d_\Y,d_\X)$.  In addition, let $\xi:\X^G\to \X$ be the projection onto the $e$-indexed coordinate, so $\xi^G = \rm{id}_{\X^G}$. Finally, let
\[\l := \int_{\X^G} \delta_{(x,\Phi(x))}\,\mu(\d x) \quad \hbox{and} \quad \hat{\l} := \int_{\X^G} \delta_{(x,x)}\,\mu(\d x).\]
These are the graphical joining of $\mu$ and $\nu$ corresponding to the factor map $\Phi$, and the diagonal joining of $\mu$ with itself, respectively.

Since $\xi$ may be regarded as a constant AL approximating sequence to itself, Lemma~\ref{lem:combine} and Corollary~\ref{cor:combine} give that
\begin{equation}\label{eq:join-still-AL-approx1}
(\xi,\psi_k) \aL (\xi,\phi) \quad \hbox{rel}\ (\mu,d_\X,d)
\end{equation}
and
\begin{equation}\label{eq:join-still-AL-approx2}
\xi\times \t{\psi}_m \aL \xi\times \t{\phi} \quad \hbox{rel}\ (\l,d,d^{(2)}_\X).
\end{equation}

By Proposition~\ref{prop:approx-meas-and-factor} and~(\ref{eq:join-still-AL-approx1}), if we choose $k_1 \leq k_2 \leq \dots$ growing sufficiently slowly, then
\begin{equation}\label{eq:lambdan}
\l_n := \big((\xi,\psi_{k_n})^{\s_n}\big)_\ast\mu_n \q \l.
\end{equation}
Fix such a sequence $(k_n)_{n\geq 1}$, and define $\nu_n := (\psi_{k_n}^{\s_n})_\ast\mu_n$, so this is the marginal of $\l_n$ on the space $\Y^{V_n}$.  By Corollary~\ref{cor:approx-meas-and-factor} we also have $\nu_n \q \nu$.

We now apply Proposition~\ref{prop:approx-meas-and-factor} with~(\ref{eq:join-still-AL-approx2}) and~(\ref{eq:lambdan}), and also Proposition~\ref{prop:control-cov-of-img}.  According to those propositions, if we choose $m_1 \leq m_2 \leq \dots$ growing sufficiently slowly, then we have
\begin{equation}\label{eq:conv-to-lhat}
((\xi \times \t{\psi}_{m_n})^{\s_n})_\ast\l_n \q (\xi^G\times \t{\Phi})_\ast\l = \hat{\l},
\end{equation}
and also there is a $\delta' > 0$ such that
\begin{equation}\label{eq:delta-prime}
\rm{cov}_{\eps/2,\delta/2}\big((\t{\psi}^{\s_n}_{m_n})_\ast\nu_n,d_\X^{(V_n)}\big) \leq \cov_{\eps/8,\delta'}(\nu_n,d_\Y^{(V_n)})
\end{equation}
for all sufficiently large $n$.

We finish the proof by comparing $\mu_n$ with its image measure
\[(\t{\psi}^{\s_n}_{m_n})_\ast\nu_n = (\t{\psi}^{\s_n}_{m_n})_\ast(\psi^{\s_n}_{k_n})_\ast\mu_n.\]
This is where we need the joining $\hat{\l}$.  On $\X^G\times \X^G$, let $F$ be the continuous function
\[F\big((x_g)_{g\in G},(x'_g)_{g\in G}\big) := d_\X(x_e,x'_e).\]
Then by~(\ref{eq:conv-to-lhat}) and a simple calculation we have
\begin{multline*}
\int d_\X^{(V_n)}\big(\bf{x},\t{\psi}^{\s_n}_{m_n}(\psi^{\s_n}_{k_n}(\bf{x}))\big)\,\mu_n(\d \bf{x}) = \iint F\,\d P^{\s_n}_{\left(\bf{x},\t{\psi}^{\s_n}_{m_n}\left(\psi^{\s_n}_{k_n}(\bf{x})\right)\right)}\,\mu_n(\d \bf{x})\\
= \iint F\,\d P^{\s_n}_{\left(\bf{x},\t{\psi}^{\s_n}_{m_n}(\bf{y})\right)}\,\l_n(\d \bf{x},\d \bf{y})
\to \int F\,\d\hat{\l} = 0.
\end{multline*}
Therefore Lemma~\ref{lem:comparing-nearby-covnos} gives
\[\cov_{\eps,\delta}\big(\mu_n,d_\X^{(V_n)}\big) \leq \cov_{\eps/2,\delta/2}\big((\t{\psi}^{\s_n}_{m_n})_\ast\nu_n,d_\X^{(V_n)}\big)\]
for all sufficiently large $n$.  Combining with~(\ref{eq:delta-prime}) completes the proof.
\end{proof}

\section{Entropy of Cartesian products}\label{sec:prod}

\subsection{Proof of Theorem B}

\begin{proof}[Proof of Theorem B]
Let $(\X^G,\mu,S,d_\X)$ and $(\Y^G,\nu,S,d_\Y)$ be metric $G$-processes, and suppose that $\eta > 0$. Let $d$ be the Hamming average of $d_\X$ and $d_\Y$ on $\X\times \Y$.  Let $h_1 := \rmh^\rm{dq}_\S(\mu)$ and $h_2 = \rmh_\S(\nu)$. We assume the latter is equal to $\ul{\rmh}_\S(\nu)$, so there is a $\delta > 0$ such that for any w$^\ast$-neighbourhood $\calO$ of $\nu$ we have
\begin{equation}\label{eq:cov-lower-bd}
\rm{cov}_\delta(\O(\calO,\s_n),d_\Y^{(V_n)}) \geq \rme^{(h_2 - \eta)|V_n|}
\end{equation}
for all sufficiently large $n$.

By shrinking $\delta$ further if necessary, and choosing $\eps > 0$ sufficiently small, we may also assume that there are a sequence $n_i \uparrow \infty$ and a sequence of measures $\mu_i \in \Pr(\X^{V_{n_i}})$ such that $\mu_{n_i} \dq \mu$ over $(\s_{n_i})_{i\geq 1}$ and
\[\cov_{\eps,\delta}(\mu_i,d_\X^{(V_{n_i})}) \geq \rme^{(h_1 - \eta)|V_{n_i}|}\]
for all sufficiently large $i$.

Let $\calN$ be any w$^\ast$-neighbourhood of $\mu\times \nu$.  We will prove that
\[\limsup_{n\to\infty}\frac{1}{|V_n|}\log \pack_{\delta/2}\big(\O(\calN,\s_n),d^{(V_n)}\big) \geq h_1 + h_2 - 2\eta.\]
Since $\eta > 0$ is arbitrary, and recalling the inequalities~(\ref{eq:pre-cov-and-pack}), this will complete the proof.

By conclusion (ii) of Theorem A, there is a w$^\ast$-neighbourhood $\calO$ of $\nu$ such that
\begin{equation}\label{eq:meas-lower-bd}
\inf_{\bf{y} \in \O(\calO,\s_{n_i})}\mu_i\big\{\bf{x}:\ (\bf{x},\bf{y}) \in \O(\calN,\s_{n_i})\big\} > 1 - \eps
\end{equation}
for all sufficiently large $i$ (note that~(\ref{eq:cov-lower-bd}) guarantees that $\O(\calO,\s_{n_i})$ is nonempty for all sufficiently large $i$).

Having chosen $\calO$, the inequalities~(\ref{eq:pre-cov-and-pack}) and the lower bound~(\ref{eq:cov-lower-bd}) let us choose subsets $F_n \subseteq \O(\calO,\s_n)$ which are $\delta$-separated according to the metrics $d_\Y^{(V_n)}$ and such that
\[|F_n| \geq \rme^{(h_2 - \eta)|V_n|}\]
for all sufficiently large $n$.

For each $\bf{y} \in F_n$, let
\[G_{n,\bf{y}} := \big\{\bf{x} \in \X^{V_n}:\ (\bf{x},\bf{y}) \in \O(\calN,\s_n)\big\},\]
so~(\ref{eq:meas-lower-bd}) implies that once $i$ is sufficiently large we have $\mu_i(G_{n_i,\bf{y}}) > 1 - \eps$ for all $\bf{y} \in F_{n_i}$.  This requires that
\[\cov_\delta(G_{n_i,\bf{y}},d_\X^{(V_{n_i})}) \geq \cov_{\eps,\delta}(\mu_i,d_\X^{(V_{n_i})}) \quad \forall \bf{y} \in F_{n_i}\]
once $i$ is sufficiently large.

Using the inequalities~(\ref{eq:pre-cov-and-pack}) again, we may therefore find further subsets $H_{n,\bf{y}} \subseteq G_{n,\bf{y}}$ for each $\bf{y} \in F_n$ which are $\delta$-separated according to the metrics $d_\X^{(V_n)}$ and such that
\[|H_{n_i,\bf{y}}| \geq \rme^{(h_1 - \eta)|V_{n_i}|} \quad \forall \bf{y} \in F_{n_i}\]
for all sufficiently large $i$.

Finally, defining
\[K_n := \{(\bf{x},\bf{y}):\ \bf{y} \in F_n,\ \bf{x} \in H_{n,\bf{y}}\}\]
for each $n$, it follows that these sets are $(\delta/2)$-separated according to $d^{(V_n)}$; that they are contained in $\O(\calN,\s_n)$; and that
\[\limsup_{n\to\infty}\frac{1}{|V_n|}\log|K_n| \geq \limsup_{i \to\infty}\frac{1}{|V_{n_i}|}\log|K_{n_i}| \geq h_1 + h_2 - 2\eta.\]
\end{proof}

\begin{rmk}
The above proof really shows that any $G$-systems $(X,\mu,T)$ and $(Y,\nu,S)$ satisfy
\[\rmh_\S(\mu\times \nu,T\times S) \geq \rmh^\rm{dq}_\S(\mu,T) + \ul{\rmh}_\S(\nu,S).\]
This conclusion may also be deduced formally from the statement of Theorem B, by first restricting to a sofic sub-approximation which nearly realizes the value $\rmh^\rm{dq}_\S(\mu,T)$, and then restricting to a further sub-approximation along which the sofic entropy and lower sofic entropy of $(Y,\nu,S)$ agree. \fin
\end{rmk}

\subsection{Proof of Theorem C}

The first assertion of Theorem C, that model-measure sofic entropy is subadditive, holds for arbitrary joinings, similarly to sofic entropy itself (see Proposition~\ref{prop:subadd}).

\begin{prop}\label{prop:h-q-subadd}
Let $(X,\mu,T)$ and $(Y,\nu,S)$ be $G$-systems and let $\l$ be a joining of them.  Then
\[\rmh^\rm{q}_\S(\l,T\times S) \leq \rmh^\rm{q}_\S(\mu,T) + \rmh^\rm{q}_\S(\nu,S),\]
and similarly if $\rmh^\rm{q}_\S$ is replaced with $\rmh^\rm{dq}_\S$ throughout.
\end{prop}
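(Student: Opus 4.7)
The plan is to reduce the subadditivity of $\rmh^{\rm{q}}_\S$ on $\l$ to sub-multiplicativity of covering numbers for couplings (Lemma~\ref{lem:sum-cov}(i)) using the observation that marginals of a quenched-convergent sequence are themselves quenched-convergent (the second half of Corollary~\ref{cor:approx-meas-and-factor}, applied to the coordinate projections $(\X\times \Y)^G \to \X^G$ and $(\X\times \Y)^G \to \Y^G$, which are continuous $\{e\}$-local).

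Concretely, fix compact generating metrics $d_\X$ and $d_\Y$ of diameter at most $1$, and let $d$ be their Hamming average on $\X\times \Y$. Suppose $n_i \uparrow \infty$ and $\l_i \in \Pr((\X\times \Y)^{V_{n_i}})$ with $\l_i \q \l$ over $(\s_{n_i})_{i\geq 1}$. Let $\mu_i$ and $\nu_i$ denote the $\X^{V_{n_i}}$- and $\Y^{V_{n_i}}$-marginals of $\l_i$; these are the pushforwards of $\l_i$ under the coordinate projections $(\X\times\Y)^{V_{n_i}}\to \X^{V_{n_i}}$ and $(\X\times\Y)^{V_{n_i}}\to \Y^{V_{n_i}}$, which are precisely the maps arising from the two $\{e\}$-local continuous factor maps above. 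Corollary~\ref{cor:approx-meas-and-factor} therefore yields $\mu_i \q \mu$ and $\nu_i \q \nu$ over $(\s_{n_i})_{i\geq 1}$.

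Next, apply Lemma~\ref{lem:sum-cov}(i) to the coupling $\l_i$ of $\mu_i$ and $\nu_i$ to obtain
\[\cov_{\eps,\delta}\big(\l_i,d^{(V_{n_i})}\big) \leq \cov_{\eps/2,\delta}\big(\mu_i,d_\X^{(V_{n_i})}\big)\cdot \cov_{\eps/2,\delta}\big(\nu_i,d_\Y^{(V_{n_i})}\big).\]
Taking $\frac{1}{|V_{n_i}|}\log(\cdot)$, passing to $\limsup_i$ (using subadditivity of $\limsup$ for sums), and then taking the supremum over $\eps,\delta > 0$ on both sides gives
\[\sup_{\eps,\delta > 0}\limsup_{i\to\infty}\frac{1}{|V_{n_i}|}\log\cov_{\eps,\delta}\big(\l_i,d^{(V_{n_i})}\big) \leq \rmh^{\rm{q}}_\S(\mu) + \rmh^{\rm{q}}_\S(\nu),\]
since after reindexing $\eps' := \eps/2$ each of the two separated $\sup$-$\limsup$ expressions is no larger than the corresponding quantity in the definition of $\rmh^{\rm{q}}_\S$ applied to $(\mu_i)_{i\geq 1}$ or $(\nu_i)_{i\geq 1}$. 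Taking the supremum of the left-hand side over all subsequences $n_i \uparrow \infty$ and all $\l_i \q \l$ over $(\s_{n_i})_{i\geq 1}$ yields $\rmh^{\rm{q}}_\S(\l) \leq \rmh^{\rm{q}}_\S(\mu) + \rmh^{\rm{q}}_\S(\nu)$.

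The doubly-quenched case is identical: Corollary~\ref{cor:approx-meas-and-factor} also asserts that marginals of a doubly-quenched-convergent sequence are doubly-quenched-convergent (since coordinate projections are continuous local factor maps), so the same chain of inequalities applies verbatim with $\q$ replaced by $\dq$ and $\rmh^{\rm{q}}_\S$ replaced by $\rmh^{\rm{dq}}_\S$. I do not anticipate any real obstacle: the substantive content lives in Corollary~\ref{cor:approx-meas-and-factor} (marginal convergence) and Lemma~\ref{lem:sum-cov}(i) (sub-multiplicativity of covers), and the proof simply assembles these with a standard $\limsup$-of-a-product manipulation.
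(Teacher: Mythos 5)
Your proof is correct and follows essentially the same route as the paper's: both take the marginals $\mu_i,\nu_i$ of $\l_i$, use Corollary~\ref{cor:approx-meas-and-factor} to see that they quenched (resp.\ doubly-quenched) converge, and then apply Lemma~\ref{lem:sum-cov}(i) to $\l_i$ as a coupling of its marginals before taking logarithms, normalizing, and passing to $\limsup$ and suprema.
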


\begin{proof}
It suffices to consider two metric $G$-processes $(\X^G,\mu,S,d_\X)$ and $(\Y^G,\nu,S,d_\Y)$.  Let $d$ be the Hamming average of $d_\X$ and $d_\Y$ on $\X\times \Y$.

Suppose $(\s_{n_i})_{i\geq 1}$ is a sofic sub-approximation, that $\eps,\delta > 0$, and that $\l_i \q \l$ over $(\s_{n_i})_{i\geq 1}$.  By relabeling the sub-approximation, we may simply assume that it equals $(\s_n)_{n\geq 1}$, and write $\l_n \q \l$.

Let $\mu_n$ and $\nu_n$ be the marginals of $\l_n$ on $\X^{V_n}$ and $\Y^{V_n}$, so Corollary~\ref{cor:approx-meas-and-factor} gives that $\mu_n \q \mu$ and $\nu_n \q \nu$.  Part (i) of Lemma~\ref{lem:sum-cov} gives
\[\cov_{\eps,\delta}(\l_n,d^{(V_n)}) \leq \cov_{\eps/2,\delta}(\mu_n,d_\X^{(V_n)})\cdot \cov_{\eps/2,\delta}(\nu_n,d_\Y^{(V_n)}).\]
Taking logarithms and normalizing by $|V_n|$, this completes the proof, since $\eps$, $\delta$ and $\l_n$ were arbitrary.

The argument is the same if quenched convergence is replaced with doubly-quenched convergence.
\end{proof}

The reverse inequality required for Theorem C relies on the following lemma, which is another manifestation of the difference between quenched and doubly-quenched convergence.

\begin{lem}\label{lem:product-still-dqs}
If $\mu_n \in \Pr(\X^{V_n})$ and $\nu_n \in \Pr(\Y^{V_n})$ satisfy
\[\hbox{both} \quad \mu_n \dq \mu \quad \hbox{and} \quad \nu_n \dq \nu \quad \hbox{over}\ \S,\]
then
\[\mu_n \times \nu_n \dq \mu\times \nu \quad \hbox{over}\ \S.\]
\end{lem}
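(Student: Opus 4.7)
The plan is to unwind the definition of doubly-quenched convergence and then apply Theorem A iteratively. By definition, $\mu_n\times\nu_n\dq\mu\times\nu$ asserts that
\[
(\mu_n\times\nu_n)\times(\mu_n\times\nu_n)\q(\mu\times\nu)\times(\mu\times\nu),
\]
and the obvious coordinate-swap between $(\X\times\Y)\times(\X\times\Y)$ and $\X^2\times\Y^2$ is an isometry for the Hamming-average metrics that induces, coordinate-wise on $V_n$, an isomorphism of metric spaces commuting with the sofic approximation. Under this identification
\[
(\mu_n\times\nu_n)\times(\mu_n\times\nu_n) \;\longleftrightarrow\; (\mu_n\times\mu_n)\times(\nu_n\times\nu_n),
\]
with the analogous identification on the limits, so it suffices to prove
\[
(\mu_n\times\mu_n)\times(\nu_n\times\nu_n) \q (\mu\times\mu)\times(\nu\times\nu).
\]

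The heart of the matter is to upgrade the hypothesis $\mu_n\dq\mu$ to $\mu_n\times\mu_n\dq\mu\times\mu$, which, after the same kind of coordinate identification, is exactly $\mu_n^{\times 4}\q\mu^{\times 4}$. I would prove by induction on $k\geq 1$ that $\mu_n^{\times k}\q\mu^{\times k}$ over $\S$. The case $k=1$ is immediate from $\mu_n\dq\mu$. For the inductive step, apply conclusion (iii) of Theorem A with the doubly-quenched sequence $\mu_n\dq\mu$ and the quenched sequence $\mu_n^{\times k}\q\mu^{\times k}$; this yields $\mu_n\times\mu_n^{\times k}\q\mu\times\mu^{\times k}$, i.e., $\mu_n^{\times(k+1)}\q\mu^{\times(k+1)}$. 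Setting $k=4$ gives $\mu_n\times\mu_n\dq\mu\times\mu$.

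Finally, $\nu_n\dq\nu$ yields $\nu_n\times\nu_n\q\nu\times\nu$, and one last invocation of Theorem A (iii), this time with the doubly-quenched sequence $\mu_n\times\mu_n\dq\mu\times\mu$ and the quenched sequence $\nu_n\times\nu_n\q\nu\times\nu$, produces the required convergence
\[
(\mu_n\times\mu_n)\times(\nu_n\times\nu_n)\q(\mu\times\mu)\times(\nu\times\nu).
\]
The only subtlety is the inductive promotion in the middle paragraph: Theorem A (iii) only permits one to tensor a doubly-quenched sequence with a quenched one, not two doubly-quenched ones directly, so the boost $\mu_n\dq\mu\Rightarrow\mu_n\times\mu_n\dq\mu\times\mu$ must be extracted by feeding the conclusion of (iii) back in as the quenched input at each stage, reusing $\mu_n\dq\mu$ as the doubly-quenched input. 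Aside from this, the argument is formal bookkeeping with the natural identifications of product measurable spaces and their Hamming-average metrics.
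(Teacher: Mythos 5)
Correct, and the core mechanism is the same as the paper's: iterate Theorem A's implication (i)\,$\Rightarrow$\,(iii) to obtain quenched convergence of the four-fold product, then pass through the coordinate-swap identification of $\X^2\times\Y^2$ with $(\X\times\Y)^2$. The paper's route is a little leaner, tensoring $\nu_n\times\nu_n\q\nu\times\nu$ with $\mu_n\dq\mu$ twice in succession, whereas you first establish $\mu_n\times\mu_n\dq\mu\times\mu$ by an induction on $k$ (reusing $\mu_n\dq\mu$ as the doubly-quenched input at each stage) and only then tensor with $\nu_n\times\nu_n$; this costs a few extra applications of Theorem A but nothing breaks.
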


\begin{proof}
Repeatedly applying the implication (i $\Longrightarrow$ iii) in Theorem A gives
\begin{align*}
\nu_n \times \nu_n \q \nu\times \nu \quad &\Longrightarrow \quad \mu_n \times \nu_n \times \nu_n \q \mu\times \nu\times \nu\\ &\Longrightarrow \quad \mu_n \times \mu_n \times \nu_n \times \nu_n \q \mu \times \mu\times \nu\times \nu,
\end{align*}
and hence $\mu_n \times \nu_n \dq \mu\times \nu$.
\end{proof}

\begin{proof}[Proof of Theorem C]
Let $(\X^G,\mu,S,d_\X)$, $(\Y^G,\nu,S,d_\Y)$ and $d$ be as in the proof of Proposition~\ref{prop:h-q-subadd}.  That proposition has given subadditivity, so it remains to prove the reverse inequality.

We need only consider the case in which $\rmh_\S^\rm{dq}(\mu),\rmh_\S^\rm{dq}(\nu) \geq 0$, for otherwise subadditivity gives also that $\rmh_\S^\rm{dq}(\mu\times \nu) = -\infty$.

Since we assume that $\rmh^\rm{dq}_\S(\nu) = \ul{\rmh}_\S^\rm{dq}(\nu)$, for any $\eta > 0$ there are $\eps,\delta > 0$ and a sequence $\nu_n \in \Pr(\Y^{V_n})$ such that $\nu_n \dq \nu$ over $\S$ and
\[\cov_{\sqrt{\eps},\delta}(\nu_n,d_\Y^{(V_n)}) \geq \rme^{(\rmh^\rm{dq}_\S(\nu) - \eta)|V_n|}\]
for all sufficiently large $n$.  Of course, this remains true if we pass to any sofic sub-approximation.

After shrinking $\eps$ and $\delta$ if necessary, now let $(\s_{n_i})_{i\geq 1}$ be a sofic sub-approximation and let $\mu_i \in \Pr(\X^{(V_{n_i})})$ be a sequence such that $\mu_i \dq \mu$ over $(\s_{n_i})_{i\geq 1}$ and
\[\cov_{\sqrt{\eps},\delta}(\mu_i,d_\Y^{(V_{n_i})}) \geq \rme^{(\rmh^\rm{dq}_\S(\mu) - \eta)|V_{n_i}|}\]
for all sufficiently large $i$.

Lemma~\ref{lem:product-still-dqs} gives that $\mu_i \times \nu_{n_i} \dq \mu\times \nu$ over $(\s_{n_i})_{i\geq 1}$, and Corollary~\ref{cor:sum-cov} gives
\[\cov_{\eps,\delta/4}(\mu_i \times \nu_{n_i},d^{(V_{n_i})}) \geq \cov_{\sqrt{\eps},\delta}(\mu_i,d_\X^{(V_{n_i})})\cdot \cov_{\sqrt{\eps},\delta}(\nu_{n_i},d_\Y^{(V_{n_i})}).\]
Taking logarithms and normalizing by $|V_{n_i}|$, this completes the proof.

If $\mu\times \nu$ is ergodic then we may argue in just the same way using only quenched convergence, because in that case
\[\mu_i \q \mu \quad \hbox{and} \quad \nu_{n_i} \q \nu \quad \hbox{imply} \quad \mu_i \times \nu_{n_i} \q \mu\times \nu.\]
\end{proof}

Example~\ref{ex:hq-neq-hdq} shows that the assumption that $\mu \times \nu$ is ergodic cannot be dropped.

The proof of Theorem C gives the following important special case.

\begin{cor}\label{cor:dq-is-stable}
For any $G$-system $(X,\mu,T)$ we have
\[\rmh^\rm{dq}_\S(\mu^{\times k},T^{\times k}) = k\cdot \rmh^\rm{dq}_\S(\mu,T) \quad \forall k \geq 1.\]
\end{cor}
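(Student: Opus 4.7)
The plan is to derive both inequalities from pieces already established in the proof of Theorem C, noting that when all factors coincide we do not need the hypothesis $\rmh^\rm{dq}_\S(\mu,T)=\ul{\rmh}^\rm{dq}_\S(\mu,T)$ that was required for the general additivity statement.

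First I would handle the upper bound $\rmh^\rm{dq}_\S(\mu^{\times k},T^{\times k}) \le k\cdot\rmh^\rm{dq}_\S(\mu,T)$ by induction on $k$, using the subadditivity half of Theorem C (Proposition~\ref{prop:h-q-subadd}), which requires no regularity assumption. Indeed, $\mu^{\times (k+1)}$ is a joining of $\mu^{\times k}$ and $\mu$, so
\[\rmh^\rm{dq}_\S(\mu^{\times (k+1)}) \le \rmh^\rm{dq}_\S(\mu^{\times k}) + \rmh^\rm{dq}_\S(\mu) \le (k+1)\rmh^\rm{dq}_\S(\mu),\]
with the inductive hypothesis applied to the first term.

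For the reverse bound, I would imitate the lower-bound portion of the proof of Theorem C, but take both ``factors'' from a single near-optimal sequence. If $\rmh^\rm{dq}_\S(\mu) = -\infty$ there is nothing to prove, so assume it is finite and fix $\eta>0$. By definition there exist a sofic sub-approximation $(\s_{n_i})_{i\ge 1}$ and measures $\mu_i \in \Pr(\X^{V_{n_i}})$ with $\mu_i \dq \mu$ over $(\s_{n_i})_{i\ge 1}$, together with $\eps,\delta>0$, such that
\[\cov_{\eps^{1/2^{k-1}},\delta}\big(\mu_i,d_\X^{(V_{n_i})}\big) \ge \rme^{(\rmh^\rm{dq}_\S(\mu)-\eta)|V_{n_i}|}\]
for all sufficiently large $i$ (we may choose $\eps$ as small as we wish). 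Iterating Lemma~\ref{lem:product-still-dqs} a total of $k-1$ times shows that $\mu_i^{\times k} \dq \mu^{\times k}$ over the same sub-approximation $(\s_{n_i})_{i\ge 1}$. Iterating Corollary~\ref{cor:sum-cov} (applied to the product of two measures at each step, absorbing the metric factors into the Hamming average on $\X^k$), we obtain
\[\cov_{\eps,\delta/4^{k-1}}\big(\mu_i^{\times k},d_{\X^k}^{(V_{n_i})}\big) \ge \prod_{j=1}^{k}\cov_{\eps^{1/2^{k-1}},\delta}\big(\mu_i,d_\X^{(V_{n_i})}\big) \ge \rme^{k(\rmh^\rm{dq}_\S(\mu)-\eta)|V_{n_i}|}\]
for all large $i$. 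Taking logarithms, dividing by $|V_{n_i}|$, passing to the limit supremum along $i$, and supremizing over $\eps,\delta>0$ inside the definition of $\rmh^\rm{dq}_\S(\mu^{\times k})$ yields $\rmh^\rm{dq}_\S(\mu^{\times k}) \ge k(\rmh^\rm{dq}_\S(\mu)-\eta)$, and since $\eta>0$ was arbitrary this completes the proof.

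The only step requiring care is the observation that this argument sidesteps the $\rmh^\rm{dq}_\S = \ul{\rmh}^\rm{dq}_\S$ hypothesis of Theorem C. In Theorem C, that hypothesis was needed because the near-optimal sequences for $\mu$ and $\nu$ a priori live on different sub-approximations, and one has to align them; here, a single choice of sub-approximation and a single sequence $(\mu_i)_{i\ge 1}$ serves all $k$ factors simultaneously, so Lemma~\ref{lem:product-still-dqs} can be applied directly without any realignment. I expect this to be the only subtlety, with the Hamming-metric bookkeeping for the iterated application of Corollary~\ref{cor:sum-cov} being routine.
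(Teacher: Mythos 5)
Your proof is correct, and the crucial observation—that because all $k$ factors are the same system you can serve them all with a \emph{single} near-optimal sequence $\mu_i \dq \mu$ on a single sofic sub-approximation, sidestepping the $\rmh_\S^{\rm{dq}} = \ul{\rmh}_\S^{\rm{dq}}$ hypothesis of Theorem~C—is exactly the right point. Your upper bound via induction on Proposition~\ref{prop:h-q-subadd} matches the paper's implicitly used subadditivity. For the lower bound the routes differ in execution, though not in underlying ingredients: the paper first diagonalizes to a sofic sub-approximation $\S''$ on which $\rmh_{\S''}^{\rm{dq}}(\mu) = \ul{\rmh}_{\S''}^{\rm{dq}}(\mu)$ is forced to hold (while losing at most $\eta$ from the entropy), and then invokes Theorem~C as a black box $k$ times over $\S''$; you instead re-run the internals of Theorem~C's lower-bound argument directly with your fixed sequence $(\mu_i)$, iterating Lemma~\ref{lem:product-still-dqs} and Corollary~\ref{cor:sum-cov} $k-1$ times. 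Your approach is a bit more hands-on and avoids the unstated diagonalization step; the paper's is more modular. One small bookkeeping point you correctly flag but should be explicit about if you write this up: iterating Corollary~\ref{cor:sum-cov} produces the geometrically-weighted Hamming metric on $\X^k$ (weights $1/2,1/4,\dots,1/2^{k-1},1/2^{k-1}$) rather than the equal-weight metric $d_{\X^k}^{(V_{n_i})}$; these are bi-Lipschitz with constant depending only on $k$, so this changes $\delta/4^{k-1}$ by a $k$-dependent factor, which is harmless once you take the supremum over $\delta$ in the definition of $\rmh_\S^{\rm{dq}}(\mu^{\times k})$.
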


Here we do not need the assumption that $\rmh^\rm{dq}_\S = \ul{\rmh}_\S^\rm{dq}$.  This is because we are now combining $(X,\mu,T)$ with itself, so there is no risk that two different sofic sub-approximations are needed to obtain the entropies of the ingredient systems.

\begin{proof}
If $\rmh^\rm{dq}_\S(\mu,T) = -\infty$ then the result is trivial, so suppose otherwise.

Let $(\X^G,\mu,S,d)$ be a metric $G$-process. For any $\eta > 0$, there are $\eps,\delta > 0$, a sofic sub-approximation $\S' = (\s_{n_i})_{i\geq 1}$, and a sequence of measures $\mu_{n_i} \dq \mu$ over $\S'$ such that
\[\cov_{\eps,\delta}(\mu_i,d^{(V_{n_i})}) \geq \rme^{(\rmh^\rm{dq}_\S(\mu) - \eta)|V_{n_i}|}\]
for all sufficiently large $i$.  It follows that
\[\rmh^\rm{dq}_\S(\mu) \geq \rmh^\rm{dq}_{\S'}(\mu) \geq \ul{\rmh}^\rm{dq}_{\S'}(\mu) \geq \rmh^\rm{dq}_\S(\mu) - \eta.\]

Since $\eta$ was arbitrary, a simple diagonal argument now gives a sofic sub-approximation $\S''$ such that in fact
\[\rmh^\rm{dq}_{\S''}(\mu) = \ul{\rmh}^\rm{dq}_{\S''}(\mu) \geq \rmh^\rm{dq}_\S(\mu) - \eta.\]
A $k$-fold application of Theorem C with this sofic sub-approximation gives that
\[\rmh^\rm{dq}_\S(\mu^{\times k}) \geq \rmh^\rm{dq}_{\S''}(\mu^{\times k}) = k\cdot \rmh^\rm{dq}_{\S''}(\mu) \geq k\cdot (\rmh^\rm{dq}_\S(\mu) - \eta).\]
Since $\eta$ was arbitrary, this completes the proof.
\end{proof}

\section{Processes with finite state spaces}\label{sec:finite-state-spaces}

\subsection{Alternative formulae for the entropies}

Now let $(\X^G,\mu,S)$ be a $G$-process with finite state space $\X$.  Let $d$ be the discrete metric on $\X$ (all distances are zero or one). For this process there are alternative, simpler formulae for the sofic entropy and model-measure sofic entropy.  These will be essential in the proof of Theorem D.

\begin{prop}\label{prop:finite-X-formulae}
For a $G$-process $(\X^G,\mu,S)$ with $|\X| < \infty$, we have
\[\rmh_\S(\mu) = \inf_\calO \limsup_{n\to\infty}\frac{1}{|V_n|}\log |\O(\calO,\s_n)|,\]
where $\calO$ ranges over w$^\ast$-neighbourhoods of $\mu$,
and
\[\rmh^{\rm{dq}}_\S(\mu) := \sup\Big\{\sup_{\eps > 0}\limsup_{i\to\infty}\frac{1}{|V_{n_i}|}\log\rm{cov}_\eps(\mu_i): n_i \uparrow \infty\ \hbox{and}\ \mu_i \dq \mu\ \hbox{over}\ (\s_{n_i})_{i\geq 1}\Big\},\]
where
\[\rm{cov}_\eps(\mu_i) = \min\{|F|:\ F\subseteq \X^{V_{n_i}}\ \hbox{with}\ \mu_i(F) > 1 - \eps\}.\]
\end{prop}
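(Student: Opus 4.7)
Both formulae will be proved by the same device: since $d$ is the discrete metric on the finite set $\X$, the Hamming balls in $\X^V$ are small enough that covering numbers and raw cardinalities differ only by a factor whose exponential rate tends to $0$ as $\delta \downarrow 0$.

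The key quantitative input is the following volume estimate. For any nonempty finite $V$, any $\bf{x} \in \X^V$, and any $\delta \in (0,1)$,
\[
|B_\delta(\bf{x})| \;\leq\; \sum_{k \leq \delta|V|}\binom{|V|}{k}(|\X|-1)^k \;\leq\; \rme^{|V|(H(\delta) + \delta \log|\X|)},
\]
where $H$ denotes the binary entropy function. In particular, writing $\eta(\delta) := H(\delta) + \delta\log|\X|$, we have $\eta(\delta) \downarrow 0$ as $\delta \downarrow 0$, and for any subset $Y \subseteq \X^V$,
\[
\cov_\delta(Y,d^{(V)}) \;\leq\; |Y| \;\leq\; \cov_\delta(Y,d^{(V)}) \cdot \rme^{|V|\eta(\delta)}.
\]

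For the first formula, the left inequality above applied with $Y = \O(\calO,\s_n)$ gives
\[
\inf_\calO \limsup_{n\to\infty} \tfrac{1}{|V_n|}\log\cov_\delta(\O(\calO,\s_n),d^{(V_n)}) \;\leq\; \inf_\calO \limsup_{n\to\infty} \tfrac{1}{|V_n|}\log|\O(\calO,\s_n)|
\]
for every $\delta > 0$; taking $\sup_\delta$ on the left shows $\rmh_\S(\mu)$ is bounded by the right-hand side of the claimed formula. The right inequality above gives, for each $\delta > 0$,
\[
\inf_\calO \limsup_{n\to\infty} \tfrac{1}{|V_n|}\log|\O(\calO,\s_n)| \;\leq\; \inf_\calO \limsup_{n\to\infty} \tfrac{1}{|V_n|}\log\cov_\delta(\O(\calO,\s_n),d^{(V_n)}) + \eta(\delta),
\]
and sending $\delta \downarrow 0$ yields the reverse inequality, since the first term on the right is non-increasing in $\delta$ (and its supremum in $\delta$ is $\rmh_\S(\mu)$) while $\eta(\delta) \to 0$.

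For the second formula, the analogous observation is that
\[
\cov_{\eps,\delta}(\mu_n,d^{(V_n)}) \;\leq\; \cov_\eps(\mu_n) \;\leq\; \cov_{\eps,\delta}(\mu_n,d^{(V_n)}) \cdot \rme^{|V_n|\eta(\delta)}
\]
for every $\delta \in (0,1)$: the left inequality is immediate since any $F$ with $\mu_n(F) > 1 - \eps$ is already a $\delta$-cover of itself, and the right inequality follows because if $\mu_n(B_\delta(F)) > 1-\eps$ then $B_\delta(F)$ is an admissible covering set for $\cov_\eps(\mu_n)$ and has cardinality at most $|F|\cdot \rme^{|V_n|\eta(\delta)}$. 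Apply this along any sub-approximation $(\s_{n_i})_{i\geq 1}$ and any sequence $\mu_i \dq \mu$ over it, taking logs, dividing by $|V_{n_i}|$, and passing to $\limsup_{i\to\infty}$; then take $\sup_{\delta,\eps>0}$ on the side of $\cov_{\eps,\delta}$ and $\sup_{\eps>0}$ on the side of $\cov_\eps$, and finally $\sup$ over sub-approximations and $\dq$-convergent sequences. The left inequality above shows $\rmh^\rm{dq}_\S(\mu)$ is bounded by the claimed expression, and the right inequality, together with $\eta(\delta)\downarrow 0$, gives the reverse.

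There is no genuine obstacle here beyond bookkeeping of the order of the quantifiers $\sup_\delta$, $\inf_\calO$, $\limsup_n$; the only analytic content is the Hamming-ball volume estimate, which is the standard observation that discrete-metric covering rates agree with raw log-cardinality rates in the limit $\delta \downarrow 0$.
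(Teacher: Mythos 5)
Your argument is correct and takes essentially the same approach as the paper: both rest on the observation that Hamming balls in the discrete metric on $\X^V$ are exponentially small (the paper's Lemma~\ref{lem:ball-small}, which you instantiate explicitly as $|B_\delta(\bf{x})| \leq \rme^{|V|(H(\delta)+\delta\log|\X|)}$), so that covering numbers and raw cardinalities have the same exponential growth rate as $\delta \downarrow 0$. The only cosmetic difference is that for the first formula the paper simply cites its equivalence with Bowen's original definition and the Kerr--Li agreement result, whereas you give the direct volume-counting argument; for the second formula your proof is the paper's, verbatim in substance.
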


This proposition is a consequence of the following bound on the volumes of Hamming balls, which is implied by standard estimates in Information Theory: see, for instance,~\cite[Section 5.4]{GolPin--book}.

\begin{lem}\label{lem:ball-small}
If $(\X,d)$ is a nonempty finite set with its discrete metric, then for every $\eta > 0$ there is a $\delta > 0$ such that the following holds.  If $V$ is a nonempty finite set and $\bf{x} \in \X^V$, then
\[|B_\delta(\bf{x})| \leq \rme^{\eta|V|},\]
where $B_\delta(\bf{x})$ is the $\delta$-ball around $\bf{x}$ for the metric $d^{(V)}$. \qed
\end{lem}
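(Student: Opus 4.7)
The plan is to exploit the combinatorial structure of balls in the normalized Hamming metric on $\X^V$; since $d$ is the discrete metric, $d^{(V)}$ is just the normalized Hamming distance. The ball $B_\delta(\bf{x})$ is then exactly the set of $\bf{y} \in \X^V$ that disagree with $\bf{x}$ on at most $\lfloor \delta|V|\rfloor$ coordinates. Counting by first choosing the set of disagreeing coordinates and then choosing a value from $\X$ at each disagreement gives
\[|B_\delta(\bf{x})| \;\leq\; \Big(\sum_{k=0}^{\lfloor\delta|V|\rfloor}\binom{|V|}{k}\Big)\cdot |\X|^{\lfloor\delta|V|\rfloor}.\]

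The heart of the estimate is the classical entropy bound
\[\sum_{k=0}^{\lfloor p n\rfloor}\binom{n}{k} \;\leq\; \rme^{n h(p)} \qquad \text{for}\ p \in (0,1/2],\]
where $h(p) := -p\ln p - (1-p)\ln(1-p)$. I would derive this cleanly from the identity $\sum_k \binom{n}{k}p^k(1-p)^{n-k} = 1$: for $k \leq pn$ and $p \leq 1/2$ one has $p^k(1-p)^{n-k} \geq p^{pn}(1-p)^{n-pn}$, so the partial sum on the left is bounded above by $p^{-pn}(1-p)^{-(n-pn)} = \rme^{n h(p)}$. Applying this with $n = |V|$ and $p = \delta$ (which we may assume is at most $1/2$, since we are free to shrink $\delta$) yields
\[|B_\delta(\bf{x})| \;\leq\; \rme^{|V|\,(h(\delta) + \delta \ln |\X|)}.\]

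Finally, since both $h(\delta)\to 0$ and $\delta\ln|\X|\to 0$ as $\delta\downarrow 0$, the quantity $h(\delta)+\delta\ln|\X|$ can be made smaller than $\eta$ by choosing $\delta$ sufficiently small, depending only on $\eta$ and $|\X|$. This gives $|B_\delta(\bf{x})|\leq \rme^{\eta|V|}$ uniformly in $V$ and $\bf{x}$, as required. There is no substantive obstacle here — this is a textbook exercise in the asymptotics of binomial coefficients — and the only care needed is to keep the estimate uniform in $|V|$, which the entropy bound does automatically.
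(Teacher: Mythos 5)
Your proof is correct and complete. The paper itself does not write out an argument for this lemma — it simply cites the standard volume-of-Hamming-balls estimate from Information Theory — and your derivation (counting disagreement patterns, bounding the partial binomial sum by $\rme^{n h(\delta)}$ via the identity $\sum_k\binom{n}{k}p^k(1-p)^{n-k}=1$, and then letting $\delta\downarrow 0$) is exactly that standard argument, carried out cleanly and uniformly in $|V|$.
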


\begin{proof}[Proof of Proposition~\ref{prop:finite-X-formulae}]
The formula for $\rmh_\S(\mu)$ is easily seen to be equivalent to Bowen's original definition of sofic entropy for systems with finite generating partitions~(\cite{Bowen10}).  Its agreement with $\rmh_\S(\mu)$ is therefore contained in~\cite{KerLi11a}.

The reasoning for model-measure sofic entropy is very similar. Clearly we always have
\[\cov_{\eps,\delta}(\mu_i,d^{(V_{n_i})}) \leq \cov_\eps(\mu_i),\]
and so $\rmh^\rm{dq}_\S(\mu)$ is bounded from above by the right-hand side of the desired formula.  On the other hand, for any $\eta > 0$, Lemma~\ref{lem:ball-small} gives a $\delta > 0$ such that all $\delta$-balls in the metric $d^{(V_{n_i})}$ have size at most $\rme^{\eta|V_{n_i}|}$.  This implies that
\[|B_\delta(F)| \leq \rme^{\eta|V_{n_i}|}|F| \quad \forall F\subseteq \X^{V_{n_i}},\]
and hence that
\[\frac{1}{|V_{n_i}|}\log\cov_{\eps,\delta}(\mu_i,d^{(V_{n_i})}) \geq \frac{1}{|V_{n_i}|}\log\cov_\eps(\mu_i) - \eta \quad \forall i\geq 1.\]
Since $\eta$ can be made arbitrarily small, $\rmh^\rm{dq}_\S(\mu)$ is also bounded from below by the right-hand side of the desired formula.
\end{proof}

\subsection{Proof of Theorem D}

One half of Theorem D is true for general systems.

\begin{lem}\label{lem:half-ThmD}
Any $G$-system $(X,\mu,T)$ satisfies
\[\rmh^\rm{dq}_\S(\mu,T) \leq \rmh^\ps_\S(\mu,T).\]
\end{lem}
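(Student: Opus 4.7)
The plan is to observe that this half of Theorem D follows formally by combining two results that have already been proved: the Cartesian-power stability
\[
\rmh^\rm{dq}_\S(\mu^{\times k},T^{\times k}) = k\cdot \rmh^\rm{dq}_\S(\mu,T)
\]
for every $k\geq 1$, established in Corollary~\ref{cor:dq-is-stable}, and the elementary bound $\rmh^\rm{dq}_\S \leq \rmh_\S$ of Lemma~\ref{lem:simple-ineq}. The crucial input here is Corollary~\ref{cor:dq-is-stable}, because it holds unconditionally: we do not need to assume any coincidence between $\rmh^\rm{dq}_\S$ and $\ul{\rmh}^\rm{dq}_\S$, which is precisely the feature that makes this direction of Theorem D accessible for arbitrary systems (not just ones with a finite generating partition).

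First I would apply Corollary~\ref{cor:dq-is-stable} to rewrite $\rmh^\rm{dq}_\S(\mu,T)$ as $\frac{1}{k}\rmh^\rm{dq}_\S(\mu^{\times k},T^{\times k})$ for an arbitrary $k\geq 1$. Next I would apply Lemma~\ref{lem:simple-ineq} to the Cartesian power system $(X^{\times k},\mu^{\times k},T^{\times k})$ to obtain $\rmh^\rm{dq}_\S(\mu^{\times k},T^{\times k})\leq \rmh_\S(\mu^{\times k},T^{\times k})$. Chaining these gives
\[
\rmh^\rm{dq}_\S(\mu,T) \leq \frac{1}{k}\rmh_\S(\mu^{\times k},T^{\times k}) \quad \forall k\geq 1.
\]
Finally, letting $k\to\infty$ and using the definition of $\rmh^\ps_\S$ (whose existence as a limit is guaranteed by subadditivity and Fekete's Lemma, as noted in the Introduction) yields $\rmh^\rm{dq}_\S(\mu,T)\leq \rmh^\ps_\S(\mu,T)$.

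There is no real obstacle here: the entire argument is the two-line combination above, and the work has already been done in establishing Corollary~\ref{cor:dq-is-stable}. The serious content of Theorem D lies in the \emph{reverse} inequality under the finite-generator hypothesis, which, as the paper indicates, will require converting good models of large Cartesian powers of $\mu$ into doubly-quenched convergent sequences of measures for $\mu$ itself.
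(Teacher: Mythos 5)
Your proposal is correct and is essentially identical to the paper's own argument: both combine Corollary~\ref{cor:dq-is-stable} with Lemma~\ref{lem:simple-ineq} to obtain $\rmh^\rm{dq}_\S(\mu,T) \leq \frac{1}{k}\rmh_\S(\mu^{\times k},T^{\times k})$ for every $k$, then let $k\to\infty$. Nothing is missing.
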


\begin{proof}
Combining Lemma~\ref{lem:simple-ineq} and Corollary~\ref{cor:dq-is-stable} gives
\[\rmh^\rm{dq}_\S(\mu,T) = \frac{1}{k}\rmh^\rm{dq}_\S(\mu^{\times k},T^{\times k}) \leq \frac{1}{k}\rmh_\S(\mu^{\times k},T^{\times k}) \quad \forall k\geq 1,\]
and hence $\rmh^{\rm{dq}}_\S(\mu) \leq \rmh^\ps_\S(\mu)$.
\end{proof}

We prove the reverse half of Theorem D using the assumption that $|\X|$ is finite.  In this case we always endow $\X$ with the discrete metric $d$.  As remarked in the Introduction, this gives the result for any $G$-system that has a finite generating partition, including any ergodic $G$-system for which $\rmh^{\rm{Rok}}(\mu,T) < \infty$ (see~\cite{Seward--KriI}).

The proof relies on producing model-measures for $\mu$ out of good models for $\mu^{\times k}$ for large values of $k$.  This is done using the following proposition.

\begin{prop}\label{prop:making-model-measures}
Let $V$ be any finite set and $\s:G\to\rm{Sym}(V)$ any map.

For any $\eps > 0$ and w$^\ast$-neighbourhood $\calO$ of $\mu\times \mu$, the following holds for any sufficiently large $k\geq 1$: there is a w$^\ast$-neighbourhood $\calN$ of $\mu^{\times k}$ such that, if
\[\bf{x} = (\bf{x}_1,\dots,\bf{x}_k)\in \O(\calN,\s),\]
then the measure
\[\varrho := \frac{1}{k}\sum_{i=1}^k \delta_{\bf{x}_i} \in \Pr(\X^{V_n})\]
satisfies
\[|\{v:\ (\Pi^\s_v)_\ast (\varrho \times \varrho) \in \calO\}| > (1-\eps)|V| \quad \hbox{and} \quad (\varrho \times \varrho)(\O(\calO,\s)) > 1- \eps.\]
\end{prop}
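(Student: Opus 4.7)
The plan is to reduce both conclusions to w$^\ast$-closeness of suitable marginals of the empirical distribution $P^\s_{\bf{x}}$. Define the w$^\ast$-continuous map
\[
\Psi : (\X^G)^k \to \Pr((\X\times\X)^G), \qquad \Psi(y_1,\ldots,y_k) := \frac{1}{k^2}\sum_{i,j=1}^k \delta_{(y_i,y_j)}.
\]
A direct computation gives the identity $(\Pi^\s_v)_\ast(\varrho\times\varrho) = \Psi(\Pi^\s_v(\bf{x}))$, and for $i\neq j$ the joint marginal of $P^\s_{\bf{x}}$ onto the $(i,j)$ coordinates equals $P^\s_{(\bf{x}_i,\bf{x}_j)}$. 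These two identities translate the two conclusions into conditions on the single measure $P^\s_{\bf{x}}$.

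A law-of-large-numbers argument shows that $\Psi(\vec y) \to \mu\times\mu$ weakly$^\ast$ in probability as $k\to\infty$ for $\vec y \sim \mu^{\times k}$: for each $f\in C((\X\times\X)^G)$ the diagonal contribution $\frac{1}{k^2}\sum_i f(y_i,y_i)$ is $O(1/k)$, while $\frac{1}{k(k-1)}\sum_{i\neq j}f(y_i,y_j)$ is a $U$-statistic converging almost surely to $\int f\,d(\mu\times\mu)$; a countable dense family of test functions then yields the full w$^\ast$ convergence. So one may fix $k$ large enough that $k > 1/\eps$ and $\mu^{\times k}\{\Psi\in\calO\} > 1 - \eps/2$. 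By inner regularity pick a compact $K \subseteq \Psi^{-1}(\calO)$ with $\mu^{\times k}(K) > 1 - 3\eps/4$, and by Urysohn's lemma a continuous $\phi : (\X^G)^k \to [0,1]$ equal to $1$ on $K$ and $0$ outside $\Psi^{-1}(\calO)$; in particular $\phi \leq 1_{\Psi^{-1}(\calO)}$ and $\int\phi\,d\mu^{\times k} > 1 - 3\eps/4$.

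Now set $\calN := \calN_1 \cap \calN_2$, where
\[
\calN_1 := \Big\{P \in \Pr((\X^k)^G) : \int\phi\,dP > 1 - \eps\Big\}
\]
and $\calN_2 \subseteq \Pr((\X^k)^G)$ is the set of measures whose joint marginal on coordinates $(i,j)$ lies in $\calO$ for every pair $i\neq j$; both are w$^\ast$-open neighbourhoods of $\mu^{\times k}$. For $\bf{x} \in \O(\calN,\s)$, membership in $\O(\calN_1,\s)$ together with $\Psi(\Pi^\s_v(\bf{x})) = (\Pi^\s_v)_\ast(\varrho\times\varrho)$ yields
\[
\frac{|\{v : (\Pi^\s_v)_\ast(\varrho\times\varrho)\in\calO\}|}{|V|} \;\geq\; \int\phi\,dP^\s_{\bf{x}} \;>\; 1 - \eps,
\]
the first conclusion. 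Membership in $\O(\calN_2,\s)$ forces $(\bf{x}_i,\bf{x}_j) \in \O(\calO,\s)$ for every $i\neq j$, so at most the $k$ diagonal pairs can fail and $(\varrho\times\varrho)(\O(\calO,\s)) \geq 1 - 1/k > 1 - \eps$, the second conclusion.

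The main subtlety is that $1_{\Psi^{-1}(\calO)}$ is only lower semicontinuous, whereas w$^\ast$-closeness of $P^\s_{\bf{x}}$ to $\mu^{\times k}$ only controls integrals of continuous functions; the Urysohn detour through $\phi$ is what bridges this gap. Taking $k$ large plays a double role here: concentrating $\Psi$ near $\mu\times\mu$ under $\mu^{\times k}$, and suppressing the $1/k$ loss from the diagonal pairs $(i,i)$.
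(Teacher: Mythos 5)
Your proof is correct, and it reorganizes the argument in a way that is genuinely different from the paper's. The paper first reduces to a sub-basic family of neighbourhoods $\calO$ of the tensor-product form $\{\theta:\int f\otimes h\,\d\theta \approx_{2\k}\int f\,\d\mu\int h\,\d\mu\}$; this lets it exploit the factorization
\[\int f\otimes h\ \d\big((\Pi^\s_v)_\ast(\varrho\times\varrho)\big) = F_k\big(\Pi^\s_v(\bf{x}_1),\dots\big)\cdot H_k\big(\Pi^\s_v(\bf{x}_1),\dots\big),\]
where $F_k,H_k$ are the sample means of $f$ and $h$, so the scalar law of large numbers applied separately to $f$ and $h$ suffices. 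You instead handle arbitrary $\calO$ directly via the continuous map $\Psi$, which cleanly packages the two identities $(\Pi^\s_v)_\ast(\varrho\times\varrho)=\Psi(\Pi^\s_v(\bf{x}))$ and $(P^\s_{\bf{x}})_{(i,j)}=P^\s_{(\bf{x}_i,\bf{x}_j)}$, and you invoke the LLN for U-statistics to get concentration of $\Psi$ around $\mu\times\mu$ under $\mu^{\times k}$. Your inner-regularity-plus-Urysohn step plays exactly the same role as the paper's implicit appeal to the Portmanteau theorem for the open set $U_k$: both convert an ``open set has large $\mu^{\times k}$-mass'' statement into a w$^\ast$-open neighbourhood $\calN_1$ of $\mu^{\times k}$ that transfers to the empirical distribution $P^\s_{\bf{x}}$. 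The second parts (the $(i,j)$-marginal constraint and the $1-1/k$ bound) are essentially identical in both proofs. The trade-off is that the paper's concrete reduction keeps the probabilistic input elementary at the cost of working only with sub-basic sets, while your $\Psi$-map keeps the argument coordinate-free at the cost of a slightly heavier LLN input. Both are sound, and your choice of $k$ correctly serves the double purpose of concentrating $\Psi$ and absorbing the $1/k$ diagonal loss.
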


We consider a neighbourhood $\calO$ of $\mu\times \mu$, rather than $\mu$, to ensure that the resulting model measures not only quenched converge but doubly-quenched converge.

\begin{proof}
It suffices to prove this for a sub-basic family of neighbourhoods $\calO$, so we may assume that
\[\calO = \Big\{\theta \in \Pr(\X^G\times \X^G):\ \int f\otimes h\,\d\theta \approx_{2\k} \int f\,\d\mu \int h\,\d\mu\Big\}\]
for some $h,f \in C(\X^G)$ with $\|f\|_\infty, \|h\|_\infty \leq 1$ and some $\k > 0$.

\vspace{7pt}

\emph{Part 1.} \quad For each $k$, define two continuous functions $(\X^G)^k \to \bbR$ by
\[F_k(x_1,\dots,x_k) = \frac{1}{k}\sum_{i=1}^k f(x_i) \quad \hbox{and} \quad H_k(x_1,\dots,x_k) = \frac{1}{k}\sum_{i=1}^k h(x_i),\]
and let
\begin{multline*}
U_k := \Big\{(x_1,\dots,x_k) \in (\X^G)^k:\ F_k(x_1,\dots,x_k) \approx_\k \int f\,\d\mu \\ \hbox{and} \ H_k(x_1,\dots,x_k) \approx_\k \int h\,\d\mu\Big\}.
\end{multline*}
This $U_k$ is open, and the Law of Large Numbers gives that $\mu^{\times k}(U_k) \to 1$ as $k\to\infty$.  Letting
\[\calN_{1,k} := \big\{\nu \in \Pr((\X^G)^k):\ \nu(U_k) > 1 - \eps\big\},\]
it follows that this is a w$^\ast$-neighbourhood of $\mu^{\times k}$ for all sufficiently large $k$.

If $(\bf{x}_1,\dots,\bf{x}_k) \in \O(\calN_{1,k},\s)$, then this asserts that
\[P^\s_{(\bf{x}_1,\dots,\bf{x}_k)}(U_k) > 1 - \eps,\]
and hence that the set
\begin{multline*}
V_k^\rm{good} := \Big\{v\in V:\ F_k(\Pi^\s_v(\bf{x}_1),\dots,\Pi^\s_v(\bf{x}_k)) \approx_\k \int f\,\d\mu \\ \hbox{and} \ H_k(\Pi^\s_v(\bf{x}_1),\dots,\Pi^\s_v(\bf{x}_k)) \approx_\k \int h\,\d\mu\Big\}
\end{multline*}
has $|V_k^\rm{good}| > (1-\eps)|V|$.

Now let $\varrho$ be as in the statement of the proposition, and observe that
\[\int f\otimes h\ \d((\Pi^\s_v)_\ast (\varrho\times \varrho)) = \int (f\circ \Pi^\s_v)\,\d\varrho \cdot \int (h\circ \Pi^\s_v)\,\d\varrho.\]
The first of these right-hand factors is equal to
\[\int (f\circ \Pi^\s_v)\,\d\varrho = \frac{1}{k}\sum_{i=1}^k f(\Pi^\s_v(\bf{x}_i)) = F_k(\Pi^\s_v(\bf{x}_1),\dots,\Pi^\s_v(\bf{x}_k)),\]
and similarly the second is equal to $H_k(\Pi^\s_v(\bf{x}_1),\dots,\Pi^\s_v(\bf{x}_k))$.  Therefore any $v \in V_k^\rm{good}$ satisfies
\begin{multline*}
\int f\otimes h\ \d((\Pi^\s_v)_\ast (\varrho \times \varrho))\\ = F_k(\Pi^\s_v(\bf{x}_1),\dots,\Pi^\s_v(\bf{x}_k))\cdot H_k(\Pi^\s_v(\bf{x}_1),\dots,\Pi^\s_v(\bf{x}_k)) \approx_{2\k} \int f\,\d\mu \int h\,\d\mu
\end{multline*}
where the last estimate by $2\k$ uses the fact that both of the factors here lie in $[-1,1]$.

Hence
\[|\{v:\ (\Pi^\s_v)_\ast (\varrho \times \varrho) \in \calO\}| \geq |V_k^\rm{good}| > (1-\eps)|V|\]
once $k$ is sufficiently large.

\vspace{7pt}

\emph{Part 2.}\quad The second part is simpler.  The set
\begin{multline*}
\calN_{2,k} := \Big\{\nu \in \Pr((\X^G)^k):\ \int f(x_i)h(x_j)\,\nu(\d x_1,\dots,\d x_k) \approx_{2\k} \int f\,\d\mu \int h\,\d\mu \\ \hbox{whenever}\ i,j \in\{1,\dots,k\}\ \hbox{are distinct}\Big\}
\end{multline*}
is another a w$^\ast$-neighbourhood of $\mu^{\times k}$ for every $k$.  If $(\bf{x}_1,\dots,\bf{x}_k) \in \O(\calN_{2,k},\s)$ and $i \neq j$, then
\[\int f\otimes h\,\d P^\s_{(\bf{x}_i,\bf{x}_j)} = \int f(x_i)h(x_j)\,P^\s_{(\bf{x}_1,\dots,\bf{x}_k)}(\d x_1,\dots,\d x_k) \approx_{2\k} \int f\,\d\mu \int h\,\d\mu:\]
that is, $P^\s_{(\bf{x}_i,\bf{x}_j)} \in \calO$.  Therefore
\[(\rho\times \rho)(\O(\calO,\s)) = \frac{|\{(i,j) \in \{1,\dots,k\}^2:\ P^\s_{(\bf{x}_i,\bf{x}_j)} \in \calO\}|}{k^2} \geq \frac{k(k-1)}{k^2},\]
and this is greater than $1-\eps$ once $k$ is large enough.

\vspace{7pt}

\emph{Completion.}\quad Choose $k$ large enough to satisfy both parts above, and set
\[\calN := \calN_{1,k}\cap \calN_{2,k}.\]
\end{proof}

\begin{proof}[Proof of Theorem D]
One inequality is already given by Lemma~\ref{lem:half-ThmD}, so we focus on the other.

Let $\eps > 0$.  Let $\calO_1 \supseteq \calO_2 \supseteq \dots$ be a basis of w$^\ast$-neighbourhoods of $\mu$.  By Proposition~\ref{prop:making-model-measures}, there are integers $1 \leq k_1 \leq k_2 \leq \dots$ tending to $\infty$ and w$^\ast$-neighbourhoods $\calN_j$ of $\mu^{\times k_j}$ for every $j$ such that the following holds.  If
\[\vec{\bf{x}} := (\bf{x}_1,\dots,\bf{x}_{k_j}) \in \O(\calN_j,\s_n) \quad \hbox{for some}\ j\ \hbox{and}\ n,\]
then the measure
\[\varrho^{n,j}_{\vec{\bf{x}}} := \frac{1}{k_j}\sum_{i=1}^{k_j}\delta_{\bf{x}_i}\]
satisfies
\begin{multline}\label{eq:indivs-conv}
|\{v \in V_n:\ (\Pi^{\s_n}_v)_\ast(\varrho^{n,j}_{\vec{\bf{x}}}\times \varrho^{n,j}_{\vec{\bf{x}}}) \in \calO_j\}| > (1-2^{-j})|V_n| \\ \hbox{and} \quad (\varrho^{n,j}_{\vec{\bf{x}}}\times \varrho^{n,j}_{\vec{\bf{x}}})(\O(\calO_j,\s_n)) > 1 - 2^{-j}.
\end{multline}

Next, by the definition of $\rmh^\ps_\S$, we may also choose a subsequence $n_1 < n_2 < \dots$ such that
\begin{equation}\label{eq:OmegaNlarge}
|\O(\calN_j,\s_{n_j})| \geq \exp\big(k_j(\rmh^\ps_\S(\mu) - \eps)|V_{n_j}|\big) \geq 1
\end{equation}
for all $j$.  Now set
\[\mu_j := \frac{1}{|\O(\calN_j,\s_{n_j})|}\sum_{\vec{\bf{x}} \in \O(\calN_j,\s_{n_j})} \varrho^{n_j,j}_{\vec{\bf{x}}} \quad \hbox{for}\ j\geq 1.\]

Since the sets $\calO_j$ form a basis of neighbourhoods around $\mu\times \mu$, the bounds~(\ref{eq:indivs-conv}) imply that for any sequence of single $k_j$-tuples $\vec{\bf{x}}_j \in \O(\calN_j,\s_{n_j})$, we have
\[\rho^{n_j,j}_{\vec{\bf{x}}_j} \dq \mu \quad \hbox{along}\ (\s_{n_j})_{j\geq 1}.\]
We can quickly strengthen this conclusion as follows: if $\vec{\bf{y}}_j \in \O(\calN_j,\s_{n_j})$ is any other sequence of $k_j$-tuples, then the implication (i $\Longrightarrow$ iii) of Theorem A gives that
\[\rho^{n_j,j}_{\vec{\bf{x}}_j} \times \rho^{n_j,j}_{\vec{\bf{y}}_j} \q \mu\times \mu.\]
By simply averaging this last assertion, it follows that
\[\mu_j\times \mu_j = \frac{1}{|\O(\calN_j,\s_{n_j})|^2}\sum_{\vec{\bf{x}},\vec{\bf{y}} \in \O(\calN_j,\s_{n_j})} \varrho^{n_j,j}_{\vec{\bf{x}}} \times \varrho^{n_j,j}_{\vec{\bf{y}}} \ \q \ \mu\times \mu,\]
and hence $\mu_j \dq \mu$.

Finally, let
\[\rmH(2\eps',1-2\eps') := -2\eps'\log (2\eps') - (1-2\eps')\log(1-2\eps') \quad \hbox{for}\ \eps' \in (0,1/2),\]
and choose $\eps'$ so small that
\[2\eps'\log|\X| + \rmH(2\eps',1-2\eps') < \eps.\]
Consider the covering numbers $\cov_{\eps'}(\mu_j)$.  For each $j$, let $F_j \subseteq \X^{V_{n_j}}$ be a minimum-size subset for which $\mu_j(F_j) > 1 - \eps'$.  By the definition of $\mu_j$ and Chebyshev's Inequality, this implies that at least half of the tuples $(\bf{x}_1,\dots,\bf{x}_{k_j}) \in \O(\calN_{n_j},\s_{n_j})$ satisfy
\[\varrho^{n_j,j}_{(\bf{x}_1,\dots,\bf{x}_{k_j})}(F_j) = \frac{|\{i \in \{1,\dots,k_j\}:\ \bf{x}_i \in F_j\}|}{k_j} > 1 - 2\eps'.\]

On the other hand, a simple estimate using volumes of Hamming-balls (c.f.~\cite[Section 5.4]{GolPin--book}) gives
\begin{multline*}
\Big|\Big\{(\bf{x}_1,\dots,\bf{x}_{k_j}) \in (\X^{V_{n_j}})^{k_j}:\ \frac{|\{i \in \{1,\dots,k_j\}:\ \bf{x}_i \in F_j\}|}{k_j} > 1 - 2\eps' \Big\}\Big|\\
\leq |F_j|^{k_j}\cdot |\X|^{2\eps' \cdot k_j\cdot |V_{n_j}|}\cdot 2^{\rmH(2\eps',1 - 2\eps')k_j},
\end{multline*}
where the last factor estimates the number of ways of choosing at most $2\eps' k_j$ coordinates $i \in \{1,\dots,k_j\}$ at which to allow $\bf{x}_i \not\in F_j$.

Therefore
\begin{multline*}
\frac{1}{2}|\O(\calN_{n_j},\s_{n_j})| \leq |F_j|^{k_j}\cdot |\X|^{2\eps' \cdot k_j\cdot |V_{n_j}|}\cdot 2^{\rmH(2\eps',1 - 2\eps')k_j}\\ = (\cov_{\eps'}(\mu_j))^{k_j}\cdot |\X|^{2\eps'\cdot k_j\cdot |V_{n_j}|}\cdot 2^{\rmH(2\eps',1 - 2\eps')k_j}.
\end{multline*}
Combining this with~(\ref{eq:OmegaNlarge}) and re-arranging, we obtain
\[\frac{1}{|V_{n_j}|}\log \cov_{\eps'}(\mu_j) \geq \rmh^\ps_\S(\mu) -\eps - 2\eps'\log|\X| - \frac{\rmH(2\eps',1-2\eps')\cdot \log 2}{|V_{n_j}|} - O\Big(\frac{1}{|k_j|}\Big),\]
and this lower bound is greater than $\rmh^\ps_\S(\mu) - 2\eps$ for all sufficiently large $j$.  Since $\eps$ was arbitrary, this shows that $\rmh^\rm{dq}_\S(\mu) \geq \rmh^\ps_\S(\mu)$.
\end{proof}

\begin{proof}[Proof of Corollary D$'$]
Theorem B has already proved that
\[\rmh^\rm{dq}_\S(\mu,T) \leq \rmh_\S(\mu\times \nu,T\times S) - \rmh_\S(\nu,S)\]
for any other $G$-system $(Y,\nu,S)$ satisfying $\rmh_\S(\nu,S) = \ul{\rmh}_\S(\nu,S)$.

On the other hand, if $(X,\mu,T)$ has a finite generating partition, then Theorem D shows that
\[\frac{1}{k}\rmh_\S(\mu^{\times k},T^{\times k}) \to \rmh^\rm{dq}_\S(\mu,T).\]
In particular, for any $\eps > 0$, there must be infinitely many $k$ for which
\[\rmh_\S(\mu^{\times (k+1)},T^{\times (k+1)}) \leq \rmh_\S(\mu^{\times k},T^{\times k}) + \rmh^\rm{dq}_\S(\mu,T) + \eps.\]
Letting $(Y,\nu,S) := (X^k,\mu^{\times k},T^{\times k})$ and re-arranging, this becomes
\[\rmh_\S(\mu\times \nu,T\times S) - \rmh_\S(\nu,S) \leq \rmh^\rm{dq}_\S(\mu,T) + \eps.\]
Since we also have $\rmh_\S(\nu,S) = \ul{\rmh}_\S(\nu,S)$ for this system $(Y,\nu,S)$ by assumption, these examples complete the proof.
\end{proof}

Another corollary seems worth including at this point.  For any $G$-system $(X,\mu,T)$, the definition of $\rmh^\ps_\S$ gives that
\[\rmh_\S^\ps(\mu^{\times k},T^{\times k}) = \lim_{n\to\infty} \frac{1}{n}\rmh_\S(\mu^{\times kn},T^{\times kn}) = k\cdot \rmh^\ps_\S(\mu,T) \quad \forall k\geq 1.\]
If $(Y,\nu,S)$ is another system with the property that
\begin{equation}\label{eq:upper=lower-strong}
\rmh_\S(\nu^{\times k},S^{\times k}) = \ul{\rmh}_\S(\nu^{\times k},S^{\times k}) \quad \forall k \geq 1,
\end{equation}
then we may take $k^\rm{th}$ Cartesian powers of both systems in Theorem B to obtain
\[\frac{1}{k}\rmh_\S((\mu\times\nu)^{\times k},(T\times S)^{\times k}) \geq \rmh^\ps_\S(\mu,T) + \frac{1}{k}\rmh_\S(\nu^{\times k},S^{\times k}) \quad \forall k \geq 1.\]
Letting $k \to \infty$, we conclude that $\rmh^\ps_\S$ \emph{is} strictly additive.

\begin{cor}\label{cor:strictly-add}
If $(X,\mu,T)$ has a finite generating partition and~(\ref{eq:upper=lower-strong}) is satisfied then
\[\rmh^\ps_\S(\mu\times\nu,T\times S) = \rmh^\ps_\S(\mu,T) + \rmh^\ps_\S(\nu,S).\]
\qed
\end{cor}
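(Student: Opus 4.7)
The plan is to combine the subadditivity of sofic entropy (Proposition~\ref{prop:subadd}) with the lower bound from Theorem B, the latter applied after passing to Cartesian powers and then letting $k\to\infty$. This essentially follows the sketch in the paragraph preceding the statement; the main task is to flesh it out carefully and verify that the hypotheses of Theorem B transfer to the power systems.

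For the upper bound, Proposition~\ref{prop:subadd} applied to the $n$-fold product gives
\[
\rmh_\S\big((\mu\times\nu)^{\times n},(T\times S)^{\times n}\big) \leq \rmh_\S(\mu^{\times n},T^{\times n}) + \rmh_\S(\nu^{\times n},S^{\times n})
\]
for every $n \geq 1$. Dividing by $n$ and letting $n\to\infty$ yields the easy inequality $\rmh^\ps_\S(\mu\times\nu,T\times S) \leq \rmh^\ps_\S(\mu,T) + \rmh^\ps_\S(\nu,S)$.

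For the matching lower bound, I would first invoke Theorem D: since $(X,\mu,T)$ has a finite generating partition, $\rmh^\rm{dq}_\S(\mu,T) = \rmh^\ps_\S(\mu,T)$, and therefore by Corollary~\ref{cor:dq-is-stable}
\[
\rmh^\rm{dq}_\S(\mu^{\times k},T^{\times k}) = k\cdot \rmh^\rm{dq}_\S(\mu,T) = k\cdot \rmh^\ps_\S(\mu,T) \quad \forall k \geq 1.
\]
Now I apply Theorem B to the pair of $G$-systems $(\mu^{\times k},T^{\times k})$ and $(\nu^{\times k},S^{\times k})$: the second of these satisfies $\rmh_\S = \ul{\rmh}_\S$ by the hypothesis~(\ref{eq:upper=lower-strong}), so Theorem B delivers
\[
\rmh_\S\big((\mu\times\nu)^{\times k},(T\times S)^{\times k}\big) \geq k\cdot \rmh^\ps_\S(\mu,T) + \rmh_\S(\nu^{\times k},S^{\times k}).
\]

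Dividing through by $k$ and letting $k\to\infty$ finishes the argument: the left-hand side converges to $\rmh^\ps_\S(\mu\times\nu,T\times S)$ by the definition of power-stabilized sofic entropy, while the second term on the right converges to $\rmh^\ps_\S(\nu,S)$ for the same reason. Combined with the upper bound, this gives equality. I do not anticipate any genuine obstacle: the argument is a direct assembly of Proposition~\ref{prop:subadd}, Corollary~\ref{cor:dq-is-stable}, and Theorems B and D, and the only subtle point is that the finite generating partition hypothesis is used exactly to import, via Theorem D, the additivity behaviour of $\rmh^\rm{dq}_\S$ into a statement about $\rmh^\ps_\S$.
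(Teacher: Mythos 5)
Your proof is correct and is essentially the paper's own argument, made slightly more explicit: the paper's preceding paragraph applies Theorem B to $k$th Cartesian powers and uses the identity $\rmh^\ps_\S(\mu^{\times k})=k\,\rmh^\ps_\S(\mu)$, silently invoking Corollary~\ref{cor:dq-is-stable} and Theorem~D to rewrite $\tfrac1k\rmh^\rm{dq}_\S(\mu^{\times k},T^{\times k})$ as $\rmh^\ps_\S(\mu,T)$, and leaves the easy subadditivity direction implicit. You have simply spelled out those two hidden steps, which is exactly the right way to fill the gaps.
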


\vspace{7pt}

\subsection{Some remarks on systems without finite generating partitions}

I do not know whether $\rmh^\rm{ps}_\S = \rmh^\rm{dq}_\S$ for arbitrary systems.  If this is so, then Corollary D$'$ and Corollary~\ref{cor:strictly-add} can be extended to them.  Also, Theorem B could be re-written with $\rmh^\rm{dq}_\S$ replaced by $\rmh^\rm{ps}_\S$, so that model measures are not needed to give a meaningful lower bound on the sofic entropy of a Cartesian product.

The difficulty in the general case seems to be the following.  Written out in full, the power-stabilized entropy for a metric $G$-process $(\X^G,\mu,S,d)$ is
\[\rmh^\ps_\S(\mu) := \limsup_{k\to\infty}\frac{1}{k}\sup_{\delta > 0}\ \inf_{\rm{Int}(\calO) \ni \mu^{\times k}}\ \limsup_{n\to\infty}\frac{1}{|V_n|}\log \rm{cov}_\delta\Big(\O(\calO,\s_n),(d^{(k)})^{(V_n)}\Big).\]
If this is non-negative, then for every $k\geq 1$ and $\eps > 0$ there is a $\delta_k > 0$ such that
\[\inf_{\rm{Int}(\calO) \ni \mu^{\times k}}\ \limsup_{n\to\infty}\frac{1}{|V_n|}\log \rm{cov}_{\delta_k}\Big(\O(\calO,\s_n),(d^{(k)})^{(V_n)}\Big) \geq k(\rmh^\rm{ps}_\S(\mu) - \eps).\]

However, as far as I know, it could happen that we must choose smaller and smaller values of $\delta_k$ as $k\to\infty$.  On the other hand, in order to make contact with $\rmh_\S^\rm{dq}$, we must find \emph{fixed} $\eps,\delta > 0$ and measures satisfying $\mu_i \dq \mu$ along $(\s_{n_i})_{i\geq 1}$ such that $\cov_{\eps,\delta}(\mu_i,d^{(V_{n_i})})$ grows fast enough as $i\to\infty$.  If the choice of $\delta_k$ tends to $0$ as $k\to\infty$, and then we construct the measures $\mu_j$ as in the proof of Theorem D, we do not obtain control over $\cov_{\eps,\delta}(\mu_j,d^{(V_{n_j})})$ for any fixed $\delta > 0$.

To get around this problem, one could simply re-define $\rmh^\rm{ps}_\S$ so that the supremum over $\delta$ appears on the outside: let us set
\[\t{\rmh}^\ps_\S(\mu) := \sup_{\delta > 0}\Big[\limsup_{k\to\infty}\frac{1}{k}\ \inf_{\rm{Int}(\calO) \ni \mu^{\times k}}\ \limsup_{n\to\infty}\frac{1}{|V_n|}\log \rm{cov}_\delta\Big(\O(\calO,\s_n),(d^{(k)})^{(V_n)}\Big)\Big].\]
Using this quantity, the construction used to prove Theorem D does generalize quite easily, leading to the inequality
\[\t{\rmh}^\rm{ps}_\S(\mu) \leq \rmh^\rm{dq}_\S(\mu).\]
However, now the argument of Lemma~\ref{lem:half-ThmD} seems to run into difficulty, and I cannot show that
\[\t{\rmh}^\rm{ps}_\S(\mu) \geq \rmh^\rm{dq}_\S(\mu).\]

Thus, Theorem D will hold for arbitrary systems if the supremum over $\delta$ may be exchanged with the limit supremum over $k$ in the formula for $\rmh^\ps_\S(\mu)$.  If $\X$ is finite and $d$ is the discrete metric, then Proposition~\ref{prop:finite-X-formulae} lets one switch to counting individual models, so that $\delta$ disappears altogether from $\rmh^\ps_\S(\mu)$ and $\rmh^\rm{dq}_\S(\mu)$.  This is why the proofs above could be completed when $\X$ is finite.

\section{Co-induced systems}\label{sec:when-equal}

Consider the setting of Theorem E.  In view of Lemma~\ref{lem:simple-ineq}, that theorem will follow if we show that
\begin{equation}\label{eq:remaining-ineq}
\rmh_{\S\times \rm{T}}\big(\mu^{\times H},\rm{CInd}_G^{G\times H}T\big) \leq \rmh^\rm{dq}_{\S\times \rm{T}}\big(\mu^{\times H},\rm{CInd}_G^{G\times H}T\big).
\end{equation}

As usual, we can assume that we start with a metric $G$-process $(\X^G,\mu,S,d)$.  After co-induction this simply becomes $(\X^{G\times H},\mu^{\times H},\t{S},d)$, where $\t{S}$ is the right-shift action of $G\times H$.

To prove Theorem E, we also need to use the left-action $\t{T}$ of $H$ on $\X^{G\times H}$:
\[\t{T}^h\big((x_{g,k})_{(g,k) \in G\times H}\big) = (x_{g,h^{-1}k})_{(g,k) \in G\times H}.\]
The product measure $\mu^{\times H}$ is invariant under this action, as well as under $\t{S}$: this special feature of the measure is crucial for the proof. Since $H$ is infinite, the $H$-system $(\X^{G\times H},\mu^{\times H},\t{T})$ is weakly mixing.

The action $\t{T}$ commutes with $\t{S}$, and so each transformation $\t{T}^h$ is an automorphism of $(X^H,\mu^{\times H},\t{S})$. Therefore the results of Section~\ref{sec:factors} apply to each of these transformations.  Each $\t{T}^h$ is already defined coordinate-wise by the $\{(e_G,h^{-1})\}$-local map
\[\X^{G\times H}\to \X: (x_{g,k})_{g,k} \mapsto x_{e_G,h^{-1}},\] which is $1$-Lipschitz from $d^{(\{(e_G,h^{-1})\})}$ to $d$.  Therefore there is no need to introduce other AL approximations to these maps.

For each $\s_n$, $\tau_n$ and $h \in H$, the map
\[\rm{id}_{V_n}\times \tau_n^h:V_n\times W_n\to V_n\times W_n\]
has an `approximate adjoint' defined by setting
\[\rho_n^h:\X^{V_n\times W_n}\to\X^{V_n\times W_n}:(x_{v,w})_{v\in V_n,\,w\in W_n} \mapsto (x_{v,\s_n^{h^{-1}}(w)})_{v\in V_n,\,w \in W_n}.\]
Such maps were already discussed in the remark following Lemma~\ref{lem:approx-equiv}: as explained there, they become useful only now that our measure $\mu^{\times H}$ is also left-shift-invariant.  In terms of these, a special case of Lemma~\ref{lem:psi-sig-compatible} translates as follows.

\begin{lem}\label{lem:approx-centralizer}
If $F\subseteq G\times H$ and $E \subseteq H$ are finite, then the following holds w.h.p in $(v,w) \in V_n\times W_n$:
\[\Pi^{\s_n\times \tau_n}_{(v,w)}(\rho_n^h(\cdot))|_F = \big(\t{T}^h(\Pi^{\s_n\times \tau_n}_{(v,w)}(\cdot))\big)\big|_F \quad \forall h \in E.\]
\qed
\end{lem}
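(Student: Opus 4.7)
The plan is to prove this directly, by unfolding both sides using the definitions of $\Pi^{\s_n\times\tau_n}_{(v,w)}$, $\rho_n^h$, and $\t{T}^h$, and then invoking the sofic approximation property~(\ref{eq:sofic1}) for $\rm{T}$. Fix $(g,k)\in F$ and $h\in E$, and let $\bf{x}\in \X^{V_n\times W_n}$. The left-hand side unfolds as
\[\bigl(\Pi^{\s_n\times\tau_n}_{(v,w)}(\rho_n^h(\bf{x}))\bigr)_{(g,k)} = (\rho_n^h(\bf{x}))_{(\s_n^g(v),\,\tau_n^k(w))} = x_{\s_n^g(v),\,\tau_n^{h^{-1}}(\tau_n^k(w))},\]
whereas the right-hand side unfolds as
\[\bigl(\t{T}^h(\Pi^{\s_n\times\tau_n}_{(v,w)}(\bf{x}))\bigr)_{(g,k)} = \bigl(\Pi^{\s_n\times\tau_n}_{(v,w)}(\bf{x})\bigr)_{(g,\,h^{-1}k)} = x_{\s_n^g(v),\,\tau_n^{h^{-1}k}(w)}.\]
Thus the desired identity at coordinate $(g,k)$, for this $h$ and all $\bf{x}$, reduces to the single relation
\[\tau_n^{h^{-1}}(\tau_n^k(w)) = \tau_n^{h^{-1}k}(w).\]

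By~(\ref{eq:sofic1}) applied to the sofic approximation $\rm{T}=(\tau_n)_{n\geq 1}$, this relation holds w.h.p.\ in $w\in W_n$ for each individual pair $(h^{-1},k)$. Since $F$ and $E$ are finite, so is the collection of pairs $(h,k)\in E\times\pi_H(F)$ that we must consider (where $\pi_H$ denotes projection of $F$ onto its $H$-coordinate), and a finite union bound therefore gives the relation simultaneously for all such pairs w.h.p.\ in $w$. As the resulting condition depends only on $w$ and not on $v$, it trivially also holds w.h.p.\ in $(v,w)\in V_n\times W_n$, which is the desired conclusion.

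There is no serious obstacle here; the whole argument is a direct analogue of the proof of Lemma~\ref{lem:approx-equiv}, with the roles of right- and left-multiplication (and of $\s_n$ and $\tau_n$) swapped. The parenthetical remark that the lemma is ``a special case of Lemma~\ref{lem:psi-sig-compatible}'' can be realized by observing that $\rho_n^h$ agrees with $\psi^{\s_n\times\tau_n}$ for the $\{(e_G,h^{-1})\}$-local projection $\psi:(y_{(g',k')})_{(g',k')\in G\times H}\mapsto y_{(e_G,h^{-1})}$, up to requiring also that $\s_n^{e_G}$ fixes most vertices of $V_n$ (which itself follows from~(\ref{eq:sofic1}) applied with $g=h=e_G$). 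Either route yields the lemma with minimal additional work.
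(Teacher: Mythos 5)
Your proposal is correct and matches the paper's intended approach: the paper states the lemma with an immediate \qed, pointing to it as a special case of Lemma~\ref{lem:psi-sig-compatible}, and your direct unfolding is precisely the computation that lemma encapsulates (and mirrors the proof of Lemma~\ref{lem:approx-equiv}). Your observation that the $V_n$-coordinate $\s_n^g(v)$ appears identically on both sides, so that the whole question reduces to the single $\tau_n$-relation $\tau_n^{h^{-1}}(\tau_n^k(w)) = \tau_n^{h^{-1}k}(w)$ and a finite union bound over $(h,k)\in E\times\pi_H(F)$, is exactly right; this is in fact slightly cleaner than literally invoking Lemma~\ref{lem:psi-sig-compatible}, since the latter route requires the extra (true, but distracting) step you note about $\s_n^{e_G}$ fixing most vertices in order to reconcile $\rho_n^h$ with $\psi^{\s_n\times\tau_n}$.
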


\begin{lem}\label{lem:make-dq}
Let $E_1$, $E_2$, \dots be any sequence of finite subsets of $H$ satisfying $|E_m|\to \infty$.  Also, suppose that $\theta_n \in \Pr(\X^{V_n\times W_n})$ is any sequence of measures satisfying
\begin{equation}\label{eq:prelim-conv}
\theta_n(\O(\calO,\s_n\times \tau_n)) \to 1 \quad \hbox{as}\ n\to\infty
\end{equation}
for any w$^\ast$-neighbourhood $\calO$ of $\mu^{\times H}$.

Provided the sequence $m_1 \leq m_2 \leq \dots$ grows sufficiently slowly, the sequence of measures
\[\mu_n := \frac{1}{|E_{m_n}|}\sum_{h \in E_{m_n}}(\rho_n^h)_\ast\theta_n\]
doubly-quenched converges to $\mu^{\times H}$ over $\S\times \rm{T}$.
\end{lem}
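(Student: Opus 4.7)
\textit{Strategy.} I aim to prove $\mu_n \q \mu^{\times H}$ over $\S\times\rm{T}$; Lemma~\ref{lem:q-dq-and-wm} then upgrades this to doubly-quenched convergence, since $\mu^{\times H}$ is a Bernoulli shift over the infinite group $H$, making its $\t{T}$-action mixing and hence making $\mu^{\times H}$ weakly mixing as a $(G\times H)$-system. Being in particular ergodic, Corollary~\ref{cor:lws-lwa} further reduces quenched convergence to local weak$^\ast$ convergence $\mu_n \lws \mu^{\times H}$, so the whole task is to verify that last statement.

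\textit{Key computation.} Fix a continuous $F$-local test function $f$ on $\X^{G\times H}$ with $\int f\,d\mu^{\times H}=0$, and set $F_m := |E_m|^{-1}\sum_{h\in E_m}f\circ \t{T}^h$. Applying Lemma~\ref{lem:approx-centralizer} with $E=E_{m_n}$, and letting $m_n$ grow slowly enough that the resulting sofic errors vanish, one obtains
\[\int f\,d(\Pi^{\s_n\times\tau_n}_{(v,w)})_\ast\mu_n = \int F_{m_n}\bigl(\Pi^{\s_n\times\tau_n}_{(v,w)}(\bf{x})\bigr)\,d\theta_n(\bf{x}) \quad \text{w.h.p.\ in $(v,w)$.}\]
By Cauchy--Schwarz, the $(v,w)$-average of the square of the right-hand side is bounded above by $\int P^{\s_n\times\tau_n}_\bf{x}(|F_{m_n}|^2)\,d\theta_n(\bf{x})$. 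The hypothesis $\theta_n\bigl(\O(\calO,\s_n\times\tau_n)\bigr)\to 1$ forces $\int P^{\s_n\times\tau_n}_\bf{x}(g)\,d\theta_n\to \int g\,d\mu^{\times H}$ for every fixed $g\in C(\X^{G\times H})$, so for slowly-growing $m_n$ the averaged square becomes arbitrarily close to $\|F_{m_n}\|_{L^2(\mu^{\times H})}^2$.

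\textit{Mixing and conclusion.} Because $\t{T}$ on $\mu^{\times H}$ is mixing,
\[\|F_m\|_{L^2(\mu^{\times H})}^2 = \frac{1}{|E_m|^2}\sum_{h,h'\in E_m}\int f\cdot (f\circ \t{T}^{h^{-1}h'})\,d\mu^{\times H} \longrightarrow 0\]
as $m\to\infty$ for any sequence $E_m$ with $|E_m|\to \infty$: the inner integrals tend to $0$ as $h^{-1}h'$ leaves finite subsets of $H$, and each $k\in H$ arises at most $|E_m|$ times in the form $h^{-1}h'$ among pairs $(h,h')\in E_m^2$. Chebyshev's inequality then gives $\int F_{m_n}\circ \Pi^{\s_n\times\tau_n}_{(v,w)}\,d\theta_n\to 0$ w.h.p.\ in $(v,w)$, so combining with the key computation yields $\int f\,d(\Pi^{\s_n\times\tau_n}_{(v,w)})_\ast\mu_n\to 0$ w.h.p.\ in $(v,w)$. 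A countable dense family of test functions $f$ and a diagonalization over them deliver $\mu_n \lws \mu^{\times H}$.

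\textit{Main obstacle.} The only nontrivial aspect is coordinating the competing demands on $m_n$: it must grow slowly enough for Lemma~\ref{lem:approx-centralizer} applied with $E=E_{m_n}$, slowly enough for the approximation $\int P^{\s_n\times\tau_n}_\bf{x}(|F_{m_n}|^2)\,d\theta_n \approx \|F_{m_n}\|_{L^2(\mu^{\times H})}^2$ to hold uniformly over a countable dense family of $f$'s (a statement about continuous but $n$-dependent functions), and yet $|E_{m_n}|\to\infty$ must be preserved so that the mixing-based decay of $\|F_{m_n}\|_{L^2}$ takes hold. Since each constraint has the form ``$m_n\le m_n^\circ$ for some $m_n^\circ\uparrow\infty$,'' a standard slow-growth diagonalization in the spirit of Corollary~\ref{cor:pushfwd-approx} handles them simultaneously.
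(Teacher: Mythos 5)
Your proposal is correct and follows essentially the same route as the paper's proof: reduce to $\mu_n \lws \mu^{\times H}$ via weak mixing of the co-induced system (Corollary~\ref{cor:lws-lwa} plus Lemma~\ref{lem:q-dq-and-wm}), apply Lemma~\ref{lem:approx-centralizer} to trade $\rho_n^h$ for $\t{T}^h$, and use concentration of the $E_{m}$-averages under $\mu^{\times H}$; the paper packages this last concentration as a Law-of-Large-Numbers / Portmanteau argument (the open sets $U_m$ with $\mu^{\times H}(U_m) \to 1$ and $\calN_m := \{\nu : \nu(U_m) > \a_m\}$), while you make the underlying variance/$L^2$ computation explicit via Cauchy--Schwarz and mixing of $\t{T}$ — two phrasings of the same weak LLN for a Bernoulli process. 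One small slip worth fixing: having a mixing automorphism group (here $\t{T}$) in the centralizer does not by itself imply the ambient action is weakly mixing (take $T$ trivial and $U$ Bernoulli for a counterexample), so the justification that the co-induced system is weakly mixing should instead be that the $H$-subaction of $\t{S}$ itself — the right-shift on $(\X^G)^H$, conjugate to $\t{T}$ by inverting the $H$-coordinate — is Bernoulli for $\mu^{\times H}$, hence weakly mixing, and a $(G\times H)$-action with a weakly mixing $H$-subaction is weakly mixing. Everything else in your argument goes through.
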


\begin{proof}
It suffices to show that $\mu_n \lws \mu^{\times H}$; since the co-induced system is weakly mixing, this implies doubly-quenched convergence.

Also, it suffices to consider a sub-basic w$^\ast$-neighbourhood of $\mu^{\times H}$, so let
\[\calO := \Big\{\nu \in \Pr(\X^{G\times H}):\ \int f\,\d\nu \approx_\k \int f\,\d\mu^{\times H}\Big\}\]
for some local function $f \in C(\X^{G\times H})$ and $\k > 0$.

Now for each $m$ let
\[U_m = \Big\{x \in \X^{G\times H}:\ \frac{1}{|E_m|}\sum_{h \in E_m}f(\t{T}^hx) \approx_\k \int f\,\d\mu\Big\}.\]
Since $|E_m|\to \infty$, the Law of Large Numbers gives that $\mu^{\times H}(U_m) \to 1$ as $m\to\infty$ (note that this works even if there is no ergodic theorem over the sets $E_m$ for general $H$-systems).  Choose real values $\a_m < \mu^{\times H}(U_m)$ which still tend to $1$, and for each $m$ let
\[\calN_m := \big\{\nu \in \Pr(\X^{G\times H}):\ \nu(U_m) > \a_m \big\}.\]
Each $\calN_m$ is a w$^\ast$-neighbourhood of $\mu^{\times H}$, and so~(\ref{eq:prelim-conv}) implies that
\[\theta_n(\O(\calN_{m_n},\s_n\times \tau_n)) \to 1 \quad \hbox{as}\ n\to\infty\]
provided $m_1 \leq m_2 \leq \dots$ grows sufficiently slowly.  In terms of empirical distributions, this implies that
\begin{multline}\label{eq:mostly-good}
\int \frac{\big|\big\{(v,w) \in V_n\times W_n:\ \Pi^{\s_n\times \tau_n}_{(v,w)}(\bf{x}) \in U_{m_n}\big\}\big|}{|V_n\times W_n|}\,\theta_n(\d\bf{x}) \\
= \frac{1}{|V_n\times W_n|}\sum_{(v,w) \in W_n\times W_n}\theta_n\big\{\bf{x}:\ \Pi^{\s_n\times \tau_n}_{(v,w)}(\bf{x}) \in U_{m_n}\big\} \to 1
\end{multline}
as $n\to\infty$.

Finally, we have
\[\int f\ \d((\Pi^{\s_n\times \tau_n}_{(v,w)})_\ast\mu_n) = \int_{\X^{V_n\times W_n}}\Big(\frac{1}{|E_{m_n}|}\sum_{h \in E_{m_n}}f\big(\Pi^{\s_n\times \tau_n}_{(v,w)}(\rho^h_n(\bf{x}))\big)\Big)\ \theta_n(\d \bf{x}).\]
Since $f$ is a local function, another appeal to Lemma~\ref{lem:approx-centralizer} gives that, w.h.p. in $(v,w)$, this is equal to
\[\int_{\X^{V_n\times W_n}}\Big(\frac{1}{|E_{m_n}|}\sum_{h \in E_{m_n}}f\big(\t{T}^h(\Pi^{\s_n\times \tau_n}_{(v,w)}(\bf{x}))\big)\Big)\ \theta_n(\d \bf{x}).\]
Recalling the definition of $U_{m_n}$ and the inequality~(\ref{eq:mostly-good}), this, in turn, lies within $\k$ of $\int f\,\d\mu$ w.h.p. in $(v,w)$.  That is,
\[(\Pi^{\s_n\times \tau_n}_{(v,w)})_\ast\mu_n \in \calO \quad \hbox{w.h.p. in}\ (v,w),\]
as required.
\end{proof}

\begin{proof}[Proof of Theorem E]
We need only prove the inequality~(\ref{eq:remaining-ineq}) for the co-induced process. If $\rmh_{\S\times \rm{T}}(\mu^{\times H}) = -\infty$ then the result is trivial, so suppose otherwise, let $h_1 < \rmh_{\S\times \rm{T}}(\mu^{\times H})$ be arbitrary, and let $h_2$ lie strictly between these two values.

Let $d$ be a compact generating metric on $\X$, and let $E_1$, $E_2$, \dots be finite subsets of $H$ with $|E_m|\to \infty$.

From the definition of $\rmh_{\S\times \rm{T}}(\mu^{\times H})$, it follows that there are $\delta > 0$ and a sequence of subsets $A_n \subseteq \X^{V_n\times W_n}$ such that
\begin{itemize}
\item[i)] each $A_n$ is $\delta$-separated according to $d^{(V_n\times W_n)}$,
\item[ii)] for every w$^\ast$-neighbourhood $\calO$ of $\mu^{\times H}$ we have
\[A_n \subseteq \O(\calO,\s_n\times \tau_n)\]
for all sufficiently large $n$, and
\item[iii)] $|A_n| \geq \exp(h_2|V_n||W_n|)$ for infinitely many $n$.
\end{itemize}
By passing to a subsequence $n_1 < n_2 < \dots$, we may now assume that (iii) holds for all sufficiently large $n$, and in particular that $A_n \neq \emptyset$ for every $n$.

Let $\theta_n$ be the uniform measure on $A_n$ for each $n$.  Then condition (ii) above shows that these satisfy the hypotheses of Lemma~\ref{lem:make-dq}, and so
\[\mu_n := \frac{1}{|E_{m_n}|}\sum_{h \in E_{m_n}}(\rho_n^h)_\ast\theta_n \dq \mu^{\times H}\]
for some $m_1 \leq m_2 \leq \dots$ tending slowly to $\infty$.

Finally, for each $n$ let $B_n \subseteq \X^{V_n\times W_n}$ be a subset of minimal cardinality such that
\[\mu_n(B_{\delta/2}(B_n)) > 1/2.\]
From the definition of $\mu_n$, this requires that
\[\theta_n\big((\rho^h_n)^{-1}(B_n)\big) = \frac{|A_n \cap (\rho^h_n)^{-1}(B_n)|}{|A_n|} > \frac{1}{2} \quad \hbox{for some}\ h \in F_{m_n}.\]
Since each $\rho^h_n$ is an isometry of the metric $d^{(V_n\times W_n)}$, this and property (i) require that
\[|B_n| \geq |A_n|/2.\]
Therefore, since property (iii) now holds for all sufficiently large $n$, we have
\[\rm{cov}_{1/2,\delta/2}(\mu_n,d^{(V_n\times W_n)}) = |B_n| > \exp(h_1|V_n||W_n|)\]
for all sufficiently large $n$, and thus
\[\rmh_{\S\times \rm{T}}^{\rm{dq}}(\mu^{\times H}) \geq h_1.\]
Since $h_1 < \rmh_{\S\times\rm{T}}(\mu^{\times H})$ was arbitrary, this completes the proof.
\end{proof}

In case $G$ is trivial, Theorem E just asserts that all our sofic entropy-notions coincide for Bernoulli $H$-systems.  However, in that case one could give a much simpler proof: if $(\X^H,\nu^{\times H},S)$ is a Bernoulli $H$-process and $d$ a compact generating metric on $\X$, then Lemma~\ref{lem:Bern-dq} gives that $\nu^{\times W_n} \dq \nu^{\times H}$, and this sequence of measures achieves the full sofic entropy of the process, which just equals the Shannon entropy of $\nu$.

It is worth comparing Theorem E with~\cite[Theorem 4.1]{Bowen11}, which gives other sufficient conditions for a $G$-system $(X,\mu,T)$ to satisfy $\rmh_\S(\mu,T) = \rmh^\rm{q}_\S(\mu,T)$. Bowen's assumptions are that $G$ is residually finite and that $\S$ consists of quotients by finite-index normal subgroups of $G$.  This looks quite different from Theorem E: in the first place, Theorem E assumes that the system is of a special kind, whereas Bowen's condition is mostly about the sofic approximation. However, the proofs do have a similar flavour.  A key point in the proof of Theorem E is that the \emph{left}-shift action of $H$ on $X^H$ commutes with the co-induced $(G\times H)$-action and is weakly mixing.  Bowen's proof also requires that there be a `sufficiently large' group commuting with a given action: in his case, that the $G$-actions on the sofic approximations $V_n$ commute with some transitive actions from the other side.  In both cases, the proof uses this large centralizer for some auxiliary averaging, which converts single good models into measures.  It would be interesting to find some way of unifying these two sufficient conditions.  It would also worth knowing whether Bowen's condition can be generalized in such a way that every sofic group has some sofic approximation which satisfies it.

In light of the role played by the left-shift $H$-action in the proof of Theorem E, I suspect it might have a far-reaching generalization as follows.  Let $(X,\mu,T)$ be a $G$-system.  Let $\rm{Aut}(X,\mu)$ denote the group of all measure-preserving automorphisms of the probability space $(X,\mu)$, up to agreement $\mu$-almost everywhere.  This is a Polish group in its coarse topology.  The $G$-action $T$ defines a homomorphism $G\to \rm{Aut}(X,\mu)$, and we define the \textbf{centralizer} of $T$ to be the subgroup of elements of $\rm{Aut}(X,\mu)$ which commute with the image of that homomorphism.

\begin{conj}\label{conj:centralizer}
If the centralizer of $T$ is ergodic, then $\rmh_\S(\mu,T) = \rmh^{\rm{q}}_\S(\mu,T)$.  If the centralizer is weakly mixing, then $\rmh_\S(\mu,T) = \rmh^{\rm{dq}}_\S(\mu,T)$. 
\end{conj}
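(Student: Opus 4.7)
The strategy is to adapt the proof of Theorem~E, where the centralizer contained the weakly mixing left-shift $H$-action, to an abstract ergodic (resp.\ weakly mixing) centralizer $C$ of $T$. Reduce first to a metric $G$-process $(\X^G,\mu,S,d)$; then each automorphism $R \in C$ is an isomorphism of this process, hence of the form $R = r^G$ for a measurable root $r:\X^G\to\X$. By Lemma~\ref{lem:approx-by-Lip}, $r$ and the root $\t r$ of $R^{-1}$ both admit AL approximating sequences $\psi^k_R \aL r$ and $\t\psi^k_R \aL \t r$. The induced maps $\hat R_{n,k} := (\psi^k_R)^{\s_n}:\X^{V_n}\to\X^{V_n}$ are almost Lipschitz on good-model sets (Lemma~\ref{lem:Lip-still-Lip}), approximately preserve good models (Proposition~\ref{prop:approx-by-Lip-maps}), approximately commute with $\Pi^{\s_n}_v$ for most $v$ (Corollary~\ref{cor:psi-sig-compatible}), and are approximate inverses of $(\t\psi^k_R)^{\s_n}$ in the Hamming metric on good-model sets (Lemma~\ref{lem:comparing-approxs}).

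Let $A_n\subseteq\O(\calO_n,\s_n)$ be $\delta$-separated sets witnessing $\rmh_\S(\mu)\geq h$, and let $\theta_n$ be uniform on $A_n$. In Theorem~E the Law of Large Numbers controlled $|E_m|^{-1}\sum_h f\circ\t T^h$; in the general case this must be replaced by the mean ergodic theorem, available because the unitary representation of $C$ on $L^2_0(\mu)$ has no fixed vector by ergodicity. An Alaoglu--Birkhoff diagonalization then yields, for a prescribed countable dense collection of mean-zero continuous local functions, finite subsets $F_m\subseteq C$ and convex weights $\a^{(m)}_R\geq 0$ with
\[\Big\|\sum_{R\in F_m}\a^{(m)}_R\,f\circ R\Big\|_{L^2(\mu)}\longrightarrow 0 \quad \text{as}\ m\to\infty\]
for every $f$ in the collection. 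Set
\[\mu_n:=\sum_{R\in F_{m_n}}\a^{(m_n)}_R(\hat R_{n,k_{R,n}})_\ast\theta_n,\]
where $m_n\to\infty$ slowly and the AL-levels $k_{R,n}$ are chosen via Corollaries~\ref{cor:pushfwd-approx}--\ref{cor:psi-sig-compatible} so that every approximation property kicks in simultaneously for all $R\in F_{m_n}$.

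To verify $\mu_n\q\mu$, three things must be checked. Concentration on good models follows termwise from Proposition~\ref{prop:approx-by-Lip-maps}. For local weak$^\ast$ convergence, approximate commutativity expresses $\int f\,\d((\Pi^{\s_n}_v)_\ast\mu_n)-\int f\,\d\mu$ as $\int\t g_{m_n}\circ\Pi^{\s_n}_v\,\d\theta_n+o(1)$, uniformly in $v$, with $\t g_{m_n}:=\sum_R\a^{(m_n)}_R(f\circ R)-\int f\,\d\mu$ satisfying $\|\t g_{m_n}\|_{L^2(\mu)}\to 0$. A Cauchy--Schwarz step bounds
\[\mathbb{E}_v\Big|\int\t g_{m_n}\circ\Pi^{\s_n}_v\,\d\theta_n\Big|^2\leq\iint |\t g_{m_n}|^2\,\d P^{\s_n}_\bf{x}\,\theta_n(\d\bf{x}),\]
and since $\theta_n$ concentrates on good models the right side converges to $\|\t g_{m_n}\|_{L^2(\mu)}^2\to 0$, yielding w.h.p.-in-$v$ convergence via Chebyshev. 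Third, for the covering-number lower bound, if $\mu_n(B_{\delta'}(F))>1/2$ then some summand $(\hat R_{n,k})_\ast\theta_n$ places mass $>1/2$ on $B_{\delta'}(F)$; the approximate-Lipschitz bound on $(\t\psi^k_R)^{\s_n}$ and the $\delta$-separation of $A_n$ then force $|F|\gtrsim|A_n|$, by the pigeonhole argument at the end of Theorem~E's proof.

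The weakly mixing case follows the same template applied to $C\times C$ acting on $(X^2,\mu^{\times 2})$: weak mixing of $C$ is equivalent to ergodicity of $C\times C$, supplying the mean ergodic theorem on $L^2_0(\mu\times\mu)$. The same construction then produces $\mu_n$ such that $\mu_n\times\mu_n\q\mu\times\mu$, i.e., $\mu_n\dq\mu$ by condition (i) of Theorem~A. The main technical obstacle is coordinating the AL approximations across all $R\in F_{m_n}$ simultaneously so that the $L^2$-convergence — which involves genuinely measurable functions $f\circ R$ — can be realized through local continuous functions on which the good-model machinery acts cleanly; this is what forces the levels $k_{R,n}$ in the construction to depend on $R$ and requires the delicate diagonalization of Corollaries~\ref{cor:pushfwd-approx}--\ref{cor:psi-sig-compatible}. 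A secondary obstacle is the approximate-invertibility step in the covering-number bound, where the non-isometric nature of $\hat R_{n,k}$ (unlike Theorem~E's genuine isometries $\rho^h_n$) makes the pullback argument sensitive to Hamming errors accumulated across the convex combination.
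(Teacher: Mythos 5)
This statement is Conjecture~\ref{conj:centralizer}, which the paper explicitly leaves open --- it appears in the closing section of speculative generalizations with the remark that the author merely ``suspects'' Theorem~E admits such an extension. There is no proof in the paper to compare against, so what you have written is an attempted proof of an open problem, and it should be evaluated as such.

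Your strategy --- replace the left-shift action of $H$ in Theorem~E by an abstract ergodic subgroup of $\mathrm{Aut}(X,\mu)$ commuting with $T$, use Alaoglu--Birkhoff in place of the Law of Large Numbers to find finite convex averages $\sum_R \a_R\, f\circ R$ with small $L^2_0$-norm, push the uniform measure $\theta_n$ on a large $\delta$-separated set $A_n\subseteq\O(\calO_n,\s_n)$ through the AL-approximants $\hat R_{n,k}=(\psi^k_R)^{\s_n}$, and average --- is the natural generalization, and the local weak$^\ast$ step you outline is plausible, though not as routine as the sketch suggests (the functions $f\circ(\psi^k_R)^G$ are only almost-Lipschitz, so the passage from $\iint|\t g_{m_n}|^2\,\d P^{\s_n}_{\bf{x}}\,\theta_n(\d\bf{x})$ to $\|\t g_{m_n}\|^2_{L^2(\mu)}$ requires the Portmanteau machinery of Lemma~\ref{lem:pushfwd-approx}, not mere weak$^\ast$ convergence of empirical distributions).

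The genuine gap is precisely the one you flag as a ``secondary obstacle'' and then do not resolve: the covering-number lower bound. In Theorem~E, the maps $\rho^h_n$ are literal isometries of $(\X^{V_n\times W_n},d^{(V_n\times W_n)})$, so $(\rho^h_n)_\ast\theta_n$ is again uniform on a $\delta$-separated set of size $|A_n|$, and the pigeonhole argument gives $\cov_{1/2,\delta/2}(\mu_n)\geq|A_n|/2$ with $\delta$ \emph{fixed once and for all}. In your construction, $\hat R_{n,k}$ is only $\eta$-almost $L_R$-Lipschitz on good models (Lemma~\ref{lem:Lip-still-Lip}), with an approximate inverse $(\t\psi^{k'}_R)^{\s_n}$ that is only $\eta'$-almost $L'_R$-Lipschitz. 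The pigeonhole step requires: if $\hat R(a),\hat R(a')\in B_{\delta'}(z)$ for the same $z$, then $d^{(V_n)}(a,a')\lesssim \eta'_R+2L'_R\delta'+(\text{approximate-inverse error})<\delta$, which forces $\delta'\lesssim\delta/L'_R$. But $L'_R$ is simply whatever Lipschitz constant Lusin's Theorem (Lemma~\ref{lem:Lusin1}) produces for the root of $R^{-1}$; as $R$ ranges over larger and larger $F_m\subseteq C$, these constants are completely uncontrolled and generally unbounded. The definition of $\rmh^\rm{q}_\S$ requires exhibiting a \emph{single} pair $(\eps,\delta')$ for which $\cov_{\eps,\delta'}(\mu_n,d^{(V_n)})$ grows at the target rate, so letting $\delta'$ shrink with $n$ (to track $\sup_{R\in F_{m_n}}L'_R$) is not permitted. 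There is no way to choose $m_n$ growing slowly enough to evade this, because the obstruction is geometric, not a matter of how quickly one exhausts $C$: once $F_{m_n}$ contains an element with large inverse-Lipschitz constant, a fixed $\delta'$ ball can swallow two $\delta$-separated models after pushforward by that element. This is exactly the point where the special structure of co-inductions --- that the commuting $H$-action is itself a shift, hence acts on model spaces by genuine Hamming isometries --- is used in an essential way, and why the statement remains conjectural rather than a corollary of Theorem~E.
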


For example, the centralizer of the co-induced $(G\times H)$-system $(X^H,\mu^{\times H},\rm{CInd}_G^{G\times H}T)$ includes the left-shift action of $H$, which is Bernoulli and therefore weakly mixing.

On the other hand, I do not believe that the conditions in Conjecture~\ref{conj:centralizer} are necessary.  On the contrary, if $G$ is amenable, then one should always have
\[\rmh_\S(\mu,T) = \rmh_\S^\rm{q}(\mu,T) = \rmh_\S^\rm{dq}(\mu,T) = \rmh_{\rm{KS}}(\mu,T)\]
for any sofic approximation $\S$.  The equality of the first and last values here is shown in~\cite{Bowen12}, and I think similar methods are able to prove the others.

\section{Some directions for further study}\label{sec:open}

Just as for sofic entropy itself, the following basic fact is not known for the model-measure sofic entropies.  It suggests a major gap in our present understanding.

\begin{ques}
Are there a sofic group $G$, a $G$-system $(X,\mu,T)$, and two different sofic approximations $\S$ and $\S'$ to $G$ such that
\[0 \leq \rmh^\rm{q}_\S(\mu,T) < \rmh^\rm{q}_{\S'}(\mu,T),\]
or similarly with $\rmh^\rm{q}$ replaced by $\rmh^\rm{dq}$?
What if $G$ is a free group?
\end{ques}

There are cases in which some sofic approximations give a non-negative value, while others give $-\infty$, just as there are for sofic entropy itself.

The following related questions are also open.

\begin{ques}
Are there a sofic group $G$, a $G$-system $(X,\mu,T)$, and a sofic approximation $\S$ such that at least two of the quantities
\[\rmh_\S(\mu,T), \quad \rmh^\rm{q}_\S(\mu,T) \quad \hbox{and} \quad \rmh^\rm{dq}_\S(\mu,T)\]
are non-negative, but are not equal? What if $G$ is a free group?
\end{ques}

\begin{ques}
Are there a sofic group $G$, a $G$-system $(X,\mu,T)$, and a sofic approximation $\S$ such that the sequence
\[\frac{1}{k}\rmh_\S(\mu^{\times k},T^{\times k}), \quad k\geq 1,\]
contains at least two distinct non-negative values? How about for $\rmh_\S^\rm{q}$? Are there examples in which
\[0 \leq \rmh_\S^\rm{ps}(\mu,T) < \rmh_\S(\mu,T)?\]
What if $G$ is a free group?
\end{ques}

Another possibility that might be worth pursuing is that certain choices of sofic approximation give some simplification of the entropy theories.

\begin{ques}
Let $G$ be a sofic group.  Is there a sofic approximation $\S$ to $G$ such that
\[\rmh_\S(\mu,T) = \rmh_\S^\rm{dq}(\mu,T)\]
for all $G$-systems $(X,\mu,T)$?
\end{ques}

In this case, I think an obvious candidate is to start with an arbitrary sofic approximation $\S_0 = (\s_n:G\to \rm{Sym}(V_n))_{n\geq 1}$, and then let $\S$ be the sequence
\[\s_n^{\times m_n}:G\to \rm{Sym}(V_n^{m_n})\]
for some slowly-growing sequence $m_1 \leq m_2 \leq \dots$.  It might be that for this $\S$, some variation of the averaging argument used to prove Theorem E would answer the above question positively.  I have not pursued this idea very far.

In case $G$ is a free group, Bowen introduced another entropy-like invariant called the `f-invariant' in~\cite{Bowen10free}, and denoted it by $\rm{f}(\mu,T)$.  In~\cite{Bowen10c}, he then showed that $\rm{f}(\mu,T)$ may be expressed as a kind of average of sofic entropies over random sofic approximations.  As a result, the f-invariant may have better behaviour than the sofic entropy along any give sofic approximation.  It would be interesting to study its additivity properties using the method of the present paper.

\bibliographystyle{abbrv}
\bibliography{bibfile}

\parskip 0pt
\parindent 0pt

\vspace{7pt}

\small{\textsc{Courant Institute of Mathematical Sciences, New York University, 251 Mercer St, New York NY 10012, USA}

\vspace{7pt}

Email: \verb|tim@cims.nyu.edu|2

URL: \verb|cims.nyu.edu/~tim|}

\enddocument